\DeclareMathAlphabet{\mathup}{OT1}{\familydefault}{m}{n}
\newcommand{\dd}[1]{\mathop{}\!\mathup{d} #1}
\renewcommand{\div}[1]{\mathop{}\!\mathup{div} #1}
\renewcommand{\arg}[1]{\mathop{}\!\mathup{Arg} #1}
\newcommand{\N}{\mathbb{N}}
\newcommand{\Z}{\mathbb{Z}}
\newcommand{\R}{\mathbb{R}}
\newcommand{\Co}{\mathbb{C}}
\newcommand{\T}{\mathbb{T}}
\newcommand{\cM}{{\mathcal M}}
\newcommand{\err}{\mathrm{err}}
\newcommand{\vspan}{\mathrm{span}}
\newcommand{\Lapp}{\mathcal{L}_{\mathrm{app}}}
\newcommand{\Int}{\mathrm{Int}}
\newcommand{\Id}{\mathrm{Id}}
\newcommand\e{{\rm e}}
\newcommand{\dsOne}{\mathds{1}}
\def\loc{{\mathrm{loc}}}
\newcommand{\D}{\ensuremath{\mathcal{D}}}
\renewcommand{\Re}{\mathrm{Re}}
\renewcommand{\Im}{\mathrm{Im}}
\newcommand{\eps}{\varepsilon}
\newtheorem{proposition}{Proposition}[section]
\newtheorem{theorem}{Theorem}[section]
\newtheorem{lemma}{Lemma}[section]
\newtheorem{example}{Example}[section]
\theoremstyle{definition}
\newtheorem{definition}{Definition}[section]
\theoremstyle{remark}
\newtheorem{remark}{Remark}[section]
\newcommand{\G}{\mathcal{G}}
\newcommand{\F}{\mathcal{F}}
\renewcommand{\L}{\mathcal{L}}
\newcommand{\BigO}{O}
\newcommand{\M}{\mathcal{M}}
\newcommand{\cI}{\mathcal{I}}
\newcommand{\bd}{\mathrm{bd}}
\newcommand{\main}{\mathrm{main}}
\newcommand{\error}{\mathrm{error}}
\DeclareMathOperator{\I}{I}
\DeclareMathOperator{\II}{II}
\DeclareMathOperator{\III}{III}
\DeclareMathOperator{\IV}{IV}
\DeclareMathOperator{\A}{A}
\DeclareMathOperator{\Bi}{Bi}
\DeclareMathOperator{\Ai}{Ai}
\newcommand{\TC}{\mathrm{TC}}
\newcommand{\LV}{\mathrm{LV}}
\DeclareMathAlphabet{\mathup}{OT1}{\familydefault}{m}{n}
\newcommand{\supp}[1]{\mathop{}\!\mathup{supp} #1}
\newcommand{\dyn}{\mathcal{L}^{\mathrm{dyn}}}
\numberwithin{equation}{section}
\def\namedlabel#1#2{\begingroup
    #2%
    \def\@currentlabel{#2}%
    \phantomsection\label{#1}\endgroup
}
\renewcommand\subsubsection{\@startsection{subsubsection}{3}%
\normalparindent{.5\linespacing\@plus.7\linespacing}{-.5em}
{\normalfont\bfseries}}
\def\@tocline#1#2#3#4#5#6#7{\relax
  \ifnum #1>\c@tocdepth 
  \else
    \par \addpenalty\@secpenalty\addvspace{#2}%
    \begingroup \hyphenpenalty\@M
    \@ifempty{#4}{%
      \@tempdima\csname r@tocindent\number#1\endcsname\relax
    }{%
      \@tempdima#4\relax
    }%
    \parindent\z@ \leftskip#3\relax \advance\leftskip\@tempdima\relax
    \rightskip\@pnumwidth plus4em \parfillskip-\@pnumwidth
    #5\leavevmode\hskip-\@tempdima
      \ifcase #1
       \or\or \hskip 1em \or \hskip 2em \else \hskip 3em \fi%
      #6\nobreak\relax
    \dotfill\hbox to\@pnumwidth{\@tocpagenum{#7}}\par
    \nobreak
    \endgroup
  \fi}
\begin{document}

\title[Spectral instability in the smooth Ponomarenko dynamo]{Spectral instability in the smooth Ponomarenko dynamo}

\author[V. Navarro-Fernández]{Víctor Navarro-Fernández}
\address{(VNF) Department of Mathematics, Imperial College London, London, SW7 2AZ, UK}
\email{v.navarro-fernandez@imperial.ac.uk}

\author[D. Villringer]{David Villringer}
\address{(DV) Department of Mathematics, Imperial College London, London, SW7 2AZ, UK}
\email{d.villringer22@imperial.ac.uk}

\subjclass[2020]{35Q35, 34L05, 76W05}

\keywords{Slow dynamo, exponential growth, passive vector}

\date{\today}

\begin{abstract}
We consider the kinematic dynamo equations for a passive vector in $\mathcal{M} \times \mathbb{T} \subseteq \mathbb{R}^2 \times \mathbb{T}$ describing the evolution of a magnetic field with resistivity $\eps > 0$, that is transported by a given velocity field. For a broad class of $C^3$ velocity fields with helical geometry, we establish the existence of solutions that exhibit exponential growth over time. We construct an unstable eigenmode via detailed resolvent estimates of the corresponding linear operator, which we carry out by introducing suitable Green’s functions that accurately approximate the local behaviour of the true system. This approach yields an explicit asymptotic expression for the growing mode, providing a sharp description of the instability mechanism. We first derive the results with $\M=\R^2$ for a large class of velocity fields that includes finite energy examples. We then consider the case of domains with boundary, where $\M\times\T$ denotes a periodic cylinder, annular cylinder, or the exterior of a cylinder, with the boundary conditions of perfectly conducting walls. Our results offer a rigorous and sharp mathematical justification for the physically conjectured process by which helical flows can sustain magnetic field generation in the Ponomarenko dynamo, with growth rate of order $\eps^{1/3}$.
\end{abstract}

\maketitle

\setcounter{tocdepth}{2}
\tableofcontents

\section{Introduction}\label{s:intro}

The kinematic dynamo equations describe the evolution of a magnetic field $B^\eps\in\R^3$ under the influence of a prescribed, divergence-free velocity field $u\in\R^3$, and play a central role in understanding magnetic field generation in astrophysical and geophysical contexts, see e.g.\ the original work by Larmor \cite{Larmor19}. Despite their linearity, the equations exhibit rich and subtle dynamics as a result of the interplay between advection, stretching and diffusion. We consider the Cauchy problem
\begin{equation}\label{eq:dynamo}
\left\lbrace
    \begin{array}{rcl}
        \partial_t B^\eps + (u\cdot \nabla)B^\eps - (B^\eps\cdot \nabla)u & = & \eps\Delta B^\eps, \\
        \div (B^\eps) & = & 0, \\
        B^\eps(0,\cdot) & = & B^\eps_{\mathrm{in}},
    \end{array}
\right.
\end{equation}
with $t\in (0,\infty)$ and $x\in \M\times\T$. Here $\M \subseteq \R^2$, $\eps>0$ denotes the magnetic diffusivity, and $B^\eps_\mathrm{in}$ represents an initial configuration for the magnetic field. When $\partial \M \neq \emptyset$, we further impose the \emph{perfectly conducting} boundary conditions
\begin{equation}\label{eq:BC}
B^\eps \cdot \hat n|_{\partial(\M\times \T)} =0, \quad (\nabla \times B^\eps) \times \hat n |_{\partial(\M\times \T)} =0,
\end{equation}
where $\hat n$ denotes the outwards pointing unit normal vector to the boundary of $\M\times\T$. This choice of perfectly conducting walls as boundary conditions is standard for the magnetohydrodynamics (MHD) equations, see e.g.\ \cites{SermangeTemam1983,roberts1967}.

A central question in the framework of equation \eqref{eq:dynamo} is whether one can find examples of vector fields $u$ for which there is a sustained exponential growth of the total magnetic energy---typically represented by the $L^2$ norm of $B^\eps$. 
Indeed, constructing examples of velocity fields that lead to exponential magnetic energy growth has been a major focus of research among physicists and mathematicians in the last decades, see the monographs \cites{ChildressGilbert,AK98,GhilChildress1987}.

In this regard, we say that a vector field $u$ is a \emph{kinematic dynamo} if there exist an initial datum $B_\mathrm{in}^\eps \in L^2(\M\times\T)$ satisfying appropriate boundary conditions, a coefficient $\gamma_\eps>0$, and a constant $c_\eps>0$---all possibly depending on the magnetic diffusivity $\eps$, such that solutions to equations \eqref{eq:dynamo} satisfy
\begin{equation}\label{eq:gamma}
    \|B^\eps(t)\|_{L^2} \geq c_\eps\|B^\eps_{\mathrm{in}}\|_{L^2}\e^{\gamma_\eps t},
\end{equation}
see for instance \cite{AK98}*{Chapter V} for further details about this definition.

The dynamo problem can also be addressed for the more physical case of the nonlinear MHD equations, which are composed of \eqref{eq:dynamo} coupled with the Navier--Stokes equations for fluids. In this situation, conservation of energy prevents the exponential growth from occurring indefinitely in time, therefore the problem is formulated as a transfer from kinetic to magnetic energy in finite time. This question has been extensively studied numerically, e.g.\ \cites{BrandenburgSubramanian2005,Moffatt1978,RuedigerHollerbach2006,ChildressGilbert}, but only a few rigorous results are available in the literature \cites{GerardVaret05,GerardVaretRousset07,FriedlanderVishik91,Vishik86}. Crucially, since equations \eqref{eq:dynamo} are nothing but the linearised MHD equations around the (possible) steady state $(u,0)$, a good understanding of the kinematic dynamo equations may allow for a more precise analysis of nonlinear instability for the MHD equations.

For the linear problem \eqref{eq:dynamo}, we have the following classification of kinematic dynamos depending on how the growth rate $\gamma_\eps$ \eqref{eq:gamma} varies with the magnetic diffusivity $\eps$. The simplest scenario corresponds to $\eps=0$ in \eqref{eq:dynamo}---i.e.\ the non-resistive case:
\begin{itemize}
    \item if $\gamma_0>0$, $u$ defines an \emph{ideal dynamo}.
\end{itemize} 
In this case, the problem can be reduced to proving positivity of the top Lyapunov exponent for the Lagrangian trajectories; with well-prepared initial data, the magnetic energy will grow exponentially fast. The more challenging problem of finding \emph{universal} ideal dynamos---where the growth occurs for all divergence free $B_\mathrm{in}\in L^2$---has been studied with random velocity fields in \cites{BaxendaleRozovskii93,CotiZelatiNavarroFernandez}.

If $\eps>0$, a distinction depending on the behaviour of $\gamma_\eps$ as $\eps\to 0$ was introduced in \cite{VainshteinZeldovich1972}, giving rise to two different types of dynamo: 
\begin{itemize}
    \item if $\liminf_{\eps\to 0} \gamma_\eps > 0$, $u$ defines a \emph{fast dynamo},
    \item if $\limsup_{\eps \to 0}\gamma_{\eps} \leq 0$, $u$ defines a \emph{slow dynamo}.
\end{itemize}
The fast dynamo problem has been rigorously formulated in Arnold's problems book \cite{ArnoldsProblems} for compact manifolds in $\R^3$. As stated, this is still an outstanding open problem, however some relevant examples of fast dynamo action have been found in the full space $\R^3$. For instance, in \cite{ZRMS1984} unbounded vector fields of the form $u(t,x) = C(t)x$, for some random traceless matrix $C(t)$, are proved to be fast dynamos; or in \cite{gilbert1988} an example of a discontinuous axisymmetric flow has been shown to exhibit fast dynamo action---we will elaborate on this specific flow later on. Only recently, bounded and Lipschitz examples of fast dynamos in the full space have been found in \cite{CZSV25} by exploiting the notion of the \emph{alpha-effect}, see also \cites{Childress1979,Moffatt1978,Roberts1970}. Moreover, it has been proven that fast dynamos in the full space not only exist but are, in some sense, generic \cites{CZSV25,Roberts1970}. The existence of fast dynamos in bounded subdomains of $\R^3$ is still unknown, but some progress has been made very recently in \cite{Rowan25}, where a \emph{subsequential} fast dynamo on the three-dimensional torus has been proved to exist, namely an example with the property
\[
\limsup_{\eps\to 0} \limsup_{t\to\infty} \frac{1}{t}\log\|B^\eps(t)\|_{L^2} >0.
\]

In this paper, we shall focus on the case of slow dynamos, where the growth rate decays to zero as $\eps \to 0$. This phenomenon is intricately linked to concepts in boundary layer theory---indeed, whilst velocity fields defining fast dynamos must necessarily also be ideal dynamos (see \cite{Vishik89}), this is not the case for slow dynamos. Instead, somewhat counter-intuitively, the introduction of the dissipative term $\eps \Delta$ into the equation may actually have a \emph{destabilizing} effect. Analogous phenomena are well documented for the Navier--Stokes equations, where profiles that may be stable for the Euler equations can become linearly unstable once viscosity is introduced, see for instance \cites{GGN16a,GGN16b}. 

Despite the fact that slow dynamos should in principle be less complicated to study than fast dynamos, there are remarkably few rigorous results in the literature. In large part, this lack of results (for both fast and slow dynamos) can be attributed to the existence of so-called \emph{anti-dynamo theorems}, results which show that dynamo action may only be maintained by ``sufficiently complicated'' velocity fields. Without any aim of completeness, we mention Zeldovich’s theorem \cites{zeldovich1980magnetic,Zeldovich_1992}, which states that a velocity field with zero vertical component cannot sustain a dynamo, and Cowling’s theorem \cite{Cowling33}, which rules out dynamos for which the magnetic field $B$ is axisymmetric. These results highlight that a dynamo requires a genuinely three-dimensional magnetic field \cite{AK98}.

Despite these theoretical restrictions, in \cite{Ponomarenko73} Ponomarenko gives an example of a relatively simple velocity field under which the analysis of the kinematic dynamo equations reduces to a system of coupled ordinary differential equations, but which still may yield dynamo action. Indeed, in \cite{Ponomarenko73}, velocities of the of the form 
\begin{equation}
\label{eq:ponomarenko velocity field}
u = r\Omega(r)\hat\theta+ U(r)\hat z,
\end{equation}
are considered, where $(r,\theta,z)\in \R\times\T\times\T$ denotes the standard cylindrical coordinate system, and
\begin{equation}
\label{eq:fast ponomarenko}
\Omega(r)=\begin{cases}
\Omega & r \leq r_0,\\
0 & r>r_0,
\end{cases}
\quad 
U(r)=
\begin{cases}
U & r \leq r_0,\\
0 & r>r_0,
\end{cases}
\end{equation}
for some constant $\Omega, U\neq 0$. In fact, in \cite{gilbert1988} Gilbert shows that this choice of axisymmetric and discontinuous velocity field actually defines a fast dynamo, for which the growth is asymptotically localised near the discontinuity at $r=r_0$ as $\eps \to 0$. 

To rectify the somewhat unphysical nature of a discontinuous velocity field, in \cite{gilbert1988} the author further considers divergence-free and radial velocity fields of the form \eqref{eq:ponomarenko velocity field}, where $\Omega, U$ are arbitrary, smooth functions of $r$. Via a formal asymptotic WKB expansion, Gilbert claims that the maximum growth rate of dynamos driven by velocity fields of the form \eqref{eq:ponomarenko velocity field} is given by 
$\gamma_\eps \sim \eps^{1/3}$.
Henceforth, we shall refer to dynamo action by such velocity fields as ``smooth Ponomarenko dynamos''. In particular, $\gamma_\eps\to 0$ as $\eps\to 0$, and so the dynamo action is that of a \emph{slow dynamo}. This statement has been supported since with strong numerical \cites{Wynne-thesis,Peyrot_Gilbert_Plunian_2009,Normand03} and experimental evidence \cites{Gailitis2000,verhille2010}. The only rigorous result on the smooth Ponomarenko dynamo we are aware of is from \cite{GerardVaretRousset07}, where the authors prove finite time instability for the forced MHD equations with a Ponomarenko-type velocity profile. It is worth remarking that both the lower and upper bounds from \cite{GerardVaretRousset07} are in good agreement with the growth rate $\gamma_\eps\sim \eps^{1/3}$, further supporting the idea that the results obtained in \cite{gilbert1988} are sharp.

In this work, we shall provide a rigorous proof of dynamo action for a large class of functions $\Omega, U$ in the Ponomarenko dynamo \eqref{eq:ponomarenko velocity field}, for $\M \times \mathbb{T}$ being either the full space $\R^2\times\T$, a cylinder, an annular cylinder, or the exterior of a cylinder, equipped with perfectly conducting boundary conditions. Furthermore, we shall give a precise description of the growing magnetic field profile in the limit $\eps \to 0$.

\subsection{Preliminaries and main results}

At this point, it is worth outlining the ways in which the choice of Ponomarenko velocity field \eqref{eq:ponomarenko velocity field} simplifies the analysis of the kinematic dynamo equations. 
Writing the magnetic vector in cylindrical components $B = B_r\hat r + B_\theta\hat \theta + B_z\hat z$, we find that the kinematic dynamo equations \eqref{eq:dynamo} become
\begin{align*} 
    \partial_t B_r +\Omega(r) \partial_\theta B_r+U(r)\partial_zB_r & =\eps \left(\Delta B_r-\frac{B_r}{r^2}-\frac{2}{r^2}\partial_\theta B_\theta \right), \\
    \partial_t B_\theta+\Omega(r) \partial_\theta B_\theta+U(r)\partial_zB_\theta
     & = \eps\left(\Delta B_\theta-\frac{B_\theta}{r^2}+\frac{2}{r^2}\partial_\theta B_r \right) + r\Omega'(r)B_r, \\
    \partial_t B_z +\Omega(r)\partial_\theta B_z+U(r)\partial_zB_z & =\eps \Delta B_z + U'(r)B_r,
\end{align*}
where $\Delta = \partial_r^2 + r^{-1}\partial_r + r^{-2}\partial_\theta^2 + \partial_z^2$ represents the scalar Laplacian operator in cylindrical coordinates. Observe that the first two equations---for $B_r$ and $B_\theta$---are coupled through the stretching term $r\Omega'(r)B_r$ appearing in the second equation, as well as through the Laplacian coupling, which is a by-product of the cylindrical geometry. Crucially however, the equations corresponding to $B_r, B_\theta$ do not contain $B_z$, and so the PDE admits a Jordan block-like structure. Indeed, our strategy will be to solve the coupled system for the radial and azimuthal components $(B_r,B_\theta)$, and then, only once a solution is found, do we solve for the $z$ component, which satisfies a standard advection-diffusion equation with a forcing of the form $U'(r)B_r$.

In this coordinate system, we consider the components $(r,\theta)\in\M=\cI\times\T$, where $\cI\subseteq[0,\infty)$ is an interval in the positive real line, namely $\M\subseteq\R^2$ consists of a circle, an annulus, or the exterior of a circle centred at the origin. We can further make the modal form Ansatz 
\[
B(t,r,\theta,z) = b(r) \e^{\lambda t+ i(m\theta + kz)},
\]
for $b:\mathcal{I}\to \mathbb{C}^3$, $\lambda\in\Co$ and $m,k\in \Z$. The components of the vector $b = b_r\hat r+b_\theta\hat\theta + b_z\hat z$ satisfy the following system of ordinary differential equations,
\begin{align}
    \lambda b_r + i(\Omega(r)m +U(r)k)b_r & = \eps\left( \partial_r^2b_r + \frac{1}{r}\partial_rb_r - \frac{m^2}{r^2}b_r - k^2b_r -\frac{1}{r^2}b_r - \frac{2im}{r^2}b_\theta\right), \label{eq:modal-eq-r}\\
    \lambda b_\theta + i(\Omega(r)m +U(r)k)b_\theta & = r\Omega'(r)b_r + \eps\left( \partial_r^2b_\theta + \frac{1}{r}\partial_rb_\theta - \frac{m^2}{r^2}b_\theta - k^2b_\theta -\frac{1}{r^2}b_\theta + \frac{2im}{r^2}b_r\right), \label{eq:modal-eq-theta}\\
    \lambda b_z + i(\Omega(r)m +U(r)k)b_z & = U'(r)b_r + \eps\left( \partial_r^2b_z + \frac{1}{r}\partial_rb_z - \frac{m^2}{r^2}b_z - k^2b_z\right). \label{eq:modal-eq-z}
\end{align}
The perfectly conducting boundary conditions \eqref{eq:BC} translate into 
\begin{equation}\label{eq:BC-radial}
    b_r|_{\partial \mathcal{I}\setminus\{0\}}=(rb_\theta)'|_{\partial \mathcal{I}\setminus\{0\}}=b_z'|_{\partial \mathcal{I}\setminus\{0\}}=0
\end{equation}
under this modal Ansatz. If $\cI\subseteq[0,\infty)$ is of the form $\cI=[0,q)$ for some $0<q\leq\infty$, it is worth noting that $\{0\}\in\partial\cI$ is not an actual element of $\partial(\M\times\T)$ and just a fabrication of the coordinate system. 
Therefore, we can appreciate that finding an exponentially growing solution to the kinematic dynamo equations may be achieved by finding an \emph{unstable eigenvalue} of the kinematic dynamo operator, that is, finding a finite energy magnetic field $b(r)$, and $\lambda \in \mathbb{C}$ with $\Re(\lambda)>0$ so that equations \eqref{eq:modal-eq-r}--\eqref{eq:modal-eq-z} are represented by 
\begin{equation}\label{eq:Ponomarenko-operator}
\L^{\mathrm{dyn}}b=\lambda b,
\end{equation}
where the differential operator $\L^{\mathrm{dyn}}$ will be referred to as the \emph{Ponomarenko dynamo operator}. Observe that an eigenvalue $\lambda$ in \eqref{eq:Ponomarenko-operator} with positive real part gives the growth condition \eqref{eq:gamma}. 
Indeed, the bulk of this paper shall be devoted to studying the spectrum of the operator $\L^{\mathrm{dyn}}$, and we shall say that any (smooth) velocity field of the form \eqref{eq:ponomarenko velocity field} for which there is an unstable eigenfunction of $\L^{\mathrm{dyn}}$ is a (smooth) Ponomarenko dynamo.

With this spectral point of view established, we now move on to providing a summary of the main results that we prove in this paper.

\subsubsection{Main results}

The main aim of this work is to find conditions on the functions $\Omega, U$ which guarantee the existence of an exponentially growing mode for the Ponomarenko dynamo operator defined by \eqref{eq:modal-eq-r}--\eqref{eq:modal-eq-z}. Of course the exact conditions depend on the domain $\M$ in question, but there are a few commonalities. In fact, we will show that the existence of a growing mode can be guaranteed by the local properties of $\Omega, U$ around some critical radius $r_0$ determined by the Fourier modes $m,k$, as long as the behaviours of $\Omega(r), U(r)$ as $r \to \infty$ are not too degenerate (if the domain we are interested in includes this endpoint). Since the precise form of the degeneracies we need to rule out is somewhat technical, it is instructive to give an informal sketch of the assumptions we shall place on the functions $\Omega, U$. Indeed, we will assume conditions on $\Omega, U$ that formally amount to:
$\Omega(r),U(r)$ are smooth functions on $[0,\infty)$, which furthermore satisfy
\begin{enumerate}
    \item $\Omega, U$ grow at most polynomially as $r\to\infty$,
    \item $\Omega, U$ do not oscillate infinitely often around a fixed value as $ r\to \infty$,
    \item As $r \to \infty$, $|\Omega(r)+U(r)| \gg |r\Omega'(r)|$.
\end{enumerate}

We refer to Assumptions \ref{H0}--\ref{H3} in Section \ref{s:admissible-fields} for a more rigorous statement of these properties. We draw special attention to the last of these assumptions, since (at least heuristically) it gives some insight as to why we can always find a growing \emph{eigenfunction} for \eqref{eq:dynamo}, even when we pose the equation on a non-compact domain. Indeed, on an intuitive level, this assumption amounts to saying that for large $r$, the stretching term $r\Omega'(r)$ is ``weaker'' than the transport term $u\cdot \nabla$, and therefore it cannot overcome the decay caused by the interplay of the Laplacian and mixing as a result of transport. Therefore, all growth in the equation must happen in the vicinity of the origin, where the Laplacian admits a compact resolvent, and so growing solutions are indeed attained by eigenfunctions. More formally, we can say that the equation \eqref{eq:dynamo} can be treated as an advection diffusion equation, perturbed by the \emph{relatively compact} term $r\Omega'(r)B_r$. Since such a perturbation does not disturb the essential spectrum, which for the advection diffusion equation will be contained in the left half plane, any spectral value in the right half plane will immediately be an eigenvalue for the dynamo equation. Indeed, in Section \ref{s:injectivity} we shall make this intuition rigorous.

Our main results, stated rigorously below, then show that under the aforementioned conditions, slow dynamo action with rate $\gamma_\eps \sim \eps^{1/3}$, can be deduced from a purely local, geometric condition at some critical radius $r_0>0$. Moreover, we give a precise asymptotic expansion of the associated eigenfunction $B^\eps$ as $ \eps \to 0$.

Our construction of the dynamo profiles proceeds via the analysis of the ODE system defined by the operator $\dyn$, which is a meaningful problem even when its connection to the PDE \eqref{eq:dynamo} is less relevant (for instance when $m, k \notin \mathbb{Z}$, and so do not represent ``true'' Fourier variables). We therefore split our main result into two theorems, one which focuses exclusively on the ODE, whereas the other synthesises the ODE result into an instability result for our PDE. 

\begin{theorem}\label{thm:dynamo}
    Let $\cI =[p,q]$ or $\cI = [p,\infty)$, where $p \in [0,\infty)$, $q \in (p,\infty)$, and let the velocity field $u=r\Omega(r)\hat\theta + U(r)\hat z$ satisfy Assumptions \ref{H0}--\ref{H2}. Pick any $r_0 \in \mathrm{int}(\cI)$ so that
    \[
    r_0\left|\frac{\dd}{\dd r}\log\left|\frac{\Omega'(r_0)}{U'(r_0)}\right|\right|<4,
    \]
    and any $|M|$ small.
    Then, for all $\eps>0$ sufficiently small, there exist a constant $\mu\in\Co$ with $\Re(\mu)>0$, and a $C^2$ vector field $b^\eps: \cI \to \Co^3$ with $b^\eps \in (L^1\cap L^\infty)(\cI,r\dd r)$, so that $\L^{\mathrm{dyn}}b^\eps=\eps^{1/3}\mu b^\eps$. Furthermore, if $\cI \neq [0,\infty)$, $b^\eps$ satisfies the perfectly conducting boundary conditions 
    \[
    b^\eps_r|_{\partial \cI \setminus \{0\}}=0, \quad (rb^\eps_\theta)'|_{\partial \cI \setminus \{0\}}=0, \quad (b^\eps_z)'|_{\partial \cI \setminus \{0\}}=0.
    \]    
    In addition, the growth rate satisfies the asymptotic expansion $\mu = \mu_\star + o_{\eps \to 0}(1)$, where
   \begin{equation*}
    \Re(\mu_\star) = \left[ |M|^{1/2}\left(\frac{|\Omega'(r_0)|^{1/2}}{r_0^{1/2}} - \frac{1}{2}\left| \Omega''(r_0)-\frac{\Omega'(r_0)}{U'(r_0)}U''(r_0) \right|^{1/2}    \right) - |M|^2\left(\frac{1}{r_0^2}+\frac{|\Omega'(r_0)|^2}{|U'(r_0)|^2}\right)\right ]>0.
   \end{equation*}
    If either $\cI$ is compact, or Assumption \ref{H3} is satisfied, there further exists $a>0$ such that for any $N\in\N$ and all $\eps>0$ small enough depending on $N$, the $(r,\theta)$ components of $b^\eps$ admit the asymptotic expansion,
    \begin{align*}
    \begin{pmatrix}
    b^\eps_{r}\\
    b^\eps_{\theta}
    \end{pmatrix}
     = 
    \begin{pmatrix}
    \eps^{1/3}\sqrt{\frac{-2iM}{r_0^3\Omega'(r_0)}}\\
    1
    \end{pmatrix}\left (\e^{-\frac{1}{2}\eps^{-2/3}c_2^{1/2}(r-r_0)^2}+\psi_{\mathrm{err}}^\eps \right)
    \end{align*}
   for some explicitly computable constant $c_2 \in \mathbb{C}$ with $\Re(c_2^{1/2})>0$, and
   \[
   \sup_{r \in \mathcal{I}}\left|\left(1+(|r-r_0|\eps^{-1/3})^N\right)\psi_{\mathrm{err}}^\eps\right| \lesssim \eps^a.
   \]
\end{theorem}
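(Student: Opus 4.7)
The plan is to construct the unstable eigenmode via a boundary-layer analysis at the critical radius $r_0$, and then promote an approximate eigenpair to a true one by a resolvent perturbation argument built on the approximate Green's functions $\Gapp$ alluded to in the abstract. First I would exploit the lower-triangular (Jordan block) structure of $\dyn$: the $(b_r,b_\theta)$ subsystem \eqref{eq:modal-eq-r}--\eqref{eq:modal-eq-theta} is closed, whereas $b_z$ solves the scalar advection-diffusion equation \eqref{eq:modal-eq-z} with source $U'(r)b_r$. So I would solve the $2\times 2$ problem first, and recover $b_z$ at the end by an elementary resolvent bound for the scalar advection-diffusion operator on $\cI$, from which the endpoint condition $(b^\eps_z)'|_{\partial\cI\setminus\{0\}}=0$ is easily arranged.

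For the $(b_r,b_\theta)$ problem I would choose $m,k$ so that the transport symbol $P(r):=\Omega(r)m+U(r)k$ satisfies $P'(r_0)=0$, $P(r_0)=M$ small, and $P''(r_0)\neq 0$; the hypothesis $r_0|\frac{d}{dr}\log|\Omega'/U'|(r_0)|<4$ encodes precisely that this turning point is non-degenerate and that, for $|M|$ small, the stretching gain dominates the curvature penalty in $\Re(\mu_\star)$. Introducing $\xi=\eps^{-1/3}(r-r_0)$ and $\lambda=\eps^{1/3}\mu$, the leading-order equation reduces to a parabolic cylinder problem of the form
\[
-\partial_\xi^2\phi + c_2\,\xi^2\,\phi \;=\; \mu_\star\,\phi,
\]
with $c_2\in\Co$ built from $P''(r_0)$, the stretching coefficient $r_0\Omega'(r_0)$, and the diffusive mass coming from $1/r_0^2$ together with the coupling $\pm 2im/r^2$. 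The Gaussian ground state $\e^{-\frac{1}{2}c_2^{1/2}\xi^2}$ (with the branch $\Re(c_2^{1/2})>0$ selected by the normalizability constraint) is exactly the profile in the statement, and matching the $\eps$-powers in the Jordan block forces the ansatz $b_r^\eps\approx\eps^{1/3}\sqrt{-2iM/(r_0^3\Omega'(r_0))}\,\phi$. The explicit formula for $\Re(\mu_\star)$ then arises from $c_2^{1/2}$ (giving the $|M|^{1/2}$ terms, with sign determined by the logarithmic derivative condition) plus the angular Laplacian penalty contributing the $|M|^2$ correction.

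The heart of the argument is converting this formal picture into a genuine eigenpair. I would split $\dyn-\lambda = \Lapp+\mathcal{R}$, where $\Lapp$ is the localized quadratic model whose Green's function $\Gapp$ is constructed explicitly from parabolic cylinder functions and which has a simple pole at $\mu_\star$, while $\mathcal{R}$ collects the Taylor remainder of $P$ at $r_0$, the cylindrical curvature terms, the coupling error, and far-field contributions. On a weighted Banach space capturing simultaneously the inner boundary-layer scale $\eps^{1/3}$ and the outer $L^1\cap L^\infty(r\,dr)$ behavior, I would prove $\|\mathcal{R}\Gapp\|=o_{\eps\to 0}(1)$ uniformly for $\mu$ in a small disc around $\mu_\star$, invert $\Id-\mathcal{R}\Gapp$ by Neumann series, and obtain $\G^\eps:=\Gapp(\Id-\mathcal{R}\Gapp)^{-1}$ as the true Green's function. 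An application of the argument principle (or Rouch\'e) to the characteristic determinant yields a genuine eigenvalue $\mu = \mu_\star+o(1)$ near the pole, and the corresponding normalized eigenfunction inherits the Gaussian leading profile; iterating the Neumann series against a pointwise kernel bound on $\Gapp$ then gives the weighted estimate on $\psi_{\err}^\eps$ for arbitrary $N\in\N$.

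Finally I would handle the boundary conditions and domain dependence. On compact $\cI$ the Gaussian tails are exponentially smaller than any power of $\eps$, so enforcing $b_r=0$ and $(rb_\theta)'=0$ at the walls perturbs $\mu$ by $O(\e^{-c\eps^{-2/3}})$, absorbed in $o(1)$; on $[p,\infty)$, Assumption \ref{H3} is exactly what makes the Neumann series close globally, since it renders the stretching term a relatively compact perturbation of advection-diffusion and keeps the essential spectrum in the left half-plane (cf.\ Section \ref{s:injectivity}). The main obstacle I anticipate is the construction of $\Gapp$ and the uniform bound $\|\mathcal{R}\Gapp\|=o(1)$: one must match the inner parabolic cylinder description near $r_0$ to an outer exponentially decaying description where $P(r)\neq 0$ dominates dissipation, all while simultaneously controlling the $2\times 2$ Jordan coupling through the stretching, the singular $1/r^2$ terms, and the joint dependence on $\eps$ and on the spectral parameter $\mu$. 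This matching, together with the uniform invertibility of $\Id-\mathcal{R}\Gapp$, is where the bulk of the technical work will lie.
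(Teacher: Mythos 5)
Your proposal follows essentially the same architecture as the paper: solve the closed $(b_r,b_\theta)$ block first and recover $b_z$ from the forced advection--diffusion equation at the end; use Gilbert's scaling and the turning-point condition to reduce, near $r_0$, to a parabolic cylinder problem whose Gaussian ground state gives $\mu_\star$ and the profile in the statement; build a matched approximate Green's function (inner parabolic-cylinder region glued to outer regions where the transport dominates), close a Neumann series with $o_{\eps\to0}(1)$ error on weighted spaces, and treat the boundary conditions as small corrections because the mode localises at $r_0$. Two points where you diverge from the paper deserve care. First, your eigenvalue-detection step --- ``argument principle (or Rouch\'e) applied to the characteristic determinant'' --- is the one place the paper deliberately does something else: there is no natural characteristic determinant here, and the paper explains (in its comparison with \cite{GGN16a}) why the dispersion-relation route via homogeneous solutions fails, since homogeneous solutions are ill-behaved at one endpoint; instead the paper integrates the resolvent over a small contour $\Gamma$ around $\eps^{1/3}\mu_\star$, shows the Riesz projector applied to the Gaussian quasimode $f_\star$ is nonzero (Cauchy's theorem kills every $\lambda$-independent piece of $\G^\lambda$, leaving only $\G_2^\lambda$, for which $f_\star$ is an exact eigenfunction), and then shows the range of $P$ is one-dimensional to get the expansion of $\psi_{\mathrm{err}}^\eps$. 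Your sketch can be repaired by a Lyapunov--Schmidt reduction at the rank-one pole of the inner Green's function, but as written the determinant step is a gap. Second, you misplace the role of Assumption \ref{H3}: the Neumann series closes on $[p,\infty)$ already under \ref{H0}--\ref{H2} (this is Proposition \ref{proposition:approx-Green's}); \ref{H3} (or compactness of $\cI$) is used only to show, via relative compactness of the stretching term, that unstable spectrum consists of isolated eigenvalues, which is what legitimises identifying the constructed right inverse with the resolvent and hence the asymptotic expansion --- existence of the growing mode itself does not need it. Finally, note that on domains with boundary the paper does not merely assert exponential smallness of the corrections: it constructs four homogeneous solutions by applying $(\L^{\mathrm{dyn}}-\lambda)^{-1}$ to indicators supported just outside $\cI$, proves propagation-of-mass estimates for the iteration, and inverts the resulting $4\times 4$ boundary matrix \eqref{eq:matrix-J}; your one-line claim would need this quantitative backing, although the underlying intuition is the same.
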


Once the result for the ODE is established, under an additional technical assumption, we can immediately deduce the existence of a slow dynamo, as stated in the next theorem.

\begin{theorem}\label{thm:full dynamo}
If in addition to the assumptions of Theorem \ref{thm:dynamo}, the compact set $\mathcal{R}_0$ from \ref{H2} contains an interval around $r_0$, then, for all $\eps>0$ sufficiently small, there exists $m,k \in \mathbb{Z}$ so that 
\[
B^\eps(t,r,\theta,z) = \e^{\eps^{1/3}\mu t}\e^{i(m\theta + kz)} b^\eps(r) 
\]
is a divergence-free, finite energy solution to the kinematic dynamo equations \eqref{eq:dynamo}, where $b^\eps$ and $\mu$ are given by Theorem \ref{thm:dynamo}.
\end{theorem}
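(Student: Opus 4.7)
The strategy is to pass from the ODE eigenvalue result of Theorem \ref{thm:dynamo} to a genuine PDE solution of \eqref{eq:dynamo} by selecting integer Fourier modes $(m,k)\in\mathbb{Z}^2$, and then to verify that the resulting solution is divergence-free and of finite energy. The bulk of the work is already done: for any integer pair $(m,k)$, the ODE system \eqref{eq:modal-eq-r}--\eqref{eq:modal-eq-z} is precisely the modal form of \eqref{eq:dynamo}, so substituting the eigenfunction $b^\eps$ produced by Theorem \ref{thm:dynamo} yields a classical solution with the prescribed boundary conditions.

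For the integer selection, the crucial new input is that $\mathcal{R}_0$ contains an interval around $r_0$. In Theorem \ref{thm:dynamo} the modes $(m,k)$, implicit in the definition of $\dyn$, and the parameter $M$ are real-valued, with the critical radius $r_0$ constrained by a critical-layer relation and $|M|$ required small. The interval hypothesis turns this pointwise condition into an open one: by an implicit-function-theorem argument, a continuous family of admissible $r_0\in\mathcal{R}_0$ corresponds to a continuous family of ratios $k/m$, and densely many of these ratios are rational. Choosing such a rational ratio with sufficiently large denominator $k\in\mathbb{Z}$ (and the corresponding integer $m$) yields admissible integer modes, while the residual $|M|$ associated with the choice can be simultaneously driven to zero by a careful scaling argument. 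Applying Theorem \ref{thm:dynamo} with this $(m,k,r_0,M)$ produces $b^\eps$ and $\mu$ with $\Re\mu>0$, and hence a classical solution $B^\eps$ of \eqref{eq:dynamo}.

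The divergence-free property is then automatic. Taking $\nabla\cdot$ of \eqref{eq:dynamo} and using $\nabla\cdot u=0$, the scalar $\phi:=\nabla\cdot B^\eps$ satisfies $\partial_t\phi+u\cdot\nabla\phi=\eps\Delta\phi$. For the modal profile this reduces to a scalar advection-diffusion eigenvalue problem
\[
\eps^{1/3}\mu\tilde\phi+i(m\Omega+kU)\tilde\phi=\eps\bigl(\partial_r^2\tilde\phi+r^{-1}\partial_r\tilde\phi-(m^2/r^2+k^2)\tilde\phi\bigr)
\]
with boundary conditions on $\tilde\phi$ inherited from \eqref{eq:BC-radial}. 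Testing against $\overline{\tilde\phi}\,r$, the transport term is purely imaginary while the diffusion produces a non-positive real part, giving $\Re(\mu)\eps^{1/3}\|\tilde\phi\|_{L^2(r\,dr)}^2 \leq 0$ modulo boundary contributions that vanish on each admissible geometry; since $\Re(\mu)>0$ this forces $\tilde\phi\equiv0$. Finite energy follows directly from the asymptotic expansion in Theorem \ref{thm:dynamo}: the components $b^\eps_r,b^\eps_\theta$ concentrate on an $\eps^{1/3}$-scale around $r_0$ with error algebraically decaying in any negative power of $(1+|r-r_0|\eps^{-1/3})$, and $b^\eps_z$ is recovered from \eqref{eq:modal-eq-z} as the solution of a coercive elliptic problem with $L^2$ source $U'(r)b^\eps_r$, hence $b^\eps_z\in L^2(\cI,r\,dr)$. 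Periodicity in $\theta,z$ then yields $B^\eps\in L^2(\M\times\mathbb{T})$.

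The main obstacle is the integer selection: Theorem \ref{thm:dynamo} admits real $(m,k)$, while the PDE requires integer modes, and the critical-layer map pins $r_0$ down rigidly unless some freedom is injected. The new hypothesis on $\mathcal{R}_0$ is precisely what provides this freedom, converting the problem into finding rationals in an interval and then scaling to integers. Keeping track of the simultaneous smallness of the parameter $M$ as one scales $(m,k)$ into $\mathbb{Z}^2$ is the main technical content of this step; once that is in hand the verification of divergence-freeness and finite energy is immediate from Theorem \ref{thm:dynamo} and the parabolic structure of the divergence equation.
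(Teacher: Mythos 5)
Your overall route is the same as the paper's: use the interval of admissible $r_0$ to adjust the pair $(M,r_0)$ so that $m=M\eps^{-1/3}$ and $k=-\frac{\Omega'(r_0)}{U'(r_0)}m$ are integers, invoke Theorem \ref{thm:dynamo}, and then check divergence-freeness and finite energy. However, your integer-selection step has two concrete soft spots. First, your ``continuous family of ratios'' is only useful if the map $r_0\mapsto \Omega'(r_0)/U'(r_0)$ is non-constant on the interval; otherwise there is a single (possibly irrational) ratio and no rational choice exists. This must be proved, and the paper does so in one line: if the ratio were constant on $\mathcal{R}$ then $T'(r)=U'(r)\bigl(\tfrac{\Omega'(r)}{U'(r)}-\tfrac{\Omega'(r_0)}{U'(r_0)}\bigr)\equiv 0$, hence $T\equiv T(r_0)=0$ there, contradicting \ref{H1}. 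Second, your statement that the ``residual $|M|$ can be driven to zero by a scaling argument'' is not what is needed and, taken literally, breaks the construction: all estimates of Theorem \ref{thm:dynamo}/Proposition \ref{proposition:approx-Green's} are uniform only for $M$ in a compact subset of $\mathbb{R}\setminus\{0\}$, and $\Re(\mu_\star)\to 0$ as $M\to 0$, so $M$ must be kept small but \emph{fixed away from zero} as $\eps\to 0$. The paper's recipe avoids this issue: perturb $M$ by $O(\eps^{1/3})$ so that $m\in\mathbb{Z}$, then note that the image of $r_0\mapsto \tfrac{\Omega'(r_0)}{U'(r_0)}\,m$ over the subinterval has length of order $\eps^{-1/3}\gg 1$, so by the intermediate value theorem some $r_0\in\mathcal{R}$ makes $k\in\mathbb{Z}$; the uniformity in $(r_0,M)$ guaranteed by \ref{H2} on the interval is exactly what allows these $\eps$-dependent adjustments. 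Your fixed-rational-ratio variant can be repaired along these lines, but as written the quantifiers on $M$ are wrong.

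On the divergence-free step, the skeleton is right (this is the paper's Lemma \ref{lemma:divergence free} and its boundary analogue), but two points you gloss over are genuinely needed. On domains with boundary, the boundary condition for $\div B$ is \emph{not} inherited from \eqref{eq:BC-radial}: one must derive $\nabla(\div B)\cdot\hat n=0$ on $\partial\M$ by dotting the eigenvalue equation with $\hat n$, using $B\cdot\hat n=0$ and $(\nabla\times B)\times\hat n=0$ together with $\Delta=\nabla\times\nabla\times-\nabla(\div)$; without this the boundary term in your energy identity does not obviously vanish. On unbounded domains, the integration by parts at infinity is only legitimate because of the a-priori regularity and decay estimates of Lemma \ref{lemma:eigenfunctions are smooth} (which give $\div B$ and $\partial_r\div B$ enough weighted decay); asserting that the boundary contributions ``vanish on each admissible geometry'' skips precisely the step the paper devotes Section \ref{s:divfree} to. The finite-energy claim is fine and in fact immediate, since Theorem \ref{thm:dynamo} already provides all three components of $b^\eps$ in $(L^1\cap L^\infty)(\cI,r\,\dd r)\subset L^2(\cI,r\,\dd r)$, so your separate coercivity argument for $b^\eps_z$ is unnecessary.
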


\begin{remark}
Whilst the existence of a growing mode for the dynamo equation may be deduced already under Assumptions \ref{H0}--\ref{H2}, Assumption \ref{H3} is needed to ensure that the growing mode is an \emph{isolated eigenvalue} for the kinematic dynamo operator on a suitable space. This is the subject of Section \ref{s:injectivity}. Once this is achieved, our asymptotic expansion is deduced using tools from classical spectral theory.
\end{remark}
\begin{remark}
Although throughout the paper we make the standing assumption that $z \in \mathbb{T}$, our results readily imply a corresponding dynamo result for the case when $z \in \mathbb{R}$. Indeed, under the assumptions of Theorem \ref{thm:full dynamo}, we immediately deduce the existence of an open set $\mathcal{K} \subset \mathbb{R}$, with eigenmode $b^\eps(r,k)$, so that as $k$ varies throughout $\mathcal{K}$, $\Re(\mu(k))>0$. Then, setting
\begin{equation*}
B(t,r,\theta,z)=\int_{\mathcal{K}}\e^{\eps^{1/3}\mu(k)t} \e^{i(m\theta+kz)} b^\eps(r,k) \dd k,
\end{equation*}
we obtain a slow dynamo in $L^2(\mathcal{M} \times \mathbb{R})$.
\end{remark}
Our results show that, for a large class of velocity fields, the Ponomarenko dynamo \eqref{eq:ponomarenko velocity field} exhibits $L^p$ norm exponential growth of the magnetic field for any $1\leq p\leq \infty$. In particular, it displays slow dynamo action with growth rate \eqref{eq:gamma} of the form
\[
\gamma_\eps = \eps^{1/3}(\Re(\mu_\star) + o_{\eps\to 0}(1))>0.
\]

At this point, it is worth providing some relatively simple, explicit examples of velocity fields which satisfy the aforementioned Assumptions \ref{H0}--\ref{H3}. Indeed, we have the following result.

\begin{example}
In each of the following cases, the conclusions of Theorems \ref{thm:dynamo}, \ref{thm:full dynamo} hold true on $\mathcal{I}=[p,q]$ or $\cI = [p,\infty)$ for any $p \in [0,\infty)$, $q \in (p,\infty)$.
\begin{enumerate}
    \item Simplified Ponomarenko model: $\Omega(r)=1-r$, $U(r)=1-r^2$
    \item \label{eq:gaussian} Gaussian velocity fields: $\Omega(r)=e^{-ar^2}$, $U(r)=e^{-br^2}$, with $a,b>0$ and $a \neq b$.
    \item \label{eq:compact support} Compactly supported velocity fields: $\Omega(r)=(1-r)\chi(r)$, $U(r)=(1-r^2)\chi(r)$, where 
    \[
    \chi(r) = \begin{cases}
        1 & \text{if } r\leq 2, \\
        0 & \text{if } r\geq 3,
    \end{cases}
    \]
    and $\chi(r)$ is smooth.
\end{enumerate}
Furthermore, the conclusions of Theorems \ref{thm:dynamo}, \ref{thm:full dynamo} hold true on the interval $\mathcal{I}=[p,q]$, for $p>0$, $q \in (p,\infty)$ for the Taylor--Couette flow
\begin{equation*}
u(r)=\left(\frac{a_1}{r}+a_3 r\right)\hat \theta+(a_2 \log(r)+a_4) \hat z,
\end{equation*}
which is an explicit stationary solution to the three-dimensional Navier--Stokes equations.
\end{example}

In particular, examples (\ref{eq:gaussian}) and (\ref{eq:compact support}) are, to the best of our knowledge, the first rigorous proofs of slow dynamo action driven respectively by finite energy and by compactly supported velocity fields, whereas the Taylor--Couette flow is the first rigorous proof of dynamo action driven by an explicit solution to the Navier--Stokes equations. 

Once we have shown that the Taylor--Couette flow generates a dynamo, we can employ the framework of \cite{Friedlander_Pavlović_Shvydkoy_2006} to deduce nonlinear instability of the 3D MHD equations around the steady state $(u_\TC,0)$, where $u_\TC$ denotes the Taylor--Couette flow. The details of this construction diverge from the main objective of this paper, and will be presented in the forthcoming work \cite{VNF-DV-Nonlinear}.

\addtocontents{toc}{\protect\setcounter{tocdepth}{1}}
\subsection*{Notation}
Due to the length of the paper, we collect here certain notational conventions that will be frequently used. 
\begin{itemize}
    \item We denote by $B(t,r,\theta,z)$ magnetic fields that solve the full kinematic dynamo PDE \eqref{eq:dynamo}, whereas $b(r)$ denotes a solution to the modal form ODE \eqref{eq:modal-eq-r}-\eqref{eq:modal-eq-z}. 
    
    \item Throughout the paper, $\mathcal{I} \subseteq [0,\infty)$ denotes an interval of the form $\mathcal{I}=[p,q]$, with $p \in [0,\infty)$, $q \in (p,\infty]$ (where in the case $q=+\infty$, we take $\mathcal{I}=[p,\infty)$). Since the interval arises from a change of variables into cylindrical coordinates, the point $0$ (should it be an endpoint of $\mathcal{I}$) is not in fact a physical boundary. Thus, we make the abuse of notation and denote the boundary $\partial \mathcal{I}=\{q\}$ in the case where $p=0$. Next, the letters $m,k$ denote Fourier variables for $\theta$, $z$ respectively.

    \item We will employ the notation $a\lesssim b$ to denote that there exists a constant $C>0$, independent of all the relevant quantities (in particular always independent of $\eps$), such that $a\leq Cb$. Moreover, in many different estimates we will use a generic constant $C>0$ that might change from line to line.

    \item We use the notation ``big $O$" and ``little $o$" for the limit $\eps\to 0$ as it is standard. Namely we say that an object is $O_{\eps\to 0}(1)$ if it is bounded independently of $\eps$ for all $\eps>0$ small enough, and we say that a function $f$ is $o_{\eps\to 0}(1)$ if $\lim_{\eps \to 0}f=0$.

    \item In keeping with the notation of \cite{Peyrot_Gilbert_Plunian_2009}, we set $m=M\eps^{-1/3}$, $k=K\eps^{-1/3}$, for some real coefficients $M,K=O_{\eps\to 0}(1)$.

    \item Given a function space $F$, if all the components of a vector valued function 
    \[
    f = (f_1,\hdots,f_n):\cM\times\T\to\R^n
    \]
    satisfy $f_i \in F(\cM\times\T)$, then we will abuse the notation and write as well $f\in F(\cM\times\T)$.
\end{itemize}

\addtocontents{toc}{\protect\setcounter{tocdepth}{2}}
\section{Strategy}\label{s:strategy}

In this section we give an overview on the method employed to establish the main results, i.e.\ Theorems \ref{thm:dynamo} and \ref{thm:full dynamo}. Let us recall the main ideas presented thus far:
\begin{enumerate}
    \item We consider helical vector fields---after Ponomarenko---of the form 
    \[
    u(r) = r\Omega(r)\hat\theta+U(r)\hat z.
    \]
    In Section \ref{s:analysis-equations} we define the class of admissible vector fields.
    
    \item We look for growing modes: solutions to equations \eqref{eq:dynamo} in modal form
    \[
    B(t,r,\theta,z) = b(r)\e^{\lambda t}\e^{i(m\theta+kz)},
    \]
    for some $\lambda\in\Co$ with $\Re(\lambda)>0$, and $m,k\in\Z$. This Ansatz transforms a PDE problem \eqref{eq:dynamo} into an ODE problem \eqref{eq:modal-eq-r}--\eqref{eq:modal-eq-z}.

    \item Since the equations for $(b_r,b_\theta)$ in \eqref{eq:modal-eq-r}--\eqref{eq:modal-eq-theta} are decoupled from $b_z$, we put our efforts in solving them first. After a suitable change of variables introduced in Section \ref{s:analysis-equations}, we denote the operator associated to such ODE by $\L-\lambda$.
    
    \item In Section \ref{s:injectivity} we present analytical and spectral properties of the operator $\L-\lambda$, and we make sure that all admissible solutions satisfy the divergence-free condition.
    
    \item The main goal and bulk of this paper is to find $V$ in some suitable functional space, and $\lambda\in\Co$ with $\Re(\lambda)>0$, such that 
    \[
    (\L-\lambda)V = 0.
    \]
    This is the main purpose of Sections \ref{s:proof}, \ref{s:approx-Green's} and \ref{s:boundaries}.

    \item Once solutions $(b_r,b_\theta)$ are obtained, we construct the vertical component $b_z$ in Section \ref{s:z-component}.
\end{enumerate}

Without further ado, let us present some details on the strategy to find a growing mode of the kinematic dynamo operator.

\subsection{Resolvent formalism and Green's functions}
\label{s:heuristics}

We begin by discussing the strategy for constructing a growing mode in the case where $\mathcal{M}=\mathbb{R}^2$, in other words, $r \in [0,\infty)$. Hence, we shall study the operator $\L:\mathcal{D}(\L) \subset X\to X$, where $X$ denotes the Banach space
\[
X = L^\infty ((0,\infty),\max\{1,r^2\}w_\eps(r-r_0)\dd r),
\]
with the weight function
\begin{equation}\label{eq:weight}
w_\eps(s) = 1+(\eps^{-1/3}|s|)^N,
\end{equation}
for some fixed $N>0$. In particular, notice that $X\subset (L^1\cap L^\infty)(0,\infty)$. 

The main challenge of the spectral analysis will be to bridge the gap between the formal, local analysis of Gilbert \cite{gilbert1988}---outlined in Section \ref{s:gilbert-scaling}---and the existence of a genuine, global eigenfunction of the dynamo operator. On one hand, at the level of generality we pose the problem, there is no real hope of finding an explicit solution to the system of ODEs \eqref{eq:modal-eq-r}--\eqref{eq:modal-eq-theta}. On the other hand, there is no immediate property, such as for instance normality, of the operator $\L$, that would facilitate spectral analysis. Nevertheless, we shall use the fact that our equation involves a small parameter $\eps$ to study its spectral properties using tools from perturbation theory. In particular, we will be interested in studying the \emph{resolvent} $(\L-\lambda)^{-1}$ of our operator, in the formal limit when $\eps \to 0$. This of course carries with it its own difficulties, since the ``straightforward'' approach of constructing the inverse of a complicated operator by viewing it as perturbation of a known operator will simply not work for our case. Indeed, whilst for certain subregions of $[0,\infty)$, the operator may be well-approximated by a simpler ODE (as seen for instance in Gilbert's analysis in Section \ref{s:gilbert-scaling}), the approximations are not global. This issue is well known in the community, and in the case of asymptotic expansions, is typically dealt with using the technique of ``matching inner and outer expansions'', i.e.\ constructing asymptotic expansions which are valid in overlapping spatial regions. In fact, this technique has been rigorously employed to prove nonlinear instability for instance in the seminal works \cites{Grenier_2000, Desjardins_Grenier_2003}, and in particular in the context of the MHD equations, in the works \cites{GerardVaret05, GerardVaretRousset07}. Inspired by this intuition, we employ a technique of ``matched local inverses'', namely constructing approximate inverses for our equation in disjoint parts of the domain, which can be glued together to produce a global inverse. Due to the importance of this technique in our work, we now describe it in more detail.

We split the interval $[0,\infty)$ into distinct regions, in each of which our ODE is well-approximated (in the strong sense) by a simpler ODE, which we know how to solve. In particular, given an interval, say $[a,b] \subset [0,\infty)$, we seek an approximate operator $\Lapp$, so that $\Lapp$ is invertible on $L^\infty(-\infty,\infty)$, and $(\Lapp)^{-1}$ restricted to $L^\infty([a,b])$ satisfies $\|(\L-\Lapp)(\Lapp)^{-1}\|=o_{\eps \to 0}(1)$. Namely, uniformly for all $f$ such that $\supp(f) \subset [a,b]$, there holds 
\begin{equation*}
\|(\L-\Lapp)(\Lapp)^{-1}f\|_{L^\infty([a,b])}\leq o_{\eps \to 0}(1)\|f\|_{L^\infty([a,b])}.
\end{equation*}
Additionally, we shall require that, for any compactly supported $f$, $(\Lapp)^{-1}f$ \emph{decays sufficiently quickly} (typically this means exponentially with an $\eps$ dependent exponent) outside of the support of $f$. Indeed, this property turns out to be rather generic for elliptic equations on $(-\infty,\infty)$, so long as one is sufficiently far away from the spectrum of the ODE. Supposing further that $\L-\Lapp$ is solely a multiplication operator by a function with at most polynomial growth at infinity---call it $R(r)$, we can now hope to deduce uniformly for any $f$ with $\supp(f) \subset [a,b]$
\begin{align*}
\|(\L-\Lapp)(\Lapp)^{-1}f&\|_{L^\infty([0,\infty))} \leq \|(\L-\Lapp)(\Lapp)^{-1}f\|_{L^\infty([a,b])}\\
& \quad + \sup_{r\leq a}e^{-\eps^{-\theta}|r-a|}|R(r)|\|f\|_{L^\infty([a,b])}+\sup_{r \geq b}e^{-\eps^{-\theta}|r-b|}|R(r)|\|f\|_{L^\infty([a,b])}\\
&\lesssim o_{\eps \to 0}(1)\|f\|_{L^\infty([a,b])},
\end{align*}
for some particular $\theta>0$, provided $R(r)$ is small near $a$ and $b$. This means that $\Lapp$ is a \emph{global} approximation to the ODE, as long as $f$ is supported in $[a,b]$. 

From here, we construct a partition 
\begin{equation*}
[0,\infty)=\bigcup_{i\in I}[p_i,p_{i+1}),
\end{equation*}
where $I$ denotes a finite set of indices,
and to each interval $[p_i,p_{i+1})$ we associate an operator $\Lapp^i$ that satisfies 
\begin{equation*}
|(\L-\Lapp^i)(\Lapp^i)^{-1}f\|_{L^\infty([0,\infty))}\lesssim o_{\eps \to 0}(1)\|f\|_{L^\infty([p_i,p_{i+1}])},
\end{equation*}
uniformly for $\supp(f) \subset [p_i,p_{i+1})$. Then we can in fact compute the true inverse of $\L$ on $[0,\infty)$ via a Neumann series type construction.
Set 
\[
\G_{\mathrm{app}}f=\sum_{i \in I} (\Lapp^i)^{-1}(\mathds{1}_{[p_i,p_{i+1})}f),
\]
and then define
\begin{equation*}
\L^{-1}=\G_{\mathrm{app}}\sum_{n \geq 0}(-1)^n(\L \G_{\mathrm{app}}-\Id)^n.
\end{equation*}
Applying $\L$ to this operator truncated at degree $N$ yields 
\begin{equation*}
\sum_{n =0}^N(-1)^n(\L \G_{\mathrm{app}}-\Id)^{n+1}+\sum_{n =0}^N(-1)^n(\L \G_{\mathrm{app}}-\Id)^{n}=(-1)^N (\L \G_{\mathrm{app}}-\Id)^{N+1}+\Id.
\end{equation*}
Hence, as long as $\|\L \G_{\mathrm{app}}-\Id\|<1$, we indeed have an inverse. But note that 
\begin{align*}
\|(\L \G_{\mathrm{app}}-\Id)f\|_{L^\infty([0,\infty))} & = \left\lVert\L\sum_{i \in I} (\Lapp^i)^{-1}(\mathds{1}_{[p_i,p_{i+1})}f)-\sum_{i \in I}\mathds{1}_{[p_i,p_{i+1})}f\right\rVert_{L^\infty([0,\infty))}\\
& =\left\lVert\sum_{i \in I} (\L-\Lapp^i)(\Lapp^i)^{-1}(\mathds{1}_{[p_i,p_{i+1})}f)\right\rVert_{L^\infty([0,\infty))} \\
& \lesssim o_{\eps \to 0}(1)\|f\|_{L^\infty([0,\infty))},
\end{align*}
and so for all $\eps$ small enough, our series converges and defines an inverse.

When applying the above heuristic to the operator $\L$, we encounter the issue that $\L$ contains terms of order $r^{-2}$, which ensure that $|(\L-\Lapp)(\Lapp)^{-1}f|$ is always large near the origin, even for compactly supported $f$. To rectify this, we introduce the auxiliary space
\[
Y = L^\infty ((0,\infty),r^2w_\eps(r-r_0)\dd r),
\]
which allows for quadratic divergences as $r\to 0$, and will be used to measure the size of the ``error''. 
In particular, we will show that $\|\L \G_{\mathrm{app}}-\Id\|_{Y \to Y}=o_{\eps \to 0}(1)$. In order for this inverse to be well defined, it must map $X$ to itself, so we further show that $\G_{\mathrm{app}}$ actually defines a bounded map $\G_{\mathrm{app}}:Y \to X$. 

With this in mind, we can now sketch a high level overview of the proof of Theorem \ref{thm:dynamo}.

\begin{enumerate}
    \item We begin by fixing a curve 
     \[
    \Gamma\subset \{z\in\Co\mid \Re(z)>0\}.
    \] 
For $\lambda \in \Gamma$, we then construct a ``glued approximate operator'' $ \G^\lambda$, satisfying 
$$\|(\L-\lambda) \G^\lambda -\Id\|_{Y \to Y} \lesssim o_{\eps \to 0}(1),$$
uniformly for $\lambda \in \Gamma$. Crucially, $\G^\lambda$ is of the form 
    \[
    \G^\lambda f=\G_2^\lambda (f\mathds{1}_{[r_0-\eps^\gamma, r_0+\eps^\gamma]})+\G_{\mathrm{rem}}(f\mathds{1}_{\mathbb{R}\setminus \mathds{1}_[r_0-\eps^\gamma, r_0+\eps^\gamma] }),
    \]
    where $\G_2^\lambda$ is the Green's function of Gilbert's approximate operator \eqref{eq:Gilbert equation}, and $\G_{\mathrm{rem}}$ \emph{does not depend on $\lambda \in \Gamma$}. 
    \item Using $\G^\lambda$, we show that inverse of the operator $\L-\lambda$ can be written as the infinite series
    \[
    (\L-\lambda)^{-1}f = \G^\lambda \sum_{n=0}^\infty (-1)^n\left((\L-\lambda)\G^\lambda - \Id\right)^n f,
    \]
    uniformly for all $\lambda\in\Gamma$. 

    \item We define the Riesz projector 
    \[
    Pf = \frac{1}{2\pi i}\int_\Gamma (\lambda-\L)^{-1}f\dd \lambda,
    \]
    and compute $P\phi$, where $\phi$ is the approximate growing mode from Section \ref{s:gilbert-scaling}. We show that 
    \[
    P\phi=-\frac{1}{2\pi i}\int_{\Gamma}\G^\lambda \phi d\lambda +\mathrm{error}
    \]
    where the error term is of order $o_{\eps \to 0}(1)$.
    \item 
    Recalling the definition of $\G^\lambda$, it holds by Cauchy's theorem
    \[
    -\frac{1}{2\pi i}\int_{\Gamma}\G^\lambda \phi d\lambda =-\frac{1}{2 \pi i}\int_{\Gamma} \G_2^\lambda(\phi \mathds{1}_{[r_0-\eps^\gamma, r_0+\eps^\gamma]}) \dd \lambda ,
    \]
    and since $\phi$ is \emph{by its very definition} an eigenfunction of Gilbert's operator \eqref{eq:Gilbert equation}, we conclude that $P\phi\neq 0$. Therefore, there exists $\lambda_0\in \Int(\Gamma)$, and a function $\psi\in X$ such that $\psi\in\ker(\L-\lambda_0)$, and so we have constructed a growing mode.
\end{enumerate}

The majority of the paper will be devoted to the careful construction of the operator $\G^\lambda$, since we shall need precise, quantitative bounds on it, in order to rigorously carry out the above programme.

\subsubsection{Extension to bounded domains}

The strategy for proving the existence of a growing mode we have described thus far inherently relies on the problem being posed on an unbounded domain. In particular, the property that $(\L_{\mathrm{app}})^{-1}f$ decays exponentially outside of the support of $f$ is evidently false in general if $(\L_{\mathrm{app}})^{-1}$ is constructed to furthermore satisfy boundary conditions on some $\mathcal{I} \subset [0,\infty)$. Thus, our previous strategy needs to be amended in order to cover domains with boundary. 

In particular, we begin by defining a new Banach space, denoted by $X_{\mathcal{I}}$, which consists of 
\begin{equation}\label{def:X_I}
X_{\mathcal{I}}= L^\infty(\mathcal{I},\max\{1,r^2\}w_\eps(r-r_0)\dd r),
\end{equation}
and new domain 
\begin{equation*}
\mathcal{D}(\dyn_\mathcal{I})=\{b \in X_\mathcal{I} \cap H^2_{\mathrm{loc}}(\mathcal{I}):\dyn b \in X, \quad b_r|_{\partial \mathcal{I}}=(rb_\theta)'|_{\partial \mathcal{I}}=0\},
\end{equation*}
where we introduce the perfectly conducting walls boundary conditions \eqref{eq:BC-radial}, and we abuse notation setting $\partial \mathcal{I}=\{q\}$ in the case when $\mathcal{I}=[0,q]$.

The intuition we follow is that the growing mode we constructed is highly concentrated around the critical layer $r_0$, i.e.\ it decays rapidly as the boundaries of the domain are approached. Thus, on an intuitive level, the boundary conditions are ``almost'' satisfied, and adding a small corrective term may in fact be enough to obtain a growing mode that satisfies the perfectly conducting boundary conditions. Guided by this heuristic, we prove that the inverse operator $(\L-\lambda)^{-1}_{\mathrm{pc}}$ with perfectly conducting boundary conditions can be written as the full inverse $(\L-\lambda)^{-1}|_{\mathcal{I}}$ on $[0,\infty)$, restricted to the interval $\mathcal{I}$, plus corrective terms, which consist of appropriately scaled homogeneous solutions of the equation $(\L-\lambda)V=0$ on $\mathcal{I}$, ensuring that the boundary conditions are satisfied. In this way, we write 
\begin{equation*}
(\L-\lambda)^{-1}_{\mathrm{pc}}f=\G^\lambda\sum_{n \geq 0} (-1)^n \left ( (\L-\lambda)\G^\lambda -\Id\right )^nf+\mathrm{boundary \ terms},
\end{equation*}
where $\G^\lambda$ is precisely the approximate inverse we constructed for the case $[0,\infty)$, and the boundary terms depend \emph{solely} on the values of $(\L-\lambda)^{-1}f$ at the boundary of the domain. Recalling that the growing mode we constructed for $[0,\infty)$ decays rapidly away from $r_0$, when $f=\phi$ is the approximate growing mode from Section \ref{s:gilbert-scaling} we deduce that the boundary terms may indeed be treated as an error. Thus, we once again write 
\begin{equation*}
P\phi =\frac{1}{2 \pi i} \int_{\Gamma}(\lambda-\L)^{-1}\phi \dd \lambda =-\frac{1}{2 \pi i}\int_{\Gamma} \G^\lambda \phi \dd \lambda +\mathrm{error} \neq 0,
\end{equation*}
and deduce the existence of a growing mode, satisfying the perfectly conducting boundary conditions \eqref{eq:BC-radial}.

\subsection{Design of the approximate operators}

A key point of our proof is the design of the approximated operator $\Lapp$. Whilst of course there are some case-by-case subtleties, as a general rule of thumb, we construct the approximate operator as follows. 

We determine the ``dominant'' terms of the operator $\mathcal{L}$ in any spatial region by keeping track of two parameters: the magnetic diffusivity $\eps$, and the radius $r$. Combined, one may associate to every term in the equation defining $\L$ a fairly crude ``relative size'' as $\eps \to 0$, depending on the spatial region. For instance, in the expression $\eps^{1/3}r^{-2}-\eps^{-1/3} r^2$, the dominant term for $r \ll \eps^{1/6}$ is $\eps^{1/3}r^{-2}$, whereas for $r \gg \eps^{1/6}$ it is $-\eps^{-1/3}r^2$. Whilst this approach is crude, due to the perturbative nature of the problem it actually yields remarkable accurate approximate operators throughout overlapping spatial domains, which have relatively simple inverse operators. In fact, our hypotheses \ref{H0}-\ref{H2} will ensure that there is only one subset of $[0,\infty)$ (in the example $\eps^{1/3}r^{-2}-\eps^{-1/3} r^2$ one might think of the ``transition point'' $r=\eps^{1/6}$), where this naive ``order of magnitude analysis'' does not give a simple approximate operator , and this will be precisely where we expect the growing mode to localise. Fortunately, in this critical regime, we can make use of Gilbert's approximation scheme (see Section \ref{s:gilbert-scaling}) in order to derive careful local inverses, for values of the spectral parameter very close to a spectral point of the approximate operator. In our previous notation, this yields the operator $\G^\lambda_2$, and it will allow us to close our argument as explained at in Section \ref{s:heuristics}. 

We start our analysis in Section \ref{s:approx-Green's} by considering the case $\cI=[0,\infty)$ and a simplified scenario for the transport function $T(r)$, that is first assumed to have one single zero at $r=r_0$. In this setting, we show that there exists $\gamma>0$ such that the domain $[0,\infty)$ can be split as follows.

\begin{itemize}
    \item \emph{Towards infinity}: $\supp(f)\subset[r_0+\eps^\gamma,\infty)$. In this region, the operator $\mathcal{L}$ is well approximated by a semiclassical Schr\"odinger operator with complex, non-vanishing potential. In particular, it will allow us to be very rough in our construction of the approximate inverse, denoted by $\G_3$, which is obtained by freezing the value of the potential $T(r)$ on the boundaries of a very fine partition of $[r_0+\eps^\gamma, \infty)$, and gluing together the resulting approximate inverses. The resulting map $\mathcal{G}_3$ is independent of $\lambda$. Further details about the precise definition and the estimates can be found in Section \ref{sec:tow-inf}.

     \item \emph{Towards zero}: $\supp(f)\subset[\eps^\gamma,r_0 - \eps^\gamma)$. Once again, in this region there is a clear ``leading order operator'', which again is a semiclassical Schr\"odinger operator with complex, non-vanishing potential. Thus, we proceed entirely as in the case towards infinity, taking care this time of the asymptotics as we approach $r=0$. We denote the Green's function associated to this regime by $\G_1$, and it is again defined independently of $\lambda$. Details about this regime can be found in Section \ref{s:tow-zero}.
     
    \item \emph{Near zero}: $\supp(f)\subset[0,\eps^\gamma)$. This regime presents the first part of the domain where additional care must be taken, since our approximate operator will involve terms that diverge quadratically as $r \to 0$. It is however of crucial importance, since the resulting Green's function will turn out to have a \emph{regularising effect}, mapping functions that diverge quadratically at $r=0$ to bounded functions at $r=0$. We denote the Green's function associated to this regime by $\G_0$, and we make it such that it does not depend on $\lambda$. The approximation procedure yields an ODE that will be solved using modified Bessel functions. The analysis of this part is covered in Section \ref{sec:near-0}.

    \item \emph{Around the critical radius}: $\supp(f)\subset[r_0-\eps^\gamma, r_0+\eps^\gamma)$. This construction is the crux of the argument, and requires detailed analysis of parabolic cylinder functions near spectral values of the Hermite equation. We denote the Green's function in this interval by $\G_2^\lambda$, making this the only regime where the approximated operator depends on $\lambda$. The approximation procedure will follow Gilbert's ideas presented in Section \ref{s:gilbert-scaling}. Further details on the definition and precise estimates on $\G_2^\lambda$ can be found in Section \ref{s:around-r0}.

\end{itemize}

The form of the approximate Green's function $\G^\lambda$ in the situation of Section \ref{s:approx-Green's} will be given by
\[
\begin{split}
    \G^\lambda f & = \G_0 (f\dsOne_{[0,\eps^\gamma)}) +  \G_1 (f\dsOne_{[\eps^\gamma,r_0-\eps^\gamma)}) + \G_2^\lambda (f\dsOne_{[r_0-\eps^\gamma,r_0+\eps^\gamma)}) +  \G_3 (f\dsOne_{[r_0+\eps^\gamma,\infty)}),
\end{split}
\]
where $\dsOne_A$ denotes the indicator function of the set $A$: $\dsOne_A(r)=1$ if $r\in A$, and $\dsOne_A(r)=0$ otherwise.

In Section \ref{s:linear-zeroes}, we extend the arguments from the previous Sections \ref{s:around-r0}, \ref{sec:tow-inf}, \ref{s:tow-zero}, \ref{sec:near-0} to the case where the transport function vanishes linearly at a finite set of points, denoted by $s_j>0$. We study the problem locally around each $s_j$, specifically in the interval $[s_j-\eps^\omega,s_j+\eps^\omega)$ for some parameter $\omega>0$. We denote each corresponding approximated Green's function by $\G^\LV_j$, and we solve the problem using Airy functions. All in all, we obtain an approximate inverse operator that can be iterated to obtain a well--defined inverse for $\L-\lambda$ on $[0,\infty)$. 

In particular, this inverse will also be key in allowing us to construct homogeneous solutions to $(\L-\lambda)V=0$ on intervals $\mathcal{I} \subset [0,\infty)$, by setting (for instance) $V=(\L-\lambda)^{-1}(\mathds{1}_{[0,\infty)\setminus \mathcal{I}})$. By definition, it holds $(\L-\lambda)V=0$ on $\mathcal{I}$, and via a careful analysis of the iterative procedure defining $V$, it can be shown that $V$ is not identically zero on $\mathcal{I}$. 

When constructing resolvents for our problem on bounded domains with perfectly conducting boundary conditions, this property will be crucial. Indeed, from the point of view of the ODE, it is clear that if one may find $n$ linearly independent homogeneous solutions to an ODE with $n$ boundary conditions, that are furthermore ``linearly independent at the boundary'' (in the sense that the matrix consisting of their values at the boundary is invertible), the ODE must be invertible, simply by adding suitable scaled homogeneous solutions to any particular solution to the ODE in order to ``fix'' the boundary conditions. In the case of the Ponomarenko dynamo, when $\mathcal{I}=[p,q]$ there will generically be four boundary conditions that need to be satisfied. Thus, we endeavour to construct four linearly independent solutions to $(\mathcal{L}-\lambda)v_i=0$ on $[p,q]$, $i=1,2,3,4$. The idea will be to set for instance
$$
v_1=(\mathcal{L}-\lambda)^{-1}\begin{pmatrix}
\mathds{1}_{[p-\eps^\delta,p]}\\
0
\end{pmatrix}
$$
for $\delta$ small. Certainly $(\mathcal{L}-\lambda)v_1=0$ in $(p,q)$, and by continuity, we can hope to show that for $r \in [p,q]$, $r$ close to $p$, $v_1(r) \neq 0$, so it is not the trivial solution. Furthermore, we can hope that the inverse operator $(\L-\lambda)^{-1}$ we construct inherits the exponential decay property of its building blocks, namely that $(\L-\lambda)^{-1}v(r)$ decays exponentially away from the support of $v$.--- In particular, this would imply that $|v_1(q)|\ll 1$. Setting thus 
$$
v_2(r)=(\mathcal{L}-\lambda)^{-1}\begin{pmatrix}
\mathds{1}_{[q,q+\eps^\delta]}\\
0
\end{pmatrix}
$$
and repeating the argument, we deduce $v_2(q) =O(1)$, whereas $v_2(p) \sim 0$. Hence, we deduce that $v_1,v_2$ are linearly independent. Similarly, we can define $v_3, v_4$ by localising the initial condition in the second component, i.e. 
$$
v_3=(\mathcal{L}-\lambda)^{-1}\begin{pmatrix}
0\\
\mathds{1}_{[p-\eps^\delta,p]},
\end{pmatrix}
$$
and once again showing that the corresponding solutions are small (respectively large) at the endpoints.
In fact, in Section \ref{s:boundaries} we undertake a detailed dynamical analysis of the iterative procedure defining $(\mathcal{L}-\lambda)^{-1}$, studying the propagation of mass by each term of the series defining $(\L-\lambda)^{-1}$, and conclude that not only do $v_i$ $i=1,2,3,4$ define linearly independent solutions to $(\mathcal{L}-\lambda)v_i=0$, but they also satisfy suitable estimates as $\eps \to 0$, allowing them to be treated perturbatively as $\eps \to 0$ in the construction of the inverse problem $(\L-\lambda)b=v$ on $[p,q]$, subject to perfectly conducting boundary conditions.

\subsection{Comparison with existing literature}

The (discontinuous) Ponomarenko dynamo was first introduced in \cite{Ponomarenko73}, and it was subsequently shown to be a fast dynamo in \cite{gilbert1988}. In that paper, Gilbert goes through the argument sketched in Section \ref{s:gilbert-scaling} in order to deduce the existence of a growing mode for the smooth Ponomarenko dynamo. This is further expanded upon in the more recent paper \cite{Peyrot_Gilbert_Plunian_2009}. As mentioned in the introduction, the only rigorous result on the Ponomarenko dynamo we are aware of is \cite{GerardVaretRousset07}, which proves nonlinear instability for the full MHD equations perturbed around Ponomarenko-like velocity fields. The authors consider a sequence of velocity fields $u^\nu$ that formally converge to the discontinuous Ponomarenko dynamo \eqref{eq:fast ponomarenko}, and employ the method of matched asymptotic expansions to show that initial data of order $\nu^p$ for $p>0$ and $\nu\ll 1$ can grow to size $O(1)$ in time $O(|\log(\nu)|)$. This result demonstrates the sort of nonlinear instability that is conjectured to occur as a by-product of the dynamo effect. However, since nonlinear instability results are inherently concerned with short time growth, to the best of the authors' knowledge, it is not possible to directly deduce linear spectral instability from either the results or the methods of \cite{GerardVaretRousset07}. Indeed, the method of matched asymptotic expansions yields an exponentially growing first order term, which is dominant for times of order $|\log(\nu)|$, but for times larger than that, higher order terms may dominate, whose behaviours are quite difficult to discern.

Deducing linear spectral instability for ODEs is itself a rich field, but we shall draw special attention to the paper \cite{GGN16a}, which served as an inspiration for some of our techniques. In this paper, the authors prove spectral instability for the linearised Navier--Stokes equations around a wide class of shear flows, via an analysis of the Orr--Sommerfeld equations. Such analysis carries with it the inherent challenge that the Orr--Sommerfeld instability problem is not a spectral problem in the traditional sense---the spectral parameter enters multiplicatively in the equation. Thus, there are considerably fewer abstract tools available to study such a problem. In \cite{GGN16a} the authors circumvent this issue via an ingenious approach, involving the construction of two homogeneous solutions to the equation using carefully constructed approximate inverses for the operator. By means of studying the dispersion relation of their homogenous solutions, they are able to construct a growing mode, which satisfies the desired boundary conditions, whilst giving detailed estimates on the spectral parameter in the process. Such an approach is particularly well suited for a boundary layer problem, in which one expects the instability to arise \emph{as a result} of the boundary. Constructing an approximate inverse to the equation which is finite both at the origin and at infinity requires that the ODE is invertible on $[0,\infty)$ provided one does not impose any boundary conditions at $0$.

It is precisely this property that fails for our construction---homogenous solutions to the Ponomarenko dynamo equations are either finite or blow up near zero, and finding a solution that is well behaved both at the origin and at infinity is the entire point of our endeavour. Therefore, the strategy of constructing a homogenous solution which is well-behaved at infinity, and then using continuity arguments to show that this solution must also be well-behaved at zero for some value of the spectral parameter does not seem to work in our situation. Crucially however, since our problem is indeed a traditional spectral problem, we can fall back to powerful results from classical spectral theory, and deduce the existence of a growing mode simply by finding approximate inverse operators on a circle enclosing the predicted location of the eigenvalue, and then estimating the resulting construction with care. In particular, we believe our methodology may be applicable in some generality for deducing rigorous spectral instability results from formal asymptotic expansions, provided the problem is posed as an eigenvalue problem. It may be possible to adapt our methods to study further dynamo problems, such as for instance the ``almost-fast'' dynamo on $\T^3$ constructed in \cite{Soward_1987}.

\section{Analysis of the equations and admissible vector fields}\label{s:analysis-equations}

In this section we explain how the equations under consideration are derived. We begin with a simplified model to motivate the scaling in $\eps$ from Theorem \ref{thm:dynamo} for $\lambda$, $m$ and $k$. We then give an overview on Gilbert’s asymptotic analysis, which leads to a change that is crucial the rest of the analysis presented in this paper.

\subsection{A simplified model}
\label{s:heuristics2}
Before delving into the Ponomarenko dynamo equations in more detail, it is useful to get some feel for how the construction of the dynamo occurs from a heuristic point of view. So far it is unclear how the critical radius $r_0>0$ enters into the equations, as well as what scaling of the Fourier modes $m,k\in\Z$ yields the largest growth rate. We will try to answer these questions in this section, by means of a formal order of magnitude analysis on a simplified model of the dynamo equations, which discards the terms of lesser relevance to the dynamics.

Indeed, consider the system
\begin{equation}
\label{eq:toy model}
\begin{split}
&\partial_t b_r=\eps\left(\partial_r^2 b_r-\frac{m^2}{r^{2}}b_r -\frac{2im}{r^2}b_\theta\right)-i(m\Omega(r)+kU(r))b_r\\
&\partial_t b_\theta=\eps\left(\partial_r^2b_\theta-\frac{m^2}{r^{2}}b_\theta+\frac{2im}{r^2}b_r\right)-i(m\Omega(r)+kU(r)b_\theta+r\Omega'(r)b_r.
\end{split}
\end{equation}
We split the equation into an ``unstable'' stretching term, given by 
\begin{equation*}
\partial_t \begin{pmatrix}
b_r\\
b_\theta
\end{pmatrix}
=-\eps mr^{-2}\begin{pmatrix}
b_r\\
b_\theta
\end{pmatrix}+\eps \begin{pmatrix}
-2imr^{-2} b_\theta\\
2imr^{-2}b_r
\end{pmatrix}+\begin{pmatrix}
0\\
r\Omega'(r)b_r
\end{pmatrix},
\end{equation*}
and a stable term, which is simply the diagonal advection-diffusion equation with velocity field $m\Omega(r)+kU(r)$. Since the advection-diffusion component is formally negative, any growth that occurs must be a result of the unstable term. In addition, since the unstable term does not involve any derivatives in $r$, we can diagonalize the equation. A computation yields that for $r\gtrsim 1$, the associated diagonal system is well described by the matrix 
\begin{equation*}
\partial_t \begin{pmatrix}
b_r\\
b_\theta
\end{pmatrix}=\begin{pmatrix}
-\eps m^2 +i\sqrt{-\eps m(\eps m-i)} & 0\\
0 & -\eps m^2 -i\sqrt{-\eps m(\eps m-i)}
\end{pmatrix}\begin{pmatrix}
b_r\\
b_\theta
\end{pmatrix}.
\end{equation*}
In other words, the growth rate resulting from the stretching term is of order 
\begin{equation}\label{eq:growth-rate-heuristics}
-\eps m^2 +\sqrt{\eps m} \sim \sqrt{m\eps} \quad \text{as } \eps \to 0.
\end{equation}
It thus remains to show that this growth rate formally dominates the decay from the advection diffusion equation. From classical results in enhanced dissipation, see e.g.\ \cites{BedrossianCZ, AlbrittonRajNovack, DongyiWei, DVEnhancedDissipation, Gardner_Liss_Mattingly_2024}, we recall that so long as $m\Omega'(r_0)+kU'(r_0) \neq 0$ $m, k =O_{\eps \to 0}(1)$, the equation
\begin{equation*}
\partial_r b_r=\eps\partial_r^2 b_r+i(m\Omega(r)+kU(r))b_r
\end{equation*}
experiences \emph{enhanced dissipation} of order $\eps^{1/3}$ for functions localised near $r=r_0$. Hence, near such a point, we can expect a competition between two mechanisms in the dynamo equations: a local decay of order $\eps^{1/3}$ from the advection-diffusion equations, and growth of order $\eps^{1/2}$ from the unstable terms. Since $\eps^{1/3} \gg \eps^{1/2}$ for $\eps $ small, the decay will be the dominating force near $r_0$ so long as $m\Omega'(r_0)+kU'(r_0) \neq 0$, and thus no dynamo action can take place. Hence, it will be crucial to designate a so called ``critical radius'' $r_0>0$, determined by the Fourier modes $m,k$, at which the azimuthal and vertical shears are aligned in such a way that $m\Omega'(r_0)+kU'(r_0)=0$. In this case, the heuristic enhanced dissipation argument yields local decay of order $\eps^{1/2}$, which is precisely the same order as the stretching from the unstable term. In fact, recall from \eqref{eq:growth-rate-heuristics} that the heuristic growth rate from the stretching term is $-\eps m^2+\sqrt{\eps m}$, which is maximized at order $\eps^{1/3}$ when $m=O(\eps^{-1/3})$. Therefore, the same argument yields that, so long as $m,k=O(\eps^{-1/3})$, the stretching and diffusion terms are balanced at order $\eps^{1/3}$ as $\eps \to 0$, and therefore there is potential for exponential growth at rate $\eps^{1/3}$ to occur near such a critical radius $r_0$. The precise relative signs of the stretching and diffusion terms will then be determined by the local geometry of the flow, for which a more subtle analysis is needed.

\subsection{Gilbert's asymptotic approach}\label{s:gilbert-scaling}
Having provided a heuristic towards the existence of a critical radius $r_0$ where growth takes place, as well as towards the fact that the optimal growth rate of order $\eps^{1/3}$ is achieved when $m,k \sim \eps^{-1/3}$, we now recall the main ideas from Gilbert’s asymptotic analysis \cite{gilbert1988}, that give conditions for the stretching term to be locally stronger than the diffusion term. These ideas, suitably adapted, will guide the construction of the desired growing mode for the full problem. In particular, they yield a convenient change of variables, which we examine in detail in Section \ref{s:equations}.

From the exposition in the previous section, we assume that the growing mode will asymptotically concentrate as $\eps\to 0$ around a critical radius $r_0>0$, that will be formally treated as a \emph{boundary layer}. We take the Fourier transform $\theta \mapsto m=\eps^{-1/3}M$, $z \mapsto k=\eps^{-1/3}K$, where $M,K =O_{\eps\to 0}(1)$, and we further make the modal Ansatz $B(t,r,\theta,z)=\e^{\eps^{1/3}\mu t}\e^{i\eps^{-1/3}(M\theta+Kz)}$. Thus, the $(r,\theta)$ components of the kinematic dynamo equations become
\begin{align}
\begin{split}\label{eq:gilbert expansion1}
    \eps^{1/3}\mu b_r & + i\eps^{-1/3}(M\Omega(r) +KU(r))b_r \\
    &=  \eps\left( \partial_r^2b_r + \frac{1}{r}\partial_rb_r - \eps^{2/3}\frac{M^2}{r^2}b_r - \eps^{2/3}K^2b_r -\frac{1}{r^2}b_r - \eps^{1/3}\frac{2iM}{r^2}b_\theta\right),
\end{split}\\
\begin{split}\label{eq:gilbert expansion2}
        \eps^{1/3}\mu b_\theta & + i\eps^{-1/3}(M\Omega(r) +KU(r))b_\theta \\
        & = \eps\left( \partial_r^2b_\theta + \frac{1}{r}\partial_rb_\theta - \eps^{2/3}\frac{M^2}{r^2}b_\theta - \eps^{2/3}K^2b_\theta -\frac{1}{r^2}b_\theta + \eps^{1/3}\frac{2iM}{r^2}b_r\right)+ r\Omega'(r)b_r.
    \end{split}
\end{align}
Since the dynamo growth rate is entirely determined by $\Re(\lambda) = \Re(\eps^{1/3}\mu)$, we may replace 
\[
\lambda \mapsto \lambda-\eps^{-1/3}i(M\Omega(r_0)+KU(r_0))
\]
without altering the growth rate. Now, in order for growth to occur near $r_0$, by Section \ref{s:heuristics2} we need to impose that the transport term has vanishing derivative at $r_0$. Therefore, we assume that
\begin{equation}\label{eq:Gilbert2}
    M\Omega'(r_0) + KU'(r_0) = 0.
\end{equation}
We now Taylor expand the equation around $r_0$, making the change of variables $s=\eps^{-1/3}(r-r_0)$. Making the perturbation series Ansatz 
\begin{equation*}
b_r=\sum_{n \geq 0}\eps^{\frac{n}{3}}b_r^{(n)}, \quad \sum_{ n \geq 0}\eps^{\frac{n}{3}}b_\theta^{(n)},
\end{equation*}
it follows that the only term of order $1$ in \eqref{eq:gilbert expansion1}, \eqref{eq:gilbert expansion2} is the term from the $\theta$ equation given by 
\begin{equation*}
r_0\Omega'(r_0)b_r^{(0)}=0,
\end{equation*}
from which we deduce $b_r^{(0)}=0$. Next, the $O(\eps^{1/3})$ terms are
\begin{equation*}
(\partial_s^2 -M^2r_0^{-2}-K^2)b_\theta^{(0)}+r_0\Omega'(r_0)b_r^{(1)}=\lambda b_\theta^{(0)}+ic_2 s^2b_\theta^{(0)},
\end{equation*}
where 
\begin{equation}\label{eq:P2}
c_2=\frac{i}{2}\left(M\Omega''(r_0) + KU''(r_0)\right).    
\end{equation}
We also find that the $O(\eps^{2/3})$ terms from \eqref{eq:gilbert expansion1} are given by 
\begin{equation*}
\partial_s^2b_r^{(1)} -M^2 r_0^{-2}b_r^{(1)}-K^2b_r^{(1)}-2iMr_0^{-2}b_\theta^{(0)}=\lambda b_r^{(1)} +ic_2 s^2 b_r^{(1)}.
\end{equation*}
We therefore arrive at the closed system of ODEs 
\begin{align*}
\partial_s^2b_r^{(1)} -M^2 r_0^{-2}b_r^{(1)}-K^2b_r^{(1)}-2iMr_0^{-2}b_\theta^{(0)} & =\lambda b_r^{(1)} +ic_2 s^2 b_r^{(1)}\\
(\partial_s^2 -M^2r_0^{-2}-K^2)b_\theta^{(0)}+r_0\Omega'(r_0)b_r^{(1)} & =\lambda b_\theta^{(0)}+ic_2 s^2b_\theta^{(0)}.
\end{align*}
Now, we make the change of variables 
\begin{equation}\label{eq:change-variables-go}
b_r = \alpha\eps^{1/3}(V_2-V_1), \quad b_\theta = V_1 + V_2,
\end{equation}
with 
\begin{equation}\label{eq:alpha}
\alpha^2 = -\frac{2iM}{r_0^3\Omega'(r_0)}.
\end{equation}
Furthermore, we set $\lambda=\eps^{1/3}\mu$, with $\mu=O(1)$ as $\eps \to 0$.
We thus see that $V_1,V_2$ satisfy the ordinary differential equations
\begin{equation}
\label{eq:Gilbert equation}
\begin{split}
\varepsilon\partial_r^2V_1 - c_2\varepsilon^{-1/3}(r-r_0)^2V_1 - \varepsilon^{1/3}\left(  M^2\left[ \frac{1}{r_0^2} + \left(\frac{\Omega'(r_0)}{U'(r_0)}\right)^2 \right] +\mu -\sqrt{\frac{-2iM\Omega'(r_0)}{r_0}} \right)V_1 = 0,\\
\varepsilon\partial_r^2V_2 - c_2\varepsilon^{-1/3}(r-r_0)^2V_2 - \varepsilon^{1/3}\left(  M^2\left[ \frac{1}{r_0^2} + \left(\frac{\Omega'(r_0)}{U'(r_0)}\right)^2 \right] +\mu +\sqrt{\frac{-2iM\Omega'(r_0)}{r_0}} \right)V_2 =0.
\end{split}
\end{equation}
We make the change of variables $z=\zeta s$, where recall $s=\eps^{-1/3}(r-r_0)$, with $\zeta =\sqrt{2}c_2^{1/4}$ so that
\begin{equation}\label{eq:gilbert parabolic cylinder}
V_1''(z) + \left(-\frac{1}{2}c_2^{-1/2}q(\mu) -\frac{1}{4}z^2\right)V_1(z) = 0,
\end{equation}
and where we set 
\[
q(\mu) = M^2\left[ \frac{1}{r_0^2} + \left(\frac{\Omega'(r_0)}{U'(r_0)}\right)^2 \right] +\mu - \sqrt{\frac{-2iM\Omega'(r_0)}{r_0}}.
\]
Analogously, we define $q(\mu)$ with a $+$ sign in front of the last addend in the equation for $V_2$, and we choose the root that satisfies $\Re(\sqrt{-iM\Omega'(r_0)})>0$. In this way we see that the corresponding eigenvalue problem \eqref{eq:gilbert parabolic cylinder} for the second component produces an eigenvalue with a real part strictly larger than for the first component, and therefore we can focus on the analysis of the first component. 

Solutions to equations of the form \eqref{eq:gilbert parabolic cylinder} are called \emph{parabolic cylinder functions}---see Appendix \ref{s:appendix-parabolic} for more details, and they have finite energy on $(-\infty, \infty)$ if and only if 
\begin{equation}\label{eq:gilbert condition parabolic}
\frac{1}{2}c_2^{-1/2}q(\mu)=-j-\frac{1}{2}, \quad j=0,1,2,\dots.
\end{equation} 
Therefore, solving for $\mu$ so that \eqref{eq:gilbert condition parabolic} holds, we see indeed that
\[
\Re(\mu) = \sqrt{\frac{|M||\Omega'(r_0)|}{r_0}} - M^2\left(\frac{1}{r_0^2}+\frac{|\Omega'(r_0)|^2}{|U'(r_0)|^2}\right) - \sqrt{2|c_2|}\left(j + \frac{1}{2}\right).
\]
To maximize the growth rate, we pick $j=0$. Moreover, using \eqref{eq:P2} and making the appropriate choices of complex roots we obtain,
\[
\Re(\mu) = |M|^{1/2}\left(\frac{|\Omega'(r_0)|^{1/2}}{r_0^{1/2}} - \frac{1}{2}\left| \Omega''(r_0)-\frac{\Omega'(r_0)}{U'(r_0)}U''(r_0) \right|^{1/2}    \right) - |M|^2\left(\frac{1}{r_0^2}+\frac{|\Omega'(r_0)|^2}{|U'(r_0)|^2}\right).
\]
Hence via \eqref{eq:Gilbert2} and rewriting $\lambda = \mu\eps^{1/3}$, $m = M\eps^{-1/3}$, $k=K\eps^{-1/3}$, which is precisely the growth rate of our Theorem \ref{thm:dynamo}. Moreover, this shows that upon letting the absolute value of $M\in\R$ be sufficiently small, we find that $\Re(\mu)>0$ provided that
\begin{equation}\label{eq:gilbert3}
r_0\left |\frac{\dd}{\dd r} \log \left |\frac{\Omega'(r_0)}{U'(r_0)}\right |\right |<4.
\end{equation}
Hence, we find that locally near $r_0$, the stretching term is stronger than the enhanced dissipation, precisely if \eqref{eq:gilbert3} is satisfied.
\begin{remark}
From our presentation, it may seem that the condition \eqref{eq:Gilbert2} determines the Fourier modes $m,k$ \emph{given} the critical radius $r_0$, making the value $r_0$ appear as a free parameter. However, from the point of view of the full dynamo PDEs, it is more apt to view the condition \eqref{eq:Gilbert2} as the Fourier modes \emph{determining} the critical radius $r_0$---In other words, upon picking Fourier modes $m,k$, the condition \eqref{eq:Gilbert2} determines the localisation of the growing mode for the Ponomarenko dynamo equations.
\end{remark}

\subsection{Reformulation of the kinematic dynamo equations}\label{s:equations}

The main goal of this paper is to bridge the gap from this formal analysis of to a fully rigorous description of the growing modes of the Ponomarenko dynamo. Whilst the analysis of \cite{gilbert1988} is very precise in a small neighbourhood of $r_0$, for the existence of a genuine eigenfunction of the kinematic dynamo equations, we need to extend the analysis to a global one. To do so, we shall begin by bringing the equations \eqref{eq:modal-eq-r}, \eqref{eq:modal-eq-theta} into a form more ammenable for analysis.

We fix a positive radius $r_0>0$, and write $\Omega_0=\Omega(r_0)$ and $U_0 = U(r_0)$. Furthermore, we write $\Omega_0'$, $\Omega_0''$ to denote $\Omega'(r_0)$, $\Omega''(r_0)$, and analogously with $U$. Following Gilbert's analysis, we make the following Ansatz:
\begin{itemize}
    \item $\lambda = \mu \eps^{1/3}$, where $\mu\in\Co$, and $\eps>0$ denotes the magnetic diffusivity from \eqref{eq:dynamo}.
    \item $m = M\eps^{-1/3}$, and $k=K\eps^{-1/3}$, with $M,K\in \R$. Moreover, we choose the frequencies such that Gilbert's formula \eqref{eq:Gilbert2} is satisfied,
    \[
    M\Omega_0' + KU_0' = 0.
    \]
\end{itemize}
Under this choice of parameters $\lambda$, $m$ and $k$, we can rewrite the equations corresponding to the radial and azimuthal components of the magnetic field \eqref{eq:modal-eq-r}--\eqref{eq:modal-eq-theta}, which take the form,
\begin{align*}
\mu\varepsilon^{1/3}b_r & + i\varepsilon^{-1/3}M\left( \Omega-\frac{\Omega_0'}{U_0'}U\right)b_r \\
& = \varepsilon\left( \partial_r^2 b_r +\frac{1}{r}\partial_rb_r \right) - \varepsilon^{1/3}M^2\left( \frac{1}{r^2}+\left( \frac{\Omega_0'}{U_0'} \right)^2 \right)b_r -\frac{\varepsilon}{r^2}b_r -\frac{2i\varepsilon^{2/3} M}{r^2}b_\theta, \\
\mu\varepsilon^{1/3}b_\theta & + i\varepsilon^{-1/3}M\left( \Omega-\frac{\Omega_0'}{U_0'}U\right)b_\theta \\
& = \varepsilon\left( \partial_r^2 b_\theta +\frac{1}{r}\partial_rb_\theta \right) - \varepsilon^{1/3}M^2\left( \frac{1}{r^2}+\left( \frac{\Omega_0'}{U_0'} \right)^2 \right)b_\theta -\frac{\varepsilon}{r^2}b_\theta +\frac{2i\varepsilon^{2/3} M}{r^2}b_r + r\Omega'b_r.
\end{align*}
As mentioned in the previous sections, the $z$ component is decoupled from the $r$ and $\theta$ components. The bulk of our efforts will go towards finding a solution to this coupled system of ODEs, where $\mu\in\Co$ satisfies $\Re(\mu)>0$. After we solve this problem, in Section \ref{s:z-component} we will address the behaviour of the vertical component.

We want to write the equation in a more convenient form, following the ideas exposed in Section \ref{s:gilbert-scaling}. In order to do so, we subtract
\[
i\eps^{-1/3}M\left( \Omega_0-\frac{\Omega_0'}{U_0'}U_0\right)b_r \quad \text{and} \quad i\eps^{-1/3}M\left( \Omega_0-\frac{\Omega_0'}{U_0'}U_0\right)b_\theta
\]
from the left hand side in the first and second equations respectively. In this manner, we only modify the imaginary part of the eigenvalue $\mu\eps^{1/3}$, leaving the growth rate invariant. Taking this into account, we find the following system of equations,
\begin{equation}\label{eq:radial}
\begin{split}
\mu\varepsilon^{1/3}b_r & + i\varepsilon^{-1/3}M\left( (\Omega-\Omega_0) -\frac{\Omega_0'}{U_0'}(U-U_0) \right)b_r \\
& = \varepsilon\left( \partial_r^2 b_r +\frac{1}{r}\partial_rb_r \right) - \varepsilon^{1/3}M^2\left( \frac{1}{r^2}+\left( \frac{\Omega_0'}{U_0'} \right)^2 \right)b_r -\frac{\varepsilon}{r^2}b_r -\frac{2i\varepsilon^{2/3} M}{r^2}b_\theta,
\end{split}
\end{equation}
\begin{equation}\label{eq:azimutal}
\begin{split}
\mu\varepsilon^{1/3}b_\theta & + i\varepsilon^{-1/3}M\left( (\Omega-\Omega_0) -\frac{\Omega_0'}{U_0'}(U-U_0) \right)b_\theta \\
& = \varepsilon\left( \partial_r^2 b_\theta +\frac{1}{r}\partial_rb_\theta \right) - \varepsilon^{1/3}M^2\left( \frac{1}{r^2}+\left( \frac{\Omega_0'}{U_0'} \right)^2 \right)b_\theta -\frac{\varepsilon}{r^2}b_\theta +\frac{2i\varepsilon^{2/3} M}{r^2}b_r + r\Omega'b_r.
\end{split}
\end{equation}
The addition of these purely imaginary terms yields a ``transport term'' of the form
\[
T(r) = \left((\Omega(r)-\Omega_0) - \frac{\Omega_0'}{U_0'}(U(r)-U_0)\right),
\]
that crucially vanishes up to second order as $r\to r_0$. In order to make the notation cleaner, we point out that equations \eqref{eq:radial}--\eqref{eq:azimutal} can be rewritten in contracted form as
\begin{align*}
    \eps\left( \partial_r^2b_r + \frac{1}{r}\partial_rb_r\right) & = \lambda b_r + A_\eps b_r + B_\eps b_\theta, \\
    \eps\left( \partial_r^2b_\theta + \frac{1}{r}\partial_rb_\theta\right) & = \lambda b_\theta + A_\eps b_\theta + C_\eps b_r,
\end{align*}
where $\lambda = \mu\eps^{1/3}$, and with
\[
A_\eps = \frac{\eps}{r^2} + \eps^{1/3}M^2\left( \frac{1}{r^2}+\left( \frac{\Omega_0'}{U_0'} \right)^2 \right) + i\eps^{-1/3}MT(r),
\]
\[
B_\eps = \frac{2iM\eps^{2/3}}{r^2}, \quad C_\eps = -r\Omega' - \frac{2iM\eps^{2/3}}{r^2}.
\]
Before analysing the structure of solutions to this system of ODEs, we perform one final change of variables, following the ideas outlined in Section \ref{s:gilbert-scaling}. This transformation---originally introduced implicitly in Gilbert’s paper \cite{gilbert1988}---has the advantage of yielding a symmetric scaling in $\eps$ across both components of the equations. As in \eqref{eq:change-variables-go} we set $(b_r,b_\theta)\mapsto(V_1,V_2)$, defined by
\begin{equation}
b_r = \alpha\eps^{1/3}(V_2-V_1), \quad b_\theta = V_1 + V_2,
\end{equation}
or equivalently
\[
V_1 = \frac{1}{2}\left(b_\theta - \frac{1}{\alpha} \eps^{-1/3}b_r\right), \quad  V_2 = \frac{1}{2}\left(b_\theta + \frac{1}{\alpha} \eps^{-1/3}b_r\right),
\]
where $\alpha\in\Co$ is given by \eqref{eq:alpha}. After the change of variables, the equations that $V_1$ and $V_2$ satisfy have the form
\begin{equation*}
\begin{split}
    \eps\left( \partial_r^2V_1 + \frac{1}{r}\partial_rV_1\right) & = \lambda V_1 + A_\eps V_1 - \frac{1}{2}\left(\frac{1}{\alpha}\eps^{-1/3}B_\eps + \alpha\eps^{1/3}C_\eps\right)V_1 - \frac{1}{2}\left(\frac{1}{\alpha}\eps^{-1/3}B_\eps - \alpha\eps^{1/3}C_\eps\right)V_2,  \\
    \eps\left( \partial_r^2V_2 + \frac{1}{r}\partial_rV_2\right) & = \lambda V_2 + A_\eps V_2 + \frac{1}{2}\left(\frac{1}{\alpha}\eps^{-1/3}B_\eps + \alpha\eps^{1/3}C_\eps\right)V_2 + \frac{1}{2}\left(\frac{1}{\alpha}\eps^{-1/3}B_\eps - \alpha\eps^{1/3}C_\eps\right)V_1. 
\end{split}
\end{equation*}
One can see from here that the cross terms have the same scaling in $\eps$ in both components. The above ODE defines a linear operator, which we denote (up to the inclusion of the term $\mu\eps^{1/3}$) by $\L$. In other words, the above ODEs may be written as
\[
(\L-\mu\varepsilon^{1/3})\begin{pmatrix}
    V_1\\
    V_2
\end{pmatrix} = 0.
\]
Analysing the precise form of the factors in $\L$ more carefully, we find a useful structure in the cross factors. Notice that we can compute,
\[
\begin{split}
\frac{1}{\alpha}\eps^{-1/3}B_\eps - \alpha\eps^{1/3}C_\eps & = \frac{2iM\varepsilon^{1/3}}{r^2}\left(\frac{1}{\alpha} + \alpha\varepsilon^{2/3}\right) + \alpha\varepsilon^{1/3}r\Omega' \\
& = \frac{2iM\alpha\varepsilon}{r^2} + \frac{2iM\varepsilon^{1/3}}{\alpha}\left(\frac{1}{r^2}-\frac{1}{r_0^2}\right) + \frac{2iM\varepsilon^{1/3}}{\alpha r_0^2} \\
& \qquad + \alpha\varepsilon^{1/3}(r\Omega'-r_0\Omega_0') + \alpha\varepsilon^{1/3}r_0\Omega_0'.
\end{split}
\]
Hence, making the choice of $\alpha$ as in \eqref{eq:alpha}, we find that
\[
\frac{1}{2}\left(\frac{1}{\alpha}\eps^{-1/3}B_\eps - \alpha\eps^{1/3}C_\eps\right) = \frac{iM\alpha\varepsilon}{r^2} + \frac{iM\varepsilon^{1/3}}{\alpha}\left(\frac{1}{r^2}-\frac{1}{r_0^2}\right) + \frac{\alpha\varepsilon^{1/3}}{2}(r\Omega'-r_0\Omega_0'),
\]
and therefore, we observe that the cross terms will vanish for $r\to r_0$ and $\eps\to 0$. This will be precisely the key idea at the core of our approach to the problem: analysing the behaviour of $\L$ for values very close to $r_0$, where, according to Gilbert's idea \cite{gilbert1988}, the exponential growth is expected to occur. Following a similar argument we find that this choice of $\alpha\in\Co$ yields for the other factor
\[
\frac{1}{2}\left(\frac{1}{\alpha}\eps^{-1/3}B_\eps + \alpha\eps^{1/3}C_\eps\right) = -\frac{iM\alpha\varepsilon}{r^2} + \frac{2iM\varepsilon^{1/3}}{\alpha r_0^2} + \frac{iM\varepsilon^{1/3}}{\alpha}\left(\frac{1}{r^2}-\frac{1}{r_0^2}\right) - \frac{\alpha\varepsilon^{1/3}}{2}(r\Omega'-r_0\Omega_0').
\]

We can now combine all this information and rewrite \eqref{eq:radial}--\eqref{eq:azimutal} in the desired form for the analysis we will carry out. Before doing so, however, let us introduce some more shorthand notation for a more condensed presentation of the equations. We define
\[
D_\eps^+ = \frac{iM\varepsilon^{1/3}}{\alpha}\left(\frac{1}{r^2}-\frac{1}{r_0^2}\right) + \frac{\alpha\varepsilon^{1/3}}{2}(r\Omega'-r_0\Omega_0'),
\]
\[
D_\eps^- = \frac{iM\varepsilon^{1/3}}{\alpha}\left(\frac{1}{r^2}-\frac{1}{r_0^2}\right) - \frac{\alpha\varepsilon^{1/3}}{2}(r\Omega'-r_0\Omega_0'),
\]
and we take without loss of generality the positive sign in the square root when defining $\alpha$ in \eqref{eq:alpha}. Thus, $\L-\mu\eps^{1/3}$ will be the operator associated to the equations
\begin{equation}\label{eq:1}
\eps\left( \partial_r^2V_1 + \frac{1}{r}\partial_rV_1\right) = \left(\lambda + A_\eps +\frac{iM\alpha\varepsilon}{r^2} - \sqrt{\frac{-2iM\Omega_0'}{r_0}}\varepsilon^{1/3} - D_\varepsilon^-\right)V_1 -\left(\frac{iM\alpha\varepsilon}{r^2} + D_\eps^+\right)V_2,
\end{equation}
\begin{equation}\label{eq:2}
\eps\left( \partial_r^2V_2 + \frac{1}{r}\partial_rV_2\right) = \left(\lambda + A_\eps -\frac{iM\alpha\varepsilon}{r^2} + \sqrt{\frac{-2iM\Omega_0'}{r_0}}\varepsilon^{1/3} + D_\varepsilon^-\right)V_2 +\left(\frac{iM\alpha\varepsilon}{r^2} + D_\eps^+\right)V_1.
\end{equation}
From here on, unless otherwise specified, any vector will be assumed to be written in the basis induced by this change of variables,
\begin{equation}\label{eq:change-variable}
V_r\hat r+V_\theta\hat\theta+V_z\hat z \mapsto \frac{1}{2}\left(V_\theta-\frac{1}{\alpha}\eps^{-1/3}V_r\right)\hat v_1 + \frac{1}{2}\left(V_\theta+\frac{1}{\alpha}\eps^{-1/3}V_r\right)\hat v_2 +  V_z \hat v_3.
\end{equation}
which is well-defined for any $\eps>0$.
\begin{remark}
From the change of variables, we obtain further evidence of the heuristic from Section \ref{s:heuristics2} that the growth rate of order $\eps^{1/3}$ in our main result Theorem \ref{thm:dynamo} is sharp. Indeed, the only ``unstable'' terms (i.e. those that are non-negative when undertaking energy estimates) are of order $O(\eps^{1/3})$, aligning precisely with our result.
\end{remark}

\subsection{Admissible vector fields and examples}\label{s:admissible-fields}
Thus far we have avoided giving an explicit description of the necessary conditions needed for Theorem \ref{thm:dynamo} to hold. In this section we shall outline them, and in particular we will eventually endeavour to provide simple, checkable conditions which imply our hypotheses, as well as some explicit examples of particular interest. For any fixed $r_0>0$, we begin by defining the function
\begin{equation}\label{eq:transport-function}
T(r)=\Omega(r)-\Omega(r_0)+\frac{\Omega'(r_0)}{U'(r_0)}(U(r)-U(r_0)),
\end{equation}
which we will henceforth refer to as the \emph{transport function}. Without further ado, we now list the general hypotheses under which we can deduce the existence of a growing mode for the Ponomarenko dynamo on the domain $\mathcal{I} \subseteq [0,\infty)$, with perfectly conducting boundary conditions.

\emph{General Hypotheses}: Fix a set $\mathcal{J} \subseteq [0,\infty)$ that is open in the topology on $[0,\infty)$, and so that furthermore $\overline{\mathcal{I}} \subseteq \mathcal{J}$. We assume the following.
\begin{itemize}
    \item[\namedlabel{H0}{H0}] The velocity field is at least $\Omega, U\in C^3(\overline{\mathcal{J}})$.
    
    \item[\namedlabel{H1}{H1}] The transport function vanishes up to second order only at $r_0>0$, namely 
    \[
    T(r_0)=T'(r_0)=0, \quad T''(r_0)\neq 0.
    \]
    There exists a (possibly empty) finite set of points 
    \[
    \mathcal{F}_0=\{s_1,\hdots,s_q\}\subset \mathcal{J} \setminus \{0\},
    \]
    where the transport function satisfies $T(s) = 0$, $T'(s)\neq 0$, for all $s\in\mathcal{F}_0$. Moreover, $T(r)\neq 0$ for all $r\in \mathcal{J}\setminus (r_0\cup\mathcal{F}_0)$.
    
    \item[\namedlabel{H2}{H2}] 
    There exist numbers $N_3,N_4\in\N$, a constant $\Xi >0$, and a compact set $\mathcal{R}_0\subset\mathcal{J}$ so that uniformly for $r_0 \in \mathcal{R}_0$ it holds for all $r,r' \geq \Xi$
\begin{equation*}
\frac{|T(r)-T(r')|}{|T(r')|}\leq C \sum_{n=1}^{N_3}|r-r'|^n, \quad
\frac{|r\Omega'(r)|}{|T(r')|}\leq C \sum_{n=1}^{N_4}|r-r'|^n.
\end{equation*}
Furthermore, we assume that $\inf_{r_0 \in \mathcal{R}_0}\inf_{r \geq \Xi}|T(r)|>0$.
\end{itemize}
Notice that, in case that $0 \in \overline{\mathcal{J}}$, Assumptions \ref{H0} and \ref{H1} together imply that there exists $\tau\in\R\setminus\{0\}$ such that
\[
\lim_{r\to 0}T(r) = \tau.
\]

These \emph{General Hypotheses} \ref{H0}--\ref{H2} are required for our construction of the growing mode to hold true. From the proof of Theorem \ref{theorem:mainResult} one can appreciate that any vector field satisfying \ref{H0}--\ref{H2} can be proved to be a slow dynamo with growth rate $\Re(\lambda) = \eps^{1/3}\Re(\mu)>0$. If we desire to obtain further information on the eigenfunction, i.e.\ an asymptotic expansion as stated in Theorem \ref{thm:dynamo}, we need to impose the following additional assumption on the velocity field. Indeed, provided that $\mathcal{I}$ is an unbounded domain, we have the following further assumption.

\begin{itemize}
    \item[\namedlabel{H3}{H3}] The velocity field satisfies either
    \begin{itemize}
        \item $|r\Omega'(r)|$ is bounded and $|r\Omega'(r)|\to 0$ as $r\to\infty$;
    \end{itemize}
    or
    \begin{itemize}
        \item $T(r)$ is bounded from above or below, $|T(r)|\to\infty$ as $r\to\infty$ and there exists $C>0$, $\Lambda \in \mathbb{R}$ such that
        \[
        \sup_{r\in \overline{\mathcal{I}}} \frac{|r\Omega'(r)|^2}{|T(r)+\Lambda|} \leq C.
        \]
    \end{itemize}
\end{itemize}
As mentioned earlier, Assumption \ref{H3} should be thought of as a collection of checkable conditions that imply that the kinematic dynamo equations may be treated as a relatively compact perturbation of the advection diffusion equation, thus implying that all growing modes must be isolated eigenvalues. This intuition is made precise in Section \ref{s:injectivity}.

\begin{remark}
Upon introducing the modal form Ansatz $B(t,r,\theta,z)=b(r)\e^{\lambda t + i(m\theta+kz)}$, the kinematic dynamo equations are transformed into a system of ODEs, where the physical meaning of the coefficients $m,k$ is no longer of direct importance. Indeed, it is entirely possible to find a growing mode for the system of ODEs \eqref{eq:modal-eq-r}--\eqref{eq:modal-eq-z} for $m, k \notin \mathbb{Z}$, which, whilst of independent interest from the point of view of the ODE, is rather irrelevant in the context of the kinematic dynamo PDE \eqref{eq:dynamo}. Recalling further that according to Gilbert's scaling, we pick Fourier modes 
\begin{equation}\label{eq:m&k}
m=M\eps^{-1/3}, \quad k=-m\frac{\Omega'(r_0)}{U'(r_0)},
\end{equation}
as $\eps \to 0$, these will generically not be integers. Thus, simply fixing a single $r_0 >0$, $M \in \mathbb{Z}$ and showing that the associated system of ODEs has a growing mode for all $\eps>0$ small enough is not enough to deduce the slow dynamo claim: our estimates need to be robust enough to allow for some ``wiggle room'' in both $M$ and $r_0$. Indeed, for $\eps>0$ small enough, one can always perturb $M$ by a term of order $\eps^{1/3}$ to ensure that $M\eps^{-1/3}$ is an integer. The issue then becomes the coupling between $m$ and $k$ via \eqref{eq:m&k}. Here, we once again make use of the fact that $M\eps^{-1/3}$ is very large for $\eps$ small enough, and thus it is generically not too difficult to find \emph{some} $r_0>0$ so that 
\[
-\eps^{-1/3}M\frac{\Omega'(r_0)}{U'(r_0)} \in\Z.
\]
Thus, for any $\eps>0$, to make sure that the ODE result is meaningful in the context of the PDE \eqref{eq:dynamo}, we first make a perturbation of order $\eps^{1/3}$ to ensure that $M\eps^{-1/3}$ is an integer, and then make a slight perturbation to $r_0$ so that $k$ as defined in \eqref{eq:m&k} is also an integer. The point of the compact set $\mathcal{R}_0$ in \ref{H2} is then to guarantee that estimates used in the construction of growing modes for \eqref{eq:modal-eq-r}--\eqref{eq:modal-eq-z} holds \emph{uniformly} for the perturbations needed on $M$, $r_0$ as $\eps\to 0$, so that we indeed find a growing mode for the PDE \eqref{eq:dynamo} \emph{for all $\eps \to 0$}. That being said, the uniformity in $r_0$ is not entirely needed for all applications---for instance, if we instead let the variable $z$ live in $\mathbb{R}$, then the assumption $k \in \mathbb{Z}$ can be dispensed with. In keeping with this, assumption \ref{H2} could in principle be satisfied for the compact set $\mathcal{R}_0=\{r_0\}$. In Theorems \ref{thm:dynamo}, \ref{thm:full dynamo} we thus differentiate between this case and the case where $\mathcal{R}_0$ contains a non-empty open interval.
\end{remark}

Having outlined our assumptions, we now endeavour to ``translate'' them into the main properties of the transport function $T(r)$ needed for our proofs. This is the subject of the following lemma.

\begin{lemma}[Consequences of \ref{H0}--\ref{H2}]
\label{lemma:consequences of assumptions}
Assume \ref{H0}--\ref{H2} hold true, and pick any $\gamma , \delta >0$ so that 
\[
\frac{2}{9} < \gamma < \frac{1}{3}, \quad \gamma<\delta, \quad \gamma+\delta<\frac{2}{3}.
\]
Then, there exist a compact interval $\mathcal{R} \subset \mathcal{I}$ with $r_0\in\mathcal{R}$, a finite set $\mathcal{F}_0(r_0)$ depending on $r_0 \in \mathcal{R}$ of constant cardinality, and a constant $C>0$ independent of $r_0 \in \mathcal{R}$, so that for any $\eps>0$ small enough the following holds:
\begin{itemize}
    \item[\namedlabel{P1}{P1}]  $T(s_j)=0$ for all $s_j \in \mathcal{F}_0(r_0)$, and $|T'(s_j)|$ is bounded below uniformly in $r_0 \in \mathcal{R}$, $s_j \in \mathcal{F}_0(r_0)$. Also, it holds 
    \begin{equation*}
    \sup_{r_0 \in \mathcal{R}} \sup_{s_j \in \mathcal{F}_0(r_0)}\sup_{|r-s_j|\leq 1}\frac{|T(r)-T'(s_j)(r-s_j)|}{|r-s_j|^2}<\infty.
    \end{equation*}
    \item[\namedlabel{P2}{P2}] $|T''(r_0)|$ is bounded below uniformly in $r_0 \in \mathcal{R}$, and there holds 
    \begin{equation*}
    \sup_{r_0 \in \mathcal{R}}\sup_{|r-r_0|<1}\frac{|T(r)-\frac{1}{2}T''(r_0)(r-r_0)^2|}{|r-r_0|^3}<\infty.
    \end{equation*}
    \item[\namedlabel{P3}{P3}] There exist a positive number $N_1\in\N$ and a collection of possibly $\eps-$dependent coefficients $B_n(\eps)\geq 0$, $1\leq n\leq N_1$, such that for any $\eps>0$ small, and any two points $r, r' \in \mathcal{J}$, $r'\geq \eps^\gamma$ with $|r'-R|\geq \eps^\gamma$ for all $R\in\{r_0\} \cup \mathcal{F}_0$, there holds that
    \[
    \frac{|T(r)-T(r')|}{|T(r')|} \leq \sum_{n=1}^{N_1} B_n(\eps)|r-r'|^n.
    \]
    Additionally, there exists $\beta>0$ such that the coefficients $B_n(\eps)\geq 0$ satisfy
    \[
    \sum_{n=1}^{N_1} B_n(\eps)\eps^{n\delta} \leq C\eps^\beta,
    \]
    for some $\beta$ independent of $r_0 \in \mathcal{R}$.
    \item[\namedlabel{P4}{P4}] There exist a number $N_2\in\N$ and a collection of coefficients $D_0,D_n\geq 0$, with $1\leq n\leq N_2$, such that for any $\eps>0$ small, and any two points $r, r' \in \mathcal{J}$, $r'\geq \eps^\gamma$ with $|r'-R|\geq \eps^\gamma$ for all $R \in \{r_0\} \cup \mathcal{F}_0(r_0)$, there holds that
    \[
    \frac{|r\Omega'(r)|}{|T(r')|}\leq \eps^{-2\gamma}\left( D_0 + \sum_{n=1}^{N_2} D_n|r-r'|^n\right).
    \]
    \item[\namedlabel{P5}{P5}] If $0 \in \overline{\mathcal{I}}$, then $|\lim_{r \to 0}T(r)|$ is bounded below uniformly in $r_0 \in \mathcal{R}$, and 
    $$\sup_{r_0 \in \mathcal{R}}\sup_{r\leq 1}\frac{|T(r)-T(0)|}{r}<\infty.
    $$
    \item[\namedlabel{P6}{P6}]
    Let $s_1,s_2 \in \mathcal{F}_0(r_0) \cup\{r_0\}$ be neighbouring points, i.e.\ $(s_1,s_2) \cap (\mathcal{F}_0(r_0) \cup\{r_0\}) =\emptyset$. Then, 
    \begin{itemize}
        \item if $s_1,s_2 \neq r_0$,
        \[
        \inf_{r \in [s_1+\eps^\gamma, s_2-\eps^\gamma]}|T(r)|\geq C\eps^{\gamma},
        \]
        for some constant $C>0$ independent of $r_0 \in \mathcal{R}$;
        \item if either of $s_1,s_2$ are equal to $r_0$,
        \[
        \inf_{r \in [s_1+\eps^\gamma, s_2-\eps^\gamma]}|T(r)|\geq C\eps^{2\gamma}
        \]
        for some constant $C>0$ independent of $r_0 \in \mathcal{R}$.
    \end{itemize}
\end{itemize}
\end{lemma}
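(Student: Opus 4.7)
My plan is to prove the lemma by combining the implicit function theorem, used to track zeros of the transport function as $r_0$ varies, with a careful case analysis based on the local structure of $T_{r_0}$ near its zero set. Throughout, the uniformity of the $C^3$ norms of $\Omega, U$ on $\overline{\mathcal{J}}$ will allow me to transfer pointwise information into uniform bounds on a suitable compact neighbourhood $\mathcal{R}$ of the base radius.

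First I would construct $\mathcal{R}$. The nondegeneracy conditions $T_{r_0}(r_0)=T_{r_0}'(r_0)=0$, $T_{r_0}''(r_0)\neq 0$ from \ref{H1} are preserved under small perturbations of $r_0$ by continuity of the map $r_0\mapsto T_{r_0}''(r_0)=\Omega''(r_0)-\frac{\Omega'(r_0)}{U'(r_0)}U''(r_0)$. For each simple zero $s_j$ of the base transport function, the implicit function theorem applied to $(r,r_0)\mapsto T_{r_0}(r)$ yields a $C^1$ curve $r_0\mapsto s_j(r_0)$, and taking $\mathcal{R}$ small enough ensures the zero set $\mathcal{F}_0(r_0)=\{s_1(r_0),\ldots,s_q(r_0)\}$ has constant cardinality and varies smoothly. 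Uniform lower bounds on $|T_{r_0}'(s_j(r_0))|$ and $|T_{r_0}''(r_0)|$ then follow from continuity on compact $\mathcal{R}$. Properties \ref{P1}, \ref{P2}, and \ref{P5} are immediate from second-order Taylor expansion of $T_{r_0}$ centred at $s_j(r_0)$, $r_0$, and $0$ respectively, together with the uniform $C^3$ bound on $T_{r_0}$; for \ref{P5}, $T_{r_0}(0)\neq 0$ holds because $0\notin\{r_0\}\cup\mathcal{F}_0$, with uniform lower boundedness again obtained by compactness. Property \ref{P6} follows by noting that on each interval between consecutive zeros $T$ is continuous, does not vanish in the interior, and decays to zero at the endpoints linearly at simple zeros and quadratically at $r_0$; for $\eps$ small the minimum of $|T|$ on $[s_1+\eps^\gamma,s_2-\eps^\gamma]$ is governed by this boundary behaviour, giving the lower bounds $c\eps^\gamma$ or $c\eps^{2\gamma}$ as stated.

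The main obstacle lies in \ref{P3} and \ref{P4}: the denominator $|T(r')|$ can be as small as $\eps^{2\gamma}$ when $r'$ is near $r_0$, while the target decay rate is only $\eps^{\delta-\gamma}$, so the numerator's vanishing structure must be extracted carefully. I will split into four regimes for $r'\geq\eps^\gamma$ satisfying $|r'-R|\geq\eps^\gamma$ for all $R\in\{r_0\}\cup\mathcal{F}_0$: (i) $r'$ near $r_0$, (ii) $r'$ near some $s_j$, (iii) $r'$ in a compact set away from all zeros, and (iv) $r'\geq\Xi$. In case (i), Taylor expansion at $r_0$ yields
\begin{equation*}
|T(r)-T(r')|\lesssim |r-r'|\bigl(|r-r_0|+|r'-r_0|\bigr)+\text{higher-order terms},
\end{equation*}
together with $|T(r')|\gtrsim(r'-r_0)^2\geq\eps^{2\gamma}$. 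Using $|r-r_0|\leq|r-r'|+|r'-r_0|$ then produces
\begin{equation*}
\frac{|T(r)-T(r')|}{|T(r')|}\lesssim\frac{|r-r'|}{\eps^\gamma}+\frac{|r-r'|^2}{\eps^{2\gamma}},
\end{equation*}
so that $B_1(\eps)\eps^\delta+B_2(\eps)\eps^{2\delta}\lesssim\eps^{\delta-\gamma}$, yielding $\beta=\delta-\gamma>0$ by the constraint $\delta>\gamma$. Case (ii) is simpler because $|T(r')|\gtrsim\eps^\gamma$ is only linearly small, and the crude Lipschitz bound $|T(r)-T(r')|\lesssim|r-r'|$ suffices. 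Case (iii) provides a uniform positive lower bound on $|T(r')|$, and case (iv) follows directly from \ref{H2}.

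Property \ref{P4} is handled analogously but is in fact easier: $|r\Omega'(r)|$ is uniformly bounded on compact sets by $C^1$ regularity, so the constant term $D_0$ alone suffices in the bounded regime (absorbing the factor $\eps^{-2\gamma}$ that comes from the worst-case estimate $|T(r')|\gtrsim\eps^{2\gamma}$), while \ref{H2} directly covers the unbounded regime through the $D_n$ terms with $n\geq 1$. The uniformity of all constants over $r_0\in\mathcal{R}$ is ensured by compactness of $\mathcal{R}$ together with $\mathcal{R}\subset\mathcal{J}$, which prevents any zero of $T_{r_0}$ from escaping $\mathcal{J}$ as $r_0$ varies.
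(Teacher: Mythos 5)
Your construction of $\mathcal{R}$ and $\mathcal{F}_0(r_0)$ via the implicit function theorem, and your treatment of \ref{P1}, \ref{P2}, \ref{P5}, \ref{P6}, follow essentially the same route as the paper and are fine at sketch level (though note that ruling out zeros appearing for large $r$ as $r_0$ varies uses the lower bound $\inf_{r_0}\inf_{r\geq\Xi}|T|>0$ from \ref{H2}, which you never invoke). The genuine gap is in \ref{P3} and \ref{P4}: you stratify only according to the location of $r'$, and in each of your cases (i)--(iii) you implicitly assume that $r$ stays in a region where $T$ has bounded first and second derivatives. But $\mathcal{J}$ may be unbounded and \ref{H0} gives no uniform $C^3$ bound there; \ref{H2} allows $T$ and $r\Omega'$ to grow polynomially (already in the simplified model $\Omega=1-r$, $U=1-r^2$ one has $T(r)=(r-r_0)^2/(2r_0)$ and $r\Omega'(r)=-r$, both unbounded). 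So your ``crude Lipschitz bound'' $|T(r)-T(r')|\lesssim|r-r'|$ in case (ii), the uncontrolled ``higher-order terms'' in the Taylor expansion of case (i) when $r$ is far from $r_0$, and the claim in \ref{P4} that a constant $D_0$ suffices ``in the bounded regime'' are all false for such velocity fields. Moreover your case (iv) and your appeal to ``\ref{H2} directly'' only cover $r,r'\geq\Xi$ simultaneously; the mixed regimes ($r\geq\Xi$ with $r'$ near the zeros of $T$, or $r'\geq\Xi$ with $r\leq\Xi$) are not addressed at all, and these are exactly where the general polynomial sums $\sum_n B_n(\eps)|r-r'|^n$, $\sum_n D_n|r-r'|^n$ in \ref{P3}--\ref{P4} are needed.

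The paper closes these regimes as follows: Taylor expand around $r'$ (not $r_0$) so that only $|T'(r')|/|T(r')|$ and a second-derivative supremum over the \emph{compact} set $\mathcal{K}$ enter when both points lie in $\mathcal{K}$; then, for the mixed case $r\geq\Xi$, $r'\leq\Xi$, use the polynomial bound on $|T(r)|$ in terms of $|r-\Xi|$ furnished by \ref{H2}, the uniform lower bound $|T(r')|\gtrsim\eps^{2\gamma}$, and the elementary observation that $|r-r'|\gtrsim 1$ in this regime to absorb all constants into terms $\eps^{-2\gamma}|r-r'|^n$ with $n\geq 2$; finally check that $\sum_n B_n(\eps)\eps^{n\delta}\lesssim\eps^{\delta-\gamma}+\eps^{2(\delta-\gamma)}+\eps^{3\delta-2\gamma}$, all positive powers since $\delta>\gamma$. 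An identical argument (comparing $r\Omega'(r)$ to $T$ at $\Xi$) gives \ref{P4}. Your local computation near $r_0$ reproduces the first of these steps correctly, but without the unbounded and mixed-regime arguments the proof of \ref{P3} and \ref{P4} is incomplete, and with the constants you assert it is incorrect for admissible velocity fields of polynomial growth.
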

\begin{proof}
Let $p\geq 0$ and $q>0$ that can be taken to be $q=\infty$, define $\mathcal{I}=[p,q]$, and $\mathcal{J}=[p_1,q_1]$ with $p_1 \in [0,p]$, $q_1 \in [q,\infty]$.
Assume that $\mathcal{F}_0$ is nonempty, let $p_2>\Xi$ and define $\mathcal{K}=[p_1,p_2]$ be a compact interval so that $\mathcal{F}_0 \subset \mathrm{int}(\mathcal{K})$. 
By the implicit function theorem, for any $s_j \in \mathcal{F}_0$, there exist an open neighbourhood of $(s_j,r_0)$, and a differentiable function $\phi_j$ so that the unique zero of $T(s,r_0)$ is given by $(\phi_j(r_0),r_0)$. Hence, since $\mathcal{F}_0$ is finite, we may find a  compact interval $\mathcal{R}_0\supset\mathcal{R} \ni r_0$, and a number $\omega_0>0$ so that for $r_0 \in \mathcal{R}$, $\mathrm{dist}(r,\mathcal{F}_0\cup\{r_0\})< \omega_0$, $T(r,r_0)=0$ if and only if $r=\phi_j(r_0)$, and $|r-s_j|<\omega_0$, for some $j$.
Next, by Taylor's remainder theorem, 
\begin{equation*}
\begin{split}
\left|T(r)-\frac{1}{2}T''(r_0)(r-r_0)^2\right| & \leq \sup_{s \in [r_0,r]}|T^{(3)}(s)||r-r_0|^3,\\
|T(r)-T'(s_j)(r-s_j)| & \leq \sup_{\xi \in [s_j,r]}|T''(\xi)||r-s_j|^2.
\end{split}
\end{equation*}
and $\sup_{\xi \in [s_j,r_0]}|T''(\xi)|, \sup_{s \in [r_0,r]}|T^{(3)}(s)|$ are continuous in $r_0$, and thus bounded on the compact interval $\mathcal{R}$. Similarly, since $s_j=\phi_j(r_0)$, and $|T''(r_0)|$, $|T'(s_j)|$, are positive functions that are continuous in $r_0$, upon possibly shrinking the size of $\mathcal{R}$, they may be bounded below uniformly for $r_0 \in \mathcal{R}$. Hence, points \ref{P1}, \ref{P2} have been proven. Additionally, \ref{P5} is deduced in entirely the same way as \ref{P1}, \ref{P2}.

Next, upon possibly further shrinking $\mathcal{R}$, there exists some $\omega_1$ so that 
\begin{equation*}
\begin{split}
&|T(r)|\geq \frac{1}{4}|T''(r_0)||r-r_0|^2\\
&|T(r)| \geq \frac{1}{2}|T'(\phi_j(r_0))||r-\phi_j(r_0)|,
\end{split}
\end{equation*}
respectively for $|r-r_0|<\omega_1$, $|r-\phi_j(r_0)|<\omega_1$, and $|r-\phi_j(r_0)|<\omega_1$, which implies $|r-s_j|<\omega_0$. Consider now the set $\{(r,r_0)\in \mathcal{R} \times \mathcal{K}:\ |r-\phi_j(r_0)| \geq \omega_1 \ \forall j, \ |r-r_0|\geq \omega_1\}$. This is a compact set, and $T(r,r_0)$ is a non-vanishing continuous function from it to the reals. Thus, $|T(r,r_0)|$ is uniformly bounded below on this set, which combined with \ref{P1}, \ref{P2} immediately yields \ref{P6}.

There is only \ref{P3} and \ref{P4} left to be studied, so let us start with the former. We may write,
\begin{equation*}
\frac{|T(r)-T(r')|}{|T(r')|} \leq \frac{|T'(r')|}{|T(r')|}|r-r'|+\frac{\sup_{s \in \mathcal{I}}|T''(s)|}{|T(r')|}|r-r'|^2.
\end{equation*}
But now, note that $|T'(r')|\leq \sup_{\xi \in \mathcal{K}}|T''(\xi)||r'-r_0|$, and as soon as $|r'-r_0|<\omega_1$, we can write
\begin{equation*}
\frac{|T(r)-T(r')|}{|T(r')|}\leq C_1|r'-r_0|^{-1}|r-r'|+C_2|r'-r_0|^{-2}|r-r'|^2.
\end{equation*}
Thus, for $\omega_1\geq|r'-r_0| \geq \eps^\gamma$, this is bounded uniformly for $r_0 \in \mathcal{R}$ by 
\begin{equation*}
\frac{|T(r)-T(r')|}{|T(r')|} \leq C \left (\eps^{-\gamma}|r-r'|+ \eps^{-2\gamma}|r-r'|^2 \right ).
\end{equation*}
Similarly, we can show that as soon as $\eps^\gamma \leq |r'-\phi_j(r_0)|<\omega_1$, we have 
\begin{equation*}
\frac{|T(r)-T(r')|}{|T(r')|} \leq C \eps^{-\gamma}|r-r'|.
\end{equation*}
Furthermore, since away from these two sets, we have shown that $|T(r')|$ is bounded below uniformly for $r_0 \in \mathcal{R}$, we have 
\begin{equation*}
\frac{|T(r)-T(r')|}{|T(r')|} \leq C|r-r'|.
\end{equation*}
An identical argument shows that 
\begin{equation*}
\frac{|r\Omega(r)|}{|T(r')|}\leq C\eps^{-2\gamma}|r-r'|
\end{equation*}
uniformly for $r,r' \in \mathcal{K}$, $|r-\phi_j(r_0)|\geq \eps^\gamma$, $|r-r_0|\geq \eps^\gamma$.
Finally, suppose that $r \geq \Xi$, $r' \leq \Xi$. If both $r,r' \in \mathcal{K}$, then certainly \ref{P3} is satisfied, so assume that $r \notin \mathcal{K}$, so that $|r-r'| \gtrsim 1$. Hence, observing further that by \ref{H2} we have the bound
$$
|T(r)| \leq C \sum_{n=0}^{N_3+1}|r-\Xi|^n, 
$$
we write 
\begin{equation*}
\frac{|T(r)-T(r')|}{|T(r')|}\leq C\eps^{-2\gamma} \sum_{n=0}^{N_3+1}|r-\Xi|^n \leq C \eps^{-2\gamma} \sum_{n=0}^{N_3+1}|r-r'|^n,
\end{equation*}
where we have used that $|r-\Xi|^n \leq C(n)(|r-r'|^n+|r'-\Xi|^n)$. But now, note that $|r'-\Xi|^n \leq \Xi^n$, and that $1 \lesssim |r-r'|$. Thus, we may easily bound 
\begin{equation*}
\frac{|T(r)-T(r')|}{|T(r')|}\leq C \eps^{-2\gamma}\sum_{n=2}^{N_3+1}|r-r'|^n.
\end{equation*}
Similarly, if $r' \notin \mathcal{K}$, $r\leq \Xi$, we simply bound 
\begin{equation*}
\frac{|T(r)-T(r')|}{|T(r')|}\lesssim 1+\frac{|T(\Xi)-T(r')|}{|T(r')|} \lesssim C\sum_{n=0}^{N_3}|r'-\Xi|^n.
\end{equation*}
Proceeding entirely as before, using that $|r-r'|\gtrsim 1$, this is bounded by 
\begin{equation*}
C\sum_{n=1}^{N_3}|r'-r|^n,
\end{equation*}
and so in any case we may write
\begin{equation*}
\frac{|T(r)-T(r')|}{|T(r')|}\leq C \left ( \eps^{-\gamma}|r-r'|+\eps^{-2\gamma}|r-r'|^2+\eps^{-2\gamma}\sum_{n=3}^{N_3+1}|r-r'|^n\right ).
\end{equation*}
Hence, 
\begin{equation*}
\sum_{n=1}^{N_3+1}B_n(\eps)\eps^{n\delta} \leq C(\eps^{-\gamma+\delta}+\eps^{-2\gamma+2\delta}+2\eps^{-2\gamma+3\delta}).
\end{equation*}
The argument for $|r\Omega'(r)||T(r')|^{-1}$
is entirely analogous. Hence, this shows that \ref{P3} and \ref{P4} hold true, and so the proof of the lemma is completed.
\end{proof}

\subsubsection{Sufficient conditions for Assumption \ref{H2}.}
While conditions \ref{H0} and \ref{H1} are rather easily checkable, the condition \ref{H2} is slightly less intuitive. We show that a simple computation on the derivatives of $T(r)$ is sufficient to verify \ref{H2} in many cases.

\begin{lemma}
\label{lemma:checkable hypotheses}
Let $\Omega,U \in C^\infty([0,\infty))$, and set $T(r)$ as defined in \eqref{eq:transport-function}. Suppose that there exists $\Xi>0$, and a compact set $\mathcal{R}_0 \ni r_0$ so that the following hold:
\begin{enumerate}
    \item \label{simple 1}There exist $N_1 \geq 2$, $C_i<\infty$ so that for all $0 \leq i \leq N_1-1$, $r_0 \in\mathcal{R}_0$ it holds
    \begin{equation*}
    \sup_{r\geq \Xi}\frac{|T^{(i)}(r)|}{|T(r)|}+\sup_{r \geq \Xi}\frac{|(r\Omega'(r))^{(i)}|}{|T(r)|} \leq C_i.
    \end{equation*}
    \item \label{simple 2} There holds 
    \begin{equation*}
    \sup_{r_0 \in \mathcal{R}_0}\left (\sup_{r \geq \Xi}|T^{(N_1)}(r)|+\sup_{r \geq \Xi}|(r\Omega'(r))^{(N_2)}|\right )<\infty.
    \end{equation*}
\end{enumerate}
If further $\inf_{r_0 \in \mathcal{R}_0} \inf_{r \geq \Xi}|T(r)|>0$, then, condition \ref{H2} is satisfied for the interval $\mathcal{R}_0$.
\end{lemma}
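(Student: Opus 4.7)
The plan is to verify the two inequalities appearing in \ref{H2} by applying Taylor's theorem with Lagrange remainder to both $T(r)$ and $f(r):=r\Omega'(r)$ about the point $r'$, and to use hypotheses (1) and (2) together with the assumed lower bound on $|T|$ to control each resulting term uniformly in $r_0\in\mathcal{R}_0$.

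For the first inequality, fix $r,r'\geq\Xi$ and $r_0\in\mathcal{R}_0$. Expanding $T$ about $r'$ to order $N_1-1$ yields the existence of $\xi$ between $r$ and $r'$ with
\[
T(r)-T(r') \,=\, \sum_{i=1}^{N_1-1}\frac{T^{(i)}(r')}{i!}(r-r')^{i} + \frac{T^{(N_1)}(\xi)}{N_1!}(r-r')^{N_1}.
\]
Dividing by $|T(r')|$, the first $N_1-1$ terms are bounded directly via hypothesis (1), which provides $|T^{(i)}(r')|/|T(r')|\leq C_i$ uniformly in $r'\geq\Xi$ and $r_0\in\mathcal{R}_0$. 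For the remainder, hypothesis (2) supplies a uniform bound on $|T^{(N_1)}|$, and the factor $|T(r')|^{-1}$ is controlled by $c^{-1}$, where $c:=\inf_{r_0\in\mathcal{R}_0}\inf_{r\geq\Xi}|T(r)|>0$ by assumption. Assembling these estimates delivers the desired bound with $N_3=N_1$.

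For the second inequality, the same Taylor argument applied to $f(r)=r\Omega'(r)$ works, the only difference being that the expansion must now start at $i=0$ since $f$ need not vanish at $r=r'$. Hypothesis (1) with $i=0$ bounds $|f(r')|/|T(r')|\leq C_0$, and the intermediate terms, together with the remainder expressed via $f^{(N_2)}(\xi)$, are handled exactly as in the first case (using hypothesis (2) for the remainder and the uniform lower bound on $|T|$). This yields an estimate of the form $C\sum_{n=0}^{N_2}|r-r'|^n$, matching the intended reading of the second part of \ref{H2}.

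I expect the proof to be essentially routine; the only mildly delicate bookkeeping point is ensuring that \emph{every} constant entering the bound is genuinely independent of $r_0\in\mathcal{R}_0$. This is secured by the uniform-in-$r_0$ phrasing of hypotheses (1) and (2) and by the compactness of $\mathcal{R}_0$, so no additional compactness or continuity argument is needed. The positivity of $\inf_{r_0\in\mathcal{R}_0}\inf_{r\geq\Xi}|T(r)|$ appearing in \ref{H2} is preserved trivially as it is assumed verbatim.
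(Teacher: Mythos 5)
Your proposal is correct and follows essentially the same route as the paper: a Taylor-type expansion of $T$ (and of $r\Omega'(r)$) about $r'$ up to order $N_1-1$, with the remainder controlled by hypothesis (2) and the factor $|T(r')|^{-1}$ controlled by hypothesis (1) together with the uniform lower bound $\inf_{r_0}\inf_{r\geq\Xi}|T|>0$; the paper merely phrases the expansion as an iterated first-derivative bound instead of invoking the Lagrange remainder. The constant ($n=0$) term you obtain for $|r\Omega'(r)|/|T(r')|$ is not a defect of your argument: the paper's ``identical argument'' produces it too, and it is consistent with how \ref{H2} is actually used later (cf.\ the $D_0$ term in \ref{P4}).
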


\begin{proof}
By a Taylor expansion there holds 
\begin{equation*}
|T(r)-T(r')| \leq \sup_{\xi_1 \in [r,r']}|T'(\xi_1)||r-r'| \leq |r-r'|\left(|T'(r')|+|r-r'|\sup_{\xi_2\in [r,r']}|T''(\xi_2)|\right).
\end{equation*}
Inductively, we deduce that 
\begin{equation*}
|T(r)-T(r')|\leq |r-r'|\sum_{1 \leq i \leq N_1-1}|T^{(i)}(r')||r-r'|^{i-1}+|r-r'|^{N_1}\sup_{\xi \geq 0}|T^{(N_1)}(\xi)|.
\end{equation*}
Therefore, dividing through by $|T(r')|$, we deduce that for any $r_0 \in \mathcal{R}_0$ there holds 
\begin{equation*}
\frac{|T(r)-T(r')|}{|T(r')|} \leq |r-r'|\sum_{i \leq N_1} B_i|r-r'|^{i-1},
\end{equation*}
where we have set 
\[
B_i=\sup_{r_0 \in \mathcal{R}_0}\sup_{r' \geq \Xi}\frac{|T^{(i)}(r')|}{|T(r')|}
\]
for $i \leq N_1-1$, and 
\[
B_{N_1}=\sup_{r_0 \in \mathcal{R}_0}\sup_{r \geq 0}|T^{(N_1)}(r)|\sup_{r'\geq \Xi}|T(r')|^{-1}.
\]
An identical argument yields the claim for 
$|r\Omega'(r)||T(r')|^{-1}$.
\end{proof}

\subsubsection{Relevant examples}

In this section, we present illustrative examples that fall within the scope of Assumptions \ref{H0}--\ref{H3} and are of particular interest due to their historical significance or physical relevance. Additionally, we will make sure that these examples also satisfy the condition
\begin{equation}\label{eq:log-derivative}
r_0\left|\frac{\dd}{\dd r}\log\left|\frac{\Omega'(r_0)}{U'(r_0)}\right|\right|<4
\end{equation}
from Theorem \ref{thm:dynamo}.

\begin{lemma}[Simplified model]\label{lemma:simplified-model}
    The smooth vector field
    \[
    \Omega(r) = 1-r, \quad U(r) = 1-r^2,
    \]
    satisfies \ref{H0}-\ref{H3} as well as \eqref{eq:log-derivative} with any choice of compact set $\mathcal{R}_0 \subset (0,\infty)$.
\end{lemma}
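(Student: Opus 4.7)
The plan is a direct verification of each assumption, hinging on the explicit computation that for this flow the transport function simplifies to a perfect square. I would begin by computing the ratio $\Omega'(r_0)/U'(r_0) = 1/(2r_0)$ (from $\Omega'(r)\equiv -1$, $U'(r)=-2r$) and substituting into \eqref{eq:transport-function}, consistent with the sign convention of the derivation in Section \ref{s:equations} that gives $T'(r_0)=0$, which collapses to
\[
T(r) = (r_0-r) - \frac{1}{2r_0}(r_0^2-r^2) = \frac{(r-r_0)^2}{2r_0}.
\]
This single formula delivers the key structural facts: $T$ vanishes only at $r_0$ and does so to second order with $T''(r_0)=1/r_0\neq 0$. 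Hence \ref{H0} (polynomial smoothness) and \ref{H1} (with $\mathcal{F}_0=\emptyset$) are both immediate.

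For the far-field assumption \ref{H2}, my plan is to invoke the checkable criterion of Lemma \ref{lemma:checkable hypotheses} rather than verify the bounds by hand. Choose $\Xi > 2\sup_{r_0\in\mathcal{R}_0} r_0$, so that $|r-r_0|\geq r/2$ for $r\geq\Xi$, giving $|T(r)|\gtrsim r^2$ uniformly over $\mathcal{R}_0$. Since $T'(r)=(r-r_0)/r_0$, $T''(r)=1/r_0$, $T^{(k)}\equiv 0$ for $k\geq 3$, and $r\Omega'(r)=-r$ with only one nonzero derivative, the ratios $|T^{(i)}(r)|/|T(r)|$ and $|(r\Omega'(r))^{(i)}|/|T(r)|$ are of order $r^{-1}$ or $r^{-2}$ and hence uniformly bounded for $r\geq\Xi$, while the top derivatives are trivially bounded. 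Thus the hypotheses of Lemma \ref{lemma:checkable hypotheses} apply and \ref{H2} holds with the chosen compact set.

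For \ref{H3} on unbounded intervals, $T\geq 0$ is bounded below with $T(r)\to\infty$, placing us in the second alternative. Pick any $\Lambda>0$, so that $T(r)+\Lambda\geq\Lambda$ uniformly; then
\[
\frac{|r\Omega'(r)|^2}{|T(r)+\Lambda|} = \frac{r^2}{(r-r_0)^2/(2r_0)+\Lambda}
\]
is continuous on $[p,\infty)$ and tends to the finite limit $2r_0$ as $r\to\infty$, hence is bounded. Finally, for the log-derivative condition \eqref{eq:log-derivative}, I would compute $|\Omega'(r)/U'(r)|=1/(2r)$, so $\frac{d}{dr}\log|\Omega'(r)/U'(r)| = -1/r$, whence the left-hand side equals $r_0\cdot (1/r_0) = 1$, well below the threshold $4$, independently of $r_0\in\mathcal{R}_0$.

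Since every quantity involved is an explicit polynomial, no step presents a real obstacle; the only mild care needed is ensuring uniformity of constants in $r_0$ across the compact set $\mathcal{R}_0\subset(0,\infty)$, which is automatic by continuity. The same strategy will apply to the other examples in the lemma's list, once the transport function for each is computed explicitly.
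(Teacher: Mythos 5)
Your proof is correct and follows essentially the same route as the paper: compute $T(r)=(r-r_0)^2/(2r_0)$ (with the sign convention of Section \ref{s:equations}), read off \ref{H0}--\ref{H1} directly, invoke Lemma \ref{lemma:checkable hypotheses} for \ref{H2}, and compute the log-derivative quantity to be $1<4$. You are in fact slightly more thorough than the paper's own proof, which leaves \ref{H3} unverified, whereas you explicitly check its second alternative via the bounded ratio $r^2/\bigl((r-r_0)^2/(2r_0)+\Lambda\bigr)$.
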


This example of a polynomial vector field exhibiting slow dynamo action is relevant due to the simplicity of its transport function $T(r) \sim (r-r_0)^2$. It has been widely used in applied and numerical studies of the smooth Ponomarenko dynamo, see e.g.\ \cites{Peyrot_Gilbert_Plunian_2009,Ruzmaikin_Sokoloff_Shukurov_1988}.

\begin{proof}
With Lemma \ref{lemma:checkable hypotheses} under our belt, this proof becomes elementary. Observe that 
\[
T(r)=\frac{1}{2r_0}(r-r_0)^2
\]
for any fixed $r_0 >0$. It remains to note that $T(r)$ only vanishes at $r=r_0$, moreover
\[
\frac{|T'(r)|}{|T(r)|} = \frac{2}{(r-r_0)^2}, \quad 
\]
and $T^{(i)}=0$ for any $i\geq 2$, hence
\begin{equation*}
\sup_{r \geq 2r_0+1}\frac{|T^{(i)}(r)|}{|T(r)|}<\infty.
\end{equation*}
Similarly, since $r\Omega'(r)=-r$, conditions (\ref{simple 1}) and (\ref{simple 2}) from Lemma \ref{lemma:checkable hypotheses} follow accordingly. Moreover, to see that it also satisfies \eqref{eq:log-derivative}, we compute
\[
r\left|\frac{\dd}{\dd r}\log\left|\frac{\Omega'(r)}{U'(r)}\right|\right| = 1
\]
for all $r\in (0,\infty)$.
\end{proof}

The following relevant examples concern the existence of finite energy---in the sense of finite $L^2$ norm---velocity fields that exhibit slow dynamo action. The vector fields here presented are in particular in $L^1\cap L^\infty$.

\begin{lemma}[Finite energy dynamos]\label{lemma:finite-energy-dynamo}
The following velocity fields satisfy Assumptions \ref{H0}--\ref{H3} and condition \eqref{eq:log-derivative} from Theorem \ref{thm:dynamo}. 
\begin{enumerate}
    \item Gaussian velocity fields
\begin{equation*}
\Omega(r)=\e^{-ar^2}, \quad U(r)=\e^{-br^2}, 
\end{equation*}
with $a,b>0$ and $a\neq b$.

\item Let $\Omega, U$ satisfy the assumptions of Lemma \ref{lemma:checkable hypotheses}, and moreover assume that 
\[
\Omega(r_0)-\frac{\Omega'(r_0)}{U'(r_0)}U(r_0) \neq 0
\]
and \eqref{eq:log-derivative} hold true for all $r_0\in\mathcal{R}_0$, where $\mathcal{R}_0\subset(0,\infty)$ is given by Lemma \ref{lemma:checkable hypotheses}.
Then, there exists a smooth cut-off $\chi\in C^\infty([0,\infty))$, i.e.\ $0\leq\chi\leq 1$ and
\[
\chi(r) = \begin{cases}
    1 & \textrm{if } r\leq R_0, \\
    0 & \textrm{if } r\geq R_1,
\end{cases}
\]
for some $R_1>R_0>0$, so that $\chi(r)\Omega(r)$, $\chi(r)U(r)$ also satisfy \ref{H0}--\ref{H3} and \eqref{eq:log-derivative}.
\end{enumerate}
\end{lemma}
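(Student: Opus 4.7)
The plan is to verify each of \ref{H0}--\ref{H3} together with \eqref{eq:log-derivative} in turn for both families, using Lemma \ref{lemma:checkable hypotheses} to streamline \ref{H2} and exploiting the fact that \ref{H1} and \eqref{eq:log-derivative} are purely local conditions at $r_0$. Smoothness \ref{H0} is immediate in both cases, so the work lies in the structural conditions on the transport function $T(r)$ and on $r\Omega'(r)$ for large $r$.

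For the Gaussian case, I would begin with the explicit computation
\[
\frac{\Omega'(r_0)}{U'(r_0)} = \frac{a}{b}\,\e^{(b-a)r_0^2}, \qquad T''(r_0) = 4a(a-b)r_0^2\,\e^{-ar_0^2},
\]
so $T''(r_0)\neq 0$ precisely because $a\neq b$. To check that $r_0$ is the unique zero of $T$ on $(0,\infty)$, I would use that
\[
T'(r) = 2ar\,\e^{-ar^2}\bigl(\e^{(a-b)(r^2-r_0^2)}-1\bigr)
\]
vanishes on $(0,\infty)$ only at $r=r_0$, so $T$ is strictly monotone on each side of $r_0$; combined with the nonzero limit $T(r)\to \tfrac{a-b}{b}\e^{-ar_0^2}$ as $r\to\infty$, this leaves at most a zero at $r=0$, which occurs only on a discrete (analytic) set of $r_0$'s that can be excluded by restricting $\mathcal{R}_0$. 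Condition \ref{H2} will then follow from Lemma \ref{lemma:checkable hypotheses}, since all derivatives of $T$ and of $r\Omega'(r)$ decay exponentially while $|T|$ approaches a nonzero constant uniformly on $\mathcal{R}_0$. The first alternative of \ref{H3} applies because $r\Omega'(r)=-2ar^2\e^{-ar^2}\to 0$. Finally, the computation
\[
r_0\left|\frac{\dd}{\dd r}\log\left|\frac{\Omega'(r_0)}{U'(r_0)}\right|\right| = 2|a-b|r_0^2
\]
reduces \eqref{eq:log-derivative} to the constraint $r_0 < \sqrt{2/|a-b|}$, which fixes the upper endpoint of the admissible $\mathcal{R}_0$.

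For the compactly supported case, the strategy is to take a cutoff $\chi$ with $\chi\equiv 1$ on a neighbourhood $[0,R_0]$ of $\overline{\mathcal{R}_0}$ and $\chi\equiv 0$ on $[R_1,\infty)$, so that in particular $\chi(r_0)=1$ and $\chi'(r_0)=0$ for all $r_0\in\mathcal{R}_0$. Then the quantities $(\chi\Omega)(r_0)$, $(\chi\Omega)'(r_0)$, their counterparts for $U$, and the ratio $\Omega'(r_0)/U'(r_0)$ all coincide with those of $\Omega,U$, so the conditions \ref{H0}, the local part of \ref{H1} at $r_0$, and \eqref{eq:log-derivative} are directly inherited. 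For $r\geq R_1$ the new transport function $\widetilde T$ reduces to the constant
\[
-\Bigl(\Omega(r_0) - \frac{\Omega'(r_0)}{U'(r_0)}\,U(r_0)\Bigr),
\]
nonzero by hypothesis and uniformly bounded below on the compact set $\mathcal{R}_0$; since $\widetilde\Omega,\widetilde U$ and all their derivatives vanish on $[R_1,\infty)$, \ref{H2} (via Lemma \ref{lemma:checkable hypotheses}) and the first alternative of \ref{H3} follow at once. The delicate step, and I expect the main technical obstacle, is to ensure that the transition region $(R_0,R_1)$ does not introduce new zeros of $\widetilde T$ incompatible with \ref{H1}. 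I would address this by first choosing $R_0$ large enough that $T$ has no spurious zeros past $R_0$ uniformly in $r_0\in\mathcal{R}_0$ (using that by Lemma \ref{lemma:checkable hypotheses} $|T|$ is bounded below for $r$ large), and then constructing $\chi$ with a monotone derivative on $(R_0,R_1)$ so that $\widetilde T$ transitions monotonically between $T(R_0)$ and the limiting constant, thereby introducing at most finitely many isolated simple zeros, which may be absorbed into the set $\mathcal{F}_0(r_0)$ in \ref{H1} without affecting \ref{H2}.
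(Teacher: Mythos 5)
Your treatment of the Gaussian case is correct and is in substance the paper's own argument: the paper proves uniqueness of the zero of $T$ by rewriting $T(r)=0$ as $\tfrac{a}{b}-1=\tfrac{a}{b}\e^{-b(r^2-r_0^2)}-\e^{-a(r^2-r_0^2)}$, which carries the same monotonicity information you extract from the explicit formula for $T'$; your values of $T''(r_0)$, the verification of \ref{H2} through Lemma \ref{lemma:checkable hypotheses}, the first alternative of \ref{H3}, and the identity $r_0\bigl|\frac{\dd}{\dd r}\log|\Omega'(r_0)/U'(r_0)|\bigr|=2|a-b|r_0^2$ all agree with the paper. Your concern about a zero of $T$ at $r=0$ is superfluous — the same monotonicity argument (or the paper's rearranged identity evaluated at $r=0$, i.e.\ $y=-r_0^2$) shows $T(0)\neq 0$ for every $r_0>0$ — but your fallback of removing a discrete set of $r_0$'s is harmless.

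In part (2) there is a genuine gap at exactly the point you flag as the main obstacle. Write $\widetilde T(r)=\chi(r)F(r)-c$ on the transition region, with $F(r)=\Omega(r)-\frac{\Omega'(r_0)}{U'(r_0)}U(r)$ and $c=F(r_0)\neq 0$. First, a smooth cutoff with $\chi\equiv 1$ on $[0,R_0]$ and $\chi\equiv 0$ on $[R_1,\infty)$ has $\chi'(R_0)=\chi'(R_1)=0$, so $\chi'$ cannot be monotone on $(R_0,R_1)$ unless it vanishes identically; presumably you mean $\chi$ itself monotone. Even so, $F$ is not constant on $(R_0,R_1)$, so monotonicity of $\chi$ does not make $\widetilde T$ monotone between $T(R_0)$ and $-c$, and nothing in your construction excludes degenerate zeros $\widetilde T(s)=\widetilde T'(s)=0$ (or, a priori, infinitely many zeros) in the transition region. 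Since \ref{H1} demands that every zero other than $r_0$ be simple, your conclusion ``at most finitely many isolated simple zeros, which may be absorbed into $\mathcal{F}_0$'' is precisely what must be proved and does not follow from the argument given. To close it one must exploit the freedom in choosing $\chi$: the zeros of $\widetilde T$ solve $\chi(r)=c/F(r)$ and are confined to a compact subinterval of $(R_0,R_1)$ (near $R_0$ one has $\widetilde T=T$, bounded away from zero once $R_0\geq\Xi$; near $R_1$, $\widetilde T$ is close to $-c\neq 0$), so perturbing a reference cutoff by a small bump supported where $0<\chi<1$ and invoking a Sard/transversality argument makes all such intersections transversal, hence simple and finite. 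The paper is itself laconic here — it only says the cutoff is ``picked so that \ref{H1} is satisfied'' — but your proposal commits to a specific mechanism (a monotone transition) that fails, so this step needs the repair above. The rest of your part (2) — \ref{H2} with $\Xi=R_1$, where $\widetilde T$ equals the nonzero constant $-c$ and $r(\chi\Omega)'\equiv 0$, the first alternative of \ref{H3}, and \eqref{eq:log-derivative} via $\chi\equiv 1$ near $r_0$ — matches the paper and is fine.
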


\begin{proof}
We begin by checking the case of Gaussian velocity fields. We have
\begin{equation*}
T(r)= \e^{-ar^2} -\frac{a}{b}\e^{-(a-b)r_0^2}e^{-br^2} +\left ( \frac{a}{b}-1 \right )\e^{-ar_0^2}.
\end{equation*}
A direct computation yields that $T(r)=0$ if and only if
\[
\frac{a}{b}-1 = \frac{a}{b}\e^{-b(r^2-r_0^2)} - \e^{-a(r^2-r_0^2)},
\]
which shows that $r=r_0$ is the unique point where $T(r)$ vanishes---and it does so up to second order---. Finally, since $T^{(i)}(r) \to 0$ as $r \to \infty$ for any $i$, and the same holds for $r\Omega'(r)$, the assumptions of Lemma \ref{lemma:checkable hypotheses} are satisfied. To check that \eqref{eq:log-derivative} holds true, we compute
\[
r\left|\frac{\dd}{\dd r}\log\left|\frac{\Omega'(r)}{U'(r)}\right|\right| = 2r^2|b-a|.
\]
Thus, a smart choice of $a,b>0$ yields that there exists $\mathcal{R}_0\subset (0,\infty)$ compact and with nonempty interior such that \eqref{eq:log-derivative} is satisfied for all $r_0\in\mathcal{R}_0$.

For the case of compactly supported velocity fields, we just need to pick a smooth cut-off so that \ref{H1} is satisfied and $r_0<R_0$, since the relative derivative bounds from \ref{lemma:checkable hypotheses} follow immediately from the condition 
\[
\Omega(r_0)-\frac{\Omega'(r_0)}{U'(r_0)}U(r_0) \neq 0
\] 
and from $\Omega$ and $U$ being smooth. To obtain the corresponding condition \eqref{eq:log-derivative}, simply recall that $\Omega$ and $U$ satisfy it by assumption, hence upon choosing $R_0$ and $R_1$ conveniently, this property readily follows for $\chi\Omega$ and $\chi U$.
\end{proof}

Finally, we show that the Taylor--Couette flow indeed satisfies our assumptions. 
\begin{lemma}[Taylor--Couette Ponomarenko dynamo]
Let 
\[
u(r)=\left(\frac{a_1}{r}+a_3r\right)\hat \theta+(a_2 \log(r)+a_4) \hat z,
\]
i.e.\ $\Omega(r)=a_1r^{-2}+a_3$, $U(r)=a_2\log(r)+a_4$, with $a_1,a_2\neq 0$. Then, the Taylor--Couette flow satisfies Assumptions \ref{H0}--\ref{H3} and \eqref{eq:log-derivative} on either $\cI=[p,q]$ or $\cI=[p,\infty)$ with $0<p<q$, for any $r_0\in\mathrm{int}(\cI)$.
\end{lemma}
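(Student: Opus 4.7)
The plan is to verify Assumptions \ref{H0}--\ref{H3} as well as condition \eqref{eq:log-derivative} one at a time via direct computation with the explicit Taylor--Couette formulas, using Lemma \ref{lemma:checkable hypotheses} to streamline \ref{H2}. Since $\cI \subseteq [p,\infty)$ with $p>0$, we can pick an open $\mathcal{J}\supset \overline{\cI}$ that stays in $(0,\infty)$, and on $\mathcal{J}$ the functions $\Omega(r)=a_1 r^{-2}+a_3$ and $U(r)=a_2\log(r)+a_4$ are smooth, so \ref{H0} is immediate. For the logarithmic-derivative condition, a quick calculation gives $\Omega'(r)/U'(r)=-2a_1/(a_2 r^2)$, whence $\tfrac{\d}{\d r}\log|\Omega'(r)/U'(r)|=-2/r$ and $r_0|{-2/r_0}|=2<4$, which settles \eqref{eq:log-derivative} for every $r_0>0$.

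Next, I would compute the transport function. Using $\Omega'(r_0)/U'(r_0)=-2a_1/(a_2 r_0^2)$ and $U(r)-U(r_0)=a_2\log(r/r_0)$, one obtains the clean expression
\begin{equation*}
T(r)=a_1\left[\frac{1}{r^2}-\frac{1}{r_0^2}+\frac{2}{r_0^2}\log\frac{r}{r_0}\right].
\end{equation*}
Differentiation gives $T'(r_0)=0$ and $T''(r_0)=4a_1/r_0^4\neq 0$. The crucial point for \ref{H1} is that $r_0$ is the \emph{only} zero of $T$: changing variables to $x=r/r_0$ reduces this to showing $f(x):=x^{-2}-1+2\log x$ vanishes only at $x=1$ on $(0,\infty)$. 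But $f'(x)=2(x^2-1)/x^3$ is negative on $(0,1)$ and positive on $(1,\infty)$, so $x=1$ is a strict global minimum of $f$ with $f(1)=0$, proving $f(x)>0$ for $x\ne 1$. Hence $\mathcal{F}_0=\emptyset$ and \ref{H1} holds.

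For \ref{H2} I would invoke Lemma \ref{lemma:checkable hypotheses}. Fix a compact interval $\mathcal{R}_0\subset\mathrm{int}(\cI)$ containing $r_0$, and choose $\Xi>\sup\mathcal{R}_0$. For $r\ge\Xi$ the formulas $T(r)\sim (2a_1/r_0^2)\log r$, $T^{(k)}(r)=O(r^{-k})$ (for $k\ge 1$), and $r\Omega'(r)=-2a_1/r^2$ together with its derivatives, all decaying polynomially, make the ratios $|T^{(i)}(r)|/|T(r)|$ and $|(r\Omega'(r))^{(i)}|/|T(r)|$ uniformly bounded (actually $o(1)$) in $r\ge\Xi$ and $r_0\in\mathcal{R}_0$. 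The sign of $a_1$ is preserved asymptotically so $|T(r)|\to\infty$ uniformly in $r_0\in\mathcal{R}_0$, guaranteeing $\inf_{r\ge\Xi,\,r_0\in\mathcal{R}_0}|T(r)|>0$; the top-order derivatives $T^{(N_1)}$ and $(r\Omega'(r))^{(N_2)}$ are clearly uniformly bounded in these variables. Thus Lemma \ref{lemma:checkable hypotheses} yields \ref{H2}. Finally, when $\cI=[p,\infty)$, Assumption \ref{H3} is satisfied by the first alternative, since $|r\Omega'(r)|=2|a_1|/r^2$ is bounded and tends to $0$ as $r\to\infty$.

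The proof is thus a sequence of short explicit computations; there is no serious obstacle. The only mildly non-trivial step is the uniqueness of the zero of $T$, which reduces to the elementary monotonicity argument for $f(x)=x^{-2}-1+2\log x$ sketched above. Everything else follows from direct inspection of the polynomial/logarithmic asymptotics of $\Omega$, $U$ and their derivatives, together with the fact that $r\Omega'(r)=-2a_1/r^2$ decays rapidly enough at infinity to make the alternative in \ref{H3} trivially verifiable.
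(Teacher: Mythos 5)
Your proposal is correct and follows essentially the same route as the paper: compute $T(r)=a_1\bigl(r^{-2}-r_0^{-2}+2r_0^{-2}\log(r/r_0)\bigr)$, show via the sign of $T'$ that $r_0$ is its unique (second-order) zero, verify \ref{H2} through Lemma \ref{lemma:checkable hypotheses} using the logarithmic growth of $T$ against the $O(r^{-1})$ decay of its derivatives and of $r\Omega'(r)=-2a_1r^{-2}$, use the first alternative of \ref{H3}, and check that the logarithmic derivative in \eqref{eq:log-derivative} equals $2<4$. Your substitution $x=r/r_0$ for the uniqueness of the zero is just a slightly more explicit version of the paper's monotonicity argument, so there is nothing further to add.
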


\begin{proof}
The transport function is given by 
\begin{equation*}
T(r)=a_1\left (\frac{1}{r^2}-\frac{1}{r_0^2}\right )+a_1\frac{2}{r_0^2}\left (\log(\frac{r}{r_0})\right ).
\end{equation*}
Differentiating this equation in $r$, we obtain 
\begin{equation*}
-\frac{2a_1}{r^3}+\frac{2a_1}{r r_0^2},
\end{equation*}
which is equal to zero only if $r^2=r_0^2$, i.e.\ when $r=r_0$, since we only consider non-negative values of $r$, and thus \ref{H0}, \ref{H1} are satisfied. For \ref{H2}, we apply Lemma \ref{lemma:checkable hypotheses} and deduce easily from our computation of $T(r)$ that $|T'(r)||T(r)|^{-1}$ is bounded uniformly for $r$ large and $r_0$ in any compact interval. Certainly also $|T''(r)|$ is bounded uniformly, and $|T(r)|$ is bounded below uniformly for $r$ large, $r_0$ in a compact interval. Similarly, $r\Omega'(r)=-2a_1r^{-2}$, and so the same considerations can be used in order to conclude \eqref{simple 1}, \eqref{simple 2}, and hence \ref{H2}.
To check \ref{H3} for the case $\mathcal{I}=[p,\infty)$, we note that $|r\Omega'(r)| \to 0$ as $r \to \infty$. Additionally, to obtain condition \eqref{eq:log-derivative} we compute
\[
r\left|\frac{\dd}{\dd r}\log\left|\frac{\Omega'(r)}{U'(r)}\right|\right| = 2,
\]
uniformly for an $r>0$.
\end{proof}

\section{A-priori estimates on the kinematic dynamo operator}\label{s:injectivity}

Before commencing with the proof of the existence of a growing mode, we collect here a couple of facts about the kinematic dynamo operator, which will be important for our analysis. In particular, we shall derive an extremely useful a-priori estimates on the behaviour of eigenfunctions of the kinematic dynamo operator, and discuss some of its consequences. These include being able to show that any growing mode of the dynamo equation must be divergence free, as well as begin able to deduce qualitative spectral properties about the kinematic dynamo operator. In particular, we will be able to give sufficient conditions for all unstable eigenvalues of the kinematic dynamo operator to be isolated, and hence for the asymptotic expansion of the growing mode in Theorem \ref{thm:dynamo} to be valid.

\subsection{The a-priori estimate}\label{s:a-priori}
To start with, we provide an a-priori estimate on all eigenfunctions of the operator $\L$ in $X$, which is a crucial ingredient of the proofs of Lemmas \ref{lemma:injectivity1}, \ref{lemma:injectivity2}, as well as of the proof of Lemma \ref{lemma:divergence free} concerning the divergence free property of the growing mode that we construct. 

\begin{lemma}
\label{lemma:eigenfunctions are smooth}
Let $\mathcal{I}=[0,q]$, for $q \in (0,\infty]$, suppose $\Omega,U\in C^3(\mathcal{I})$, and consider the operator 
\[
\dyn b = \eps\left( \partial^2_r+\frac{1}{r}\partial_r - \frac{m^2}{r^2} - k^2 -\frac{1}{r^2} - i\eps^{-1} (m\Omega(r)+kU(r)) \right)b + \begin{pmatrix}
    -2i\eps mr^{-2} b_\theta \\
    (2i\eps mr^{-2} + r\Omega'(r))b_r
\end{pmatrix}.
\]
If $b=(b_r,b_\theta)$ is an eigenfunction of $\dyn$ with eigenvalue $\lambda\in\Co$, so that further 
\[
b \in \left\lbrace b \in H^2_{\loc}(\mathcal{I}) \cap X:\dyn b \in X_{\mathcal{I}}\right\rbrace,
\]
then the following holds for all $m,N$ large enough depending only on $\Omega$, $U$
\begin{enumerate}
    \item $r^{-2}b \in L^\infty(\mathcal{I},\max\{1,r^2\}\dd r)$,
    \item $r^{-1}\partial_r b\in L^\infty(\mathcal{I},\max\{1,r^2\}\dd r)$,
    \item $\partial_r^2 b \in L^\infty(\mathcal{I},\max\{1,r^2\}\dd r)$.
\end{enumerate}
In fact, if $b_z \in X_{\mathcal{I}}$ satisfies 
\begin{equation}
\eps(\partial_r^2 +\frac{1}{r}\partial_r-\frac{m^2}{r^2}-\frac{1}{r^2}-i\eps^{-1}(m\Omega(r)+kU(r))b_z+U'(r)b_r=\lambda b_z,
\end{equation}
the same bounds also hold for $b_z$.
\end{lemma}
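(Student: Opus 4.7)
The plan is to exploit the eigenvalue equation $\dyn b = \lambda b$ to algebraically solve for $\partial_r^2 b$ in terms of lower derivatives, and then combine three ingredients: (i) interior elliptic regularity on compact subsets of $(0,\infty)\cap\mathcal{I}$, (ii) a Frobenius-type analysis at the singular endpoint $r=0$, and (iii) the polynomial decay in $|r-r_0|$ encoded in the weight $w_\eps$ to handle the behavior as $r\to\infty$ when $\mathcal{I}$ is unbounded. Away from $r=0$, the operator $\dyn$ is a second-order ODE system with smooth coefficients and uniformly elliptic leading part. Hence for any compact subinterval $[a,b]\subset(0,\infty)\cap\mathcal{I}$, standard interior regularity combined with the one-dimensional Sobolev embedding $H^2_{\mathrm{loc}}\hookrightarrow C^1$ immediately gives the pointwise bounds on $b,\partial_r b,\partial_r^2 b$ on $[a,b]$. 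It thus only remains to obtain uniform control as $r\to 0$ and, if $\mathcal{I}$ is unbounded, as $r\to\infty$.

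For large $r$, I would rearrange the equation to read $\partial_r^2 b + r^{-1}\partial_r b = F(r)$, where $F$ depends linearly on $b$ (and on $b_r$ in the $b_z$ equation) through coefficients that grow at most polynomially thanks to \ref{H2}. Since $b\in X$ gives $|b(r)|\lesssim \max\{1,r^2\}^{-1}(1+\eps^{-1/3}|r-r_0|)^{-N}$, taking $N$ large enough that the $w_\eps$ decay dominates the polynomial growth of the coefficients (and of $r\Omega'(r)$) yields $|\partial_r^2 b(r)|\lesssim r^{-2}$ and $|\partial_r b(r)|\lesssim r^{-1}$ for $r$ large, by integrating from infinity. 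These are exactly the bounds compatible with the weight $\max\{1,r^2\}$.

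The key technical step is the behavior at the origin. Here, the leading part of $\dyn$ acting on $(b_r,b_\theta)$ is the Bessel-type Euler operator
\begin{equation*}
\partial_r^2 + r^{-1}\partial_r - (m^2+1)r^{-2},
\end{equation*}
with characteristic exponents $\pm\sqrt{m^2+1}$, while the $b_z$ component sees the analogous operator with exponents $\pm|m|$. Since $b\in X$ forces $b$ to be bounded at $r=0$, only the regular branch $r^{\sqrt{m^2+1}}$ (respectively $r^{|m|}$) can contribute. A variation-of-parameters argument, set up as a contraction in a weighted space on $(0,\eta)$ and carried out jointly for $(b_r,b_\theta)$ to handle the off-diagonal $r^{-2}$ coupling, then propagates the forcing $F$ (which, by the bulk step, is $L^\infty$ on $[0,\eta]$) into the quantitative bound $|b(r)|\lesssim r^{\sqrt{m^2+1}}$ as $r\to 0$. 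Provided $|m|$ is large enough that $\sqrt{m^2+1}\geq 2$ (e.g.\ $|m|\geq 2$), this gives $|b|\lesssim r^2$, $|\partial_r b|\lesssim r$, and hence $|\partial_r^2 b|\lesssim 1$ from the equation, which are exactly the three bounds asked for.

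The main obstacle is that naive Frobenius analysis treats $b_r$ and $b_\theta$ independently, whereas the off-diagonal $r^{-2}$ terms mean that the correct decay of one component depends on the correct decay of the other, and in principle one should worry about resonances between the two sets of exponents. I would overcome this by performing the variation-of-parameters fixed point in a product weighted space where both components are controlled simultaneously at a common rate $r^{\sqrt{m^2+1}}$, which closes provided $m$ is large enough that the coupling constant $2\eps m r^{-2}$ can be absorbed (this is a second, independent source of the ``$m$ large'' hypothesis). Once the bounds for $b_r,b_\theta$ are in place, the argument for $b_z$ is a strictly simpler scalar version: its equation has the same singular part with a bounded source $U'(r)b_r$, and the identical Frobenius/variation-of-parameters step gives the corresponding bounds.
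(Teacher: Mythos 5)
Your overall architecture (interior regularity away from $r=0$, a Bessel/Euler reduction at the origin with the singular branch excluded by boundedness, and the weight $w_\eps$ beating polynomial coefficient growth at infinity) is the same skeleton as the paper's proof, and your treatment of the bulk and of $r\to\infty$ is essentially fine, if sketchier than the paper's explicit Green's-function bounds. The gap is exactly at the point you flag as the main obstacle: the off-diagonal coupling $\pm 2i\eps m r^{-2}$ cannot be "absorbed'' by taking $m$ large, and your fixed point at the common rate $r^{\sqrt{m^2+1}}$ does not close. The coupling enters at the same singular order $r^{-2}$ as the diagonal term, and its effect is not a small perturbation of the indicial data: the indicial equation of the coupled system is $\bigl[\mu^2-(m^2+1)\bigr]^2=4m^2$, so the true exponents are $\mu=\pm(m+1)$ and $\pm(m-1)$, not $\pm\sqrt{m^2+1}$ for both components. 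A generic admissible eigenfunction therefore vanishes at the origin only like $r^{m-1}$, which is strictly slower than $r^{\sqrt{m^2+1}}$, so the space $\{|b|\lesssim r^{\sqrt{m^2+1}}\}$ is not preserved by your iteration. Concretely, feeding $r^{\sqrt{m^2+1}}$ through the coupling and then through the Green's function of $\partial_r^2+r^{-1}\partial_r-(m^2+1)r^{-2}$ hits an exact resonance ($\sigma=\nu$) and produces $r^{\nu}\log(1/r)$; at nearby rates $\sigma$ the gain factor is $2m/|\sigma^2-(m^2+1)|$, which is of size $1$ (not small) precisely when $\sigma=m\pm1$, and enlarging $m$ leaves this $O(1)$ shift of the exponents unchanged.

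The paper sidesteps this entirely by first diagonalizing the singular coupling: setting $W_1=ib_r+b_\theta$, $W_2=-ib_r+b_\theta$ turns the $r^{-2}$ block into diagonal terms with exponents $(m+1)^2$ and $(m-1)^2$, while the remaining coupling through $r\Omega'(r)(W_1-W_2)$ is regular at $r=0$ and is simply put into the bounded right-hand side. After that, no fixed point is needed: one inverts the scalar Euler operator with the explicit kernel built from $r^{\pm(m+1)}$ (resp.\ $r^{\pm(m-1)}$), checks $|\G_\alpha f|\lesssim r^2$ for $\alpha>2$ (whence the requirement that $m$ be large, e.g.\ $m>3$) and $|\G_\alpha f|\lesssim r^{-1}$ at infinity, and identifies $W_i$ with $\G_\alpha f$ by noting the difference lies in $\mathrm{span}\{r^{\alpha},r^{-\alpha}\}$ and is bounded at $0$ and non-growing at infinity. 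If you want to keep your variation-of-parameters framework, you must either perform this diagonalization first or run a genuine system-Frobenius argument with the correct exponents $m\pm1$; with the rate $r^{m-1}$ (rather than $r^{\sqrt{m^2+1}}$) the final conclusions $|b|\lesssim r^2$, $|\partial_r b|\lesssim r$, $|\partial_r^2 b|\lesssim 1$ still follow for $m$ large, so the statement is unaffected, but the contraction step as you wrote it is not correct.
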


\begin{proof}
Set $W_1=ib_r+b_\theta$ and $W_2=-ib_r+b_\theta$. Then, we may rewrite the equations as 
\begin{equation}\label{eq:operator-apriori}
\begin{split}
    \widehat{\dyn} W & = \eps\left(\partial^2_r+\frac{1}{r}\partial_r-\frac{m^2}{r^2}-k^2-\frac{1}{r^2}-i(m\Omega(r)+kU(r))\right)W \\
    & \quad + \frac{2m}{r^2}\begin{pmatrix}
    -W_1\\
    W_2
\end{pmatrix} + \frac{r\Omega'(r)}{2i}\begin{pmatrix}
    W_1-W_2\\
    W_1-W_2
\end{pmatrix}
\end{split}
\end{equation}
In particular, if $W$ solves the equation $(\widehat \dyn-\lambda)W = 0$, we find that the first component must satisfy
\begin{equation}\label{eq:eq-first-comp-apriori}
    \eps \left(\partial^2_r+\frac{1}{r}\partial_r-\frac{m^2+2m+1}{r^2}\right)W_1 = i(m\Omega(r)+kU(r))W_1-r\Omega'(r)\frac{W_1-W_2}{2i}+(\lambda+\eps k^2) W_1.
\end{equation}
Since by assumption $W\in X=L^\infty(\mathcal{I},\max\{1,r^2\}w_\eps(r-r_0)\dd r)$, with $w_\eps(s) = 1+(\eps^{-1/3}|s|)^N$, we see that as long as the exponent $N>0$ in the weight is sufficiently large, the right hand side of \eqref{eq:eq-first-comp-apriori} is in $L^\infty(\mathcal{I},\max\{1,r^2\}\dd r)$. Indeed, from Assumption \ref{H2} it is clear that $\Omega$, $U$ and $\Omega'$ cannot grow faster than polynomially as $r\to\infty$. In fact, given $\Omega$ and $U$, we may choose $N$ sufficiently large so that the right hand side in \eqref{eq:eq-first-comp-apriori} vanishes at least cubically as $r\to\infty$. With this in mind, we construct a Green's function for this equation. A simple computation shows that the equation 
\[
\left(\partial_r^2+\frac{1}{r}\partial_r-\frac{\alpha^2}{r^2}\right)v=0
\]
has two linearly independent solutions, given by $r^{\alpha}$ and $r^{-\alpha}$. Therefore, upon computing the Wronskian, we can construct a Green's function given by
\begin{equation}\label{eq:Green's_Galpha}
\G_\alpha f=-\frac{1}{2\alpha} \left[r^{-\alpha} \int_0^r s^{\alpha+1}f(s)\dd s+r^{\alpha}\int_r^\infty s^{-\alpha+1}f(s)\dd s \right],
\end{equation}
where without loss of generality we assume $\alpha>0$. In particular, we can rewrite the equation for the first component as
\begin{equation}\label{eq:first-comp-injec}
\left(\partial^2_r+\frac{1}{r}\partial_r-\frac{(m+1)^2}{r^2}\right)W_1 = f,
\end{equation}
with
\[
f=\eps^{-1} \left(i(m\Omega(r)+kU(r))W_1-r\Omega'(r)\frac{W_1-W_2}{2i}+(\lambda+\eps k^2) W_1 \right),
\]
that is clearly in $L^\infty(\mathcal{I})$, and as mentioned before, it decays at least cubically as $r\to\infty$. A direct computation from \eqref{eq:Green's_Galpha} yields the estimate
\begin{equation}\label{eq:Galpha-1}
|\G_\alpha f(r)| \lesssim r^2\|f\|_{L^\infty}
\end{equation}
for any $r>0$, provided that $\alpha>2$. Since $\alpha = m+1$ for $W_1$, and $\alpha = m-1$ for $W_2$, we need to choose $m>3$. In addition, using that $f$ decays at least cubically at infinity, we find that as $r\to\infty$ there holds
\begin{equation}\label{eq:Galpha-2}
|\G_\alpha f(r)|\lesssim \frac{1}{r}.
\end{equation}
Going back to \eqref{eq:first-comp-injec} and setting $\alpha = m+1$, we find that
\[
\left(\partial^2_r+\frac{1}{r}\partial_r-\frac{(m+1)^2}{r^2}\right)(\G_{m+1}f-W_1)=0,
\]
and so $\G_{m+1}f-W_1$ must be in the kernel of  the operator $\partial^2_r+r^{-1}\partial_r-(m+1)^2r^{-2}$, which we have just shown to be precisely $r^{\pm (m+1)}$. $\G_{m+1}f$ is bounded both at zero and, since it lives in $X$ it cannot grow as the right endpoint of $\mathcal{I}$ is approached. Thus, it follows that $\G_{m+1}f=W_1$, and so we find that $W_1$ vanishes quadratically at zero, so that $r^{-2}W_1\in L^\infty(\mathcal{I},\max\{1,r^2\}\dd r)$. (In fact, we may iterate this argument further to show it vanishes at zero at order proportional to $m$). Next, we shall show that $b$ must in fact be differentiable in $r$. To do so, recall that we have proved that we can write
\[
W_1=-\frac{1}{2\alpha}\left[r^{-\alpha} \int_0^r s^{\alpha+1}f(s)ds+r^{\alpha}\int_r^\infty s^{-\alpha+1}f(s)ds\right].
\]
Note that $f \in H^2_{\loc}$ by assumption, and since we are in one dimension, this in fact implies that $f \in C^1$. Therefore, it is clear that $W_1$ is continuously differentiable away from zero, and its derivative is given by 
\[
\partial_r W_1=-\frac{1}{2} \left[- r^{-\alpha-1}\int_0^r s^{\alpha+1}f(s)ds+r^{\alpha-1} \int_r^\infty s^{-\alpha+1}f(s)ds \right].
\]
Hence, arguing as for \eqref{eq:Galpha-1} and \eqref{eq:Galpha-2}, we deduce that
\begin{align*}
    |\partial_r W_1| & \lesssim r\|f\|_{L^\infty} \quad \text{for any } r>0, \\
    |\partial_r W_1| & \lesssim \frac{1}{r^2} \quad \text{as } r\to\infty.
\end{align*}
In particular this implies that (at least) $r^{-1}\partial_r W_1\in L^\infty(\mathcal{I},\max\{1,r^2\}\dd r)$. The same argument, (with the sole exception that now $\alpha=m-1$), carries over for the $W_2$ using the second component in \eqref{eq:operator-apriori}. All in all we can finally combine the facts that 
\begin{itemize}
    \item $\mathcal{L}b \in L^\infty(\mathcal{I},\max\{1,r^2\})$,
    \item $r^{-2}b, \ r^{-1}\partial_r b, \in L^\infty(\mathcal{I},\max\{1,r^2\}\dd r)$, and
    \item $(m\Omega(r)+kU(r))b, \ r\Omega'(r)b \in L^\infty(\mathcal{I},\max\{1,r^2\}\dd r)$ provided that $N$ is large enough,
\end{itemize}
to deduce from 
\[
\eps \partial_r^2 b_r=-\eps \left(\frac{1}{r}\partial_r-\frac{m^2}{r^2}-k^2-\frac{1}{r^{2}}-i(m\Omega(r)+kU(r)) \right)b_r-\frac{2im}{r^2}b_\theta.
\]
that $\partial_r^2 b\in L^\infty(\mathcal{I},\max\{1,r^2\}\dd r)$. In fact, from our arguments it actually follows that $\partial_r^2b$ is continuous. Finally, the same arguments may be repeated for the $z$-component, which then completes the proof. 
\end{proof}

\subsection{Spectral properties of the dynamo operator} \label{s:isolated eigs}

An important application of the a-priori bound from Lemma \ref{lemma:eigenfunctions are smooth} is to the question of whether eigenfunctions of the kinematic dynamo operator are isolated. Indeed, whilst this question is an elementary consequence of the compact embedding of $H^1$ into $L^2$ on bounded domains, on unbounded domains there is a-priori not much of a reason for this property to hold. In particular, the cases where $r \in [0,\infty)$ or $r \in [p,\infty)$ with perfectly conducting boundary conditions require additional care.

Since the dynamo equations can be viewed as a perturbation of the advection diffusion equation by the stretching term $r\Omega'(r)B_r$, it is natural to expect that spectral considerations are most easily dealt with in $L^2$. From this point of view, the role of Lemma \ref{lemma:eigenfunctions are smooth} is then quite simply to show that the set of $L^2$ eigenfunctions of the kinematic dynamo operator contains the set of its eigenfunctions in $X$. Therefore, if the set of $L^2$-eigenvalues is isolated, we conclude a-fortiori that the same holds for the set of $X$-eigenvalues.

The aim this sub-section will therefore precisely be to find conditions so that the set of (unstable) $L^2$-eigenvalues of the kinematic dynamo operator is isolated when the operator acts on functions defined on unbounded domains.
Continuing with the intuition that one can treat the stretching term perturbatively, we will mainly be interested in the behaviour of $r\Omega'(r)$, and in fact we deduce two checkable assumptions on this stretching term that ensure that all unstable eigenvalues are isolated. The first of these addresses the case where the stretching decays at infinity, namely
\[
\lim_{r\to\infty} |r\Omega'(r)| = 0.
\]
This is certainly satisfied in many cases of interest, such as for compactly supported, or finite energy velocity fields. If on the other hand, the velocity field grows at infinity, the second Lemma of this section still yields relatively mild conditions for all eigenfunctions to be isolated. Indeed, we simply require that the transport function $T(r)$ must diverge as $r\to\infty$ at a rate exceeding \emph{the square} of the stretching term, namely $|T(r)|\to\infty$ and
\[
\lim_{r\to\infty} \frac{|T(r)|}{|r\Omega'(r)|^2} = \infty.
\]
We stress once again that these conditions for the velocity fields are required in order to obtain an asymptotic expression for the growing mode eigenfunction. Any examples of vector fields that satisfy Assumptions \ref{H0}--\ref{H2} but that do not satisfy the conditions here introduced, will still have a growing mode of the form stated in Theorem \ref{thm:dynamo}, but we cannot say much about how the eigenfunction looks like.

Without further ado, let us address the main results of this section.
\begin{lemma}
\label{lemma:injectivity1}
Let $\mathcal{I}=[0,\infty)$ or $\mathcal{I}=[p,\infty)$, for some $p >0$. Suppose that 
\[
\lim_{r \to \infty}|r\Omega'(r)|=0.
\]
Then, provided $k^2$ is large enough, there are at most countably many $\lambda \in \mathbb{C}$ with $\Re(\lambda)>0$ for which $\mathcal{L}-\lambda$ is not injective on $X|_{\mathcal{I}}$, and each such $\lambda$ is \emph{isolated}. 
\end{lemma}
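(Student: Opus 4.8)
The plan is to view the kinematic dynamo operator $\dyn$ as a relatively compact perturbation of the advection–diffusion operator, so that by Weyl's theorem the essential spectrum is unchanged, and any spectral value in $\{\Re(\lambda)>0\}$ is automatically an isolated eigenvalue of finite multiplicity. Concretely, write $\dyn = \dyn_0 + \mathcal{K}$, where $\dyn_0$ is the (diagonal) advection–diffusion operator
\[
\dyn_0 b = \eps\left(\partial_r^2 + \tfrac{1}{r}\partial_r - \tfrac{m^2}{r^2} - k^2 - \tfrac{1}{r^2} - i\eps^{-1}(m\Omega(r)+kU(r))\right)b,
\]
acting componentwise, and $\mathcal{K}$ collects the coupling terms, i.e. $\mathcal{K}b = \bigl(-2i\eps mr^{-2}b_\theta,\ (2i\eps mr^{-2}+r\Omega'(r))b_r\bigr)$. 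First I would record that $\dyn_0$, posed on $L^2(\cI,r\,\dd r)$ with the appropriate boundary conditions, is a closed sectorial operator whose spectrum lies in a left half-plane: indeed, taking the real part of $\langle \dyn_0 b, b\rangle$ kills the imaginary transport term and leaves $-\eps\|\partial_r b\|^2 - \eps\|(m^2+1)^{1/2}r^{-1}b\|^2 - \eps k^2\|b\|^2 \le -\eps k^2\|b\|^2$, so $\sigma(\dyn_0)\subset\{\Re(\lambda)\le -\eps k^2\}$. In particular $\{\Re(\lambda)>0\}\subset\rho(\dyn_0)$ once $k^2>0$.

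The heart of the argument is to show $\mathcal{K}(\dyn_0-\lambda)^{-1}$ is compact on $L^2(\cI,r\,\dd r)$ for $\lambda$ in the right half-plane. The operator $\mathcal{K}$ consists of the multiplication operator by $r\Omega'(r)$ (together with the harmless bounded term $2\eps m r^{-2}$, which for $r$ bounded away from $0$ and decaying at infinity is already relatively compact by the standard compact-embedding argument — and near $r=0$ in the case $p=0$ it is controlled exactly as in Lemma \ref{lemma:eigenfunctions are smooth}). For the genuinely new part, write $r\Omega'(r)(\dyn_0-\lambda)^{-1} = \bigl[r\Omega'(r)\langle r\rangle^{-s}\bigr]\cdot\bigl[\langle r\rangle^{s}(\dyn_0-\lambda)^{-1}\bigr]$ for a suitable $s>0$. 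The first factor: since $|r\Omega'(r)|\to 0$ as $r\to\infty$ by hypothesis, $r\Omega'(r)\langle r\rangle^{-s}$ is a bounded function that vanishes at infinity (and near the origin $r\Omega'(r)$ is bounded), hence multiplication by it is compact when composed with any operator mapping boundedly into $H^1_{\mathrm{loc}}$ with weight — i.e. the resolvent. For the second factor one needs that $(\dyn_0-\lambda)^{-1}$ maps $L^2$ into the weighted space $\langle r\rangle^{-s}H^1$; this is where the decaying-stretching hypothesis does the work, since the weight $k^2$ in the elliptic part plus a resolvent identity gives exponential-type spatial localization of $(\dyn_0-\lambda)^{-1}f$, so one gains polynomial decay in $r$ for free. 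Combining, $r\Omega'(r)(\dyn_0-\lambda)^{-1}$ is a norm-limit of finite-rank-after-truncation operators, hence compact.

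With relative compactness established, Weyl's essential spectrum theorem gives $\sigma_{\mathrm{ess}}(\dyn) = \sigma_{\mathrm{ess}}(\dyn_0) \subset \{\Re(\lambda)\le -\eps k^2\}$, so every $\lambda$ with $\Re(\lambda)>0$ in $\sigma(\dyn)$ is an isolated point of the spectrum of finite algebraic multiplicity; by the analytic Fredholm alternative (applied to $\lambda\mapsto \Id + \mathcal{K}(\dyn_0-\lambda)^{-1}$, which is analytic and Fredholm of index zero on the connected open set $\{\Re(\lambda)>0\}$, and invertible for $\Re(\lambda)$ large), the set of such $\lambda$ is discrete, hence at most countable. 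The final step is the bridge to the space $X$: by Lemma \ref{lemma:eigenfunctions are smooth}, any eigenfunction of $\dyn$ in $X|_\cI$ in fact lies in $L^2(\cI,r\,\dd r)$ (the a-priori bounds force quadratic vanishing at $0$ and sufficiently fast decay at $\infty$ for $N$ large), so the set of $X$-eigenvalues with positive real part is contained in the set of $L^2$-eigenvalues, and therefore is itself discrete and consists of isolated points. I expect the main obstacle to be making the compactness of $r\Omega'(r)(\dyn_0-\lambda)^{-1}$ fully rigorous, in particular extracting enough spatial decay from the resolvent of the advection–diffusion operator uniformly for $\lambda$ in the relevant region and handling the singular coefficients at $r=0$ when $p=0$ — though the latter is precisely what Lemma \ref{lemma:eigenfunctions are smooth} is designed to control.
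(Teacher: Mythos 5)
Your proof mirrors the paper's strategy in outline: dissipative base operator, relatively compact stretching perturbation, Weyl/Kato to lock the essential spectrum into the closed left half-plane, analytic Fredholm (equivalently, Kato's theory of eigenvalues of finite type) for discreteness, and the a-priori estimate of Lemma~\ref{lemma:eigenfunctions are smooth} to transfer from $X$-eigenfunctions to $L^2$-eigenfunctions. Two aspects differ from the paper, and one of them is a genuine gap.

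The gap is in your decomposition. You take a purely diagonal $\dyn_0$ and put the coupling $\pm 2i\eps m r^{-2}$ into $\mathcal{K}$. The paper instead keeps this coupling inside the base operator $S$ and absorbs the resulting cross-term $4\Re(imb_\theta\overline{b_r}/r)$ into the diagonal $\eps(m^2+1)r^{-2}|b|^2$ via the elementary bound $4|m|\le 2(m^2+1)$, so that $S$ is directly dissipative and the only perturbation that must be relatively compact is multiplication by $r\Omega'(r)$. Your choice is fine on $\cI=[p,\infty)$ with $p>0$, where $r^{-2}$ is bounded and decays, but on $\cI=[0,\infty)$ it forces you to prove that $r^{-2}(\dyn_0-\lambda)^{-1}$ is compact, and even boundedness is not free: the dissipativity estimate gives control of $\int r^{-1}|b|^2\,\dd r$ on the range of the resolvent, not of $\int r^{-3}|b|^2\,\dd r$, and there is no second-order Hardy inequality in effective dimension two to close that two-power gap. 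Deferring this to Lemma~\ref{lemma:eigenfunctions are smooth} does not help: that lemma is an a-priori bound on eigenfunctions of $\dyn$, not a mapping property of $(\dyn_0-\lambda)^{-1}$. Grouping the $r^{-2}$ coupling into $S$, as the paper does, is precisely what sidesteps this issue.

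The second point is an unnecessary complication you should drop. The factorization $r\Omega'(r)(\dyn_0-\lambda)^{-1}=\bigl[r\Omega'(r)\langle r\rangle^{-s}\bigr]\cdot\bigl[\langle r\rangle^{s}(\dyn_0-\lambda)^{-1}\bigr]$ together with the appeal to exponential-type spatial localization of the resolvent is more than you need, and the localization claim is left unjustified (the large oscillatory potential $i\eps^{-1}(m\Omega+kU)$ makes such decay nontrivial). Once $|r\Omega'(r)|\to 0$ at infinity and $(S-\lambda)^{-1}$ is bounded $L^2\to H^1$, compactness follows by the plain Rellich argument the paper uses: for a bounded sequence the resolvent images converge in $L^2_{\loc}$ on $\{|x|<R\}$ by compact embedding, and for $|x|\ge R$ the tail is uniformly small because $r\Omega'(r)$ is small there. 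No spatial decay of the resolvent is needed, so the weight factorization can be discarded entirely.
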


\begin{proof}
We shall provide explicit details for this proof in the case where $\mathcal{I}=[0,\infty)$, and mention how to modify the argument for $\mathcal{I}=[p,\infty)$ at the end of the proof.

We consider the operator $S$ defined on $L^2((0,\infty),r\dd r)$ by 
\begin{equation}\label{eq:dissipative-operator}
Sb=\eps \left(\partial_r^2+\frac{1}{r}\partial_r-\frac{m^2}{r^{2}}-k^2-\frac{1}{r^{2}}-i\eps^{-1}(m\Omega(r)+kU(r))\right)b+\begin{pmatrix}
-2\eps imr^{-2}b_\theta\\
2i \eps mr^{-2}b_r
\end{pmatrix},
\end{equation}
where we denote $b=(b_r,b_\theta)$. Furthermore, we endow it with the domain 
\[
\mathcal{D}(S)=H^2((0,\infty),r\dd r)\cap X,
\]
where we denote by $H^2((0,\infty),r\dd r)$ the space of functions which have weak derivatives up to second order in $L^2((0,\infty),r\dd r)$. Note that for any $B\in \mathcal{D}(S)$ we may integrate by parts, and obtain that 
\begin{align*}
\langle Sb,b\rangle_{L^2(r\dd r)}= & -\eps\int_0^\infty \left[r|\partial_r b|^2+\frac{m^2}{r^{2}}|b|^2+k^2|b|^2+\frac{1}{r}|b|^2+4\Re\left(i\frac{mb_\theta\overline{b}_r}{r}\right) \right]\dd r \\
& -i\int_0^\infty r(m\Omega(r)+kU(r))|b|^2 \dd r.
\end{align*}
From this it is clear that $(S,\mathcal{D}(S))$ is a dissipative operator. Furthermore, note that it is in fact densely defined, since $\mathcal{D}(S)$ certainly contains $C_c^\infty(0,\infty)$. Given any $H \in L^2((0,\infty),r\dd r)$, one finds a cut-off $\chi$ so that $H\chi$ has compact support in $(0,\infty)$, and such that $\|H-H\chi\|_{L^2(r\dd r)}\leq \delta/2$. In turn, up to a mollification, one can find a smooth, compactly supported function $\phi$ so that $\|H\chi-\phi\|_{L^2(r\dd r)} <\delta/2$, and hence density follows. Therefore, using \cite{Arendt_Batty_Neubrander_2013a}*{Lemma 3.44}, it follows that $(S,\mathcal{D}(S))$ is closable, and its closure is a dissipative operator. In fact, denoting the closure of $S$ by $(\overline{S},\mathcal{D}(\overline{S}))$, it holds that for any $b \in \mathcal{D}(\overline{S})$ one has 
\[
\Re\langle -\overline{S}b,b\rangle_{L^2(r\dd r)} \geq \eps  \int_0^\infty \left(r|\partial_r b|^2 +r^{-1}|b|^2 \right)\dd r.
\]
Indeed, let $b_n \subset \mathcal{D}(S)$ be so that $b_n \to b, Sb_n \to Sb$. Then
\begin{align*}
\Re\langle -\overline{S}b,b\rangle_{L^2(r\dd r)} & = \lim_{n \to \infty}\Re\langle -\overline{S}b_n,b_n\rangle_{L^2(r\dd r)}\\
&\geq \liminf_{n \to \infty} \eps \int_0^\infty \left(r|\partial_r b_n|^2+r^{-1}|b_n|^2\right) \dd r \geq \eps \int_0^\infty \left(r|\partial_r b|^2 +r^{-1}|b|^2\right)\dd r.
\end{align*}
From now on we abuse notation and identify $S$ with its closure. With this in mind, we will aim to establish the essential spectrum of the operator $S$, and then treat the operator $\mathcal{L}$ as a relatively compact perturbation of $S$. Note that we can write 
\[
\mathcal{L}b=Sb+Kb
\]
where 
\[
K\begin{pmatrix}
b_r\\
b_\theta
\end{pmatrix}=\begin{pmatrix}
0\\
r\Omega'(r)b_r
\end{pmatrix}
\]
in cylindrical coordinates. Furthermore, note that for any $\lambda\in\Co$ with $\Re(\lambda)>0$, there exists $C>0$ such that
\[
\|(S-\lambda)^{-1}f\|_{H^1(r\dd r)} \leq C\|f\|_{L^2(r\dd r)},
\]
where we have abused notation and set $\|b\|_{H^1(r\dd r)}=\int_0^\infty r|\partial_r b|^2+r^{-1}|b|^2 \dd r$.
Therefore, by a standard argument, it holds that $K(S-\lambda)^{-1}$ is a compact operator. We reproduce this here for the convenience of the reader. We begin by extending functions in $L^2(r\dd r)$ to radial functions $L^2(\mathbb{R}^2)$. Then, recalling that the vector gradient in of a radial field $b$ on $R^2$ has modulus given by 
\begin{equation*}
|\nabla b|=|\partial_r b|^2+r^{-2}|b|^2,
\end{equation*}
the $H^1(r\dd r)$ norm of such vector fields corresponds precisely to their $H^1(\mathbb{R}^2)$ norm. Therefore, by Rellich's theorem, if we have a sequence $f_n$ with bounded $L^2(r\dd r)$ norm, there will exist a (non-relabled) subsequence, and a radial $f \in H^1(\mathbb{R}^2)$ so that $(S-\lambda)^{-1}f_n \to f$ in $L^2_{\loc}$. But then, since by assumption $|r \Omega'(r)| \to 0$ as $r \to \infty$, we can pick $R>0$ so that for all $n$ it holds 
\[
\int_{|x|\geq R}|K(S-\lambda)^{-1} f_n|^2 \dd x\leq \frac{\delta}{8}.
\]
By the local convergence, we may further pick $N$ large enough so that for all $m,n \geq N$, 
\[
\int_{|x|<R}|K(S-\lambda)^{-1}f_n-K(S-\lambda)^{-1} f_m|^2 \dd x <\frac{\delta}{2}.
\]
Then, for all $m, n \geq N$, 
\begin{align*}
\int_{\mathbb{R}^2}|K(S-\lambda)^{-1} f_n-K(S-\lambda)^{-1} f_m|^2 \dd x & \leq  \int_{|x|<R}|K(S-\lambda)^{-1} f_n-K(S-\lambda)^{-1} f_m|^2 \dd x\\
& +2\int_{|x| \geq R}|K(S-\lambda)^{-1} f_n|^2 +|K(S-\lambda)^{-1} f_m|^2 \dd x < \delta.
\end{align*}
But note that, since all functions in the above argument are radial, we in fact have shown that there exists a function $f \in L^2((0,\infty),r\dd r)$, so that $K(S-\lambda)^{-1}f_n \to f$ as $n \to \infty$.
Hence, $K(S-\lambda)^{-1}$ is indeed a compact operator. Therefore, we deduce first by \cite{Kato}*{IV-1, Theorem 1.11} that $(S+K,\mathcal{D}(S))$ is closed, and by \cite{Kato}*{IV-5, Theorem 5.35} it follows that the essential spectrum of $\L$ satisfies 
\[
\sigma_{\mathrm{ess}}(\mathcal{L}) =\sigma_{\mathrm{ess}}(S) \subset \{z \in \mathbb{C}: \Re(z) \leq 0\},
\]
where the latter inclusion follows from the fact that $S$ is a dissipative operator. Since furthermore it is clear that $\L-\lambda$ is invertible for $\Re(\lambda)$ sufficiently large, we conclude by \cite{Kato}*{IV-5, Theorem 5.31} (see also the discussion in \cite{Kato}*{IV-6}) that the component of the spectrum of $\mathcal{L}$ contained in the right half-plane consists of at most countably many points, each being isolated eigenvalues with finite multiplicity. But now, note that by Lemma \ref{lemma:eigenfunctions are smooth}, any eigenfunction of the operator $\mathcal{L}$ in $X$ is in $\mathcal{D}(S)$, and so the result follows. 

Finally, we shall briefly mention how this result may be extended to the scenario when $r \in [p,\infty)$. In this case, whilst integration by parts for the $b_r$ component proceeds without issue, the $b_\theta$ component requires some additional care. Indeed, we have 
\begin{equation*}
\int_{p}^\infty \frac{\dd}{\dd r}(r b_\theta') \overline{b_\theta} \dd r= \int_{p}^\infty \frac{\dd}{\dd r}((rb_\theta)'-b_\theta) \overline{b_\theta} \dd r=-\int_{p}^\infty r|b'_\theta|^2 \dd r-\int_{p}^\infty 2 \Re(b_\theta' \overline{b_\theta}) \dd r.
\end{equation*}
Therefore, in totality the energy balance equation reads 
\begin{equation*}
\Re\langle -S b, b\rangle_{L^2(r \dd r)} \geq \eps \int_p^\infty r|\partial_r b|^2 +r^{-1}|b|^2 +k^2|b|^2 +2\Re(b_\theta' \overline{b_\theta}) \dd r.
\end{equation*}
But we may bound $|2\Re(b_\theta' \overline{b_\theta})|\leq \frac{1}{2}r|b'_\theta|^2+8p^{-1}|b_\theta|^2$, and so, as soon as $k^2$ is large enough, it holds 
\begin{equation*}
\Re\langle -S b, b\rangle_{L^2(r \dd r)} \geq \eps \int_p^\infty \frac{1}{2}r|\partial_r b|^2 +r^{-1}|b|^2  \dd r,
\end{equation*}
from where the proof proceeds entirely as before.
\end{proof}

\begin{lemma}\label{lemma:injectivity2}
Let $\mathcal{I}=[0,\infty)$ or $\mathcal{I}=[p,\infty)$, for some $p >0$. Suppose that 
\begin{equation}
\label{eq:T growing}
\left|\Omega(r)-\frac{\Omega'(r_0)}{U'(r_0)}U(r)\right| \to \infty
\end{equation}
as $r \to \infty$, and moreover suppose that there exists $C>0, \Lambda>0$ such that
\[
\sup_{r\in (0,\infty)} \frac{|r\Omega'(r)|^2}{\left|\Omega(r)-\frac{\Omega'(r_0)}{U'(r_0)}U(r)+\Lambda \right|} \leq C.
\]
Assume further that $\Omega(r)-\frac{\Omega'(r_0)}{U'(r_0)}U(r)$ is bounded from below (or from above).
Then, for all $|m|, |k|$ sufficiently large, there are at most countably many $\lambda \in \mathbb{C}$ with $\Re(\lambda)>0 $, where $\mathcal{L}-\lambda$ is not injective on $X_{\mathcal{I}}$, and each such $\lambda$ is isolated. 
\end{lemma}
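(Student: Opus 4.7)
The plan is to follow the general structure of the proof of Lemma~\ref{lemma:injectivity1}, writing $\mathcal{L} = S + K$, where $S$ is the dissipative operator \eqref{eq:dissipative-operator} (which crucially already contains the transport term $-i\eps^{-1}(m\Omega+kU) = -iM\eps^{-4/3}T(r)$ up to an additive constant, by the Gilbert scaling condition $M\Omega'(r_0)+KU'(r_0)=0$), and $K\binom{b_r}{b_\theta} = \binom{0}{r\Omega'(r)b_r}$ is the stretching perturbation. The new difficulty compared to Lemma~\ref{lemma:injectivity1} is that $|r\Omega'(r)|$ is no longer assumed to decay at infinity, so the tail compactness argument based on multiplication by a function vanishing at infinity fails. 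The new ingredient will be the hypothesis $|r\Omega'(r)|^2 \leq C|T(r)+\Lambda|$, which allows us to control the stretching term by the transport in an appropriate weighted norm. Rather than establishing relative compactness of $K$ with respect to $S$ directly, I will instead show that $S$ has compact resolvent and that $K$ is $S$-bounded with relative bound strictly less than one, which together imply that $\mathcal{L}$ has compact resolvent, so its spectrum is purely discrete.

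For the compact resolvent of $S$, I will mimic the $L^2(r\dd r)$ energy analysis of Lemma~\ref{lemma:injectivity1}. Taking the real part of $\langle Sb,b\rangle$ yields an $H^1$ bound (with boundary terms absorbed by taking $|k|$ large, as in the $\mathcal{I}=[p,\infty)$ case of Lemma~\ref{lemma:injectivity1}). Taking the imaginary part, and using that the transport term reduces to $M\eps^{-1/3}T(r)$ plus a constant multiple of the identity, gives
\[
\int_{\mathcal{I}} r\,|T(r)+\Lambda|\,|b|^2 \dd r \;\leq\; \frac{1}{|M|\eps^{-1/3}}\|Sb\|\|b\| + C\|b\|^2,
\]
where I use the assumption that $T$ is bounded from below (or above) to choose $\Lambda$ so that $T+\Lambda$ has constant sign. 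Hence elements of $\mathcal{D}(S)$ lie in $H^1 \cap L^2(|T+\Lambda|r\dd r)$. Since $|T+\Lambda| \to \infty$, the weighted $L^2$ bound gives uniform tightness $\int_{r\geq R}|b_n|^2 r\dd r \leq C/\inf_{r \geq R}|T+\Lambda| \to 0$, which combined with Rellich's theorem (applied to the radial extension to $\mathbb{R}^2$) yields the compact embedding $\mathcal{D}(S) \hookrightarrow L^2(r\dd r)$ and hence compactness of $(S-\lambda)^{-1}$.

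For the relative $S$-boundedness of $K$, directly from the hypothesis we have
\[
\|Kb\|_{L^2(r\dd r)}^2 \leq C\int_{\mathcal{I}} r\,|T(r)+\Lambda|\,|b|^2 \dd r \leq \frac{C}{|M|\eps^{-1/3}}\|Sb\|\|b\| + C'\|b\|^2.
\]
Applying Young's inequality with a free parameter $\alpha>0$, and then optimizing $\alpha$, one finds $\|Kb\|^2 \leq \eta(|m|)\|Sb\|^2 + C_\eta \|b\|^2$ with $\eta(|m|) = O(|m|^{-2}) \to 0$ as $|m|\to\infty$. Thus for $|m|$ sufficiently large, the relative $S$-bound of $K$ is strictly less than $1$, so by a standard Kato--Rellich argument $\mathcal{L} = S+K$ is closed on $\mathcal{D}(S) = \mathcal{D}(\mathcal{L})$, and since $(\mathcal{L}-\lambda)^{-1}\colon L^2(r\dd r) \to \mathcal{D}(S) \hookrightarrow L^2(r\dd r)$ factors through the compact embedding, $\mathcal{L}$ itself has compact resolvent. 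Consequently $\sigma(\mathcal{L})$ consists of isolated eigenvalues with finite multiplicity, and by Lemma~\ref{lemma:eigenfunctions are smooth} every eigenfunction of $\mathcal{L}$ in $X$ actually lies in $\mathcal{D}(S) \subset L^2(r\dd r)$, completing the proof.

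The principal technical obstacle I anticipate is making precise the tightness argument for the compact embedding $\mathcal{D}(S) \hookrightarrow L^2(r\dd r)$ when $\mathcal{I}=[p,\infty)$ under perfectly conducting boundary conditions: the cross terms $\int 4\eps m r^{-1}\Re(ib_\theta \bar{b}_r)\dd r$ in the real part, as well as the boundary terms arising from the $b_\theta$ equation, must be controlled using $|k|$ and $|m|$ sufficiently large in a quantitatively compatible way with the $\alpha$-optimization for the relative bound on $K$. The verification that $T$ being eventually one-signed after the shift by $\Lambda$ suffices, even though the sign of $M$ is fixed by the Gilbert scaling, requires tracking signs carefully throughout.
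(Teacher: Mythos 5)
Your proposal is correct and follows essentially the same route as the paper: decompose $\mathcal{L}=S+K$, use the real-part energy estimate plus the imaginary-part confining estimate (exploiting that $T+\Lambda$ is one-signed and $|T|\to\infty$) to get a compact resolvent for $S$, use the hypothesis $|r\Omega'(r)|^2\lesssim|T+\Lambda|$ to show $K$ is $S$-bounded with relative bound $O(|m|^{-1})$, invoke Kato perturbation theory to conclude $\mathcal{L}$ is closed with compact resolvent, and finish with Lemma \ref{lemma:eigenfunctions are smooth} to place $X$-eigenfunctions in $L^2(r\dd r)$. The only differences are cosmetic: the paper obtains the relative bound through the resolvent estimate $\|K(i\lambda-S)^{-1}\|\leq Cm^{-1}$ and the factorization $(K+S-i\lambda)=(K(S-i\lambda)^{-1}+\Id)(S-i\lambda)$, whereas you use Young's inequality on the quadratic form and the compact embedding of $\mathcal{D}(S)$, which amounts to the same argument.
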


\begin{proof}
We provide a proof in the case where $\mathcal{I}=[0,\infty)$, $\Omega(r)-\frac{\Omega'(r_0)}{U'(r_0)}U(r)$ is bounded from below, and $m \to +\infty$. The general case follows similarly. We aim to show that $\mathcal{L}$ has a compact resolvent on $L^2(r \dd r)$. Once this is established, by classical spectral theory (see e.g. \cite{Kato}*{III-6, Theorem 6.29}) $\mathcal{L}$ has empty essential spectrum on $L^2(r\dd r)$, and then the result follows exactly as in the previous Lemma. We once again take $(S,\mathcal{D}(S))$ defined in \eqref{eq:dissipative-operator} as the ``base'' operator around which we perturb. Recall that by the proof of Lemma \ref{lemma:injectivity1} we know that $(S,\mathcal{D}(S))$ is closable, and its closure is dissipative. In fact, we claim that for all $\Re(\lambda)$ large enough, and for all $b \in \mathcal{D}(\overline{S})$ there holds that 
\begin{equation}
\label{eq:confining}
\Im\langle (i\lambda-\overline{S})b, b\rangle_{L^2(r\dd r)} \geq  \int_0^\infty rm(\Omega(r)-\frac{\Omega'(r_0)}{U'(r_0)}U(r)+\frac{\lambda}{m})|b|^2 \dd r.
\end{equation}
Indeed, let $\Re(\lambda)\geq -\inf_{r \in (0,\infty)}m(\Omega(r)-\frac{\Omega'(r_0)}{U'(r_0)}U(r))$, then the inequality \eqref{eq:confining} certainly holds for any $b \in \mathcal{D}(S)$. Proceeding as in Lemma \ref{lemma:injectivity1}, we take a sequence $b_n \to b$, $Sb_n \to Sb$, $b \in \mathcal{D}(\overline{S})$, and observe that there must hold
\begin{equation}
\label{eq:confining2}
\Im\langle (i\lambda-\overline{S})b, b\rangle_{L^2(r\dd r)} \geq \liminf_{n \to \infty}\int_0^\infty rm(\Omega(r)-\frac{\Omega'(r_0)}{U'(r_0)}U(r)+\frac{\lambda}{m})|b_n|^2 \dd r.
\end{equation}
Since the right-hand side of \eqref{eq:confining2} is non-negative, by Fatou's Lemma we in fact see that we obtain \eqref{eq:confining} as claimed. Furthermore, from Lemma \ref{lemma:injectivity1} we also know that 
\[ 
\Re\langle -\overline{S}b,b\rangle_{L^2(r\dd r)} \geq \eps \int_0^\infty r |\partial_r b|^2\dd r
\]
for all $b \in \mathcal{D}(\overline{S})$. Once again we abuse notation and identify $S$ with its closure. From \eqref{eq:T growing}, \eqref{eq:confining} we can conclude by an identical argument as in the proof of Lemma \ref{lemma:injectivity1} that $S$ has a compact resolvent on $L^2((0,\infty),r\dd r)$.

Next, we show that in fact $S+K$ is also a closed operator with compact resolvent. Indeed, let us begin by showing that $(S+K,\mathcal{D}(S))$ is closed. To do so, we let $\lambda=m\lambda_0$, where $\lambda_0 \geq -\inf_{r \in (0,\infty)}(\Omega(r)-\frac{\Omega'(r_0)}{U'(r_0)}U(r))+1$. Then, the inequality \eqref{eq:confining} yields
\begin{equation*}
\|B\|_{L^2( r \dd r)}\|(i\lambda-S)^{-1}b\|_{L^2(r \dd r)}\geq m\|(i\lambda-S)^{-1} b\|_{L^2(r \dd r)}^2,
\end{equation*}
and so
\begin{equation*}
\|B\|_{L^2(r \dd r)} \geq m \|(i\lambda-S)^{-1}b\|_{L^2(r \dd r)}.
\end{equation*}
In particular, we observe that 
\begin{equation*}
\|(i\lambda -S)^{-1}\|_{L^2(r \dd r) \to L^2(r \dd r)} \leq m^{-1}.
\end{equation*}
Furthermore, we see from \eqref{eq:confining} that 
\begin{align*}
&\|K(i\lambda -S)^{-1}b\|_{L^2(r \dd r)}^2 \leq \int_0^\infty r|r\Omega'(r)|^2|(i\lambda -S)^{-1}b|^2 \dd r \\
&\leq m^{-1}\sup_{r \geq 0}\frac{|r\Omega'(r)|^2}{|\Omega(r)-\frac{\Omega'(r_0)}{U'(r_0)}U(r)+\lambda|}\int_0^\infty rm|\Omega(r)-\frac{\Omega'(r_0)}{U'(r_0)}U(r)+\lambda||(i\lambda -S)^{-1}b|^2 \dd r\\
& \leq Cm^{-1}\|b\|_{L^2(r\dd r)}\|(i\lambda -S)^{-1}b\|_{L^2(r \dd r)}\leq Cm^{-2}\|b\|_{L^2(r \dd r)}^2.
\end{align*}
Therefore, we conclude that for $b \in \mathcal{D}(S)$,
\begin{equation*}
\|Kb\|\leq C m^{-1}\|(i\lambda -S)b\|_{L^2(r \dd r)} \leq Cm^{-1}\|Sb\|_{L^2(r \dd r)}+Cm^{-1}|\lambda|\|b\|_{L^2(r \dd r)}.
\end{equation*}
In the language of \cite{Kato}*{Chapter III-4}, $K$ is $S$-bounded with $S$-bound $Cm^{-1}$. Thus, picking $m$ large enough, by Theorem 1.1 from \cite{Kato}*{Chapter III-4}, it holds that $(S+K,D(S))$ is closed for all $m$ large enough. Next, we show that it still admits a compact resolvent. To do so, we once again use the observation that $\|K(i\lambda -S)\|_{L^2(r \dd r) \to L^2(r \dd r)}\leq Cm^{-1}$. We write 
\begin{equation}
(K+S-i\lambda)=(K(S-i\lambda)^{-1}+\Id)(S-i\lambda).
\end{equation}
Since $\|K(i\lambda -S)^{-1}\|_{L^2(r \dd r) \to L^2(r \dd r)}<1$ for $m$ large, we deduce that $(K(S-i\lambda)^{-1}+\Id)$ is invertible, and hence so is $(K+S-i\lambda)$, as a product of invertible operators. In fact, we have 
\begin{equation}
(K+S-i\lambda)^{-1}=(S-i\lambda)^{-1}(K(S-i\lambda)^{-1}+\Id)^{-1}
\end{equation}
is a compact operator, as a composition of a compact operator with a bounded operator. Hence, $S+K$ has a compact resolvent, and so the proof is complete. 
\end{proof}

\subsection{The divergence-free condition}\label{s:divfree}

The next crucial application of the a-priori bound from Lemma \ref{lemma:eigenfunctions are smooth} is to the question of whether a given growing mode is in fact divergence free. It is worth noting that, at least on a formal level, this property should certainly be satisfied. Taking divergence of the equation 
\begin{equation}
\eps \Delta B+B\cdot \nabla u-u \cdot \nabla B=\lambda B,
\end{equation}
we see that $\div(B)$ satisfies the stationary advection diffusion equation 
\begin{equation}\label{eq:div advection diffusion}
\eps \Delta \div(B)-u\cdot \nabla \div(B)=\lambda \div(B),
\end{equation}
see also e.g.\ \cite{ChildressGilbert}.
If it were possible to integrate by parts in \eqref{eq:div advection diffusion}, the divergence free property would immediately follow as soon as $\Re(\lambda)>0$. Therefore, we need at least that $\div(B) \in H^1$, which crucially follows from Lemma \ref{lemma:eigenfunctions are smooth}. Hence, we have the following result.

\begin{lemma}
\label{lemma:divergence free}
Let $b\in X$ define a growing mode for the kinematic dynamo equation with Ponomarenko velocity field via $B(r,\theta,z)=b(r)\e^{i(m\theta+kz)}$ on $\mathbb{R}^2 \times \mathbb{T}$. Then, $\div(B)=0$.
\end{lemma}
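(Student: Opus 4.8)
The plan is to make rigorous the formal computation sketched just before the statement: taking the divergence of the eigenvalue equation $\eps\Delta B + B\cdot\nabla u - u\cdot\nabla B = \lambda B$ yields that $g := \div(B)$ solves the stationary advection-diffusion equation $\eps\Delta g - u\cdot\nabla g = \lambda g$ (using $\div u = 0$), and then to conclude $g \equiv 0$ by a weighted energy estimate, valid because $g$ has enough regularity and decay thanks to Lemma \ref{lemma:eigenfunctions are smooth}. The first step is to record that, in cylindrical coordinates, $g = \div(B) = \e^{i(m\theta+kz)}\big(\partial_r b_r + r^{-1}b_r + i m r^{-1} b_\theta + i k b_z\big)$, so that $g = \e^{i(m\theta+kz)} h(r)$ for a scalar function $h$. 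By Lemma \ref{lemma:eigenfunctions are smooth}, $r^{-2}b$, $r^{-1}\partial_r b$, $\partial_r^2 b \in L^\infty(\mathcal{I},\max\{1,r^2\}\dd r)$ for all of $b_r,b_\theta,b_z$; from this one reads off that $h$ and $r^{-1}h$ and $\partial_r h$ all lie in $L^\infty(\mathcal{I},\max\{1,r^2\}\dd r)$, hence $g \in H^1(\mathbb{R}^2\times\mathbb{T})$ (viewing $g$ as a genuine function on $\mathbb{R}^2\times\mathbb{T}$ via the radial identification used in Lemma \ref{lemma:injectivity1}, noting that $r^{-1}h \in L^2(r\dd r)$ controls the angular part of $\nabla g$).

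The second step is to verify the PDE for $g$. One differentiates the $b$-equation componentwise — or, more cleanly, works directly with the modal ODE system \eqref{eq:modal-eq-r}--\eqref{eq:modal-eq-z} and forms the combination $\partial_r + r^{-1} + imr^{-1}(\,\cdot\,) + ik(\,\cdot\,)$ acting on the three equations. The stretching terms $r\Omega'(r)b_r$ and $U'(r)b_r$, together with the Laplacian cross-terms of order $r^{-2}$ coupling $b_r$ and $b_\theta$, must cancel against the contributions of $B\cdot\nabla u$ to the divergence; this is precisely the algebraic identity that encodes $\div(B\cdot\nabla u - u\cdot\nabla B) = u\cdot\nabla\div B - B\cdot\nabla\div u = u\cdot\nabla\div B$ when $\div u=0$. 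So after this bookkeeping one obtains $\eps\Delta g - u\cdot\nabla g = \lambda g$, i.e. in modal form $\eps(\partial_r^2 + r^{-1}\partial_r - m^2r^{-2} - k^2)h - i(m\Omega(r)+kU(r))h = \lambda h$.

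The third step is the energy estimate. Since $g \in H^1$ and solves the above equation, one pairs with $\bar g$ in $L^2(\mathbb{R}^2\times\mathbb{T})$ (equivalently, pairs the $h$-equation with $\bar h$ in $L^2(r\dd r)$) and integrates by parts: the Laplacian term gives $-\eps\int(|\partial_r h|^2 + m^2 r^{-2}|h|^2 + k^2|h|^2)r\dd r \le 0$, the transport term is purely imaginary, and the boundary terms at $r=0$ and $r=\infty$ vanish because $h$ vanishes quadratically at $0$ and decays like $r^{-1}$ at infinity (again from Lemma \ref{lemma:eigenfunctions are smooth}, or by iterating the argument there). Taking real parts gives $\Re(\lambda)\|h\|_{L^2(r\dd r)}^2 \le 0$; since $\Re(\lambda)>0$ for a growing mode, $h \equiv 0$, hence $\div(B) = 0$. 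The only genuinely non-routine point is the second step — confirming that the stretching and cross-terms cancel exactly so that no zeroth-order term in $g$ survives on the right-hand side — but this is forced by the structure $\div(B\cdot\nabla u) = B\cdot\nabla\div u = 0$, so it must work out; the regularity needed to justify integrating by parts is exactly what Lemma \ref{lemma:eigenfunctions are smooth} was designed to supply.
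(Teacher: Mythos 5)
Your proposal is correct and follows essentially the same route as the paper: take the divergence of the eigenvalue equation to obtain the stationary advection--diffusion equation for $\div(B)$, invoke Lemma \ref{lemma:eigenfunctions are smooth} to secure the regularity and decay needed to justify the integration by parts, and conclude from the resulting energy identity together with $\Re(\lambda)>0$ that $\div(B)\equiv 0$. The only stylistic difference is that you verify the cancellation of the stretching and cross-terms by inspection of the modal ODEs, whereas the paper appeals directly to the vector identity $\div(B\cdot\nabla u - u\cdot\nabla B)=u\cdot\nabla\div B$ for $\div u=0$.
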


\begin{proof}
Recall that we have found $\lambda$ with $\Re(\lambda)>0$, and a vector field $B$ so that 
\[
\eps \Delta B+B\cdot \nabla u-u\cdot \nabla B=\lambda B,
\]
and so from \eqref{eq:div advection diffusion} we have that $\div(B)$ satisfies
\begin{equation}
\label{eq:divergence equation}
\eps \Delta\div(B)-u \cdot \nabla \div(B)=\lambda \div(B).
\end{equation}
We claim this is enough to deduce that $\div(B)=0$. Indeed, rewriting this in cylindrical coordinates, we see that 
\[
\eps \left(\partial_r^2+\frac{1}{r}\partial_r-\frac{m^2}{r^{2}}-k^2 \right)\div(B)-i(m\Omega(r)+kU(r))\div(B)=\lambda \div(B).
\]
But now, recall that in cylindrical coordinates, the divergence is given by 
\[
\div(B)=\frac{1}{r}(B_r+mB_\theta)+\partial_r B _r+kB_z.
\]
In particular, following Lemma \ref{lemma:eigenfunctions are smooth}, we find that if $B\in X=L^\infty((0,\infty),(1+(\eps^{-1/3}|r-r_0|)^N)\dd r)$, then there holds that
\begin{enumerate}
    \item $\div(B) \in L^\infty((0,\infty),(1+(\eps^{-1/3}|r-r_0|)^{N-1})\dd r)$,
    \item $\partial_r\div(B) \in L^\infty((0,\infty),(1+(\eps^{-1/3}|r-r_0|)^{N-2})\dd r)$.
\end{enumerate}
Furthermore, iterating the argument from Lemma \ref{lemma:eigenfunctions are smooth}, we in fact deduce that $\div(B) \in C^2((0,\infty))$. Hence, we now multiply \eqref{eq:divergence equation} by $\overline{\div(B)}$ and integrate by parts in the left-hand side to get, for any $R>0$,
\begin{align*}
&\int_0^R r \left[\eps \left(\frac{1}{r}\partial_r(r\partial_r)-\frac{m^2}{r^{2}}-k^2 \right)\div(B)-i(m\Omega(r)+kU(r))\right]\div(B) \overline{\div(B)} \dd r\\
& \quad = -\int_0^R \left[ \eps \left(r |\partial_r \div(B)|^2 + \frac{m^2}{r}|\div(B)|^2+k^2r|\div(B)|^2 \right)+ir(m\Omega(r)+kU(r))|\div(B)|^2 \right] \dd r\\
& \qquad + \eps |R\div B(R)\partial_r \div B(R)|.
\end{align*}
Taking $R \to \infty$, we observe from \eqref{eq:divergence equation}
\[
\begin{split}
    -\int_0^\infty & \eps \left( \left[ r|\partial_r \div(B)|^2+\left(\frac{m^2}{r}+rk^2 \right)|\div(B)|^2 \right] + ir(m\Omega(r)+kU(r))|\div(B)|^2 \right) \dd r \\
    & = \lambda \int_0^\infty r |\div(B)|^2 \dd r.
\end{split}
\]
If $\div(B)$ is non-zero, the real part of the left-hand side is negative, whilst the real part of the right-hand side is positive. Therefore, we deduce that $\div(B)=0$, and the claim of the lemma follows.
\end{proof}

When proving a similar result on domains with boundaries, we need to take additional care, since the perfectly conducting boundary conditions need to first be transformed into appropriate boundary conditions for the divergence. Nevertheless, we have the following result.
\begin{lemma}
Let $b$ define a growing mode for the kinematic dynamo equations with Ponomarenko velocity field via $B(r,\theta,z)=b(r)\e^{i(m\theta+kz)}$ on a domain of the form $[p,q] \times \mathbb{T}^2$, where $p$ may be either zero or a finite number, and $q$ may be either a finite number larger than $p$ or $\infty$. Suppose further that $b$ satisfies perfectly conducting boundary conditions on the boundary of the domain. Then, $\div(B)=0$.
\end{lemma}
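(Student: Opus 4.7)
The plan is to mimic the proof of Lemma \ref{lemma:divergence free}: derive the scalar advection--diffusion equation \eqref{eq:divergence equation} satisfied by $\div(B)$ by taking divergence of the kinematic dynamo system, and then establish $\div(B)=0$ via an energy identity obtained by multiplying by $r\overline{\div(B)}$ and integrating over $\mathcal{I}$. The new difficulty relative to Lemma \ref{lemma:divergence free} is that the ``integration-to-infinity'' step must now be replaced by a genuine integration by parts on a bounded interval, so I first need to extract from the perfectly conducting boundary conditions a suitable condition on $\div(B)$ at the physical walls.

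The key observation is that the three perfectly conducting conditions combine to give a Neumann-type condition $\partial_r\div(B)=0$ at any physical wall. Indeed, writing
$$
\div(B)=\partial_r b_r + \frac{b_r}{r} + \frac{imb_\theta}{r} + ikb_z,
$$
differentiating in $r$, and evaluating at a wall $r=q$ where $b_r|_{r=q}=0$, $(rb_\theta)'|_{r=q}=0$ and $b_z'|_{r=q}=0$, one obtains
$$
\partial_r\div(B)|_{r=q} = \partial_r^2 b_r|_{r=q}+\tfrac{1}{q}\partial_r b_r|_{r=q}+\tfrac{im}{q}\partial_r b_\theta|_{r=q}-\tfrac{im}{q^2}b_\theta|_{r=q}.
$$
Next, the radial equation \eqref{eq:modal-eq-r} evaluated at $r=q$ (where $b_r$ vanishes) reduces to $\partial_r^2 b_r|_{r=q}+q^{-1}\partial_r b_r|_{r=q}=2imq^{-2}b_\theta|_{r=q}$; plugging this in and using $(rb_\theta)'|_{r=q}=b_\theta|_{r=q}+q\,\partial_r b_\theta|_{r=q}$ gives
$$
\partial_r\div(B)|_{r=q}=\tfrac{im}{q^2}(rb_\theta)'|_{r=q}=0.
$$
The same computation applies at a wall $r=p>0$.

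With $\partial_r\div(B)=0$ at the physical walls, I multiply \eqref{eq:divergence equation} by $r\overline{\div(B)}$ and integrate by parts over $\mathcal{I}$. The boundary term $[r\partial_r\div(B)\cdot\overline{\div(B)}]_{\partial\mathcal{I}}$ vanishes at physical walls by the previous step, at the fictitious endpoint $r=0$ (when $p=0$) thanks to the quadratic vanishing and $C^2$-regularity of $\div(B)$ guaranteed by Lemma \ref{lemma:eigenfunctions are smooth}, and at $r=\infty$ (when $q=\infty$) exactly as in Lemma \ref{lemma:divergence free}. Taking real parts of the resulting identity yields
$$
-\eps\int_{\mathcal{I}}\!\left(r|\partial_r\div(B)|^2+\tfrac{m^2}{r}|\div(B)|^2+k^2 r|\div(B)|^2\right)\dd r = \Re(\lambda)\int_{\mathcal{I}}r|\div(B)|^2\,\dd r,
$$
and since $\Re(\lambda)>0$, both sides must vanish, forcing $\div(B)\equiv 0$. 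The main obstacle is the boundary computation itself: its success hinges on the precise combination of all three perfectly conducting conditions collapsing into $(rb_\theta)'$; with any one of them weakened or replaced, a residual nonvanishing term would survive in $\partial_r\div(B)$ at the wall and the energy argument would break down.
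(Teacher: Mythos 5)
Your proof is correct and follows essentially the same strategy as the paper: both arguments extract the Neumann condition $\partial_r \div(B)=0$ at the physical walls from the perfectly conducting conditions combined with the eigenvalue equation evaluated on the boundary, and then conclude with the same energy identity for the scalar advection--diffusion equation satisfied by $\div(B)$, using $\Re(\lambda)>0$. The only difference is cosmetic: the paper obtains the wall condition vectorially, dotting the equation with $\hat n$ and using $\Delta B=\nabla(\div B)-\nabla\times(\nabla\times B)$ together with $(\nabla\times B)\times\hat n=0$, whereas you perform the equivalent computation componentwise via the radial modal equation at the wall and the conditions $(rb_\theta)'|_{\partial\mathcal{I}}=b_z'|_{\partial\mathcal{I}}=0$.
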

\begin{proof}
Denote by $\hat n$ the outward normal to the domain, and take the dot product of the eigenvalue equation with $\hat{n}$, to deduce (using that $B \cdot \hat n=0$ on the boundary)
\begin{equation}
(u \cdot \nabla B-B \cdot \nabla u)\cdot \hat n=\eps \Delta B \cdot  \hat n.
\end{equation}
Now since $\hat n$ is simply a vector pointing in the radial direction, $ (B\cdot \nabla u )\cdot \hat n=0$, and similarly, using the modal form we see $(u \cdot \nabla B)\cdot \hat n=(im\Omega(r)+ikU(r))B_r =0$ on the boundary of the domain. Therefore, on $\partial \mathcal{M}$, it holds $\Delta B \cdot \hat n=0$. Furthermore, note that $\Delta=\nabla \times \nabla -\nabla (\div)$. An explicit computation shows that $(\nabla \times \nabla \times  B) \cdot\hat n=0$ on $\partial\mathcal{M}$, and so it holds $\nabla (\div B)\cdot \hat n=0$. But this is precisely what we need, since now it remains to multiply \eqref{eq:divergence equation} by $\overline{\div(B)}$ and observe that the boundary terms vanish due to the condition $\nabla (\div B)\cdot \hat n=0$ on $\partial\M$. Thus we deduce that $\div(B)=0$.
\end{proof}

\section{Proof of Theorems \ref{thm:dynamo} and \ref{thm:full dynamo} in the full space} \label{s:proof}

This section is devoted to the proof of the main theorems, that provide the existence of a growing mode, and gives an asymptotic profile of such mode as $\eps\to 0$. For the convenience of the reader, let us state a more concise version of Theorem \ref{thm:dynamo} specific for the case $\M=\R^2$, whose proof will be the central point of this section.

\begin{theorem}\label{theorem:mainResult}
Under Assumptions \ref{H0}--\ref{H2} for the velocity field, for all $\eps$ small enough, the operator $\L$ has an eigenvalue in $X$ of the form
\begin{equation}\label{eq:eigenvalue}
\lambda =\eps^{1/3}\left[-M^2\left(\frac{1}{r_0^2}+\frac{\Omega'_0}{U'_0}\right) + \sqrt{\frac{-2iM\Omega'_0}{r_0}}-c_2^{1/2}+o_{\eps \to 0}(1)\right].
\end{equation}
If moreover the velocity field satisfies \ref{H3}, then the associated eigenfunction $\psi\in X$ is given by 
\begin{equation}\label{eq:eigenvector}
\psi(r) = C\begin{pmatrix}
\e^{-\frac{1}{2}\eps^{-2/3}c_2^{1/2}(r-r_0)^2}\\
0
\end{pmatrix} + \psi_{\mathrm{err}}(r),
\end{equation}
with $C\in\Co\setminus\{0\}$ and such that $\psi_\err$ satisfies $\|\psi_{\mathrm{err}}\|_X = o_{\eps\to 0}(1)$.
\end{theorem}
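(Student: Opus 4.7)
The plan is to implement the programme laid out in Section \ref{s:strategy}: construct an approximate inverse $\G^\lambda$ of $\L-\lambda$ that is valid uniformly on a small contour $\Gamma$ encircling the predicted eigenvalue $\lambda_\star = \eps^{1/3}\mu_\star$; convert it into a genuine resolvent via a Neumann series; and show that the Riesz projector associated with $\Gamma$, applied to Gilbert's approximate mode, produces a nontrivial element of $X$, which must be a linear combination of eigenfunctions for eigenvalues enclosed by $\Gamma$.

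I would start by fixing as ``test mode'' the Gilbert ansatz $\phi(r)=(\e^{-\frac{1}{2}\eps^{-2/3}c_2^{1/2}(r-r_0)^2},0)^\top$ (cut off outside $[r_0-\eps^\gamma, r_0+\eps^\gamma]$), which by the scaling analysis of Section \ref{s:gilbert-scaling} satisfies $(\L-\lambda_\star)\phi = o_{\eps\to 0}(1)$ in a suitable norm. I would then fix a circle $\Gamma\subset\{\Re(\lambda)>0\}$ of radius $\eps^{1/3+\kappa}$ centred at $\lambda_\star$, with $\kappa>0$ small, so that $\Gamma$ is disjoint from any other candidate spectral point.

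The core analytic step is the construction of $\G^\lambda$ as the glued operator
\[
\G^\lambda f = \G_0(f\dsOne_{[0,\eps^\gamma)}) + \G_1(f\dsOne_{[\eps^\gamma,r_0-\eps^\gamma)}) + \G_2^\lambda(f\dsOne_{[r_0-\eps^\gamma,r_0+\eps^\gamma)}) + \G_3(f\dsOne_{[r_0+\eps^\gamma,\infty)}),
\]
where $\G_0,\G_1,\G_2^\lambda,\G_3$ are the local Green's functions sketched in Section \ref{s:heuristics}. The goal is to prove, uniformly for $\lambda\in\Gamma$, the two bounds $\G^\lambda:Y\to X$ bounded and $\|(\L-\lambda)\G^\lambda - \Id\|_{Y\to Y}=o_{\eps\to 0}(1)$. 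Granting these estimates, the Neumann series
\[
(\L-\lambda)^{-1} = \G^\lambda \sum_{n\geq 0}(-1)^n\bigl((\L-\lambda)\G^\lambda - \Id\bigr)^n
\]
converges in $\mathcal{B}(Y,X)$ for all $\eps>0$ small, yielding a resolvent on $\Gamma$. This is the main obstacle of the proof: the four local pieces are governed by different special functions (modified Bessel near $0$, semiclassical Schrödinger toward $0$ and $\infty$, parabolic cylinder around $r_0$), and the exponential decay of each $\G_i^{-1}f$ outside $\mathrm{supp}(f)$ must beat the polynomial growth of the ``error'' factor $\L-\Lapp^i$ at the interfaces $\eps^\gamma,\, r_0\pm\eps^\gamma$. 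This is precisely what Sections \ref{s:approx-Green's}--\ref{s:around-r0} are devoted to.

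With the resolvent available, I would define the Riesz projector $P=\frac{1}{2\pi i}\int_\Gamma (\lambda-\L)^{-1}d\lambda$ and compute $P\phi$. Substituting the Neumann series and using $\|(\L-\lambda)\G^\lambda-\Id\|=o_{\eps\to 0}(1)$, all terms with $n\geq 1$ contribute $o_{\eps\to 0}(1)$ in $X$, leaving the principal piece $-\frac{1}{2\pi i}\int_\Gamma \G^\lambda\phi\,d\lambda$. Since $\phi$ is supported in $[r_0-\eps^\gamma,r_0+\eps^\gamma]$, only $\G_2^\lambda$ contributes, and by Cauchy's theorem this integral reduces to the residue of $\G_2^\lambda\phi$ at $\lambda_\star$. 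Because $\phi$ is, by construction, the eigenfunction of the Gilbert operator underlying $\G_2^\lambda$, the pole is simple with nonzero residue, and an explicit computation yields $P\phi = C(\e^{-\frac{1}{2}\eps^{-2/3}c_2^{1/2}(r-r_0)^2},0)^\top + o_{\eps\to 0}(1)$ in $X$, with $C\neq 0$. In particular $P\phi\neq 0$, so $\Gamma$ encloses a genuine spectral point of $\L$; picking any $\lambda_0\in\mathrm{Int}(\Gamma)$ in the spectrum and any $\psi\in\ker(\L-\lambda_0)$ gives the asserted growing mode, with the real part of $\lambda_0$ matching \eqref{eq:eigenvalue} up to the radius $\eps^{1/3+\kappa}$ of $\Gamma$.

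For the asymptotic profile \eqref{eq:eigenvector} we invoke \ref{H3}: by Lemmas \ref{lemma:injectivity1}, \ref{lemma:injectivity2}, all growing spectral values of $\L$ on $X$ are isolated eigenvalues of finite multiplicity. Shrinking $\Gamma$ if necessary so that it encloses only $\lambda_0$, classical spectral theory identifies $P$ with the spectral projector onto the associated generalized eigenspace, so $\psi = P\phi/\|P\phi\|_X$ (up to normalization) and the explicit formula for $P\phi$ above yields \eqref{eq:eigenvector}, with the error term controlled in $X$ by $o_{\eps\to 0}(1)$ using the same Neumann-series bound. Uniqueness of the leading profile follows from the fact that both $\Re(\mu_\star)$ and the Hermite ground state of Gilbert's equation are simple, which is the content of \eqref{eq:gilbert condition parabolic} at $j=0$.
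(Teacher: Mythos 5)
Your overall strategy is the same as the paper's (glued approximate Green's functions, Neumann series, Riesz projector applied to the Gilbert mode, then \ref{H3} for the profile), but three steps are not closed as written. First, the Neumann series $\G^\lambda\sum_n(-1)^n((\L-\lambda)\G^\lambda-\Id)^n$ only produces a \emph{right} inverse of $\L-\lambda$; to write $P=\frac{1}{2\pi i}\int_\Gamma(\lambda-\L)^{-1}\dd\lambda$ and use its projector properties you must know that $\L-\lambda$ is injective on $\Gamma$, which is not available under \ref{H0}--\ref{H2} alone. The paper resolves this by a dichotomy: either injectivity fails for some $\lambda$ near $\lambda_\star$, in which case that kernel element is already the desired growing mode, or injectivity holds on a slightly deformed contour (using the isolated-eigenvalue structure of Lemmas \ref{lemma:injectivity1}, \ref{lemma:injectivity2} under \ref{H3}) and the right inverse is then the genuine resolvent. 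Your proposal asserts ``yielding a resolvent on $\Gamma$'' and skips this point, so the eigenvalue claim in the non-\ref{H3} case is not established.

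Second, your contour of radius $\eps^{1/3+\kappa}$ corresponds to letting the shifted spectral parameter $\eta$ tend to $0$ as $\eps\to 0$, a regime not covered by Proposition \ref{proposition:approx-Green's}: the kernel of $\G_2^\lambda$ carries the Wronskian prefactor $\mathfrak{w}(\eta)\propto\Gamma\bigl(\tfrac12 c_2^{-1/2}\eta\bigr)$, which blows up as $\eta\to 0$ (the resolvent of $\L_2$ diverges at its eigenvalue), so both the $Y\to X$ bound and the smallness of $(\L-\lambda)\G^\lambda-\Id$ degrade by powers of $|\eta|^{-1}$. The paper keeps the radius $\eps^{1/3}|\eta|$ with $\eta\neq 0$ fixed and recovers the $o_{\eps\to0}(1)$ in \eqref{eq:eigenvalue} by taking $|\eta|$ arbitrarily small afterwards; to use your shrinking contour you would need to redo the estimates tracking this loss. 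Third, the identification ``$\psi=P\phi$ up to normalization'' presupposes that the range of $P$ consists of genuine eigenfunctions; simplicity of the Hermite ground state is a statement about the approximate operator $\L_2$, not about $\L$, and does not rule out a higher-dimensional or Jordan-type generalized eigenspace. The paper closes this by showing that \emph{every} unit vector in the range of $P$ is $O(\eps^a)$-close to the line spanned by $f_\star$ and invoking Riesz's lemma to get one-dimensionality, after which \eqref{eq:eigenvector} follows; some argument of this kind is needed in your final step as well.
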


To prove Theorem \ref{theorem:mainResult} we need to introduce an actual inverse of $\L-\lambda$, given in terms of $\G^\lambda$ as the following composition of operators
\[
(\L-\lambda)^{-1} = \G^\lambda \sum_{n=0}^\infty (-1)^n \left( (\L-\lambda)\G^\lambda - \Id \right)^n.
\]
We thus crucially need to make use of the results presented in Section \ref{s:injectivity}, as well as the following proposition that gives quantitative estimates on the operator norm of $\G^\lambda$, and measures the error produced by this Green's function, as and approximation the actual inverse near Gilbert's eigenvalue
\begin{equation}
\label{eq:gilbert eigenvalue}
\lambda^\star = \eps^{1/3}\mu^\star = \eps^{1/3}\left[ -M^2\left(\frac{1}{r_0^2}+\frac{\Omega'_0}{U'_0}\right) + \sqrt{\frac{-2iM\Omega_0'}{r_0}}-c_2^{1/2}\right].
\end{equation}
The proposition reads as follows.

\begin{proposition}\label{proposition:approx-Green's}
Let $\eta\in \Co\setminus\{0\}$ be such that $|\Re(-c_2^{1/2}\eta)|\leq 2$, pick $\gamma \in (\frac{2}{9},\frac{1}{3})$ and $\omega\in(\frac{2}{9},\gamma)$. Assume \ref{H0}--\ref{H2}, and let $\mathcal{R}$ be the compact interval in $r_0$ guaranteed by Lemma \ref{lemma:consequences of assumptions}.
Fix any compact set $\mathcal{N} \subset \mathbb{R}\setminus \{0\}$, and let
\[
\lambda = \eps^{1/3}\left( \eta - M^2\left[ \frac{1}{r_0^2} + \left(\frac{\Omega_0'}{U_0'}\right)^2 \right] + \sqrt{\frac{-2iM\Omega_0'}{r_0}} - c_2^{1/2} \right).
\]
Then, there exists a Green's function $\G^\lambda$ such that the following estimates are satisfied uniformly for $f\in Y$ with $\supp(f)\subset [0,\infty)$, $\eps>0$ sufficiently small, $r_0 \in \mathcal{R}$, and $M \in \mathcal{N}$,
\begin{equation}\label{eq:G}
\| \G^\lambda f\|_X \lesssim \varepsilon^{-1/3} \| f\|_Y,
\end{equation}
\begin{equation}\label{eq:G-error}
\| ((\L-\lambda)\G^\lambda-\Id) f\|_Y \lesssim \left( \eps^{\gamma} + \eps^{3\gamma-2/3} + \eps^{2/3-2\gamma} + \eps^\beta + \eps^{2\omega-4/9} \right) \| f\|_Y,
\end{equation}
where the coefficient $\beta>0$ is defined in \ref{P3} and all bounds are uniform in compact subsets of $\eta$.
In particular, $\G^\lambda$ may be written as 
\begin{equation*}
\G^\lambda f= \G_2^\lambda (f\mathds{1}_{[r_0-\eps^\gamma, r_0+\eps^\gamma]}) +\G_{\mathrm{extra}}(f\mathds{1}_{\mathbb{R}_+ \setminus [r_0-\eps^\gamma, r_0+\eps^\gamma]}).
\end{equation*}
Here $\G_{\mathrm{extra}}$ does not depend on $\lambda$, and $\mathcal{G}_{2}^\lambda$ is the exact inverse of $\L_2-\lambda$ on $(-\infty,\infty)$, where $\L_2$ denotes a differential operator for which $\lambda_\star\in\Co$ as defined in \eqref{eq:gilbert eigenvalue}
is an isolated eigenvalue, with associated eigenfunction
\begin{equation}\label{eq:fvec0}
f_\star = \begin{pmatrix}
    \e^{-\frac{1}{2}\eps^{-2/3}c_2^{1/2}(r-r_0)^2} \\
    0
\end{pmatrix}
\end{equation}
on $L^\infty((-\infty,\infty)\max\{1,r^2\}w_\eps(r-r_0))$. 
\end{proposition}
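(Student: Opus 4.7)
The plan is to build $\G^\lambda$ by the matched-local-inverses strategy described in Section \ref{s:heuristics}. First I partition $[0,\infty)$ using a parameter $\gamma\in(2/9,1/3)$ into
\begin{equation*}
I_0=[0,\eps^\gamma),\ I_1=[\eps^\gamma,r_0-\eps^\gamma),\ I_2=[r_0-\eps^\gamma,r_0+\eps^\gamma],\ I_3=(r_0+\eps^\gamma,\infty),
\end{equation*}
together with small neighbourhoods $[s_j-\eps^\omega,s_j+\eps^\omega]$ of the additional zeros of $T$ from $\mathcal{F}_0$. On each interval I identify the dominant terms of $\L$ as $\eps\to 0$ and construct an explicit approximate operator $\L_i^{\mathrm{app}}$ whose inverse can be written down in closed form. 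On $I_0$ the equation is driven by $\eps(\partial_r^2+r^{-1}\partial_r-r^{-2})$ and the inverse $\G_0$ is built from modified Bessel functions, regularising the $r^{-2}$ singularity at the origin. On $I_1$ and $I_3$ the dominant term is $i\eps^{-1/3}MT(r)$, so $\L$ is approximated by a semiclassical Schrödinger operator with non-vanishing complex potential; freezing $T$ on a fine subpartition and gluing the resulting exponentially-decaying Green's functions yields $\G_1,\G_3$, both $\lambda$-independent. Near each linear zero $s_j$ the appropriate local model is an Airy equation, giving a $\G_j^{\mathrm{LV}}$. On $I_2$, Gilbert's Taylor expansion from Section \ref{s:gilbert-scaling} produces $\L_2 = \eps\partial_r^2 - c_2\eps^{-1/3}(r-r_0)^2 - \text{const}$, whose exact resolvent $\G_2^\lambda$ is constructed from parabolic cylinder functions; this is the only piece that retains a $\lambda$-dependence and it carries the near-pole at $\lambda_\star$.

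With these pieces in hand I set $\G^\lambda f = \sum_i \G_i(f\mathds{1}_{I_i})$ and prove the two claimed bounds term by term. For \eqref{eq:G}, the estimates $\|\G_i\|_{Y\to X}\lesssim \eps^{-1/3}$ follow from the explicit form of each local inverse; the worst factor is saturated on $I_2$, where the parabolic-cylinder resolvent is near a pole for $\lambda$ on the curve $|\Re(-c_2^{1/2}\eta)|\leq 2$. For the consistency bound \eqref{eq:G-error}, I decompose
\begin{equation*}
\bigl((\L-\lambda)\G^\lambda-\Id\bigr)f \;=\; \sum_i(\L-\L_i^{\mathrm{app}})\G_i(f\mathds{1}_{I_i}) \;+\; \sum_i\bigl(\L_i^{\mathrm{app}}\G_i-\Id\bigr)(f\mathds{1}_{I_i}),
\end{equation*}
and note that the second sum vanishes on $I_i$ by construction, while off $I_i$ it is controlled by the exponential off-support decay of $\G_i f$ (at rate $\exp(-\eps^{-\theta}\mathrm{dist}(r,I_i))$) pitted against the at-most polynomial growth of the coefficients supplied by Assumption \ref{H2}. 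The first sum -- the genuine consistency error -- is estimated region by region using Lemma \ref{lemma:consequences of assumptions}: property \ref{P3} yields the $\eps^\beta$ contribution on $I_1,I_3$ after summing the piecewise-frozen approximations; properties \ref{P2} and \ref{P4} together with the cubic Taylor remainder on $I_2$ produce $\eps^\gamma+\eps^{3\gamma-2/3}+\eps^{2/3-2\gamma}$; and the Airy approximation near each $s_j$ contributes $\eps^{2\omega-4/9}$. Summing gives exactly the bound in \eqref{eq:G-error}.

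The main obstacle will be the careful construction and estimation of $\G_2^\lambda$ in the regime where $\lambda$ lies on a compact curve enclosing the Gilbert eigenvalue $\lambda_\star$. Since $\lambda_\star$ is a near-pole of the operator $\L_2$, naive resolvent bounds are insufficient: quantitative control of $\|\G_2^\lambda f\|_X$ and of the remainder $(\L-\L_2)\G_2^\lambda f$ must be extracted directly from the asymptotics of parabolic cylinder functions of complex order (Appendix \ref{s:appendix-parabolic}), paying particular attention to the pointwise values at the endpoints $r=r_0\pm\eps^\gamma$ where $\G_2^\lambda$ must glue cleanly to $\G_1,\G_3$ without generating boundary artefacts. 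A secondary difficulty, equally important for the error bound, is ensuring that each piece $\G_i(f\mathds{1}_{I_i})$ possesses enough off-support decay so that $(\L-\L_i^{\mathrm{app}})\G_i$ is small on all of $[0,\infty)$ rather than merely on $I_i$; Assumption \ref{H2}, together with properties \ref{P3}, \ref{P4} which trade polynomial growth of $T(r)-T(r')$ and $|r\Omega'(r)|$ against negative powers of $\eps$, is precisely what makes this global-from-local transition possible. Once both obstacles are overcome, all five error contributions combine into \eqref{eq:G-error} and the proposition follows.
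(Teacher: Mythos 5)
Your proposal follows essentially the same route as the paper: the same partition into the near-origin, towards-zero, critical-layer, towards-infinity and Airy (linear-zero) regions, the same choice of local model operators (modified Bessel, frozen-potential semiclassical, parabolic cylinder, Airy), and the same error decomposition into consistency terms plus off-support exponential decay controlled through \ref{P2}--\ref{P4}, which is exactly how the paper assembles Propositions \ref{proposition:r0}, \ref{proposition:rinf}, \ref{proposition:rtow0}, \ref{prop:G0} and Lemma \ref{lemma:linear-vanish} (with the condition $\omega<\gamma$ ensuring the Airy inverses cover the $\eps^\gamma$-neighbourhoods of the $s_j$). The proposal is correct in outline and matches the paper's argument.
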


The proof of this statement shall be postponed to the subsequent Section \ref{s:approx-Green's}, since the construction of the Green's function $\G^\lambda$ entails some of the most technical and convoluted arguments presented in this paper.

As a last extra step before proceeding with the proof of Theorem \ref{theorem:mainResult}, we need a small interlude to discuss some preliminary results about the properties of the operator $\L$.

\begin{lemma}\label{lemma:L-closed}
The unbounded operator $(\L,\D(\L))$ with domain $\mathcal{D}(\L)$ given by
\[
\mathcal{D}(\L)=\left\lbrace b \in H^2_{\mathrm{loc}}(0,\infty)\cap X:\mathcal{L}b \in X\right\rbrace
\]
is closed.
\end{lemma}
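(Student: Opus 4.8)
The plan is to show closedness of $(\L,\D(\L))$ by the standard graph-limit criterion: suppose $b_n \in \D(\L)$ with $b_n \to b$ in $X$ and $\L b_n \to g$ in $X$, and prove that $b \in \D(\L)$ and $\L b = g$. First I would observe that convergence in $X = L^\infty((0,\infty),\max\{1,r^2\}w_\eps(r-r_0)\dd r)$ implies convergence in $L^1_{\loc}(0,\infty)$ and in particular in the sense of distributions on any fixed compact subinterval $[a,b] \Subset (0,\infty)$. Since each $b_n$ solves the linear ODE $\L b_n = g_n$ with $g_n := \L b_n \to g$, passing to the distributional limit on $[a,b]$ gives $\L b = g$ in the sense of distributions on $(0,\infty)$; here one uses that the coefficients of $\L$ (which, recalling the explicit form of $\L$ from Section \ref{s:equations}, are smooth and at worst of the type $r^{-2}$, $r^{-1}$, together with $\Omega,U,\Omega',U' \in C^3$ by \ref{H0}) are locally bounded away from $r=0$, so multiplication by them is continuous on $L^1_{\loc}(0,\infty)$.

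Next I would upgrade this distributional identity to the required regularity. Since $\L$ is (componentwise) a second-order elliptic operator with smooth coefficients on $(0,\infty)$ and the leading coefficient $\eps$ is a nonzero constant, the equation $\partial_r^2 b = \eps^{-1}(\text{lower order terms in } b) + \eps^{-1} g$ together with $b \in L^\infty_{\loc}$, $g \in X \subset L^\infty_{\loc}$ and elliptic bootstrapping (or simply integrating twice) gives $b \in H^2_{\loc}(0,\infty)$; in fact one gets $b \in W^{2,\infty}_{\loc}$, which is more than enough. Combined with $b \in X$ (which holds since $X$ is a Banach space and $b_n \to b$ in $X$) and $\L b = g \in X$, this shows precisely that $b$ satisfies the two defining conditions of $\D(\L)$, namely $b \in H^2_{\loc}(0,\infty) \cap X$ and $\L b \in X$. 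Hence $b \in \D(\L)$ and $\L b = g$, which is exactly closedness.

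The one point requiring a little care — and the place I would flag as the main (mild) obstacle — is the behaviour near the coordinate singularity $r = 0$: the coefficients $r^{-2}$, $r^{-1}$ appearing in $\L$ blow up there, so one cannot naively claim local ellipticity up to $r=0$. However, this is not a genuine difficulty here, because $\D(\L)$ only asks for $H^2_{\loc}(0,\infty)$ (the open interval), and all of the arguments above are carried out on compact subintervals $[a,b] \subset (0,\infty)$ with $a>0$, where the coefficients are bounded. The space $X$ already encodes, via the a-priori estimates of Lemma \ref{lemma:eigenfunctions are smooth}, the correct decay as $r \to 0$ for genuine eigenfunctions, but for the purpose of showing $\L$ is closed we need nothing at $r=0$ beyond local control on $(0,\infty)$. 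Thus the proof reduces to the soft functional-analytic argument above, with the only genuinely used inputs being (i) $X \hookrightarrow L^1_{\loc}(0,\infty)$ continuously, (ii) the coefficients of $\L$ are locally bounded on $(0,\infty)$, and (iii) interior elliptic regularity for a constant-leading-coefficient second order ODE.
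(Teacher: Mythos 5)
Your proof is correct and follows essentially the same route as the paper's: take a graph-limit sequence, pass to the distributional limit against compactly supported test functions (the paper makes this precise via the formal adjoint $\mathcal{L}^*\phi$, you do it via $L^1_{\mathrm{loc}}$ convergence and continuity of multiplication by locally bounded coefficients — two phrasings of the same step), then invoke interior elliptic regularity on $(0,\infty)$ to upgrade $b$ to $H^2_{\mathrm{loc}}$, and conclude that the two defining conditions of $\mathcal{D}(\mathcal{L})$ are met. Your remark that the coordinate singularity at $r=0$ is harmless because the domain only requires $H^2_{\mathrm{loc}}$ of the \emph{open} interval is the right observation and is implicit in the paper's use of $C_c^\infty(0,\infty)$ test functions; no changes needed.
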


\begin{proof}
We begin by noting that, since any $b \in \mathcal{D}(\mathcal{L})$ is in $H^2_{\mathrm{loc}}$, the action of $\mathcal{L}$ on $\mathcal{D}(\mathcal{L})$ is well defined, and hence the definition of $\mathcal{D}(\mathcal{L})$ makes sense. Suppose now that we have a sequence $(B_n)_n$ and two elements $b,H\in X$ such that 
\[
b_n \to b, \quad \L b_n \to h
\]
as $n\to\infty$ in $X$. Fix a smooth compactly supported vector valued $\phi \in C_c^\infty(0,\infty)$. Then, it holds 
\[
\int_{0}^\infty \mathcal{L}B_n(x) \cdot \phi(x) \dd x = \int_{0}^\infty b_n(x) \cdot \mathcal{L}^*\phi(x) \dd x \to \int_0^\infty b(x) \cdot \mathcal{L}^*\phi(x) \dd x
\]
as $n \to \infty$, where $\mathcal{L}^*$ denotes the formal adjoint of $\mathcal{L}$, and convergence follows since $\mathcal{L}^*\phi$ is a compactly supported, bounded function. However, by assumption it also holds that 
\[
\int_{0}^\infty \mathcal{L}b_n(x) \cdot \phi(x) \dd x \to \int_0^\infty h(x) \cdot \phi(x) \dd x
\]
and so we deduce that 
\begin{equation}
\label{eq:distributionalFormulation}
\int_0^\infty b(x) \cdot \mathcal{L}^*\phi(x) \dd x=\int_0^\infty h(x) \cdot \phi(x) \dd x
\end{equation}
for any $\phi \in C_c^\infty(0,\infty)$. We thus conclude that $\mathcal{L}b=h$ on $(0,\infty)$ in a distributional sense. We note that since $H \in X$, it holds that $H \in L^2_{\mathrm{loc}}(0,\infty)$. Therefore, by elliptic regularity, see for instance \cite{Folland96}*{Theorem 6.33}, there holds that $b \in H^2_{\mathrm{loc}}(0,\infty)$, so that in fact $b \in \mathcal{D}(\mathcal{L})$. Finally, since $b \in H^2_{\mathrm{loc}}(0,\infty)$ and $\phi$ has compact support we may integrate by parts in \eqref{eq:distributionalFormulation} to obtain 
\[
\int_0^\infty \mathcal{L}b\cdot \phi(x) dx=\int_0^\infty h(x) \cdot \phi(x) dx
\]
which by the Fundamental Lemma of Calculus of Variations implies that $\mathcal{L}b=h$ almost everywhere on $(0,\infty)$, and this completes the proof.
\end{proof}

With this tool in our bag, we can directly move on to the proof of Theorem \ref{theorem:mainResult}.

\begin{proof}[Proof of Theorem \ref{theorem:mainResult}]
The proof of this theorem will be split in three different pieces. First of all we will use the approximated Green's functions to construct a resolvent operator, secondly we will define a Riesz projector to prove that $\L$ has a spectral point nearby the candidate eigenvalue, and finally we will prove the existence of the corresponding eigenfunction.

\emph{Step 1}. We begin by using the approximate Green's function $\G^\lambda$ to construct a resolvent operator. As further elaborated in Section \ref{s:approx-Green's}, $\G^\lambda$ is given in terms of a linear combination of different Green's functions associated to approximations of the problem in different regions,
\[
\G^\lambda f = \G_2^\lambda(f \dsOne_{[r_0-\eps^\gamma,r_0+\eps^\gamma)}) + \G_{\mathrm{extra}}(f \dsOne_{\R_+\setminus [r_0-\eps^\gamma,r_0+\eps^\gamma)}),
\]
where $G_{\mathrm{extra}}$ is defined accordingly depending on the number of zeroes of the transport function $T(r)$, e.g.\ if $\F_0$ from Assumption \ref{H1} is empty, then
\[
\G_{\mathrm{extra}}(f \dsOne_{\R_+\setminus [r_0-\eps^\gamma,r_0+\eps^\gamma)}) = \G_0(f \dsOne_{[0,\eps^\gamma)}) + \G_1(f \dsOne_{[\eps^\gamma,r_0-\eps^\gamma)}) + \G_3(f \dsOne_{[r_0+\eps^\gamma,\infty)}).
\]
Crucially, the superscript $\lambda$ appears exclusively in $\G_2^\lambda$ since it is the only piece of the approximate Green's function that depends on the choice of $\lambda = \mu\eps^{1/3}$, regardless of the number of zeroes of $T(r)$. Hence, we define $\mu$ to be given by an expression of the form
\[
\mu = -M^2\left(\frac{1}{r_0^2}+\frac{\Omega'_0}{U'_0}\right) + \sqrt{\frac{-2iM\Omega'_0}{r_0}}-c_2^{1/2}-\eta,
\]
where $\eta\in\Co$ is chosen as in Proposition \ref{proposition:approx-Green's}, namely $\eta\neq 0$ and sufficiently small in modulus. For any fixed $\eta$, we find that $\lambda = \mu\eps^{1/3}$ traces out a circumference of radius $\eps^{1/3}|\eta|>0$ around Gilbert's eigenvalue
\[
\eps^{1/3}\mu_\star = \eps^{1/3} \left[-M^2\left(\frac{1}{r_0^2}+\frac{\Omega'_0}{U'_0}\right) + \sqrt{\frac{-2iM\Omega'_0}{r_0}}-c_2^{1/2}\right]
\]
in the complex plane. We shall denote this curve by $\Gamma\subset\Co$. Next in order, notice from Proposition \ref{proposition:approx-Green's} that there exists a positive number $a>0$ such that for any $f\in Y$ and any $\eps>0$ sufficiently small,
\begin{equation}\label{eq:error-estimate-total}
\|(\L-\lambda)\G^\lambda f -  f\|_Y \lesssim \eps^a \|f\|_Y.
\end{equation}
In addition, we claim that the inverse of the exact operator $\L-\lambda$ is given by the map
\begin{equation}\label{eq:inverseL}
\G_\L^\lambda = \G^\lambda \sum_{n=0}^\infty (-1)^n\left((\L-\lambda)\G^\lambda - \Id\right)^n.
\end{equation}
First of all we will show that \eqref{eq:inverseL} is a right inverse. Indeed, notice from \eqref{eq:error-estimate-total} that
\[
\left\lVert \left((\L-\lambda)\G^\lambda - \Id\right)^n \right\rVert \lesssim \eps^{na},
\]
which is summable in $n$ for $\eps$ small. Here we use $\|\cdot\|$ to denote the operator norm $Y\to Y$. Moreover, since $\G^\lambda$ maps $Y$ to $X$, we deduce that the operator $(\L-\lambda)^{-1}$ is well-defined. In addition, for any fixed $N\in \N$ we can estimate
\[
\begin{split}
    & \left\lVert (\L-\lambda) \G^\lambda \sum_{n=0}^N (-1)^n\left((\L-\lambda)\G^\lambda - \Id\right)^n f - f \right\rVert_Y \\
& \quad = \left\lVert \sum_{n=0}^{N} (-1)^{n}\left((\L-\lambda)\G^\lambda - \Id\right)^{n+1} f + \sum_{n=0}^{N} (-1)^{n}\left((\L-\lambda)\G^\lambda - \Id\right)^n f - f \right\rVert_Y  \\
& \quad = \left\lVert \sum_{n=1}^{N+1} (-1)^{n-1}\left((\L-\lambda)\G^\lambda - \Id\right)^n f - \sum_{n=0}^{N} (-1)^{n-1}\left((\L-\lambda)\G^\lambda - \Id\right)^n f - f  \right\rVert_Y \\
& \quad = \left\lVert (-1)^{N}\left((\L-\lambda)\G^\lambda - \Id\right)^{N+1} f \right\rVert_Y \lesssim \eps^{(N+1)a} \left\lVert f \right\rVert_Y,
\end{split}
\]
which converges to zero as $N\to\infty$. Therefore, mimicking the proof of closedness of the operator from Lemma \ref{lemma:L-closed}, there holds that $\G_\L^\lambda$ maps $X$ into $\mathcal{D}(\L)$, and $(\L-\lambda)\G_\L^\lambda= \Id$. We now mention a subtle point that needs to be dealt with. Throughout this section, we will frequently argue by saying that $\G_\L^\lambda=(\L-\lambda)^{-1}$, where $(\L-\lambda)^{-1}$ denotes the functional analytic resolvent, and thus derive properties about said resolvent using our explicit form for $\G_\L^\lambda$. However, thus far we have only showed that $\G_\L^\lambda$ is a \emph{right inverse} to $(\L-\lambda)$, and so $(\L-\lambda)^{-1}$ may not even exist. From the existence of a right inverse, we know that $(\L-\lambda)$ is surjective, but injectivity is still an open question. However, in Lemmas \ref{lemma:injectivity1} and \ref{lemma:injectivity2} from Section \ref{s:injectivity}, we show that under \ref{H3}, $\L :\mathcal{D}(\L) \to X$ admits at most countably many, isolated eigenvalues with positive real part, which do not have an accumulation point. Thus, given our Jordan curve $\Gamma \subset \{\Re(z) >0\}$ for which $\G_\L^\lambda$ defines a right inverse of $\L-\lambda$ for all $\lambda $ in a neighbourhood of $\Gamma$, we may simply deform $\Gamma$ slightly and conclude that there exists a curve $\tilde \Gamma$ on which both $\L-\lambda$ is injective, and $\G_\L^\lambda$ is a right inverse for $\L-\lambda$. Hence, for any $\lambda \in \tilde{\Gamma}$, by surjectivity we know that there exists a (possibly unbounded) inverse map $(\L-\lambda)^{-1}$, and for any $f \in X$ it holds write
\[
(\L-\lambda)((\L-\lambda)^{-1}f-\G_\L^\lambda f) = 0,
\]
so that by the injectivity of $\L-\lambda$, it must hold $(\L-\lambda)^{-1}f=\G_\L^\lambda f$ for all $f \in X$. Hence $(\L-\lambda)^{-1}=\G_\L^\lambda$, and since $\G_\L^\lambda$ is bounded, we conclude that $\lambda \in \rho(\L)$, the resolvent set of $\L$. 
Hence, \eqref{eq:inverseL} gives a well-defined right and left inverse of the operator $\L-\lambda$ for all $\lambda\in\tilde\Gamma$ and $\eps>0$ sufficiently small. From now on, we shall abuse notation and identify $\Gamma$ with $\tilde \Gamma$. We claim that from this, we can deduce the existence of a growing mode for $\L$, and thus complete the proof of Theorem \ref{theorem:mainResult}.

\emph{Step 2}. At this stage we will show that the operator $\L$ has an spectral point inside the curve $\Gamma$. In order to do so we define the Riesz projector
\[
Pf = \frac{1}{2\pi i} \int_{\Gamma} (\lambda-\L)^{-1} f \dd \lambda.
\]
A detailed account of the Riesz projector may be found in \cite{Kato}*{III-5,6,7}. For a summary of these results, see also \cite{CZSV25}*{Appendix A}. We introduce this object due to the fact that standard properties of Riesz projectors ensure that if there exists $\phi\in X$ such that $P\phi\neq 0$, then $\L$ has a spectral point in the interior of $\Gamma$, see e.g.\ \cite{Kato}. The goal now is to find such $\phi$. We use Proposition \ref{proposition:approx-Green's} to find the estimate
\[
\begin{split}
    \left\lVert \G^\lambda \sum_{n=1}^\infty (-1)^n \left( (\L-\lambda)\G^\lambda - \Id \right)^n f \right\rVert_X & \lesssim \eps^{-1/3} \left\lVert \sum_{n=1}^\infty (-1)^n \left( (\L-\lambda)\G^\lambda - \Id \right)^n f \right\rVert_Y \\
    & \lesssim \eps^{-1/3} \sum_{n=1}^\infty \eps^{na} \|f\|_Y \leq \eps^{a-1/3}\|f\|_Y.
\end{split}
\]
In particular, note that
\[
\begin{split}
   Pf - \frac{1}{2\pi i}\int_\Gamma (-\G^\lambda)f\dd \lambda & = \frac{1}{2\pi i}   \int_\Gamma \left[(\lambda-\L)^{-1} - (\lambda-\L)^{-1}(\lambda-\L)(-\G^\lambda)\right] f\dd \lambda   \\
    & = \frac{1}{2\pi i}   \int_\Gamma (\L-\lambda)^{-1} \left((\L-\lambda)\G^\lambda - \Id \right) f\dd \lambda  \\
    & = \frac{1}{2\pi i}   \int_\Gamma \G^\lambda \sum_{n=0}^\infty (-1)^n \left((\L-\lambda)\G^\lambda - \Id \right)^{n+1} f\dd \lambda,
\end{split}
\]
hence, combining both expressions we obtain
\[
\begin{split}
    \left\lVert Pf - \frac{1}{2\pi i}\int_\Gamma (-\G^\lambda)f\dd \lambda  \right\rVert_X & \lesssim \eps^{1/3} |\eta|\sup_{\lambda\in\Gamma} \left\lVert \G^\lambda \sum_{n=1}^\infty (-1)^{n-1} \left( (\L-\lambda)\G^\lambda - \Id \right)^n f \right\rVert_X \\
    & \lesssim \eps^a|\eta| \|f\|_Y,
\end{split}
\]
where recall that $\eps^{1/3}|\eta|$ is the radius of the  circle $\Gamma\subset\Co$. Furthermore, note that when integrating $\G^\lambda$ in $\Gamma$, all components of the Green's function $\G^\lambda$ that do not depend on $\lambda$---namely $G_{\mathrm{extra}}$---will vanish due to Cauchy's Theorem. Hence, using the decomposition of Proposition \ref{proposition:approx-Green's}, we can write
\[
\frac{1}{2\pi i}\int_\Gamma (-\G^\lambda)f\dd \lambda = \frac{1}{2\pi i}\int_\Gamma (-\G_2^\lambda)\left(f\dsOne_{[r_0-\eps^\gamma,r_0+\eps^\gamma)}\right)\dd \lambda.
\]
Therefore, as a candidate we want to choose a function that has the property of being an eigenfunction for the approximated operator $\L_2$, namely the operator corresponding to Green's function $\G_2^\lambda$. This operator is defined by
\[
\L_2 V = \eps\partial_r^2 V - \left[c_2\eps^{-1/3}(r-r_0)^2 - \eps^{1/3}M^2\left(\frac{1}{r_0^2} + \frac{\Omega_0'}{U_0'}\right)\right] V + \sqrt{\frac{-2iM\Omega_0'}{r_0}} \begin{pmatrix}
    \displaystyle V_1 \\
    \displaystyle - V_2
\end{pmatrix}
\]
see Section \ref{s:around-r0} for further explanation. We pick
\begin{equation*}
f_\star = \begin{pmatrix}
    \e^{-\frac{1}{2}\eps^{-2/3}c_2^{1/2}(r-r_0)^2} \\
    0
\end{pmatrix},
\end{equation*}
because it is an eigenfunction of the operator $\L_2$ in $L^\infty(\mathbb{R},w_\eps(r-r_0)\max\{1,r^2\})$ with an isolated eigenvalue
\begin{equation}
\label{eq:lambda star}
\lambda_\star = \mu_\star\eps^{1/3} = \eps^{1/3} \left[-M^2\left(\frac{1}{r_0^2} + \frac{\Omega_0'}{U_0'}\right) + \sqrt{\frac{-2iM\Omega_0'}{r_0}} - c_2^{1/2} \right].
\end{equation}
The obtention of this eigensystem is a straightforward computation that we include in the Appendix \ref{s:around-r0-app} for the convenience of the reader. Since $\G_2^\lambda$ is the Green's function associated to the operator $\L_2-\lambda$, there holds that
\begin{equation}\label{eq:riesz-prop1}
\frac{1}{2\pi i}\int_\Gamma (-\G_2^\lambda)f_\star \dd\lambda = f_\star,
\end{equation}
due to standard properties of the Riesz projector, since $\lambda_\star \in \mathrm{int}(\Gamma)$. Moreover, for any $\eps>0$ sufficiently small we have that
\[
\|f_\star(r) - f_\star\dsOne_{[r_0-\eps^\gamma,r_0+\eps^\gamma)}(r)\|_X =\sup_{|r-r_0|\geq \eps^{\gamma}}|w_\eps(r-r_0)\e^{-\eps^{-2/3}(r-r_0)^2\Re(c_2^{1/2})|}| \leq \eps\|f_\star\|_X,
\]
where we crucially need $\gamma<1/3$. Hence, using \eqref{eq:riesz-prop1} and that the Riesz projector is a bounded operator with norm $1$, we observe that
\[
\begin{split}
    \left\lVert \frac{1}{2\pi i}\int_\Gamma (-\G^\lambda)f_\star\dd \lambda \right\rVert_X & \geq \|f_\star\|_X - \left\lVert \frac{1}{2\pi i}\int_\Gamma (-\G^\lambda)\left(f_\star - f_*\dsOne_{[r_0-\eps^\gamma,r_0+\eps^\gamma)} \right)\dd \lambda \right\rVert_X \\
    & \geq (1-\eps)\|f_\star\|_X.
\end{split}
\]
Finally, we combine all the estimates to conclude with
\[
\begin{split}
    \|P f_*\|_X & \geq \left\lVert \frac{1}{2\pi i}\int_\Gamma (-\G^\lambda)f_\star\dd \lambda \right\rVert_X - \left\lVert Pf_*- \frac{1}{2\pi i}\int_\Gamma (-\G^\lambda)f_\star \dd \lambda \right\rVert_X\\
    &\gtrsim (1-\eps-\eps^a)\|f_\star\|_X.
\end{split}
\]
Therefore we see that if $\eps>0$ is small enough, then $\| P\phi\|_X>0$, and thus the range of the Riesz projector is non-empty.

\emph{Step 3}. Last, we move on to showing that we can actually infer the existence of an eigenfunction, since so far we have only shown that there exists a spectral value of $\L$ in the interior of $\Gamma$. To show that an eigenfunction exists it suffices to prove that the range of $P$ is finite dimensional, and indeed, we will show that it is one-dimensional. We claim that for any $\psi\in X$ in the range of $P$ with $\|\psi\|_X=1$, there exist a constant $C\in\mathbb{C}$, $C\neq 0$, and a function $\psi_\err\in X$ so that 
\[
\psi = Cf_\star + \psi_\err.
\]
Moreover, $\psi_\err$ satisfies $\|\psi_\err\|_X\leq c_0 \eps^a$ for some $a>0$, $c_0>0$ independent of $\psi$. Once this is established, an application of Riesz' Lemma immediately implies that the range of $P$ is one dimensional. To see this, on the one hand we notice that the range of the operator
\[
\frac{1}{2\pi i}\int_\Gamma (-\G^\lambda_2)\dd \lambda,
\]
is precisely $\vspan(f_\star)$.  On the other hand, using these properties we can write 
\[
\begin{split}
    \| \psi - Cf_\star \|_X & \leq \left\lVert \psi - \frac{1}{2\pi i}\int_\Gamma (-\G_2^\lambda)(\mathds{1}_{[r_0-\eps^\gamma,r_0+\eps^\gamma)}\psi)\dd \lambda  \right\rVert_X \\
    & \quad + \left\lVert \frac{1}{2\pi i}\int_\Gamma (-\G_2^\lambda)(\mathds{1}_{[r_0-\eps^\gamma,r_0+\eps^\gamma)}\psi) \dd \lambda - Cf_\star \right\rVert_X.
\end{split}
\]
The first addend in the right hand side is bounded by $\eps^a|\eta|$, following the same techniques that we addressed in Step 2. The second addend directly vanishes, upon an appropriate choice of the constant $C$. All in all, the claim of the theorem follows from here, assuming \ref{H3}.

Finally, we remark that, if \ref{H3} is not satisfied and the injectivity of the operator $\L-\lambda$ cannot be established, we can still guarantee the existence of a growing mode with an eigenvalue $\lambda\in\Gamma$ of the form stated in Theorem \ref{theorem:mainResult}. Indeed, if injectivity fails for some $\lambda$ close to $\eps^{1/3}\mu_\star$, then by definition there exists some $\phi \in \ker(\L-\lambda)$, and so we have a growing mode. Hence, the proof of the theorem is complete.
\end{proof}

With this result under our belt, we now prove the existence of growing modes for the kinematic dynamo equations under the additional assumption that the compact set $\mathcal{R}_0$ from \ref{H2} contains an interval.

\begin{proof}[Proof of Theorem \ref{thm:full dynamo}.]
Under \ref{H2}, the bounds in Proposition \ref{proposition:approx-Green's} are uniform for $r_0 \in \mathcal{R}$, and $M$ in any compact subset $\mathcal{N} \subset \mathbb{R}\setminus \{0\}$.  Therefore, for any fixed compact $\mathcal{N} \subset \mathbb{R}\setminus \{0\}$, we may pick $\eps>0$ small enough so that the conclusion of Theorem \ref{theorem:mainResult} holds uniformly in $r_0 \in \mathcal{R}$, $M \in \mathcal{N}$. Now, in order for a function of the form $b(r)$ to define a growing mode of the kinematic dynamo equations via the modal form $B(r,\theta,z)=b(r)e^{i\eps^{-1/3}(M\theta+Kz)}$, it must be that $\eps^{-1/3}M, \eps^{-1/3}K \in \mathbb{Z}$. However, note that $K$ is related to $M$ via 
\[
K=-\frac{\Omega'(r_0)}{U'(r_0)}M.
\]
Thus, fix now some $M \in \mathcal{N}$, and  note that for all $\eps>0$ small enough, there exists $\tilde{M}(\eps) \in [-1,1]$ so that $(M+\eps^{1/3}\tilde{M}(\eps))\eps^{-1/3} \in \mathbb{Z}$. Next, note that the quotient $
\Omega'(r_0)/U'(r_0)$ is not constant for $r_0 \in \mathcal{R}$. Indeed, suppose that it was constant for $r_0 \in \mathcal{R}$. Then, we compute 
\begin{equation*}
T'(r)=U'(r)\left ( \frac{\Omega'(r)}{U'(r)}-\frac{\Omega'(r_0)}{U'(r_0)}\right )=0
\end{equation*}
for $r \in \mathcal{R}$. But this implies that $T(r)=T(r_0)=0$ for all $r \in \mathcal{R}$, which contradicts \ref{H1}. Furthermore, $\Omega'(r_0)/U'(r_0)$ defines a continuous function in $r_0$, since by assumption $\Omega, U$ are $C^3$.
In particular, there exists $r_0 \in \mathcal{R}$ so that 
\[
r_0 \left |\frac{\dd }{\dd r}\log\left |\frac{\Omega'(r_0)}{U'(r_0)}\right | \right |<4,
\]
and this must further in fact hold true in some subinterval of $\mathcal{R}$, and so we replace $\mathcal{R}$ by this subinterval.
By the intermediate value theorem, the image of $\mathcal{R}$ under the map 
\[
r \mapsto \frac{\Omega'(r)}{U'(r)}
\]
contains a non-empty interval $[p_1,q_1] \subset \mathbb{R}$. Thus, having fixed $M$ so that $M\eps^{-1/3} \in \mathbb{Z}$, as soon as $|q_1-p_1|>M^{-1}\eps^{1/3}$, there exists $r_0 \in \mathcal{R}$ so that 
\[
\frac{\Omega'(r_0)}{U'(r_0)}M\eps^{-1/3} \in \mathbb{Z}.
\]
Hence, for all $\eps>0$ sufficiently small, we may pick $M \in \mathcal{N}$, $r_0 \in \mathcal{R}$ according to the above recipe, deduce the existence of a growing mode for $\L$ with these choices of $M, r_0$, and thus define a growing mode for the kinematic dynamo equations of the form $B(r,\theta,z,t)=b(r)e^{i\eps^{-1/3}(M\theta+Kz)+\lambda t}$,
with $\Re(\lambda)=\eps^{1/3}(\mu+o_{\eps \to 0}(1))$, as desired.
\end{proof}

\section{Construction of the Green's functions: proof of Proposition \ref{proposition:approx-Green's}}\label{s:approx-Green's}

In this section, we conduct a detailed analysis of the operator $\G^\lambda$, which serves as an approximation to the inverse of $\L-\lambda$ in the space $Y$. As outlined in Section \ref{s:proof}, this operator plays a crucial role in defining the true inverse \eqref{eq:inverseL}, hence this section is mostly devoted to proving the quantitative estimates from Proposition \ref{proposition:approx-Green's}. 
Following the framework introduced in Section \ref{s:proof}, we assume throughout this section that the problem is posed on the full space $\M=\R^2$, thereby avoiding complications related to boundary conditions. The arguments here presented will be extended to the case of domains with boundaries in Section \ref{s:boundaries}.

The complexity of the Green's function $\G^\lambda$ is directly influenced by the number of zeros of the transport function $T(r)$, which is determined by the (given) velocity field $u$. Under Assumption \ref{H1}, $T(r)$ will always vanish up to second order only at $r=r_0$, and it might vanish linearly in a finite collection of points $s_j\neq r_0$. Since the construction of $\G^\lambda$ is intricate, we will first consider the following extra assumption:
\begin{itemize}
    \item[\namedlabel{SH}{SH}] The transport function $T(r)$ only vanishes at $r=r_0$, i.e.\ the set $\F_0$ from \ref{H1} is empty.
\end{itemize}

This simplification allows us to present the core ideas in a more streamlined manner.
We maintain Assumption \ref{SH} throughout Sections \ref{s:around-r0}, \ref{sec:tow-inf}, \ref{s:tow-zero} and \ref{sec:near-0}, removing it only for Section \ref{s:linear-zeroes}, where Proposition \ref{proposition:approx-Green's} is proved in full generality.

Assumptions \ref{H0}--\ref{H3} together with \ref{SH} already gives a very wide class of admissible vector fields, including polynomials, or examples with finite kinetic energy, e.g.\ $\Omega(r) = \e^{-ar^2}$, $U(r) = \e^{-br^2}$ with $a\neq b$. In this scenario, the approximate Green's function $\G^\lambda$ is defined as the sum of the four contributions,
\begin{equation}\label{eq:Glambda-total}
\G^\lambda f = \G_0 \left(f\dsOne_{[0,\eps^\gamma)}\right) +  \G_1 \left(f\dsOne_{[\eps^\gamma,r_0-\eps^\gamma)}\right) +  \G_2^\lambda \left(f\dsOne_{[r_0-\eps^\gamma,r_0+\eps^\gamma)}\right) +  \G_3 \left(f\dsOne_{[r_0+\eps^\gamma,\infty)}\right),
\end{equation}
associated respectively to the contributions near the origin, towards the origin, around the critical radius, and towards infinity. The operators $\G_0$, $\G_1$, $\G_2^\lambda$ and $\G_3$ are themselves Green's functions associated to differential equations that approximate the exact problem \eqref{eq:1}--\eqref{eq:2} in a tailor-made manner for each of their domains of definition.

The claim of Proposition \ref{proposition:approx-Green's} under the additional assumption \ref{SH} is a straightforward consequence of Propositions \ref{proposition:r0}, \ref{proposition:rinf}, \ref{proposition:rtow0} and \ref{prop:G0}, that will be discussed in the subsequent sections. As it can be observed from the definition of $\G^\lambda$ in \eqref{eq:Glambda-total}, we divide the support of $f\in Y$ in four distinctive parts: $[0,\eps^\gamma)$, $[\eps^\gamma,r_0-\eps^\gamma)$, $[r_0-\eps^\gamma,r_0+\eps^\gamma)$, and $[r_0+\eps^\gamma,\infty)$. Each piece of this puzzle presents its own intrinsic difficulties. 
In order to prove Proposition \ref{proposition:approx-Green's} in full generality we need to combine the arguments from Sections \ref{s:around-r0}--\ref{sec:near-0} with Lemma \ref{lemma:linear-vanish}. In particular, the complete proof is discussed by the end of Section \ref{s:linear-zeroes}.

\subsection{Green's function around the critical radius}\label{s:around-r0}

We start by studying the problem in the region where we expect the growth we are seeking to occur, that is, near the critical radius $r_0>0$. We look for an operator for which we can compute exact inverses, and that approximates the exact operator$\L-\lambda$ for values of $r$ very close to $r_0$. We will do so by looking at $\L-\lambda$ in \eqref{eq:1}--\eqref{eq:2} and removing terms that are sufficiently small in this regime. Thus, we drop $D_\eps^+$ and $D_\eps^-$ since they will be very small as $r\to r_0$. Moreover, we eliminate all terms in $\L$ of order \emph{strictly} larger than $1/3$ in $\varepsilon$, which effectively produces an approximation of equations \eqref{eq:1} and \eqref{eq:2} that is fully decoupled. Notice that crucially we do not drop the term $\lambda = \mu\eps^{1/3}$.

We also need to find suitable estimates for the transport term $T(r)$, which corresponds to a term of order $-1/3$ in $\eps$. To do so, we fall back on the intuition provided by Gilbert in \cite{gilbert1988} (see also Section \ref{s:gilbert-scaling}). Observe that a Taylor expansion for $\Omega$ and $U$ around $r_0$ yields
\[
\Omega-\Omega_0 = \Omega_0'(r-r_0) + \frac{1}{2}\Omega_0''(r-r_0)^2 + \Omega_{\text{hot}},
\]
\[
U-U_0 = U_0'(r-r_0) + \frac{1}{2}U_0''(r-r_0)^2 + U_{\text{hot}},
\]
where both $\Omega_{\text{hot}}$ and $U_{\text{hot}}$ are $O(|r-r_0|^3)$. Therefore, we can write
\[
i\eps^{-1/3}T(r) = \frac{i\eps^{-1/3}}{2}\left( \Omega_0''-\frac{\Omega_0'}{U_0'}U_0'' \right) (r-r_0)^2 + T_{\text{hot}},
\]
since the linear terms in $r$ vanish thanks to Gilbert's condition \eqref{eq:Gilbert2}, i.e.\ $M\Omega_0'+KU_0'=0$. It is important to notice that the remainder $T_{\textrm{hot}}$ is of order
\begin{equation}\label{eq:THOT}
T_{\text{hot}} = O(\varepsilon^{-1/3}|r-r_0|^3),
\end{equation}
and by Lemma \ref{lemma:consequences of assumptions}, \ref{P2}, the $O(\varepsilon^{-1/3}|r-r_0|^3)$ is in fact \emph{uniform} in $r_0 \in \mathcal{R}$.
Finally, we will also omit the first order derivatives $\varepsilon r^{-1}\partial_r$ for the approximated operator. All in all, we obtain the following equations,
\begin{equation}\label{eq:r0-1}
\varepsilon\partial_r^2V_1 - c_2\varepsilon^{-1/3}(r-r_0)^2V_1 - \varepsilon^{1/3}\left(  M^2\left[ \frac{1}{r_0^2} + \left(\frac{\Omega_0'}{U_0'}\right)^2 \right] +\mu -\sqrt{\frac{-2iM\Omega_0'}{r_0}} \right)V_1 = f_1,
\end{equation}
\begin{equation}\label{eq:r0-2}
\varepsilon\partial_r^2V_2 - c_2\varepsilon^{-1/3}(r-r_0)^2V_2 - \varepsilon^{1/3}\left(  M^2\left[ \frac{1}{r_0^2} + \left(\frac{\Omega_0'}{U_0'}\right)^2 \right] +\mu +\sqrt{\frac{-2iM\Omega_0'}{r_0}} \right)V_2 = f_2,
\end{equation}
where recall that following Gilbert's notation \eqref{eq:P2}, we introduce the constant
\begin{equation*}
c_2 = \frac{iM}{2}\left( \Omega_0''-\frac{\Omega_0'}{U_0'}U_0'' \right).
\end{equation*}
We denote by $\L_2$ the operator associated to this system of ODEs, excluding the factor $\lambda=\mu\eps^{1/3}$ in its definition, namely the system \eqref{eq:r0-1}--\eqref{eq:r0-2} can be written as
\[
(\L_2-\lambda)V = f.
\]
Throughout this paper, unless otherwise specified, $V_1$ and $V_2$ will denote the components of the vector $V=(V_1,V_2)$, and similarly we write $f=(f_1,f_2)$. 

As is standard, one way of inverting the operator $\L_2-\lambda$ is by computing a Green's operator $\G_2^\lambda$, which is related to its corresponding Green's kernel $G_2^\lambda$ by
\[
(\L_2-\lambda)V(r) = f(r), \quad V(r) = \G_2^\lambda f(r) = \int_0^\infty G_2^\lambda(r,s) f(s)\dd s.
\]
We now endeavour to find estimates on this Green's operator $\G_2^\lambda$. Indeed, for any $f$ compactly supported in a small neighbourhood of $r_0$ we find the following result.

\begin{proposition}\label{proposition:r0}
Let $\eta\in\Co$, $\eta\neq 0$, be such that $|\Re(-c_2^{1/2}\eta)|\leq 2$,  assume \ref{H0}--\ref{H2}, and choose a coefficient $\gamma>\frac{2}{9}$. Let $\mathcal{R}$ be the compact set from Lemma \ref{lemma:consequences of assumptions}, and let $\mathcal{N} \subset \mathbb{R}\setminus \{0\}$ be any compact set.
Let $\G_{2}^\lambda$ be the Green's function associated to the system of equations \eqref{eq:r0-1}--\eqref{eq:r0-2}, with
\[
\mu = \eta - M^2\left[ \frac{1}{r_0^2} + \left(\frac{\Omega_0'}{U_0'}\right)^2 \right] + \sqrt{\frac{-2iM\Omega_0'}{r_0}} - c_2^{1/2}.
\]
Then, uniformly for $r_0 \in \mathcal{R}$, $M \in \mathcal{N}$, and for $f\in Y$ such that $\supp(f)\subset [r_0-\varepsilon^\gamma,r_0+\varepsilon^\gamma)$, the following estimates hold true for all $\eps>0$ sufficiently small.
\begin{equation}\label{eq:Gr0}
\|\G_2^\lambda f\|_X \lesssim \varepsilon^{-1/3} \|f\|_Y,
\end{equation}
\begin{equation}\label{eq:drGr0}
\| \partial_r\G_{2}^\lambda f\|_X \lesssim \eps^{-1+\gamma} \|f\|_Y,
\end{equation}
\begin{equation}\label{eq:Gr0-error}
\|((\L-\mu\varepsilon^{1/3})\G_{2}^\lambda-\Id) f \|_Y \lesssim \left( \eps^\gamma + \eps^{3\gamma-2/3} \right) \|f\|_Y.
\end{equation}
Furthermore, all estimates are uniform in compact subsets of $\eta$.
\end{proposition}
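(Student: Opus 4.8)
## Proof Proposal for Proposition \ref{proposition:r0}

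The plan is to exploit the fact that, after the rescaling of Section \ref{s:gilbert-scaling}, the operator $\L_2-\lambda$ decouples into two scalar semiclassical Schrödinger operators with complex quadratic potential, whose Green's kernels are known explicitly in terms of parabolic cylinder functions (Appendix \ref{s:appendix-parabolic}). First I would introduce $s=\eps^{-1/3}(r-r_0)$ and $z=\zeta s$ with $\zeta=\sqrt{2}c_2^{1/4}$; under this change each component of \eqref{eq:r0-1}--\eqref{eq:r0-2} becomes $\eps^{1/3}$ times the parabolic cylinder equation \eqref{eq:gilbert parabolic cylinder} with spectral parameter $q_\pm(\mu)$. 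For the choice of $\mu$ in the statement these reduce to $\eta$ (resp. $\eta$ plus the fixed shift $2\sqrt{-2iM\Omega_0'/r_0}$), so the rescaled ``quantum number'' for the $V_1$-operator is $-\tfrac12 c_2^{-1/2}\eta$, i.e. $\L_2-\lambda$ sits at distance $\sim|\eta|$ from the ground-state eigenvalue $j=0$; for the $V_2$-operator the same $\lambda$ lies at a distance bounded below, using $\Re(\sqrt{-iM\Omega_0'})>0$ and the fact that the spectral gap $2|c_2^{1/2}|$ is of order one. The conditions $\eta\neq 0$, $|\eta|$ small, $|\Re(-c_2^{1/2}\eta)|\leq 2$ keep us in a fixed compact subset of the resolvent set, uniformly over $r_0\in\mathcal R$ and $M\in\mathcal N$, since $c_2$, $\Omega_0'/U_0'$, etc. are continuous and non-degenerate on $\mathcal R$ by Lemma \ref{lemma:consequences of assumptions}. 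In particular $\L_2-\lambda$ is invertible on $(-\infty,\infty)$ and $\G_2^\lambda$ is its genuine inverse.

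Next I would record the two facts about parabolic cylinder functions that everything rests on: the Green's kernel $\widetilde G(s,\sigma)$ of the rescaled operator has the form $W^{-1}\phi_+(s_{>})\phi_-(s_{<})$, where $\phi_\pm$ are the solutions decaying like $\e^{-c_2^{1/2}s^2/2}$ as $s\to\pm\infty$; and $\int_{\mathbb R}|\widetilde G(s,\sigma)|(1+|\sigma|^N)^{-1}\dd\sigma\lesssim 1$ uniformly in $s$, with the same bound for $\partial_s\widetilde G$ carrying an extra factor $1+|s|$ (from differentiating the Gaussian-type factor). Unwinding the scaling, $\eps\partial_r^2=\eps^{1/3}\partial_s^2$, so $\G_2^\lambda=\eps^{-1/3}\times(\text{rescaled resolvent})$; matching weights is then immediate because on $\mathrm{supp}(f)$ one has $\max\{1,r^2\}\sim r^2\sim r_0^2$, and for $r$ far from $r_0$ the kernel decays super-polynomially fast, so the polynomial weights $\max\{1,r^2\}w_\eps$, $r^2w_\eps$ are harmless there. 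This gives \eqref{eq:Gr0}. For \eqref{eq:drGr0} one repeats the computation with $\partial_r=\eps^{-1/3}\partial_s$: since $f$ is supported in $[r_0-\eps^\gamma,r_0+\eps^\gamma]$, the rescaled $\widetilde f$ is supported in $|\sigma|\leq\eps^{\gamma-1/3}$, so the extra factor $1+|s|$ costs at most $\eps^{\gamma-1/3}$, producing the (non-sharp but sufficient) bound $\eps^{-1+\gamma}$.

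Finally, for \eqref{eq:Gr0-error}, since $(\L_2-\lambda)\G_2^\lambda=\Id$ exactly, we have $(\L-\lambda)\G_2^\lambda-\Id=(\L-\L_2)\G_2^\lambda$, so it remains to estimate $\|(\L-\L_2)\G_2^\lambda f\|_Y$. The difference $\L-\L_2$ consists of the first-order term $\eps r^{-1}\partial_r$, the discarded cross terms $iM\alpha\eps r^{-2}$ and $D_\eps^\pm$, the discarded higher-order diagonal terms ($\eps r^{-2}$, $\eps^{1/3}M^2(r^{-2}-r_0^{-2})$, etc.), and the Taylor remainder $T_{\mathrm{hot}}=i\eps^{-1/3}MT(r)-c_2\eps^{-1/3}(r-r_0)^2=O(\eps^{-1/3}|r-r_0|^3)$, uniformly in $r_0\in\mathcal R$ by \ref{P2}. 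On $|r-r_0|\leq\eps^\gamma$ one has $|T_{\mathrm{hot}}|\lesssim\eps^{3\gamma-1/3}$ and $|D_\eps^\pm|,\ |\eps^{1/3}M^2(r^{-2}-r_0^{-2})|\lesssim\eps^{1/3+\gamma}$, while the first-order term contributes $\eps\, r_0^{-1}\eps^{-1+\gamma}$ via \eqref{eq:drGr0}; combined with $\|\G_2^\lambda f\|_X\lesssim\eps^{-1/3}\|f\|_Y$ (and $\|\G_2^\lambda f\|_X\lesssim\eps^{-1/3}\|f\|_Y$ for the $V_2$-component too) these give $\eps^\gamma+\eps^{3\gamma-2/3}$, the remaining discarded terms being $O(\eps^{1/3})$ or smaller. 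For $|r-r_0|>\eps^\gamma$, where these coefficients grow only polynomially in $r$ by \ref{H2}, one uses that $\G_2^\lambda f$ and $\partial_r\G_2^\lambda f$ decay like $\e^{-c(r-r_0)^2\eps^{-2/3}}$, and $\e^{-c\eps^{2\gamma-2/3}}$ beats every power of $\eps$ precisely because $\gamma<1/3$; this region is negligible. The hypothesis $\gamma>2/9$ is exactly what makes the dominant exponent $3\gamma-2/3$ positive. I expect the main obstacle to be the parabolic cylinder bookkeeping: proving the uniform kernel and kernel-derivative integral bounds and their uniformity in $r_0$, $M$, $\eta$, and carefully matching the weighted norms $X$, $Y$ through the rescaling, especially in the far region where the coefficients of $\L-\L_2$ grow but the Green's function decays at Gaussian rate.
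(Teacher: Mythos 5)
Your proposal is correct and follows essentially the same route as the paper: rescale by $s=\eps^{-1/3}(r-r_0)$, $z=\sqrt2\,c_2^{1/4}s$ to reduce to the Weber/parabolic-cylinder equation, invert using the decaying pair $D_\nu(\pm z)$, observe that the choice of branch $\Re\sqrt{-2iM\Omega_0'/r_0}>0$ pushes the $V_2$-index further from the spectrum so the $V_1$-estimates dominate, and then split the error $(\L-\L_2)\G_2^\lambda$ into the first-order term, the $r^{-2}$ and $D_\eps^\pm$ cross-terms, and the cubic Taylor remainder $T_{\mathrm{hot}}=O(\eps^{-1/3}|r-r_0|^3)$, using Gaussian decay of the kernel to kill the polynomially-growing coefficients outside $|r-r_0|\leq\eps^\gamma$. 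The item you flag as the ``main obstacle'' — uniform weighted $L^\infty$-type bounds on the kernel and its $s$-derivative — is exactly what the paper discharges: it splits the kernel into six contributions $\I_\eps,\I'_\eps,\II_\eps,\III_\eps,\IV_\eps,\IV'_\eps$ driven by the four sign combinations, reduces each to boundedness of the auxiliary functions $P(z)$ and $Q(z)$ via L'Hôpital, and handles the derivative through the recursion $H_\nu'(z)=2\nu H_{\nu-1}(z)$ (your ``extra factor $1+|s|$'' is the $\A_\eps'$ contribution from the Gaussian prefactor; the Hermite-derivative contribution $\A_\eps''$ is subdominant because $\gamma<1/3$). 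One small imprecision: the extra factor $1+|s|$ in the derivative is pinned at the output variable $s$, not at $\supp(\widetilde f)$; the bound $|s|\lesssim\eps^{\gamma-1/3}$ is justified only because the Gaussian decay of the kernel restricts the effective range of $s$ to that window, which is the case split the paper makes explicitly — so your conclusion is correct but the one-line justification glosses over the case $|r-r_0|>2\eps^\gamma$.
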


\begin{remark}\label{rmk:exp-decay}
    As it will be clear throughout the proof of the proposition, estimate \eqref{eq:G} corresponds to the worst case scenario, that occurs when $r\in \supp(f)$. If $r\not\in\supp(f)$ we crucially find an exponentially decaying estimate of the form
    \[
    |w_\eps(r-r_0)\G^\lambda_2 f(r)|\lesssim \eps^{-1/3} \e^{-\frac{1}{2}\chi\eps^{-2/3}\left(|r-r_0|^2-\eps^{2\gamma}\right)}\|f\|_Y,
    \]
    for some constant $\chi>0$ defined in Lemma \ref{lemma:bounds-v1v2}. This property will be essential in order to ensure that the \emph{global} error estimate \eqref{eq:Gr0-error} holds.
\end{remark}

Before proceeding with the proof of the proposition, let us make a comment on how to derive the Green's function associated to the approximated problem \eqref{eq:r0-1}--\eqref{eq:r0-2}. Notice that now \eqref{eq:r0-1}--\eqref{eq:r0-2} consists of two decoupled equations that can be studied individually, hence we can derive a separate Green's function for each component, 
\[
\G_2^\lambda f = \begin{pmatrix}
    \G_{2,1}^\lambda f_1\\
    \G_{2,2}^\lambda f_2
\end{pmatrix}.
\]
We start by analysing $\G_{2,1}^\lambda$, then it will become clear that the analysis of $\G_{2,2}^\lambda$ is completely analogous. In order to derive a Green's function, we first look for two independent solutions of \eqref{eq:r0-1}. This equation falls into the umbrella term of \emph{Weber differential equations}, which are ordinary differential equations of the form
\begin{equation}\label{eq:weber}
y''(z) + \left(\nu + \frac{1}{2} - \frac{1}{4}z^2\right)y(z) = 0,
\end{equation}
with $\nu\in\Co$.
Two independent solutions to this Weber equations can be found by means of the \emph{parabolic cylinder functions}: $y_1(z) = D_\nu(z)$, $y_2(z) = D_\nu(-z)$, and their Wronskian is given by
\[
W(y_1(z),y_2(z)) = \frac{\sqrt{2\pi}}{\Gamma(-\nu)},
\]
see \cite{Lebedev72} and Appendix \ref{s:appendix-parabolic} for extended information about this type of special functions. We can rewrite equation \eqref{eq:r0-1} in the form
\[
v''(z) - \zeta^2\eps^{2\beta}\left(q(\mu)\varepsilon^{-2/3} + c_2\zeta^2\eps^{2\beta-4/3}z^2\right)v(z) = 0,
\]
where we made the change of variables $\zeta \eps^\beta z=r-r_0$, and set
\[
q(\mu) = M^2\left( \frac{1}{r_0^2} + \left(\frac{\Omega_0'}{U_0'}\right)^2 \right) +\mu -\sqrt{\frac{-2iM\Omega_0'}{r_0}}.
\]
In order to obtain an equation as \eqref{eq:weber}, we make the choice $\beta = 1/3$ and $c_2\zeta^4 = 1/4$, and without loss of generality we pick 
\[
\zeta = \frac{c_2^{-1/4}}{\sqrt{2}},
\]
so that
\[
v''(z) + \left(-\frac{1}{2}c_2^{-1/2}q(\mu)-\frac{1}{4}z^2\right)v(z) = 0.
\]
Therefore, we find solutions to this Weber differential equation in terms of parabolic cylinder functions with index
\[
\nu = -\frac{1}{2}\left(c_2^{-1/2}q(\mu) +1\right) = -\frac{1}{2}c_2^{-1/2}\eta,
\]
for some number $\eta\in\Co$ such that
\[
\eta = q(\mu) + c_2^{1/2} = \mu + M^2\left( \frac{1}{r_0^2} + \left(\frac{\Omega_0'}{U_0'}\right)^2 \right)  -\sqrt{\frac{-2iM\Omega_0'}{r_0}} + c_2^{1/2}.
\]
Notice that to study the second component of the differential equation \eqref{eq:r0-2}, we follow the same analysis with the sole exception that in the definition of $q(\mu)$ we write
\[
q(\mu) = \mu + M^2\left( \frac{1}{r_0^2} + \left(\frac{\Omega_0'}{U_0'}\right)^2 \right) +\sqrt{\frac{-2iM\Omega_0'}{r_0}}.
\]
We crucially choose the branch $\Re(\sqrt{-i})>0$, in this way we observe that the (real part of the) index $\nu$ corresponding the second component \eqref{eq:r0-2} will be strictly smaller that the one associated to the first component and it sufficies to study the first component, see Remark \ref{rmk:eta}. Putting everything together, we find two independent solutions to the (first component of the) Weber differential equation \eqref{eq:r0-1},
\begin{equation}\label{eq:v1-r0}
v_1(r) = D_{-\frac{1}{2}c_2^{-1/2}\eta}\left(\sqrt{2}c_2^{1/4}\eps^{-1/3}(r-r_0)\right),
\end{equation}
\begin{equation}\label{eq:v2-r0}
v_2(r) = D_{-\frac{1}{2}c_2^{-1/2}\eta}\left(-\sqrt{2}c_2^{1/4}\eps^{-1/3}(r-r_0)\right),
\end{equation}
with Wronskian given by
\begin{equation}\label{eq:wronskian}
W(v_1(r),v_2(r)) = W(v_1(z),v_2(z))\frac{\dd z}{\dd r} = \frac{2\sqrt{\pi}c_2^{1/4}}{\Gamma\left(\frac{1}{2}c_2^{-1/2}\eta\right)}\eps^{-1/3} =: \frac{1}{\mathfrak{w}(\eta)}\eps^{-1/3}.
\end{equation}
The parabolic cylinder functions admit a representation in terms of the \emph{Hermite functions}, denoted by $H_\nu(z)$. The relation between these special functions is of the form
\[
D_\nu(z) = 2^{-\frac{1}{2}\nu} \e^{-\frac{1}{4}z^2} H_\nu\left(\frac{z}{\sqrt{2}}\right).
\]
Since for some specific calculations it will be more convenient to use this different representation of the two independent solutions. We see thus that \eqref{eq:v1-r0}--\eqref{eq:v2-r0} can be written as
\[
v_1(r) = 2^{\frac{1}{4}c_2^{-1/2}\eta}\e^{-\frac{1}{2}c_2^{1/2}\eps^{-2/3}(r-r_0)^2}H_{-\frac{1}{2}c_2^{-1/2}\eta}\left(c_2^{1/4}\eps^{-1/3}(r-r_0)\right),
\]
\[
v_2(r) = 2^{\frac{1}{4}c_2^{-1/2}\eta}\e^{-\frac{1}{2}c_2^{1/2}\eps^{-2/3}(r-r_0)^2}H_{-\frac{1}{2}c_2^{-1/2}\eta}\left(-c_2^{1/4}\eps^{-1/3}(r-r_0)\right),
\]
where $H_\nu(z)$ denotes the Hermite function of order $\nu\in\Co$. For further information about these special functions and some relevant related results we refer to Appendix \ref{s:appendix-parabolic}.

The next step is to define a Green's function associated to equation \eqref{eq:r0-1}, as well as to obtain suitable bounds on this Green's function. In order to get there, we begin by noting the following convenient bounds for the solutions $v_1(r)$ and $v_2(r)$,

\begin{lemma}\label{lemma:bounds-v1v2}
Let us define
\[
\chi  = \frac{1}{2}|c_2|^{-1/2}>0, \quad \text{and} \quad \nu=-\frac{1}{2}c_2^{-1/2}\eta.
\]
Then, $v_1(r)$ and $v_2(r)$ defined in \eqref{eq:v1-r0}--\eqref{eq:v2-r0} satisfy the following bounds uniformly for $c_2, \nu$ in compact sets of $\mathbb{C}$:
\begin{enumerate}
    \item If $r<r_0$,
    \begin{align*}
        |v_1(r)| & \lesssim \e^{\chi\eps^{-2/3}(r-r_0)^2}, \\
        |v_2(r)| & \lesssim \e^{-\chi\eps^{-2/3}(r-r_0)^2}\left(1+\eps^{-1/3}|r-r_0|\right)^{\Re(\nu)}.
    \end{align*}
    \item If $r\geq r_0$,
    \begin{align*}
        |v_1(r)| & \lesssim \e^{-\chi\eps^{-2/3}(r-r_0)^2}\left(1+\eps^{-1/3}|r-r_0|\right)^{\Re(\nu)}, \\
        |v_2(r)| & \lesssim \e^{\chi\eps^{-2/3}(r-r_0)^2}.
    \end{align*}
\end{enumerate}
\end{lemma}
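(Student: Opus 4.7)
The proof reduces to invoking the classical large-argument asymptotics of the parabolic cylinder function $D_\nu$, so I anticipate the argument to be essentially a careful bookkeeping exercise rather than a technically deep one. Setting $z := \sqrt{2}\,c_2^{1/4}\varepsilon^{-1/3}(r-r_0)$ with the branch $\Re(c_2^{1/4})>0$, one has $v_1(r)=D_\nu(z)$, $v_2(r)=D_\nu(-z)$ and $z^2/4 = \tfrac{1}{2}c_2^{1/2}\varepsilon^{-2/3}(r-r_0)^2$. My plan is to split $r$ into a bounded regime and a far regime relative to $r_0$ at scale $\varepsilon^{1/3}$.

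In the \emph{bounded regime} $\varepsilon^{-1/3}|r-r_0|\leq K$ for some fixed $K$, both the functions $v_i$ and the right-hand sides of all four claimed bounds are continuous in $(r,\nu,c_2)$ and bounded above and below by positive constants uniformly over the parameter ranges at hand; thus the inequalities reduce to a uniform continuity argument using the analytic dependence of $D_\nu$ on its argument and on $\nu$.

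In the \emph{large regime} $\varepsilon^{-1/3}|r-r_0|\geq K$, I appeal to the standard asymptotic expansions reproduced in Appendix \ref{s:appendix-parabolic}. In the principal sector $|\arg z|<3\pi/4$ one has $D_\nu(z) = z^\nu e^{-z^2/4}(1+O(|z|^{-2}))$, which—thanks to $\Re(c_2^{1/4})>0$ aligning $z$ with the positive real axis when $r>r_0$—produces the decaying bound for $v_1$ when $r>r_0$, and analogously for $v_2$ when $r<r_0$. For the opposite sector, the connection formula introduces a subdominant term of the form $\tfrac{\sqrt{2\pi}}{\Gamma(-\nu)} e^{\mp i\pi\nu} e^{+z^2/4}(-z)^{-\nu-1}$ which now genuinely dominates; absorbing the polynomial factor $|z|^{-\Re\nu-1}$ into the exponential gives the crude upper bound $|v_i(r)|\lesssim e^{+\chi\varepsilon^{-2/3}(r-r_0)^2}$ needed for the two ``growing'' estimates.

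Uniformity in $c_2$ and $\nu$ on compact sets follows because the constants in the asymptotic expansion depend continuously on $\nu$ except at non-negative integers, where $\Gamma(-\nu)^{-1}$ vanishes; the hypothesis $|\Re(-c_2^{1/2}\eta)|\leq 2$ together with $\eta\neq 0$ keeps $\nu = -\tfrac{1}{2}c_2^{-1/2}\eta$ away from these exceptional values. The main technical obstacle I foresee is the bookkeeping required to pick the branches of $c_2^{1/4}$ and $c_2^{1/2}$ consistently, so that the dominant/subdominant decomposition of the asymptotic expansion lines up with the geometric dichotomy $r \gtrless r_0$; once this is done, the remaining arguments are routine.
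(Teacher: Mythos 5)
Your proposal is correct and follows essentially the same route as the paper, which deduces the lemma directly from the large-argument asymptotics of $D_\nu$ (recorded via the Hermite-function representation in Appendix \ref{s:appendix-parabolic}, Lemma \ref{lemma:parabolic cylinder bounds}), with the bounded regime $\varepsilon^{-1/3}|r-r_0|\lesssim 1$ absorbed by continuity exactly as you suggest. The only loose point is your uniformity remark: the hypotheses $\eta\neq 0$, $|\Re(-c_2^{1/2}\eta)|\leq 2$ do not actually keep $\nu$ away from all non-negative integers, but this is harmless, since at those values the coefficient $\Gamma(-\nu)^{-1}$ of the growing term vanishes and the claimed upper bounds only improve.
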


The proof of this lemma follows from standard properties of the parabolic cylinder functions, exploiting the fact that they can be written in terms of Hermite functions. In particular, it is a direct consequence of Lemma \ref{lemma:parabolic cylinder bounds} in Appendix \ref{s:appendix-parabolic}.

\begin{remark}
As is the case throughout the document, many of our estimates are written using the $\lesssim$ symbol for a more convenient presentation of results. In particular, we typicaly absorb all constants that do not depend on $\eps$ into the symbol $\lesssim$. However, it is worth noting that these constants may depend on $\nu\in\Co$, and hence on $\eta\in\Co$, which is a parameter that must be treated carefully later on. However, it is clear from our estimates that they are upper bounded by the case when $\Re(\nu)=1$. Thus, proving the result in this case automatically yields the entire range $|\Re(\nu)|\leq 1$.
\end{remark}

The last preparation required before proceeding with the proof of Proposition \ref{proposition:r0} is the construction of the Green's function, that we do in the usual fashion. Taking the two independent solutions $v_1(r)$ and $v_2(r)$ and its Wronskian, we write a Green's kernel
\[
G_{2,1}^\lambda(r,s) = \mathfrak{w}(\eta)\eps^{-2/3}\left\lbrace
\begin{array}{ll}
    v_1(r)v_2(s) & \text{if } s<r, \\
    v_1(s)v_2(r) & \text{if } r\leq s.
\end{array}
\right.
\]
We find the solution to the first component of our problem \eqref{eq:r0-1} via 
\begin{equation}\label{eq:Green's-r0}
\begin{split}
v(r) = (\mathcal{G}_{2,1}^{\lambda}f_1)(r) & = \int_0^\infty G_{2,1}^\lambda(r,s)f_1(s)\dd s \\
& = \mathfrak{w}(\eta)\eps^{-2/3}v_1(r)\int_0^rv_2(s)f_1(s)\dd s + \mathfrak{w}(\eta)\eps^{-2/3}v_2(r)\int_r^\infty v_1(s)f_1(s)\dd s.
\end{split}
\end{equation}
Using the bounds from Lemma \ref{lemma:bounds-v1v2}, we can write a suitable pointwise estimate for the Green's function. This estimate is tailor made for the case near $r_0$, therefore we crucially assume that 
\[
\supp(f_1)\subset [r_0-\eps^\gamma,r_0+\eps^\gamma),
\]
for some $\gamma>0$ to be specified later. Recall that we use the notation from Lemma \ref{lemma:bounds-v1v2},
\[
\chi  = \frac{1}{2}|c_2|^{-1/2}>0, \quad \nu = -\frac{1}{2}c_2^{-1/2}\eta.
\]
Finding upper bounds for the Green's function will turn out to be rather cumbersome, since it requires a careful splitting of the integrals in \eqref{eq:Green's-r0} depending on the value of $r$. Indeed, using Lemma \ref{lemma:bounds-v1v2} we find the following estimate for the Green's function,
\begin{equation}\label{eq:G-bounds-I-IV}
|\G_{2,1}^{\lambda}f(r)| \lesssim \eps^{-2/3}\left(\I_\eps + \I'_\eps + \II_\eps + \III_\eps + \IV_\eps + \IV'_\eps\right),
\end{equation}
where
\[
\I_\eps = \dsOne_{\{r_0-\eps^\gamma\leq r<r_0\}}\e^{\chi\eps^{-2/3}(r-r_0)^2}\int_{r_0-\eps^\gamma}^{r} \left(1+\eps^{-1/3}|s-r_0|\right)^{\Re(\nu)} \e^{-\chi\eps^{-2/3}(s-r_0)^2}|f_1(s)|\dd s,
\]
\[
\begin{split}
\I'_\eps & = \dsOne_{\{r_0\leq r\}} \left(1+\eps^{-1/3}|r-r_0|\right)^{\Re(\nu)} \e^{-\chi\eps^{-2/3}(r-r_0)^2} \\
& \quad \times \int_{r_0-\eps^\gamma}^{r_0} \left(1+\eps^{-1/3}|s-r_0|\right)^{\Re(\nu)}\e^{-\chi\eps^{-2/3}(s-r_0)^2}|f_1(s)|\dd s,
\end{split}
\]
\[
\II_\eps = \left(1+\eps^{-1/3}|r-r_0|\right)^{\Re(\nu)} \e^{-\chi\eps^{-2/3}(r-r_0)^2} \int_{\min\{r,r_0\}}^r \e^{\chi\eps^{-2/3}(s-r_0)^2}|f_1(s)| \dd s,
\]
\[
\III_\eps = \left(1+\eps^{-1/3}|r-r_0|\right)^{\Re(\nu)} \e^{-\chi\eps^{-2/3}(r-r_0)^2} \int_r^{\max\{r,r_0\}} \e^{\chi\eps^{-2/3}(s-r_0)^2}|f_1(s)| \dd s,
\]
\[
\IV_\eps = \dsOne_{\{r_0\leq r < r_0+\eps^\gamma\}}\e^{\chi\eps^{-2/3}(r-r_0)^2}\int_{r}^{r_0+\eps^\gamma} \left(1+\eps^{-1/3}|s-r_0|\right)^{\Re(\nu)} \e^{-\chi\eps^{-2/3}(s-r_0)^2}|f_1(s)|\dd s,
\]
\[
\begin{split}
\IV'_\eps & = \dsOne_{\{r<r_0\}} \left(1+\eps^{-1/3}|r-r_0|\right)^{\Re(\nu)}\e^{-\chi\eps^{-2/3}(r-r_0)^2} \\
& \quad \times\int_{r_0}^{r_0+\eps^\gamma} \left(1+\eps^{-1/3}|s-r_0|\right)^{\Re(\nu)}\e^{-\chi\eps^{-2/3}(s-r_0)^2}|f_1(s)| \dd s.
\end{split}
\]

We leave it to the reader to verify that these bounds are precisely what one obtains by applying Lemma \ref{lemma:bounds-v1v2} carefully to the expression \eqref{eq:Green's-r0}. Note that by our assumptions, $|\Re(\nu)|\leq 1$, and we shall thus simply prove all our upper bounds for $\Re(\nu)=1$, showing that the estimates are indeed uniform for $|\Re(\nu)|\leq 1$.
\begin{remark}\label{rmk:eta}
    In the proof of Proposition \ref{proposition:r0} we will always argue for \eqref{eq:r0-1}, namely only the first component of equations. We claim that our arguments readily carry over to the second component \eqref{eq:r0-2} because of the choice of the branch 
    \[
    \Re\left(\sqrt{\frac{-2iM\Omega_0'}{r_0}}\right) > 0,
    \]
    in equations \eqref{eq:r0-1}--\eqref{eq:r0-2}. This option ensures that the real part of the index $\nu$ corresponding to the second component \eqref{eq:r0-2} will be strictly smaller, and therefore 
     the condition
    \[
    \Re(\nu) = \Re\left(-\frac{1}{2}c_2^{1/2}\eta \right) \leq 1,
    \]
    required for \eqref{eq:r0-1}, will suffice as well for \eqref{eq:r0-2}. Everything else follows analogously.
\end{remark}

We now proceed with the proof of Proposition \ref{proposition:r0}. Since many of the arguments throughout Section \ref{s:approx-Green's} will be quite similar in flavour to those appearing in this proof, we endeavour to provide a substantial amount of detail in the following.

\begin{proof}[Proof of Proposition \ref{proposition:r0}]
We will argue for equation \eqref{eq:r0-1}, since, as mentioned in Remark \ref{rmk:eta}, the same arguments carry over to the analysis of equation \eqref{eq:r0-2}. Let $f\in Y$ be such that $\supp(f)\subset [r_0-\eps^\gamma,r_0+\eps^\gamma)$. First of all we address \eqref{eq:Gr0}, and in order to do so we will find suitable bounds for each of the six addends in \eqref{eq:G-bounds-I-IV}. To do so, we must carefully split our analysis into sections, depending on the value of $r$. Furthermore, we shall take care to ensure that our estimates hold true \emph{uniformly} for $r_0 \in \mathcal{R}$, $M \in \mathcal{N}$. In particular, we this dependency primarily shows up in the guise of the constant $\chi$.

\emph{Step 1}. Assume $0\leq r<r_0-\eps^\gamma$, then all the addends in \eqref{eq:G-bounds-I-IV} vanish except for $\III_\eps$ and $\IV'_\eps$. For the third addend we can write
\[
\begin{split}
\eps^{-2/3} & \III_\eps  \leq \eps^{-2/3}\left(1+\eps^{-1/3}|r-r_0|\right) \e^{-\chi\eps^{-2/3}(r-r_0)^2} \int_{r_0-\eps^\gamma}^{r_0} \e^{\chi\eps^{-2/3}(s-r_0)^2}|f(s)| \dd s \\
& \leq \|f\|_{Y}\frac{\eps^{-2/3}\left(1+\eps^{-1/3}|r-r_0|\right)}{(r_0-\eps^\gamma)^2} \e^{-\chi\eps^{-2/3}(r-r_0)^2} \int_{r_0-\eps^\gamma}^{r_0} \frac{\e^{\chi\eps^{-2/3}(s-r_0)^2}}{1+(\eps^{-1/3}|s-r_0|)^N} \dd s.
\end{split}
\]
The main goal is to bound each of these addends in $X$, hence we need to make sure that when multiplied by $\max\{1,r^2\}w_\eps(r-r_0)$, everything stays uniformly bounded in $r$ and behaves well for $\eps$ small. First of all notice that if $r<r_0-\eps^\gamma$ we have the trivial bound 
\[
\max\{1,r^2\} \leq 1+r_0^2 \lesssim 1.
\]
After performing a change of variables $z=\eps^{-1/3}(r_0-r)$, respectively $x=\eps^{-1/3}(r_0-s)$ for the integral variable, we can write
\[
\eps^{-2/3}\max\{1,r^2\}w_\eps(r-r_0)\III_\eps \lesssim \eps^{-1/3} \|f\|_{Y} \left(1+z\right) \left(1+z^N\right) \e^{-\chi z^2} \int_0^{\eps^{-1/3+\gamma}} \frac{\e^{\chi x^2}}{1+x^N} \dd x.
\]
Notice that since $r\in [0,r_0-\eps^\gamma)$, this corresponds to $z\in [\eps^{-1/3+\gamma},r_0\eps^{-1/3})$. Therefore, for all $z$ in this region, we can write the estimate
\[
\eps^{-2/3}\max\{1,r^2\}w_\eps(r-r_0)\III_\eps \lesssim \eps^{-1/3} \|f\|_{Y} P(z),
\]
where
\begin{equation}\label{eq:P}
P(z) = \left(1+z\right) \left(1+z^N\right) \e^{-\chi z^2} \int_0^z \frac{\e^{\chi x^2}}{1+x^N} \dd x.
\end{equation}
We claim that $P:[0,\infty) \to \mathbb{R}_+$ is a bounded function for any $\chi>0$. Since it is a smooth, non-negative function with $P(0)=0$, it suffices to show that the limit as $z\to\infty$ remains bounded. Indeed, let us use L'H\^opital's rule,
\[
\frac{\dd}{\dd z} \int_0^z \frac{\e^{\chi x^2}}{1+x^N} \dd x = \frac{\e^{\chi z^2}}{1+z^N},
\]
\[
\begin{split}
    \frac{\dd}{\dd z} \left[\left(1+z\right)^{-1} \left(1+z^N\right)^{-1} \e^{\chi z^2}\right] & = -\left(1+z\right)^{-2}\left(1+z^N\right)^{-1} \e^{\chi z^2} \\
    & \quad - Nz^{N-1}\left(1+z\right)^{-1}\left(1+z^N\right)^{-2} \e^{\chi z^2} \\
    & \quad + 2\chi z \left(1+z\right)^{-1} \left(1+z^N\right)^{-1} \e^{\chi z^2},
\end{split}
\]
and hence,
\[
\lim_{z\to\infty} \frac{\frac{\dd}{\dd z} \int_0^z \left(1+x^N\right)^{-1} \e^{\chi x^2} \dd x}{\frac{\dd}{\dd z} \left[\left(1+z\right)^{-1} \left(1+z^N\right)^{-1} \e^{\chi z^2}\right]} = \lim_{z\to\infty} \frac{\left(1+z\right)}{2\chi z}=\frac{1}{2\chi}.
\]
This argument shows that, under the assumption that for any fixed $\chi >0$, we have $P(z)\lesssim 1$. Next, to show uniformity in $\chi$, simply differentiate $P(z)$ in $\chi$, yielding 
\begin{equation*}
\frac{\dd}{\dd \chi}P(z)=-z^2P(z)+(1+z)(1+z^N)\e^{-\chi z^2}\int_0^z \frac{\e^{\chi x^2}x^2}{1+x^n} \dd x.
\end{equation*}
But we can simply bound 
$$
(1+z)(1+z^N)\int_0^z \frac{\e^{\chi x^2}x^2}{1+x^n} \dd x \leq z^2(1+z)(1+z^N) \int_0^z \frac{\e^{\chi x^2}x^2}{1+x^n} \dd x=z^2P(z),
$$
and so we deduce $\frac{\dd}{\dd \chi}P(z) \leq 0$, for all $z>0$. Thus, for $\chi$ in any compact interval of $(0,\infty)$, we have that $P(z) \lesssim 1$ \emph{uniformly} in $\chi$. Therefore we can write
\[
\eps^{-2/3}\max\{1,r^2\}w_\eps(r-r_0)\III_\eps \lesssim \eps^{-1/3} \|f\|_{Y}.
\]
In fact, note that 
\begin{align*}
&\eps^{-2/3}w_\eps(r-r_0)\III_\eps(r)\\
&\leq \|f\|_{Y}\eps^{-2/3}w_\eps(\eps^\gamma)(1+\eps^{-1/3+\gamma})\e^{-\chi \eps^{-2/3+2\gamma}}\int_{r_0-\eps^\gamma}^{r_0} \frac{\e^{\chi\eps^{-2/3}(s-r_0)^2}}{1+(\eps^{-1/3}|s-r_0|)^N} \dd s  \times L(r,\eps),
\end{align*}
where 
\begin{equation*}
L(r,\eps)=\frac{w_\eps(r-r_0)(1+\eps^{-1/3}|r-r_0|)}{w_\eps(\eps^\gamma)(1+\eps^{-1/3+\gamma})}\e^{-\chi \eps^{-2/3}((r-r_0)^2-\eps^{2\gamma})}.
\end{equation*}
But by Lemma \ref{lemma:properties-weight}, we know that there exists a constant $C>0$ independent of $\eps, r$, depending continuously on $\chi$, so that
\begin{equation*}
\frac{w_\eps(r-r_0)(1+\eps^{-1/3}|r-r_0|)}{w_\eps(\eps^\gamma)(1+\eps^{-1/3+\gamma})}\leq C \e^{\frac{1}{2}\chi\eps^{-2/3}((r-r_0)^2-\eps^{2\gamma})}.
\end{equation*}
Hence, we conclude in fact that 
\begin{equation*}
\eps^{-2/3}w_\eps(r-r_0)\III_\eps(r) \lesssim \|f\|_{Y} \eps^{-2/3}w_\eps(\eps^\gamma)\III_\eps(r_0-\eps^\gamma)L(r,\eps) \lesssim \|f\|_{Y}\eps^{-1/3}\e^{-\frac{1}{2}\chi\eps^{-2/3}((r-r_0)^2-\eps^{2\gamma})},
\end{equation*}
uniformly for $r \leq r_0-\eps^\gamma$, $r_0 \in \mathcal{R}$, $M \in \mathcal{N}$.

For the term $\IV'_\eps$ we use that $s^{-2}w_\eps(s-r_0)^{-1}$ is a decreasing function for all $s>r_0$, together with the bound
\[
\left(1+\eps^{-1/3}|s-r_0|\right)\e^{-\chi\eps^{-2/3}(s-r_0)^2} \lesssim \e^{-\frac{2}{3}\chi\eps^{-2/3}(s-r_0)^2}
\]
to get the estimate
\[
\begin{split}
    \eps^{-2/3}\IV'_\eps & = \eps^{-2/3}\frac{1}{r_0^2w_\eps(0)} \e^{-\frac{2}{3}\chi\eps^{-2/3}(r-r_0)^2} \int_{r_0}^{r_0+\eps^\gamma} \e^{-\frac{2}{3}\chi\eps^{-2/3}(s-r_0)^2}|f(s)| \dd s \\
    & \lesssim \|f\|_Y\eps^{-1/3} \e^{-\frac{2}{3}\chi\eps^{-2/3}(r-r_0)^2}.
\end{split}
\]
Hence
\[
\eps^{-2/3}|\max\{1,r^2\} w_\eps(r-r_0)\IV'_\eps| \lesssim \eps^{-1/3}\|f\|_Y w_\eps(r-r_0) \e^{-\frac{2}{3}\chi\eps^{-2/3}(r-r_0)^2}.
\]
Finally, note that $w_\eps(r-r_0)\e^{-\frac{2}{3}\chi\eps^{-2/3}(r-r_0)^2} \lesssim \e^{-\frac{1}{2}\chi\eps^{-2/3}(r-r_0)^2}$, which decays very fast as $\eps\to 0$ due to the effect of the exponential.
Putting all pieces together, we find that uniformly for $0\leq r<r_0-\eps^\gamma$, $r_0 \in \mathcal{R}$, $M \in \mathcal{N}$, there holds
\begin{equation}\label{eq:G-est-1}
|\max\{1,r^2\}w_\eps(r-r_0)\G_{2,1}^{\lambda}f(r)| \lesssim \eps^{-1/3}e^{-\frac{1}{2}\chi \eps^{-2/3}\left((r-r_0)^2-(\eps^{\gamma})^2\right)}\|f\|_Y.
\end{equation}

\emph{Step 2}. Assume now that $r_0-\eps^\gamma\leq r<r_0$. Note that in this regime, we once again can neglect the $\max\{1,r^2\}$ weight up to a multiplicative constant independent of $\eps$. Furthermore, in this scenario we have $\I_\eps'=0$, $\II_\eps = 0$, and $\IV_\eps=0$, but the other three terms contribute to the error. For the first addend we write
\[
\begin{split}
\eps^{-2/3}\I_\eps & \leq \eps^{-2/3}\e^{\chi\eps^{-2/3}(r-r_0)^2}\int_{r_0-\eps^\gamma}^{r} \left(1+\eps^{-1/3}|s-r_0|\right) \e^{-\chi\eps^{-2/3}(s-r_0)^2}|f(s)|\dd s \\
& \leq \|f\|_Y \frac{\eps^{-2/3}}{(r_0-\eps^\gamma)^2} \e^{\chi\eps^{-2/3}(r-r_0)^2}\int_{r_0-\eps^\gamma}^{r} \frac{\left(1+\eps^{-1/3}|s-r_0|\right)}{1 + \left(\eps^{-1/3}|s-r_0|\right)^N} \e^{-\chi\eps^{-2/3}(s-r_0)^2}\dd s.
\end{split}
\]
We want to argue using L'H\^opital's rule as for Step 1. In this regard, notice that after performing the change of variables $z=\eps^{-1/3}(r_0-r)$, and $x=\eps^{-1/3}(r_0-s)$ respectively for the integral variables, we obtain the bound
\[
\eps^{-2/3}w_\eps(r-r_0)\I_\eps \lesssim \eps^{-1/3}\|f\|_YQ(z),
\]
with
\begin{equation}\label{eq:Q}
Q(z) = \left(1 + z^N\right)\e^{\chi z^2}\int_z^{\infty} \frac{\left(1+x\right)}{1 + x^N} \e^{-\chi x^2}\dd x.
\end{equation}
The upper bound of the integral after the change of variables corresponds to $\eps^{-1/3+\gamma}$, however we can brutally upper-bound it by infinity. As before, we want to show that $Q(z)\lesssim 1$, indeed, this is a non-negative smooth function with the property
\[
0 \leq Q(0) = \int_0^{\infty} \frac{\left(1+x\right)}{1 + x^N} \e^{-\chi x^2}\dd x \lesssim \int_0^{\infty} \e^{-\chi x^2}\dd x \lesssim 1.
\]
Therefore, as before, it suffices to show that $Q(z)$ remains bounded in the limit $z\to\infty$ in order to obtain the desired result. To see this we make use again of L'H\^opital's rule,
\[
\frac{\dd}{\dd z} \int_z^{\infty} \frac{\left(1+x\right)}{1 + x^N} \e^{-\chi x^2}\dd x = -\frac{\left(1+z\right)}{1 + z^N} \e^{-\chi z^2},
\]
\[
\frac{\dd}{\dd z}\left[\left(1 + z^N\right)^{-1}\e^{-\chi z^2}\right] = -Nz^{N-1}(1+z^N)^{-2}\e^{-\chi z^2} - 2\chi z (1+z^N)^{-1} \e^{-\chi z^2},
\]
and therefore
\[
\lim_{z\to \infty} \frac{\frac{\dd}{\dd z} \int_z^{\infty} \left(1+x\right)\left(1 + x^N\right)^{-1} \e^{-\chi x^2}\dd x}{\frac{\dd}{\dd z}\left[\left(1 + z^N\right)^{-1}\e^{-\chi z^2}\right]} = \lim_{z\to\infty} \frac{\left(1+z\right)}{2\chi z},
\]
which, as in the previous case, is bounded by a constant, provided that $\Re(\nu)\leq 1$. All in all, we derived the estimate
\[
\eps^{-2/3}|\max\{1,r^2\}w_\eps(r-r_0)\I_\eps| \lesssim \eps^{-1/3}\|f\|_Y,
\]
for all $r\in [r_0-\eps^\gamma,r_0)$. Finally, we deduce that $\frac{\dd}{\dd \chi}Q(z)\leq 0$, and so we can extend the bound to be uniform in compact intervals in $\chi$. Next, for the third addend we write
\[
\begin{split}
\eps^{-2/3}\III_\eps & = \eps^{-2/3}\left(1+\eps^{-1/3}|r-r_0|\right) \e^{-\chi\eps^{-2/3}(r-r_0)^2} \int_r^{r_0} \e^{\chi\eps^{-2/3}(s-r_0)^2}|f(s)| \dd s \\
& \leq \|f\|_Y \frac{\eps^{-2/3}\left(1+\eps^{-1/3}|r-r_0|\right)}{(r_0-\eps^\gamma)^2} \e^{-\chi\eps^{-2/3}(r-r_0)^2} \int_r^{r_0} \frac{\e^{\chi\eps^{-2/3}(s-r_0)^2}}{1+\left(\eps^{-1/3}|s-r_0|\right)^N} \dd s 
\end{split}
\]
which, after multiplication by the weights from the $X$ norm, and the usual change of variables $z=\eps^{-1/3}(r_0-r)$, $x=\eps^{-1/3}(r_0-s)$, can be bounded by
\[
\eps^{-2/3}|\max\{1,r^2\}w_\eps(r-r_0)\III_\eps| \lesssim \eps^{-1/3}\|f\|_Y P(z),
\]
where $P(z)$ is defined in \eqref{eq:P}. In particular, since we have already proved in Step 1 that $P(z)\lesssim 1$, we directly obtain the estimate
\[
\eps^{-2/3}|\max\{1,r^2\}w_\eps(r-r_0)\III_\eps| \lesssim \eps^{-1/3}\|f\|_Y.
\]
Lastly, for the fourth term we write just as in Step 1,
\[
\begin{split}
\eps^{-2/3}\IV'_\eps & \leq \|f\|_Y \frac{\eps^{-2/3}\left(1+\eps^{-1/3}|r-r_0|\right)}{r_0^2} \e^{-\chi\eps^{-2/3}(r-r_0)^2} \\
& \quad \times \int_{r_0}^{r_0+\eps^\gamma} (1+\eps^{-1/3}|r-r_0|) \frac{\e^{-\chi\eps^{-2/3}(s-r_0)^2}}{1+\left(\eps^{-1/3}|s-r_0|\right)^N} \dd s.
\end{split}
\]
As before, we make the change of variables $z=\eps^{-1/3}(r_0-r)$ and $x=\eps^{-1/3}(r_0-s)$ so that 
\[
\eps^{-2/3}w_\eps(r-r_0)\IV'_\eps \lesssim \eps^{-1/3} \|f\|_{Y} \left(1+z\right) \left(1+z^N\right) \e^{-\chi z^2} \int_0^{\infty} (1+x)^{\Re(\nu)} \frac{\e^{-\chi x^2}}{1+x^N} \dd x.
\]
Due to the finiteness of the integral and the boundedness on the prefactor we can write
\[
\left(1+z\right) \left(1+z^N\right) \e^{-\chi z^2} \int_0^{\infty} (1+x) \frac{\e^{-\chi x^2}}{1+x^N} \dd x \lesssim 1,
\]
which readily yields 
\[
\eps^{-2/3}|\max\{1,r^2\}w_\eps(r-r_0)\IV_\eps'| \lesssim \eps^{-1/3}\|f\|_Y.
\]
All in all, we find that for any $r_0 \in \mathcal{R}$, $M \in \mathcal{N}$, $r\in [r_0-\eps^\gamma,r_0)$, the approximate Green's function satisfies the estimate
\begin{equation}\label{eq:G-est-2}
|\max\{1,r^2\}w_\eps(r-r_0)\G_{2,1}^{\lambda}f(r)| \lesssim \eps^{-1/3}\|f\|_Y.
\end{equation}

\emph{Step 3}. We assume now that $r_0\leq r<r_0+\eps^\gamma$, which directly implies $\I_\eps=0$, $\III_\eps=0$, and $\IV_\eps'=0$. The analysis in this regime follows very closely the ideas introduced in the previous two steps. First of all we have,
\[
\begin{split}
    \eps^{-2/3}\I'_\eps &\leq \|f\|_Y \frac{\eps^{-2/3}\left(1+\eps^{-1/3}|r-r_0|\right)}{(r_0-\eps^\gamma)^2} \e^{-\chi\eps^{-2/3}(r-r_0)^2} \\
    & \quad \times \int_{r_0-\eps^\gamma}^{r_0} (1+\eps^{-1/3}|r-r_0|)\frac{\e^{-\chi\eps^{-2/3}(s-r_0)^2}}{1+\left(\eps^{-1/3}|r-r_0|\right)^N} \dd s.
\end{split}
\]
Once again, the usual change of variables $z=\eps^{-1/3}(r_0-r)$, and $x=\eps^{-1/3}(r_0-s)$ gives the estimate
\[
\eps^{-2/3}w_\eps(r-r_0)\I'_\eps \lesssim \eps^{-1/3} \|f\|_{Y} \left(1+z\right) \left(1+z^N\right) \e^{-\chi z^2} \int_0^{\infty} \frac{\e^{-\chi x^2}}{1+x^N}(1+x) \dd x,
\]
hence using that this is a product of a finite integral and a bounded function, we readily obtain the bound
\[
\eps^{-2/3}|\max\{1,r^2\}w_\eps(r-r_0)\I_\eps'| \lesssim \eps^{-1/3}\|f\|_Y.
\]
Next in order, for the second term we observe that
\[
\begin{split}
\eps^{-2/3}\II_\eps & = \eps^{-2/3}\left(1+\eps^{-1/3}|r-r_0|\right) \e^{-\chi\eps^{-2/3}(r-r_0)^2} \int_{r_0}^r \e^{\chi\eps^{-2/3}(s-r_0)^2}|f(s)| \dd s \\
& \leq \|f\|_Y \frac{\eps^{-2/3}\left(1+\eps^{-1/3}|r-r_0|\right)}{r_0^2} \e^{-\chi\eps^{-2/3}(r-r_0)^2} \int_{r_0}^r \frac{\e^{\chi\eps^{-2/3}(s-r_0)^2}}{1+\left(\eps^{-1/3}|r-r_0|\right)^N} \dd s,
\end{split}
\]
so that, invoking once again the boundedness of $P(z)$ defined in \eqref{eq:P}, we directly obtain the estimate
\[
\eps^{-2/3}|\max\{1,r^2\}w_\eps(r-r_0)\II_\eps| \lesssim \eps^{-1/3}\|f\|_Y.
\]
Finally, the forth term in the definition of the approximate Green's function can be controlled instead by using the boundedness of $Q(z)$, defined in \eqref{eq:Q}, that has been proved in Step 2. We thus write,
\[
\begin{split}
\eps^{-2/3}\IV_\eps & = \eps^{-2/3}\e^{\chi\eps^{-2/3}(r-r_0)^2}\int_{r}^{r_0+\eps^\gamma} \left(1+\eps^{-1/3}|s-r_0|\right) \e^{-\chi\eps^{-2/3}(s-r_0)^2}|f(s)|\dd s \\
& \leq \|f\|_Y \frac{\eps^{-2/3}}{r^2}\e^{\chi\eps^{-2/3}(r-r_0)^2}\int_{r}^{r_0+\eps^\gamma} \frac{\left(1+\eps^{-1/3}|s-r_0|\right)}{1+\left(\eps^{-1/3}|r-r_0|\right)^N} \e^{-\chi\eps^{-2/3}(s-r_0)^2}\dd s 
\end{split}
\]
Therefore, since $r\in [r_0,r_0+\eps^\gamma)$ we get
\[
\eps^{-2/3}|\max\{1,r^2\}w_\eps(r-r_0)\IV_\eps| \lesssim \eps^{-1/3}\|f\|_Y Q(z) \lesssim \eps^{-1/3}\|f\|_Y.
\]
Putting everything together, we find that for the values $r_0\leq r<r_0+\eps^\gamma$, the approximate Green's function, weighted with $\max\{1,r^2\}w_\eps(r-r_0)$, is pointwise bounded by
\begin{equation}\label{eq:G-est-3}
|\max\{1,r^2\}w_\eps(r-r_0)\G_{2,1}^{\lambda}f(r)| \lesssim \eps^{-1/3}\|f\|_Y.
\end{equation}

\emph{Step 4}. The last piece missing concerns the regime $r\geq r_0+\eps^\gamma$, where we need to be slightly more careful due to the effect of the weight $\max\{1,r^2\}$. We have $\I_\eps=0$, $\III_\eps=0$, and $\IV_\eps=\IV_\eps'=0$, for the other two terms we argue as before. On the one hand, for $\I_\eps'$ we can write
\[
\begin{split}
    \eps^{-2/3}\I'_\eps  & \leq \|f\|_Y \frac{\eps^{-2/3}\left(1+\eps^{-1/3}|r-r_0|\right)}{(r_0-\eps^\gamma)^2} \e^{-\chi\eps^{-2/3}(r-r_0)^2} \\
    & \quad \times \int_{r_0-\eps^\gamma}^{r_0} (1+\eps^{-\frac{1}{3}}|s-r_0|)\frac{\e^{-\chi\eps^{-2/3}(s-r_0)^2}}{1+\left(\eps^{-1/3}|r-r_0|\right)^N} \dd s.
\end{split}
\]
Since both exponentials have the correct sign, as in Step 1 we find that 
\[
\max\{1,r^2\}\eps^{-2/3}w_\eps(r-r_0)\I'_\eps \lesssim \eps^{-1/3}\|f\|_Y,
\]
and in fact, we note that 
\begin{align*}
\max\{1,r^2\}\eps^{-2/3}\I_\eps'(r) & \lesssim \|f\|_{Y}   \eps^{-2/3}(1+\eps^{-1/3+\gamma})w_\eps(\eps^\gamma)\max\{1,r^2\}L(r,\eps) \\
& \quad \times \e^{-\chi \eps^{-2/3}(r-r_0)^2}\int_{r_0-\eps^\gamma}^{r_0}(1+\eps^{-\frac{1}{3}}|s-r_0|)\frac{\e^{-\chi\eps^{-2/3}(s-r_0)^2}}{1+\left(\eps^{-1/3}|r-r_0|\right)^N} \dd s,
\end{align*}
where as before we define 
\begin{equation*}
L(r,\eps)=\frac{w_\eps(r-r_0)(1+\eps^{-1/3}|r-r_0|)}{w_\eps(\eps^\gamma)(1+\eps^{-1/3+\gamma})}\e^{-\chi \eps^{-2/3}((r-r_0)^2-\eps^{2\gamma})}.
\end{equation*}
Once again, we observe by Lemma \ref{lemma:properties-weight} that 
\begin{equation*}
L(r,\eps) \lesssim \e^{-\frac{1}{2}\chi \eps^{-2/3}((r-r_0)^2-\eps^{2\gamma})},
\end{equation*}
uniformly in $r, \eps$, as well as $\chi$ in compact intervals of $(0,\infty)$, and so we deduce that 
\begin{equation*}
\eps^{-2/3}w_\eps(r-r_0)\I'_\eps \lesssim \eps^{-1/3}e^{-\frac{1}{2}\chi \eps^{-2/3}((r-r_0)^2-(\eps^{\gamma})^2)}\|f\|_Y,
\end{equation*}
if $r \geq r_0+\eps^\gamma$. Finally, to include the $\max\{1,r^2\}$ weight, we note that 
\[
\max\{1,r^2\} \e^{-\frac{1}{2}\chi \eps^{-2/3}((r-r_0)^2-\eps^{2\gamma})} \lesssim \e^{-\frac{1}{4}\chi \eps^{-2/3}((r-r_0)^2-\eps^{2\gamma})},
\]
and so we deduce 
\begin{equation*}
\max\{1,r^2\}\eps^{-2/3}w_\eps(r-r_0)\I'_\eps \lesssim \eps^{-1/3}e^{-\frac{1}{4}\chi \eps^{-2/3}((r-r_0)^2-(\eps^{\gamma})^2)}\|f\|_Y,
\end{equation*}
Next, we analyse $\II_\eps$. We write,
\[
\begin{split}
\eps^{-2/3}&w_\eps(r-r_0)\II_\eps \lesssim \eps^{-2/3}r^2 w_\eps(r-r_0)\left(1+\eps^{-1/3}|r-r_0|\right)\\
& \qquad \qquad \qquad \times \e^{-\chi\eps^{-2/3}(r-r_0)^2}\int_{r_0}^r \e^{\chi\eps^{-2/3}(s-r_0)^2}|f(s)| \dd s \\
& \leq \|f\|_Y \eps^{-2/3} w_\eps(r-r_0)\left(1+\eps^{-1/3}|r-r_0|\right)\e^{-\chi\eps^{-2/3}(r-r_0)^2}\int_{r_0}^{r_0+\eps^\gamma} \frac{\e^{\chi\eps^{-2/3}(s-r_0)^2}}{w_\eps(s-r_0)} \dd s.
\end{split}
\]
On the one hand we can see that via the usual change of variables,
\[
\eps^{-2/3} w_\eps(r-r_0)\II_\eps \lesssim \|f\|_Y \eps^{-1/3} P(z)
\]
where $P(z)$ is the uniformly bounded function defined in \eqref{eq:P}. On the other hand, we once again notice that by Lemma \ref{lemma:properties-weight}, the following estimate holds true,
\begin{equation*}
L(r,\eps) \e^{-\chi\eps^{-2/3}(r^2 - (r_0+\eps^\gamma)^2)} \lesssim \e^{-\frac{1}{2}\chi \eps^{-2/3} ((r-r_0)^2 - (\eps^\gamma)^2)}.
\end{equation*}
Therefore we can write
\[
\eps^{-2/3} w_\eps(r-r_0)\II_\eps \lesssim \|f\|_Y \eps^{-1/3}\e^{-\frac{1}{2}\chi \eps^{-2/3} ((r-r_0)^2 - (\eps^\gamma)^2)},
\]
for any $r\geq r_0+\eps^\gamma$, $r_0 \in \mathcal{R}$, $M \in \mathcal{N}$, and where the constants absorbed in $\lesssim$ are independent of all the relevant quantities. Finally, as in the previous step, we can absorb the $\max\{1,r^2\}$ weight up to reducing the coefficient in the decaying exponential, and so we obtain the estimate
\[
\eps^{-2/3} \max\{1,r^2\} w_\eps(r-r_0)\II_\eps \lesssim \|f\|_Y \eps^{-1/3}\e^{-\frac{1}{4}\chi \eps^{-2/3} ((r-r_0)^2 - (\eps^\gamma)^2)},
\]
uniformly for $r \geq r_0+\eps^\gamma$.
Hence, putting all these estimates together, we find that
\begin{equation}\label{eq:G-est-4}
|\max\{1,r^2\}w_\eps(r-r_0)\G_{2,1}^{\lambda}f(r)| \lesssim \eps^{-1/3}e^{-\frac{1}{4}\chi \eps^{-1/3}((r-r_0)^2-\eps^{2\gamma})}\|f\|_Y,
\end{equation}
for $r \geq r_0+\eps^\gamma$. 
Therefore, by combining \eqref{eq:G-est-1}, \eqref{eq:G-est-2}, \eqref{eq:G-est-3} and \eqref{eq:G-est-4}, we arrive at the first claim in the proposition \eqref{eq:Gr0}. As mentioned earlier, the argument for the second component of the approximate Green's function is completely analogous.

\emph{Estimate for $\partial_r\G_{2}^\lambda$}. In order to derive the sought estimate for the derivatives of $\G_2^\lambda$, it is convenient to regard the independent solutions $v_1(r)$ and $v_2(r)$, \eqref{eq:v1-r0}--\eqref{eq:v2-r0}, in terms of Hermite functions instead of parabolic cylinder functions. In particular, we know that we can represent these solutions as
\[
\begin{split}
v_1(r) & = D_{-\frac{1}{2}c_2^{-1/2}\eta}\left(\sqrt{2}c_2^{1/4}\eps^{-1/3}(r-r_0)\right) \\
& = 2^{\frac{1}{4}c_2^{-1/2}\eta}\e^{-\frac{1}{2}c_2^{1/2}\eps^{-2/3}(r-r_0)^2}H_{-\frac{1}{2}c_2^{-1/2}\eta}\left(c_2^{1/4}\eps^{-1/3}(r-r_0)\right),
\end{split}
\]
\[
\begin{split}
v_2(r) & = D_{-\frac{1}{2}c_2^{-1/2}\eta}\left(-\sqrt{2}c_2^{1/4}\eps^{-1/3}(r-r_0)\right) \\
& = 2^{\frac{1}{4}c_2^{-1/2}\eta}\e^{-\frac{1}{2}c_2^{1/2}\eps^{-2/3}(r-r_0)^2}H_{-\frac{1}{2}c_2^{-1/2}\eta}\left(-c_2^{1/4}\eps^{-1/3}(r-r_0)\right),
\end{split}
\]
where $H_\nu(z)$ denote the Hermite functions. Once we have the bounds for the Green's operator $\G_{2}^\lambda$, we can now easily bound its derivatives using a special relation
\begin{equation}\label{eq:derivative-H}
H_\nu'(z) = 2\nu H_{\nu-1}(z).
\end{equation}
We refer to \cite{Lebedev72} and Appendix \ref{s:appendix-parabolic} for more detailed information. Again, we argue for the first component of the approximate Green's function, and the second component follows readily. Differentiating the Green's function $\G_{2,1}^\lambda$ from \eqref{eq:Green's-r0} we find that
\[
\partial_r (\G_{2,1}^\lambda f)(r) = \mathfrak{w}(\eta)\eps^{-2/3}\left(v_1'(r)\int_0^r v_2(s)f(s)\dd s + v_2'(r)\int_r^\infty v_1(s)f(s)\dd s\right).
\]
In addition, using \eqref{eq:derivative-H} we get
\[
\begin{split}
v_1'(r) & = - c_2^{1/2}\eps^{-2/3}(r-r_0)v_1(r) \\
& \quad -  2^{\frac{1}{4}c_2^{-1/2}\eta-1} c_2^{-1/4}\eta \eps^{-1/3} \e^{-\frac{1}{2}c_2^{1/2}\eps^{-2/3}(r-r_0)^2}H_{-\frac{1}{2}c_2^{-1/2}\eta-1}\left(c_2^{1/4}\eps^{-1/3}(r-r_0)\right),
\end{split}
\]
\[
\begin{split}
v_2'(r) & = - c_2^{1/2}\eps^{-2/3}(r-r_0)v_2(r) \\
& \quad + 2^{\frac{1}{4}c_2^{-1/2}\eta-1} c_2^{-1/4}\eta \eps^{-1/3} \e^{-\frac{1}{2}c_2^{1/2}\eps^{-2/3}(r-r_0)^2}H_{-\frac{1}{2}c_2^{-1/2}\eta-1}\left(-c_2^{1/4}\eps^{-1/3}(r-r_0)\right).
\end{split}
\]
If we want to compare this with the result we just proved for the Green's operator before taking the derivative, we see that $\partial_r (\G_{2,1}^\lambda f)(r)$ will have two main contributions: $\partial_r (\G_{2,1}^\lambda f)(r) = \A'_\eps + \A''_\eps$. Here we define $\A'_\eps$ to be the collection of terms that arise when the derivative hits the exponential functions, whereas $\A''_\eps$ consists of the collection of terms arising when the derivative hits the Hermite functions. For the first of the addends we can write
\[
\begin{split}
\A'_\eps & = - \mathfrak{w}(\eta)\eps^{-2/3}c_2^{1/2}\eps^{-2/3}(r-r_0)\left(v_1(r)\int_0^r v_2(s)f(s)\dd s + v_2(r)\int_r^\infty v_1(s)f(s)\dd s\right) \\
& = -c_2^{1/2}\eps^{-2/3}(r-r_0) (\G_{2,1}^\lambda f)(r).
\end{split}
\]
Adding the weights corresponding to $X$, we arrive at the estimate
\[
|\max\{1,r^2\} w_\eps(r-r_0) \A'_\eps|\lesssim \eps^{-2/3}|r-r_0||\max\{1,r^2\}w_\eps(r-r_0)\G_{2,1}^\lambda f(r)|.
\]
In particular, we see that if $r$ is sufficiently close to $r_0$, e.g.\ $|r-r_0|\leq 2\eps^\gamma$, then we can directly use \eqref{eq:Gr0} and write the estimate
\[
|\max\{1,r^2\} w_\eps(r-r_0) \A'_\eps|\lesssim \eps^{-1+\gamma}\|f\|_Y.
\]
For the remaining uncovered regions for $r$ we need to be make use of the exponential decay. If $r<r_0-2\eps^\gamma$, then we are strictly inside the regime covered in Step 1 of this proof. Therefore, a direct reproduction of the proof in such case yields that $\I_\eps=\I'_\eps=0$, $\II_\eps=0$ and $\IV_\eps=0$. Hence, recall from Step 1 that we have the following estimate for $\III_\eps$, 
\begin{equation*}
\eps^{-2/3}|\max\{1,r^2\}w_\eps(r-r_0)\III_\eps|\lesssim \eps^{-1/3}e^{-\frac{1}{2}\chi \eps^{-2/3}\left((r-r_0)^2-(\eps^{\gamma})^2\right)}\|f\|_{Y},
\end{equation*}
and thus we write
\[
|\max\{1,r^2\} w_\eps(r-r_0) \A'_\eps|\lesssim \eps^{-1/3}\e^{-\frac{1}{2}\chi \eps^{-2/3}\left((r-r_0)^2-(\eps^{\gamma})^2\right)}\|f\|_{Y}.
\] 
This argument carries over identically for the term $\IV'_\eps$. Moreover, following a completely analogous argument for the regime $r>r_0+2\eps^\gamma$, we observe that, for any $r$ it holds
\[
|\max\{1,r^2\} w_\eps(r-r_0) \A_\eps|\lesssim \eps^{-1+\gamma}\|f\|_Y\left( 1 + \e^{-\frac{1}{4}\chi \eps^{-2/3}\left((r-r_0)^2-(\eps^{\gamma})^2\right)} \dsOne_{\{|r-r_0|\geq 2\eps^\gamma\}}(r)\right).
\]
The second contribution to the derivative of the Green's function concerns the case in which the derivatives fall into the Hermite functions. We find thus an expression of the form
\[
\begin{split}
\A''_\eps & = C\mathfrak{w}(\eta)\eps^{-1} 2^{\nu/2} \e^{-\frac{1}{2}\zeta(r)^2}H_{\nu-1}(\zeta(r)) \int_{\zeta(0)}^{\zeta(r)} \e^{-\frac{1}{2}\zeta(s)^2}H_{\nu}(-\zeta(s))f(s)\dd s  \\
& \quad - C\mathfrak{w}(\eta)\eps^{-1} 2^{\nu/2} \e^{-\frac{1}{2}\zeta(r)^2}H_{\nu-1}(-\zeta(r)) \int_{\zeta(0)}^{\zeta(r)} \e^{-\frac{1}{2}\zeta(s)^2}H_{\nu}(\zeta(s))f(s)\dd s,
\end{split}
\]
where in order to shorten the notation, we define $\zeta(r) = c_2^{1/4}\eps^{-1/3}(r-r_0)$, and $C$ is a new constant absorbing only $\eps$ and $r$--independent terms. Now, in order to find appropriate estimates for $\A''_\eps$, we simply notice that the bounds from Lemma \ref{lemma:bounds-v1v2} are still applicable to this new Green's function, with the exception that, when dealing with the derivatives, the exponent $\Re(\nu)$ turns into $\Re(\nu)-1$. This yields no additional constraints since the only condition that we needed to impose on $\Re(\nu)$ is $\Re(\nu)\leq 1$. All in all, we can yet again use the bounds we derived for $\G_{2}^\lambda$ in \eqref{eq:Gr0}, keeping in mind that now there is an extra $\eps^{-1/3}$ multiplying everything. Therefore, we find as before that the largest contribution comes from the values $|r-r_0|\leq \eps^\gamma$ and, it is bounded by
\[
|\max\{1,r^2\}w_\eps(r-r_0)\A''_\eps| \lesssim \eps^{-2/3}\|f\|_Y.
\]
Since we assume $\gamma<1/3$, the estimate from the $\A'_\eps$ is indeed worse, and we obtain
\[
|\max\{1,r^2\}w_\eps(r-r_0)\partial_r\G_{2,1}^\lambda f(r)| \lesssim \eps^{-1+\gamma}\|f\|_Y.
\]
Once again, all these estimates hold uniformly in $r_0 \in \mathcal{R}$, $M \in \mathcal{N}$.
As pointed before, the estimate for the second component of the approximate Green's function follows by a completely analogous argument.

\emph{The error estimate}. The last bit remaining to be proved from Proposition \ref{proposition:r0} is the error estimate \eqref{eq:Gr0-error}, that quantifies how good the approximated operator $\L_2-\lambda$ is. For this we compare equations \eqref{eq:1} associated to the operator $\L-\lambda$, and \eqref{eq:r0-1} associated to the Green's function $\G_{2}^\lambda$. On the one hand notice that by definition of the Green's function $\G_{2}^\lambda$, we have the identities
\[
\begin{split}
((\L-\lambda)\G_{2}^\lambda - \Id)f & =
(\L-\lambda)V - (\L_{2}-\lambda)V = (\L-\L_{2})\G_{2}^\lambda f.
\end{split}
\]
Notice that, using the representation of the operator developed at the beginning of the section, for the first component we may write
\[
((\L-\L_{2})V)_1 = \frac{\eps}{r}\partial_r V_1 - T_{\text{hot}}V_1 - \frac{iM\alpha\eps}{r^2}V_1 + D_\eps^-V_1 + D_\eps^+V_2+\frac{iM\alpha\eps}{r^2}V_2,
\]
and analogously for the second component. Taking this into account, we write for $|r-r_0|\leq 2\eps^\gamma$
\[
\begin{split}
|(\L-\L_{2})\G_{2}^\lambda f| & \lesssim \frac{\eps}{r} |\partial_r \G_{2}^\lambda f| + \frac{\eps}{r^2}|\G_{2}^\lambda f| + \eps^{-1/3}|r-r_0|^3|\G_{2}^\lambda f| + (|D_\eps^-| + |D_\eps^+|)|\G_{2}^\lambda f|,
\end{split}
\]
where in particular, following \eqref{eq:THOT} we use that $T_{\text{hot}}\lesssim \eps^{-1/3}|r-r_0|^3$, uniformly for $r_0 \in \mathcal{R}$. Now, by definition of $D_\eps^+$, $D_\eps^-$ we derive the bounds
\[
|D_\eps^+|,|D_\eps^-| \lesssim \eps^{1/3}\left( \left|\frac{1}{r^2}-\frac{1}{r_0^2}\right| + |r\Omega'-r_0\Omega_0'| \right) \lesssim \eps^{1/3}|r-r_0|,
\]
therefore, using \eqref{eq:Gr0}--\eqref{eq:drGr0} we obtain the estimate for $|r-r_0|\leq 2\eps^\gamma$,
\[
\begin{split}
|r^2(\L-\L_{2})\G_{2}^\lambda f| & \lesssim \frac{\eps}{r} |\partial_r \G_{2}^\lambda f| + \left( \frac{\eps}{r^2} + \eps^{-1/3}|r-r_0|^3 + \eps^{1/3}|r-r_0|\right)|\G_{2}^\lambda f| \\
& \lesssim \eps |\partial_r \G_{2}^\lambda f| + \left( \eps + \eps^{-1/3+3\gamma} + \eps^{1/3+\gamma}\right)|\G_{2}^\lambda f| \\
& \lesssim \left( \eps^{2/3} + \eps^{-2/3+3\gamma} + \eps^{\gamma}\right) \|f\|_Y.
\end{split}
\]
Observe that the right hand side converges to zero as $\eps\to 0$ provided that $3\gamma-2/3>0$.
Next, for $|r-r_0|\geq 2\eps^\gamma$, noting that by Assumption \ref{H2}, $T$ and $\Omega$ may be bounded by some polynomial of the form $C(1+r)^{N_1}$, $C(1+r)^{N_2}$ respectively for some $N_1,N_2\in\N$ and some constant $C>0$, we find the error estimate 
\[
\begin{split}
|r^2(\L-\L_{2})\G_{2}^\lambda f| & \lesssim \eps |r\partial_r \G_{2}^\lambda f| + \left( \eps + r^2\eps^{-1/3}(1+r)^{N_1} + \eps^{1/3}r^2(1+r)^{N_2}\right)|\G_{2}^\lambda f| \\
& \lesssim \eps^\gamma\|f\|_{Y}+\left( \eps^{2/3}+\eps^{-2/3}r^2(1+r)^{N_1}+r^2(1+r)^{N_2}\right)e^{-\frac{1}{4}\chi \eps^{-\frac{2}{3}}((r-r_0)^2-\eps^{2\gamma})} \|f\|_Y \\
&\lesssim \eps^\gamma\|f\|_{Y},
\end{split}
\]
uniformly for $\eps>0$ small enough, $r_0 \in \mathcal{R}$, and $M \in \mathcal{N}$, and thus, the proof of Proposition \ref{proposition:r0} is complete.
\end{proof}

\subsection{Green's function towards infinity}\label{sec:tow-inf}

In this section we will take $f$ to be supported in the interval $[r_0+\eps^\gamma,\infty)$. The key idea to investigate the problem in this region is similar to how argued in the previous section, but with suitable changes to account for the unbounded domain and possible growth at infinity. We need to consider an appropriate approximation of \eqref{eq:1}--\eqref{eq:2} in this regime, and therefore, the behaviour of the general transport functions $\Omega$ and $U$ will play a crucial role.

The strategy to find such approximated version of \eqref{eq:1}--\eqref{eq:2} will be to neglect all terms that converge to zero as $r\to \infty$ and $\eps\to 0$, up to order $\eps^{1/3}$ (included). We find the system
\[
\eps\partial_r^2 V_1 - iM\eps^{-1/3}\left((\Omega-\Omega_0) - \frac{\Omega_0'}{U_0'}(U-U_0)\right)V_1 + \frac{\alpha\eps^{1/3}}{2}r\Omega'V_2 = f_1,
\]
\[
\eps\partial_r^2 V_2 - iM\eps^{-1/3}\left((\Omega-\Omega_0) - \frac{\Omega_0'}{U_0'}(U-U_0)\right)V_2 - \frac{\alpha\eps^{1/3}}{2}r\Omega'V_1 = f_2.
\]
Additionally, we will neglect in our approximated equation the coupled terms involving $\Omega'$. The possible growth of $\Omega'$ at infinity must be addressed later in the analysis, but will turn that we can treat it as an error, as long as the transport term $T(r)$ grows at least as quickly as $r\Omega'(r)$. In this way we have decoupled the ODE, and moreover, we observe that both components $V_1$ and $V_2$ satisfy the same equation
\begin{equation}\label{eq:r-large}
\eps\partial_r^2 V - iM\eps^{-1/3}T(r)V = f,
\end{equation}
where, as usual,
\[
T(r) = \left((\Omega(r)-\Omega_0) - \frac{\Omega_0'}{U_0'}(U(r)-U_0)\right).
\]
The transport term $T(r)$ at this stage could be very complex, so our strategy must be general enough to allow for a variety of behaviours of $T$. The crucial insight is that, as long as $T$ is non-vanishing, the equation \eqref{eq:r-large} is ``very far away'' from its spectrum. Therefore, we most certainly expect the operator associated to \eqref{eq:r-large} to be invertible, and in fact we may hope that its inverse is very well behaved. As such, we can get away with a rather simple approach for constructing an approximate inverse, which consists of pointwise linear approximations to the function $T(r)$, which, using the exponential decay of the associated Green's functions, may be glued together to produce a global inverse. 

Indeed, let $\delta>0$ to be chosen later. Select a sequence of points $\{r_j\}_{j\geq 1}$ such that the domain $[r_0+\eps^\gamma,\infty)$ is partitioned in intervals of the form $[r_j,r_j+\eps^\delta)$ where we identify 
\[
r_{j+1} = r_j+\eps^\delta,
\]
namely $r_1=r_0+\eps^\gamma$ and
\[
[r_0+\eps^\gamma,\infty) = \bigcup_{j=1}^\infty [r_j,r_j+\eps^\delta).
\]
We will denote this partition by $\mathcal{P}_{3}(\eps^\delta)$, i.e.\ $r_j = r_1+(j-1)\eps^\delta\in\mathcal{P}_3(\eps^\delta)$ for all $j\geq 1$. Now we evaluate the transport term precisely at $r_j$ on each interval to get rid of the $r-$dependent coefficients. Indeed, we can approximate \eqref{eq:r-large} by
\begin{equation}\label{eq:r-large-linear}
\eps\partial_r^2 V(r) - i\eps^{-1/3}MT(r_j)V(r) = f, \quad r\in [r_j,r_j+\eps^\delta).
\end{equation}
Setting $f=0$, this ODE has two nontrivial independent solutions
\[
v_1(r) = \e^{\sqrt{iMT(r_j)}\eps^{-2/3}r}, \quad v_2(r) = \e^{-\sqrt{iMT(r_j)}\eps^{-2/3}r},
\]
with Wronskian given by $W(v_1(r),v_2(r)) = 2\sqrt{iMT(r_j)}\eps^{-2/3}$. Therefore, following the usual method, we can construct a Green's function $\G_{3,j}$ associated to the linearly approximated ODE \eqref{eq:r-large-linear} in the domain $[r_j,r_j+\eps^\delta)$ with the correct decay at infinity, that is
\[
\G_{3,j}f(r) = \frac{\eps^{-1/3}}{2\sqrt{iMT(r_j)}}\left(\int_0^r\e^{-\sqrt{iMT(r_j)}\eps^{-2/3}(r-s)}f(s)\dd s + \int_r^\infty\e^{-\sqrt{iMT(r_j)}\eps^{-2/3}(s-r)}f(s)\dd s \right),
\]
where we always choose the branch $\Re(\sqrt{iMT(r_j)})> 0$, regardless of the sign of $MT(r_j)$.
Then we define the Green's function associated to the complete interval $[r_0+\eps^\gamma,\infty)$ by a linear combination
\begin{equation}\label{eq:Ginf-def}
\G_3 f(r) = \sum_{j=1}^\infty \G_{3,j}(f\mathds{1}_{[r_j,r_j+\eps^\delta)})(r).
\end{equation}
Observe that to have a well-defined Green's function, we need to make sure, among other things, that $T(r_j)\neq 0$ for all $j\geq 1$, which is directly ensured by Assumptions \ref{H0}--\ref{H3} together with the extra assumption \ref{SH}. 

Without further ado, let us introduce the main results corresponding to the solution towards infinity. We start with a preliminary pointwise estimate for each term $\G_{3,j}$.

\begin{lemma}\label{lemma:pointGrj-towinf}
Let $\gamma,\delta>0$ and consider the partition $\mathcal{P}_3(\eps^\delta)$ of the interval $[r_0+\eps^\gamma,\infty)$ in subintervals of size $\eps^\delta$. Let $\G_{3,j}$ be the Green's function associated to equation \eqref{eq:r-large-linear}, and let $f\in Y$. Then, under Assumptions \ref{H0}-\ref{H2}, and \ref{SH}, if 
\[
\frac{2}{3}-\gamma-\delta>0, \quad \gamma<\delta, \quad \gamma<\frac{1}{3},
\]
there exists $c_0>0$ independent of $\eps, r>0, r_0 \in \mathcal{R}, M \in \mathcal{N}$, where $\mathcal{R}$ is from Lemma \ref{lemma:consequences of assumptions}, and $\mathcal{N} \subset \mathbb{R} \setminus \{0\}$ is a compact set, so that for any $\eps>0$ sufficiently small independently of $r, r_j \geq r_0+\eps^\gamma$, and any $r\geq 0$ the following estimates hold true,
\begin{equation}\label{eq:Grj-towinf}
\begin{split}
    |w_\eps(r-r_0)\G_{3,j} (f\dsOne_{[r_j,r_j+\eps^\delta)})(r)| & \lesssim \|f\|_Y\frac{\eps^{1/3}}{r_j^2|MT(r_j)|} \e^{-c_0\eps^{-2/3+\gamma}(r_j-r)}\dsOne_{\{r<r_j-\eps^\delta\}}(r) \\
    &  + \|f\|_Y\frac{\eps^{1/3}}{r_j^2|MT(r_j)|}\dsOne_{\{r_j-\eps^\delta\leq r\leq r_j+2\eps^\delta\}}(r) \\
    &  + \|f\|_Y\frac{\eps^{1/3}}{r_j^2|MT(r_j)|} \e^{-c_0\eps^{-2/3+\gamma}(r-r_j-\eps^\delta)}\dsOne_{\{r>r_j+2\eps^\delta\}}(r),
\end{split}
\end{equation}
\begin{equation}\label{eq:drGrj-towinf}
\begin{split}
    |w_\eps(r-r_0)\partial_r\G_{3,j} (f\dsOne_{[r_j,r_j+\eps^\delta)})(r)| & \lesssim \|f\|_Y\frac{\eps^{-1/3}}{r_j^2\sqrt{|MT(r_j)|}} \e^{-c_0\eps^{-2/3+\gamma}(r_j-r)}\dsOne_{\{r<r_j-\eps^\delta\}}(r) \\
    &  + \|f\|_Y\frac{\eps^{-1/3}}{r_j^2\sqrt{|MT(r_j)|}}\dsOne_{\{r_j-\eps^\delta\leq r\leq r_j+2\eps^\delta\}}(r) \\
    &  + \|f\|_Y\frac{\eps^{-1/3}}{r_j^2\sqrt{|MT(r_j)|}} \e^{-c_0\eps^{-2/3+\gamma}(r-r_j-\eps^\delta)}\dsOne_{\{r>r_j+2\eps^\delta\}}(r),
\end{split}
\end{equation}
where the implicit constants from $\lesssim$ are \emph{independent} of $r, r_j \geq r_0+\eps^\gamma, \eps$, $r_0 \in \mathcal{R}$, $M \in \mathcal{N}$.
\end{lemma}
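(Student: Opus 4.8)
The plan is to derive the pointwise bounds \eqref{eq:Grj-towinf}--\eqref{eq:drGrj-towinf} directly from the explicit integral formula for $\G_{3,j}$, since on each small interval $[r_j,r_j+\eps^\delta)$ the Green's function is the resolvent of a constant-coefficient operator and its kernel is a pure exponential. First I would record the elementary bound $\Re(\sqrt{iMT(r_j)})\gtrsim \sqrt{|MT(r_j)|}$ (with the branch chosen so the real part is positive), and then, crucially, combine this with property \ref{P6} from Lemma \ref{lemma:consequences of assumptions}: since $r_j\geq r_0+\eps^\gamma$ and the only degenerate zero of $T$ is $r_0$, we have $|T(r_j)|\gtrsim \eps^{2\gamma}$, so $\Re(\sqrt{iMT(r_j)})\eps^{-2/3}\gtrsim \eps^{-2/3+\gamma}$. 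This is exactly the exponential rate $c_0\eps^{-2/3+\gamma}$ that appears in the statement, and it also explains why we need $\gamma<1/3$: the exponent $-2/3+\gamma<-1/3<0$ is a genuinely large, negative power of $\eps$, so the kernel decays rapidly away from its diagonal.

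Next I would estimate $|\G_{3,j}(f\dsOne_{[r_j,r_j+\eps^\delta)})(r)|$ by splitting into the three regimes $r<r_j-\eps^\delta$, $r_j-\eps^\delta\leq r\leq r_j+2\eps^\delta$, and $r>r_j+2\eps^\delta$. In each case one bounds $|f(s)|\leq \|f\|_Y s^{-2} w_\eps(s-r_0)^{-1}\leq \|f\|_Y r_j^{-2}$ on the support $[r_j,r_j+\eps^\delta)$ (using that $s^{-2}$ is decreasing and $w_\eps\geq 1$), pulls this constant out, and is left with $\int_{r_j}^{r_j+\eps^\delta}\e^{-\Re(\sqrt{iMT(r_j)})\eps^{-2/3}|r-s|}\dd s$. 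When $r$ is far from the support this integral is bounded by $\eps^\delta \e^{-c_0\eps^{-2/3+\gamma}\dist(r,\mathrm{supp})}$, but since $\eps^\delta\cdot \eps^{-2/3+\gamma} = \eps^{-2/3+\gamma+\delta}\to\infty$ (this is where $\tfrac23-\gamma-\delta>0$ enters, ensuring the prefactor is dominated by the exponential — we absorb $\eps^{-2/3+\gamma+\delta}$ into a slightly worse constant $c_0$), we can replace $\eps^\delta$ by the Wronskian-normalisation factor $\eps^{-1/3}/\sqrt{|MT(r_j)|}$ without loss, matching the claimed form $\eps^{1/3}/(r_j^2|MT(r_j)|)$ after multiplying by the $\eps^{-1/3}/\sqrt{|MT(r_j)|}\cdot\sqrt{|MT(r_j)|}$ prefactor — I would check the power bookkeeping carefully here. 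In the central regime the integral is simply $O(\eps^{-1/3}/\sqrt{|MT(r_j)|})$ directly from the $L^1$ norm of the exponential kernel, giving the stated $\eps^{1/3}/(r_j^2|MT(r_j)|)$. The derivative bound \eqref{eq:drGrj-towinf} is identical except that $\partial_r$ brings down a factor $\sqrt{iMT(r_j)}\eps^{-2/3}$, which combined with the $\eps^{-1/3}/\sqrt{|MT(r_j)|}$ Wronskian factor produces $\eps^{-1/3}/(r_j^2\sqrt{|MT(r_j)|})$; the exponential structure is untouched since differentiating a decaying exponential only changes the constant.

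Finally I would handle the weight $w_\eps(r-r_0)$ that appears on the left. The point is that the support of $f$ lies in $[r_j,r_j+\eps^\delta)$ with $r_j\geq r_0+\eps^\gamma$, so $w_\eps$ there is at most $w_\eps(\eps^\gamma)(1+\text{const})\lesssim 1+\eps^{-1/3+\gamma\cdot}$-type terms — more precisely one uses Lemma \ref{lemma:properties-weight} (invoked already in Section~\ref{s:around-r0}) to show that $w_\eps(r-r_0)/w_\eps(s-r_0)$ for $s$ in the support is dominated by a term that, after being multiplied against the Gaussian-rate exponential $\e^{-c_0\eps^{-2/3+\gamma}|r-s|}$, is absorbed by halving $c_0$; this is the standard ``the weight grows polynomially but the Green's function decays exponentially with an $\eps$-dependent rate'' mechanism that appears throughout the paper. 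Since $\gamma<\delta$ ensures the transition intervals $[r_j-\eps^\delta,r_j+2\eps^\delta)$ are comparable in scale to (and actually larger than) the $\eps^\gamma$-gap near $r_0$, there are no issues at the left endpoint $r_1=r_0+\eps^\gamma$. The main obstacle I anticipate is not any single estimate but the careful tracking of the competing powers $\eps^\gamma$ (lower bound on $|T|$), $\eps^\delta$ (interval width), $\eps^{-2/3}$ (semiclassical rescaling) and the weight exponent, making sure that the conditions $\tfrac23-\gamma-\delta>0$, $\gamma<\delta$, $\gamma<\tfrac13$ are each used exactly where needed and that the final constants genuinely do not depend on $r,r_j,\eps,r_0\in\mathcal R,M\in\mathcal N$ — the uniformity in $r_0$ and $M$ relying entirely on the uniform lower bounds from Lemma~\ref{lemma:consequences of assumptions}\ref{P6} and compactness of $\mathcal N\subset\mathbb R\setminus\{0\}$.
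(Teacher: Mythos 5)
Your overall decomposition into the three regimes, the use of \ref{P6} to get $|T(r_j)| \gtrsim \eps^{2\gamma}$ and hence the exponential rate $\sqrt{|MT(r_j)|}\,\eps^{-2/3} \gtrsim \eps^{-2/3+\gamma}$, and the appeal to Lemma \ref{lemma:properties-weight} to control the weight ratio all match the paper's argument. But the power bookkeeping you flagged as uncertain is in fact wrong, and the crude bound ``integral $\leq \eps^\delta$'' does not work in the central regime $r_j - \eps^\delta \leq r \leq r_j + 2\eps^\delta$. There the Wronskian prefactor is $\eps^{-1/3}/\sqrt{|MT(r_j)|}$ and the kernel is $\e^{-\frac{1}{\sqrt 2}\sqrt{|MT(r_j)|}\,\eps^{-2/3}|r-s|}$; the integral over the support must be bounded by its $L^1$ mass
\[
\int_0^\infty \e^{-\frac{1}{\sqrt 2}\sqrt{|MT(r_j)|}\,\eps^{-2/3}u}\,\dd u = \frac{\sqrt 2\,\eps^{2/3}}{\sqrt{|MT(r_j)|}},
\]
which with the prefactor and the $r_j^{-2}$ from $|f(s)|\leq \|f\|_Y s^{-2}w_\eps(s-r_0)^{-1}$ yields exactly $\eps^{1/3}/(r_j^2|MT(r_j)|)$. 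Your stated $L^1$ mass $\eps^{-1/3}/\sqrt{|MT(r_j)|}$ is off by $\eps$, and the ``replace $\eps^\delta$ by $\eps^{-1/3}/\sqrt{|MT(r_j)|}$'' move is not a valid inequality. If you used $\eps^\delta$ here, you would get $\eps^{-1/3+\delta}/(r_j^2\sqrt{|MT(r_j)|})$, which for $r_j$ far from $r_0$ (so $|T(r_j)|=O(1)$) is much larger than the claimed bound, and there is no exponential factor in the central regime to absorb the discrepancy. So the $L^1$ computation, not the interval-width bound, is essential.

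Two smaller points. First, the condition $\frac23 - \gamma - \delta > 0$ is not actually invoked inside the proof of this lemma; it is used later, in Proposition \ref{proposition:rinf}, to make the geometric series over $j$ converge. What you do need inside this lemma are $\gamma < \frac13$ (so that $\eps^{-\gamma}$ coming from the weight ratio is dominated by $\sqrt{|MT(r_j)|}\,\eps^{-2/3} \gtrsim \eps^{-2/3+\gamma}$, since $-\gamma > -\frac23+\gamma$) and $\gamma < \delta$ (so that Lemma \ref{lemma:properties-weight}, part (2), with $a=\delta$, $b=\gamma$, controls the weight ratio in the central regime). Second, rather than discarding $w_\eps(s-r_0)^{-1}$ first and reinstating a weight argument afterward, it is cleaner to keep that factor inside the integral and cancel it against $w_\eps(r-r_0)$ through the ratio $w_\eps(r-r_0)/w_\eps(s-r_0)$, exactly as the paper does; your version is reparable but invites exactly the kind of bookkeeping slip that occurred.
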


\begin{proof}
Let $\G_{3,j}$ be the Green's function associated to the linearised equation \eqref{eq:r-large-linear}. Fix $j\in\N$, and take $f\in L^\infty$ with $\supp(f)\subset [r_j,r_j+\eps^\delta)$. Using the branch $\Re(\sqrt{iMT(r_j)})>0$, and since $\Re(\sqrt{i})=1/\sqrt{2}$ regardless of the sign of $T(r_j)$, we find the pointwise estimate
\[
\begin{split}
    |\G_{3,j}f(r)| & \leq  \frac{\eps^{-1/3}}{2\sqrt{|MT(r_j)|}} \int_0^r\e^{-\frac{1}{\sqrt{2}}\sqrt{|MT(r_j)|}\eps^{-2/3}(r-s)}|f(s)|\dd s \\
    & \quad + \frac{\eps^{-1/3}}{2\sqrt{|MT(r_j)|}}\int_r^\infty\e^{-\frac{1}{\sqrt{2}}\sqrt{|MT(r_j)|}\eps^{-2/3}(s-r)}|f(s)|\dd s.
\end{split}
\]
Since $\supp(f)\subset [r_j,r_j+\eps^\delta)$, if $r<r_j-\eps^\delta$ we can write
\[
\begin{split}
|w_\eps(r-r_0)\G_{3,j}f(r)| & \leq  \frac{\eps^{-1/3}w_\eps(r-r_0)}{2\sqrt{|MT(r_j)|}}\int_{r_j}^{r_j+\eps^\delta}\e^{-\frac{1}{\sqrt{2}}\sqrt{|MT(r_j)|}\eps^{-2/3}(s-r)}|f(s)|\dd s \\
& \leq \|f\|_Y  \frac{\eps^{-1/3}w_\eps(r-r_0)}{2r_j^2\sqrt{|MT(r_j)|}}\int_{r_j}^\infty w_\eps(s-r_0)^{-1}\e^{-\frac{1}{\sqrt{2}}\sqrt{|MT(r_j)|}\eps^{-2/3}(s-r)}\dd s
\end{split}
\]
and therefore, using that $w_\eps(s-r_0)^{-1}$ is a decreasing function for all $s>r_0$, we obtain
\begin{equation}
\label{eq:weight bound linear interpolation}
|w_\eps(r-r_0)\G_{3,j}f(r)|  \lesssim \|f\|_Y\frac{\eps^{1/3}w_\eps(r-r_0)}{w_\eps(r_j-r_0)r_j^2|MT(r_j)|}\e^{-\frac{1}{\sqrt{2}}\sqrt{|MT(r_j)|}\eps^{-2/3}(r_j-r)}.
\end{equation}
Now, by Lemma \ref{lemma:properties-weight}, there exists a constant $C>0$ so that uniformly for $r_j \geq r_0+\eps^\gamma$, $r \leq r_0$ it holds
\begin{equation*}
\frac{w_\eps(r-r_0)}{w_\eps(r_j-r_0)}\leq Ce^{\eps^{-\gamma}(r_j-r_0)}.
\end{equation*}
In particular, notice that $|T(r_j)|\gtrsim \eps^{\gamma}$ uniformly for $r_j \geq r_0+\eps^\gamma$, $r_0 \in \mathcal{R}$ by Lemma \ref{lemma:consequences of assumptions}. Therefore, $\frac{1}{\sqrt{2}}\sqrt{|MT(r_j)|}\eps^{-2/3+\gamma}\gtrsim \eps^{-2/3+2\gamma} \to \infty$ as $\eps \to 0$, provided that $\gamma<\frac{1}{3}$. In particular, for all $\eps>0$ sufficiently small, it holds $\eps^{-\gamma}\leq \frac{1}{2\sqrt{2}}\sqrt{|MT(r_j)|}\eps^{-2/3}$, uniformly for $r_j \geq r_0+\eps^\gamma$, $r_0 \in \mathcal{R}$, $M \in \mathcal{N}$.
Hence, we may further upper bound the expression from \eqref{eq:weight bound linear interpolation} by
\begin{equation}
\label{eq:linear interpolation weight bound 2}
\|f\|_{Y} \frac{\eps^{1/3}}{r_j^2 |MT(r_j)|}e^{-(\frac{1}{\sqrt{2}}\sqrt{|MT(r_j)|}\eps^{-\frac{2}{3}}-\eps^{-\gamma})(r_j-r)} \leq C\|f\|_{Y} \frac{\eps^{1/3}}{r_j^2 |MT(r_j)|}e^{-\frac{1}{2\sqrt{2}}\sqrt{|MT(r_j)|}\eps^{-\frac{2}{3}}(r_j-r)}.
\end{equation}
Certainly this argument still works uniformly for $r_j-\eps^\delta\leq r<r_j$, but in that particular case the contribution of the exponential may be negligible, since in the limit $r\to r_j$ we do not gain any decay. Thus, for $r_j-\eps^\delta\leq r<r_j$ we write
\[
|w_\eps(r-r_0)\G_{3,j}f(r)| \lesssim \|f\|_Y\frac{\eps^{1/3}}{r_j^2|MT(r_j)|}.
\]
If $r\in (r_j,r_j+2\eps^\delta)$ we write,
\[
\begin{split}
    |w_\eps(r-r_0)\G_{3,j}f(r)| & \leq  \frac{\eps^{-1/3}w_\eps(r-r_0)}{2\sqrt{|MT(r_j)|}} \int_{r_j}^{r}\e^{-\frac{1}{\sqrt{2}}\sqrt{|MT(r_j)|}\eps^{-2/3}(r-s)}|f(s)|\dd s \\
    & \quad + \frac{\eps^{-1/3}w_\eps(r-r_0)}{2\sqrt{|MT(r_j)|}} \int_{r} ^{r_j+\eps^\delta}e^{-\frac{1}{\sqrt{2}}\sqrt{|MT(r_j)|}\eps^{-2/3}(s-r)}|f(s)| \dd s.
\end{split}
\]
We deal with the terms separately. Note first that 
\begin{align*}
&\frac{\eps^{-1/3}w_\eps(r-r_0)}{2\sqrt{|MT(r_j)|}}\int_{r} ^{r_j+\eps^\delta}e^{-\frac{1}{\sqrt{2}}\sqrt{|MT(r_j)|}\eps^{-2/3}(s-r)}f(s) \dd s\\
& \qquad \leq \|f(r)\|_Y\frac{\eps^{-1/3}w_\eps(r-r_0)}{2\sqrt{|MT(r_j)|}}\int_{r} ^{r_j+\eps^\delta}e^{-\frac{1}{\sqrt{2}}\sqrt{|MT(r_j)|}\eps^{-2/3}(s-r)}w_\eps(s-r_0)^{-1} \dd s\\
& \qquad \leq \|f(r)\|_Y\frac{\eps^{1/3}}{r_j^2|MT(r_j)|}.
\end{align*}
Here we used again that $w_\eps(s-r_0)^{-1}$ is decreasing for $s > r_0$. Next, we estimate
\[
\begin{split}
|w_\eps(r-r_0)\G_{3,j}f(r)| & \leq  \frac{\eps^{-1/3}w_\eps(r-r_0)}{2\sqrt{|MT(r_j)|}} \int_{r_j}^{r}\e^{-\frac{1}{\sqrt{2}}\sqrt{|MT(r_j)|}\eps^{-2/3}(r-s)}f(s)\dd s\\
& \lesssim  \|f\|_Y\frac{\eps^{-1/3}w_\eps(r-r_0)}{2r_j^2\sqrt{|MT(r_j)|}}\int_{r_j}^{r}w_\eps(s-r_0)^{-1}\e^{-\frac{1}{\sqrt{2}}\sqrt{|MT(r_j)|}\eps^{-2/3}(r-s)}\dd s \\
& \lesssim  \|f\|_Y \frac{\eps^{1/3}w_\eps(r-r_0)}{w_\eps(r_j-r_0)r_j^2|MT(r_j)|}.
\end{split}
\]
By  Lemma \ref{lemma:properties-weight}, we find that the quotient
\[
\frac{w_\eps(r-r_0)}{w_\eps(r_j-r_0)}
\]
is bounded by a constant, uniformly for $r_j \geq r_0+\eps^\gamma$, $|r-r_j|<2\eps^\delta$ (provided $\gamma<\delta)$, and thus we can write
\begin{equation*}
    |w_\eps(r-r_0)\G_{3,j}f(r)| \lesssim \|f\|_Y \frac{\eps^{1/3}}{r_j^2|MT(r_j)|}. 
\end{equation*}
Finally, if $r \geq r_j+2\eps^\delta$, then we estimate
\begin{align*}
|w_\eps(r-r_0)\G_{3,j}f(r)|&\lesssim \|f\|_Y \frac{\eps^{1/3}w_\eps(r-r_0)}{w_\eps(r_j-r_0)r_j^2|MT(r_j)|} \e^{-\frac{1}{\sqrt{2}}\sqrt{|MT(r_j)|}\eps^{-2/3}(r-r_j)}\e^{\frac{1}{\sqrt{2}}\sqrt{|MT(r_j)|}\eps^{-2/3+\delta}}\\
& = \|f\|_Y\frac{\eps^{1/3}w_\eps(r-r_0)}{w_\eps(r_j-r_0)r_j^2|MT(r_j)|} \e^{-\frac{1}{\sqrt{2}}\sqrt{|MT(r_j)|}\eps^{-2/3}(r-r_j-\eps^\delta)}.
\end{align*}
It remains to note the bound
\begin{equation*}
    \frac{w_\eps(r-r_0)}{w_\eps(r_j-r_0)}\e^{-\frac{1}{\sqrt{2}}\sqrt{|MT(r_j)|}\eps^{-2/3}(r-r_j-\eps^\delta)} \leq C\e^{-\frac{1}{2\sqrt{2}}\sqrt{|MT_\eps|}\eps^{-2/3}(r-r_j-\eps^\delta)},
\end{equation*}
for some $C>0$ \emph{independent} of $r>0$, $r_j \geq r_0+\eps^\gamma$, and $\eps>0$ small enough, which can be deduced from Lemma \ref{lemma:properties-weight} in precisely the same way as \eqref{eq:linear interpolation weight bound 2}. Noting finally that by Lemma \ref{lemma:consequences of assumptions}, $|MT(r_j)|\gtrsim \eps^{2\gamma}$ uniformly for $r_j \geq r_0+\eps^\gamma$, $r_0 \in \mathcal{R}$, $M \in \mathcal{N}$, we arrive at the first claim of the lemma, estimate \eqref{eq:Grj-towinf}.

In order to deal with the $r$ derivatives of the Green's functions, we directly compute them using the definition of $\G_{3,j}$,
\[
\partial_r \G_{3,j}f(r) = -\frac{\eps^{-1}}{2}\left( \int_0^r \e^{-\sqrt{iMT(r_j)}\eps^{-2/3}(r-s)}f(s)\dd s + \int_r^\infty\e^{-\sqrt{iMT(r_j)}\eps^{-2/3}(s-r)}f(s)\dd s \right),
\]
Therefore, 
\[
|\partial_r\G_{3,j}f(r)| \leq  \frac{\eps^{-1}}{2}\left(\int_0^r\e^{-\frac{1}{\sqrt{2}}\sqrt{|MT(r_j)|}\eps^{-2/3}(r-s)}|f(s)|\dd s + \int_r^\infty\e^{-\frac{1}{\sqrt{2}}\sqrt{|MT(r_j)|}\eps^{-2/3}(s-r)}|f(s)|\dd s \right),
\]
and similarly to how we argued for the previous case but noticing the different exponents on $\varepsilon$ and $|T(r_j)|$ now, we obtain the analogous pointwise estimate \eqref{eq:drGrj-towinf}.
\end{proof}

Lemma \ref{lemma:pointGrj-towinf} yields a pointwise estimate in $L^\infty((0,\infty),w_\eps(r-r_0)\dd r)$ for each addend $\G_{3,j}$ that constitute the Green's function $\G_3$. In order to obtain an analogous estimate in the full norm $X$ we must include the extra weight $\max\{1,r^2\}$. In this aspect, it is convenient to obtain estimates---akin to \eqref{eq:Grj-towinf} and \eqref{eq:drGrj-towinf}---with an $r^2$ weight. From Lemma \ref{lemma:pointGrj-towinf}, it is rather direct to see that we have the following pointwise bounds
\begin{equation}\label{eq:Ginf-pointwise}
\begin{split}
    |r^2w_\eps(r-r_0)\G_{3,j} & (f\dsOne_{[r_j,r_j+\eps^\delta)})(r)| \lesssim \|f\|_Y\frac{\eps^{1/3}}{|MT(r_j)|} \e^{-c_0\eps^{-2/3+\gamma}(r_j-r)}\dsOne_{\{r_j-r>\eps^\delta\}}(r) \\
    & + \|f\|_Y\frac{\eps^{1/3}}{|MT(r_j)|}\dsOne_{\{r_j-\eps^\delta\leq r\leq r_j+2\eps^\delta\}}(r) \\
    & + \|f\|_Y\frac{\eps^{1/3}}{|MT(r_j)|}\left(1+\frac{|r-r_j|^2}{r^2_j}\right) \e^{-c_0\eps^{-2/3+\gamma}(r-r_j-\eps^\delta)}\dsOne_{\{r-r_j>2\eps^\delta\}}(r),
\end{split}
\end{equation}
as well as,
\begin{equation}\label{eq:drGinf-pointwise}
\begin{split}
    |r^2w_\eps(r-r_0)\partial_r \G_{3,j} & (f\dsOne_{[r_j,r_j+\eps^\delta)})(r)| \lesssim \|f\|_Y\frac{\eps^{-1/3}}{\sqrt{|MT(r_j)|}} \e^{-c_0\eps^{-2/3+\gamma}(r_j-r)}\dsOne_{\{r_j-r>\eps^\delta\}}(r) \\
    & + \|f\|_Y\frac{\eps^{-1/3}}{\sqrt{|MT(r_j)|}}\dsOne_{\{r_j-\eps^\delta\leq r\leq r_j+2\eps^\delta\}}(r)\\
     & + \|f\|_Y\frac{\eps^{-1/3}}{\sqrt{|MT(r_j)|}}\left(1+\frac{|r-r_j|^2}{r^2_j}\right) \e^{-c_0\eps^{-2/3+\gamma}(r-r_j-\eps^\delta)}\dsOne_{\{r-r_j>2\eps^\delta\}}(r). \\
\end{split}
\end{equation}
Notice that, after introducing the $r^2$ factor, when it is not comparable to $r_j^2$ we use the relation
\[
\frac{r^2}{r_j^2} \leq 2\left(1+\frac{|r-r_j|^2}{r_j^2}\right ).
\]
These estimates, together with Lemma \ref{lemma:pointGrj-towinf}, provide the key to prove the main proposition of the section, that we present next.

\begin{proposition}\label{proposition:rinf}
Let $\gamma,\delta>0$ be such that
\[
\gamma<\frac{1}{3}, \quad \gamma + \delta <\frac{2}{3}, \quad \gamma<\delta,
\]
consider the partition $\mathcal{P}_3(\eps^\delta)$ of the interval $[r_0+\eps^\gamma,\infty)$ in subintervals of length $\eps^\delta$, and let $\G_3$ be the Green's function introduced in \eqref{eq:Ginf-def}. Let $f\in Y$ with $\supp(f)\subset [r_0+\eps^\gamma,\infty)$. Under Assumptions \ref{H0}--\ref{H2} and \ref{SH}, the following estimates hold true uniformly for any $\eps>0$ sufficiently small, $r_0 \in \mathcal{R}$, and $M$ in any compact set $\mathcal{N} \subset \mathbb{R}\setminus \{0\}$.
\begin{equation}\label{eq:Grinf}
\|\G_3 f\|_X \lesssim \eps^{1/3-2\gamma}\|f\|_Y,
\end{equation}
\begin{equation}\label{eq:drGrinf}
\|\partial_r\G_3 f\|_X \lesssim \eps^{-1/3-\gamma}\|f\|_Y.
\end{equation}
Moreover, the error caused by the approximated operator can be estimated by
\begin{equation}\label{eq:Grinf-error}
\| ((\L-\lambda)\G_{3}-\Id) f\|_Y \lesssim \left(\eps^{2/3-2\gamma} + \eps^{2/3-2\gamma} + \eps^{\beta}\right) \|f\|_Y,
\end{equation}
where the coefficient $\beta>0$ is from \ref{P3}.
\end{proposition}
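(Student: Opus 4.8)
\textbf{Proof proposal for Proposition \ref{proposition:rinf}.}

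The plan is to assemble the global bounds on $\G_3 = \sum_j \G_{3,j}(\,\cdot\,\dsOne_{[r_j,r_j+\eps^\delta)})$ from the per-block pointwise estimates already recorded in Lemma \ref{lemma:pointGrj-towinf} and in \eqref{eq:Ginf-pointwise}--\eqref{eq:drGinf-pointwise}, and then to control the consistency error $(\L-\lambda)\G_3 - \Id$ block by block using Lemma \ref{lemma:consequences of assumptions}. For \eqref{eq:Grinf} and \eqref{eq:drGrinf}, fix $r\geq 0$ and write $|w_\eps(r-r_0)\max\{1,r^2\}\G_3 f(r)| \leq \sum_j |r^2 w_\eps(r-r_0)\G_{3,j}(f\dsOne_{[r_j,r_j+\eps^\delta)})(r)|$. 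By \eqref{eq:Ginf-pointwise}, each summand is bounded by $\|f\|_Y \eps^{1/3}|MT(r_j)|^{-1}$ times a factor that is $O(1)$ on the $O(\eps^\delta)$-neighbourhood of $r_j$ and decays exponentially like $\e^{-c_0\eps^{-2/3+\gamma}\mathrm{dist}(r,[r_j,r_j+\eps^\delta))}$ away from it (the polynomial prefactor $1+|r-r_j|^2/r_j^2$ is harmlessly absorbed by slightly shrinking $c_0$, since $\eps^{-2/3+\gamma}\to\infty$). Since consecutive points are spaced $\eps^\delta$ apart, at most finitely many (in fact $O(1)$) blocks contribute their $O(1)$ factor at a given $r$, and the geometric tail over the remaining blocks sums to $O\big((1-\e^{-c_0\eps^{-2/3+\gamma+\delta}})^{-1}\big)=O(1)$ because $\gamma+\delta<2/3$ forces $\eps^{-2/3+\gamma+\delta}\to\infty$. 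Finally $|MT(r_j)|^{-1}\lesssim \eps^{-2\gamma}$ uniformly for $r_j\geq r_0+\eps^\gamma$, $r_0\in\mathcal{R}$, $M\in\mathcal{N}$ by \ref{P6} of Lemma \ref{lemma:consequences of assumptions}, which yields \eqref{eq:Grinf}. The derivative estimate \eqref{eq:drGrinf} is identical, using \eqref{eq:drGinf-pointwise}: the only change is the prefactor $\eps^{-1/3}|MT(r_j)|^{-1/2}\lesssim \eps^{-1/3-\gamma}$.

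For the error estimate \eqref{eq:Grinf-error}, note first that on each block $[r_j,r_j+\eps^\delta)$ the function $V:=\G_{3,j}(f\dsOne_{[r_j,r_j+\eps^\delta)})$ satisfies $\eps\partial_r^2 V - i\eps^{-1/3}MT(r_j)V = f\dsOne_{[r_j,r_j+\eps^\delta)}$ exactly, so that
\[
((\L-\lambda)\G_3 - \Id)f = \sum_j \Big(\L - \lambda - \big(\eps\partial_r^2 - i\eps^{-1/3}MT(r_j)\big)\Big)\G_{3,j}(f\dsOne_{[r_j,r_j+\eps^\delta)}).
\]
Reading off the full operator $\L-\lambda$ from \eqref{eq:1}--\eqref{eq:2}, the discrepancy operator applied to $V$ on the block $[r_j,r_j+\eps^\delta)$ consists of: the dropped first-order term $\eps r^{-1}\partial_r V$; the transport error $i\eps^{-1/3}M(T(r)-T(r_j))V$; the $O(\eps^{1/3})$ zeroth-order terms ($\lambda$, the $M^2(r^{-2}+(\Omega_0'/U_0')^2)$ term, $D_\eps^{\pm}$, the $iM\alpha\eps r^{-2}$ terms); and the dropped coupling term $\tfrac12\alpha\eps^{1/3}(r\Omega'(r)-r_0\Omega_0')V_{2}$ (resp. $V_1$). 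On $[r_j,r_j+\eps^\delta)$ one has $|T(r)-T(r_j)|\lesssim (\sum_{n} B_n(\eps)\eps^{n\delta})|T(r_j)|$ by \ref{P3}, which after multiplying by $\eps^{-1/3}$ and dividing by $|T(r_j)|$ (the scale appearing in the $Y$-norm bound for $V$) gives a factor $\eps^{-1/3}\cdot\eps^\beta\cdot(\text{bound on }\|V\|)$; similarly $|r\Omega'(r)|\lesssim \eps^{-2\gamma}(D_0+\sum_n D_n\eps^{n\delta})|T(r_j)|$ by \ref{P4}. The term-by-term bookkeeping, using $\|V\|_X\lesssim \eps^{1/3}|MT(r_j)|^{-1}\|f\dsOne_{[r_j,r_j+\eps^\delta)}\|_Y$ and $\|\partial_r V\|_X\lesssim \eps^{-1/3}|MT(r_j)|^{-1/2}\|f\dsOne_{[r_j,r_j+\eps^\delta)}\|_Y$ from Lemma \ref{lemma:pointGrj-towinf}, produces on each block a bound of the form $(\eps^{2/3-2\gamma}+\eps^\beta+\dots)\|f\dsOne_{[r_j,r_j+\eps^\delta)}\|_Y$ \emph{locally}; the subtlety is that one needs this estimate to remain valid at points $r$ \emph{outside} the support of $f\dsOne_{[r_j,r_j+\eps^\delta)}$, which is where the exponential decay from Lemma \ref{lemma:pointGrj-towinf} and the polynomial growth bounds on $T$, $\Omega'$ from \ref{H2} combine: $\eps^{-1/3}|T(r)|\,\e^{-c_0\eps^{-2/3+\gamma}\mathrm{dist}(r,\mathrm{block})}$ and $\eps^{1/3}|r\Omega'(r)|\,\e^{-c_0\eps^{-2/3+\gamma}\mathrm{dist}}$ are still $o(1)$ uniformly, exactly as in the final display of the proof of Proposition \ref{proposition:r0}. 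Summing over $j$ via the same geometric-series argument as above, and collecting the worst powers, gives \eqref{eq:Grinf-error}.

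The main obstacle is the global (in $r$) control of the error term: individually each block $\G_{3,j}$ is constructed to invert a \emph{frozen-coefficient} equation on its own block, so the naive local error is fine, but when one asks for the $Y$-norm of $((\L-\lambda)\G_3-\Id)f$ at an arbitrary $r$ (possibly far from all blocks where $f$ lives), one must show that the sum over $j$ of the errors produced by distant blocks still decays. This requires carefully pairing the exponential decay rate $c_0\eps^{-2/3+\gamma}$ of each $\G_{3,j}$ against (i) the at-most-polynomial growth of the coefficients of $\L$ from \ref{H2}, (ii) the lower bound $|T(r_j)|\gtrsim\eps^\gamma$ from \ref{P6}, and (iii) the weight ratios $w_\eps(r-r_0)/w_\eps(r_j-r_0)$ controlled by Lemma \ref{lemma:properties-weight} — precisely the interplay already worked out at the end of Section \ref{s:around-r0}. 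The inequalities $\gamma<1/3$, $\gamma+\delta<2/3$, $\gamma<\delta$ are exactly what make all three mechanisms win simultaneously, so once the template from Proposition \ref{proposition:r0} is in place the argument is a (lengthy but routine) bookkeeping exercise.
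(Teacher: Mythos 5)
Your proposal is correct and follows essentially the same strategy as the paper's own proof: decompose $\G_3$ into per-block Green's functions, bound the finitely many nearby blocks by the prefactor $\eps^{1/3}|MT(r_j)|^{-1}\lesssim \eps^{1/3-2\gamma}$ and control the distant blocks by a geometric sum enabled by $\gamma+\delta<\tfrac23$, then estimate the error block-by-block using \ref{P3} for $T(r)-T(r_j)$ and \ref{P4} for $r\Omega'(r)$, pairing the exponential decay rate $c_0\eps^{-2/3+\gamma}$ against the polynomial growth of the coefficients. Your observation that the polynomial factor $1+|r-r_j|^2/r_j^2$ is absorbed by halving $c_0$ and your citation of \ref{P6} for the lower bound on $|T(r_j)|$ are both consistent with (and in the latter case slightly more explicit than) the published argument; the remaining differences are purely expository.
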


\begin{proof}
\emph{Part 1: Estimates on $\G_3$ and $\partial_r\G_3$.} First of all we will derive the $X$ and $Y$ bounds on the Green's function and its first order derivatives. These will be a direct consequence of the estimates derived in Lemma \ref{lemma:pointGrj-towinf}. In particular, for $\G_3$ we see that we can combine \eqref{eq:Grj-towinf} and \eqref{eq:Ginf-pointwise} as follows
\[
\begin{split}
    |\max\{1,r^2\}w_\eps(r-r_0)\G_{3,j} & (f\dsOne_{[r_j,r_j+\eps^\delta)})(r)| \lesssim \|f\|_Y\frac{\eps^{1/3}}{|MT(r_j)|} \e^{-c_0\eps^{-2/3+\gamma}(r_j-r)}\dsOne_{\{r_j-r>\eps^\delta\}}(r) \\
    & + \|f\|_Y\frac{\eps^{1/3}}{|MT(r_j)|}\dsOne_{\{r_j-\eps^\delta\leq r\leq r_j+2\eps^\delta\}}(r) \\
    & + \|f\|_Y\frac{\eps^{1/3}}{|MT(r_j)|}\left(1+|r-r_j|^2\right) \e^{-c_0\eps^{-2/3+\gamma}(r-r_j-\eps^\delta)}\dsOne_{\{r-r_j>2\eps^\delta\}}(r).
\end{split}
\]
Using linearity from the definition \eqref{eq:Ginf-def} we can write 
\[
|\max\{1,r^2\}w_\eps(r-r_0)\G_3 f(r)| \leq \sum_{j=1}^\infty |\max\{1,r^2\}w_\eps(r-r_0)\G_{3,j}(f\dsOne_{[r_j,r_j+\eps^\delta)})(r)|.
\]
Now, say that $r\in [r_k,r_k+\eps^\delta)$, where $r_k = r_1+(k-1)\eps^\delta$ is an element of the partition $\mathcal{P}_3(\eps^\delta)$. Then the pointwise estimate after summing over all $j$ can be bounded by
\[
\begin{split}
    |\max\{1,r^2\}w_\eps(r-r_0)\G_3 f(r)| & \lesssim \|f\|_Y \sum_{j=1}^{k-2} \eps^{1/3-2\gamma}\left(1+|r-r_j|^2\right)\e^{-c_0\eps^{-2/3+\gamma}(r-r_j-\eps^\delta)} \\
    & \quad + \|f\|_Y \sum_{j=k+2}^{\infty} \eps^{1/3-2\gamma} \e^{-c_0\eps^{-2/3+\gamma}(r_j-r)} \\
    & \quad + \|f\|_Y\eps^{1/3}\left(\frac{1}{|MT(r_{k-1})|}+\frac{1}{|MT(r_{k})|}+\frac{1}{|MT(r_{k+1})|}\right).
\end{split}
\]
Naturally, if $r<r_0+\eps^\gamma$ we cannot write $r\in [r_k,r_k+\eps^\delta)$, but in such case one can simply obtain the corresponding contributions using the same bounds, namely if $r<r_1-2\eps^\delta$
\[
|\max\{1,r^2\}w_\eps(r-r_0)\G_3 f(r)| \lesssim \|f\|_Y \sum_{j=1}^{\infty} \eps^{1/3-2\gamma} \e^{-c_0\eps^{-2/3+\gamma}(r_j-r)},
\]
and the claim of the proposition follows readily.
Let us thus analyse the case $r\in [r_k,r_{k+1})$, by starting with the contributions of $j=k-1$, $j=k$ and $j=k+1$. By Lemma \ref{lemma:consequences of assumptions}, these may be estimated by
\[
\|f\|_Y\eps^{1/3}\left(\frac{1}{|MT(r_{k-1})|}+\frac{1}{|MT(r_{k})|}+\frac{1}{|MT(r_{k+1})|}\right) \lesssim \|f\|_Y\eps^{1/3-2\gamma}.
\]
For the other two terms we observe that we have a converging geometric series that goes to zero quickly as $\eps\to 0$, namely using that $r_j = r_1+(j-1)\eps^\delta$,
\[
\begin{split}
    \sum_{j=k+2}^{\infty} \eps^{1/3-2\gamma} \e^{-c_0\eps^{-2/3+\gamma}(r_j-r)} & \leq \sum_{j=k+2}^{\infty} \eps^{1/3-2\gamma} \e^{-c_0\eps^{-2/3+\gamma}(r_1+(j-1)\eps^\delta-r)} \\\
    & = \eps^{1/3-2\gamma} \e^{-c_0\eps^{-2/3+\gamma}(r_1-\eps^\delta-r)} \sum_{j=k+2}^{\infty} \e^{-c_0\eps^{-2/3+\gamma+\delta}j} \\
    & \lesssim \eps^{1/3-2\gamma} \e^{-c_0\eps^{-2/3+\gamma}(r_1+(k+1)\eps^\delta-r)},
\end{split}
\]
where the last inequality holds uniformly in $\eps>0$ sufficiently small, provided that $\gamma+\delta<2/3$.
Taking into account that $r_1+(k+1)\eps^\delta = r_{k+2}$, and $r<r_{k+1}$ we write
\[
\sum_{j=k+2}^{\infty} \eps^{1/3-2\gamma} \e^{-c_0\eps^{-2/3+\gamma}(r_j-r)} \lesssim \eps^{1/3-2\gamma} \e^{-c_0\eps^{-2/3+\gamma}(r_{k+2}-r)} \leq \eps^{1/3-2\gamma} \e^{-c_0\eps^{-2/3+\gamma+\delta}}.
\]
Hence, crucially using the condition $\gamma+\delta < 2/3$, we obtain the quantitative converging estimate as $\eps\to 0$. In particular, this bound holds \emph{uniformly} for $r_0 \in \mathcal{R}$, $M \in \mathcal{N}$. The estimate for the remaining term follows from the estimate,
\[
\sum_{j=1}^{k-2} \eps^{1/3-2\gamma}\left(1+|r-r_j|^2\right)\e^{-c_0\eps^{-2/3+\gamma}(r-r_j-\eps^\delta)} \lesssim \sum_{j=1}^{k-2} \eps^{1/3-2\gamma}\e^{-\frac{1}{2}c_0\eps^{-2/3+\gamma}(r-r_j-\eps^\delta)},
\]
thus, since $r\geq r_k$ we have that $r-r_{k-2}-\eps^\delta\geq \eps^\delta$, and
\[
\sum_{j=1}^{k-2} \eps^{1/3-2\gamma}\e^{-\frac{1}{2}c_0\eps^{-2/3+\gamma}(r-r_j-\eps^\delta)} \lesssim \eps^{1/3-2\gamma}\e^{-\frac{1}{2}c_0\eps^{-2/3+\gamma}(r-r_{k-2}-\eps^\delta)} \lesssim \eps^{1/3-2\gamma} \e^{-\frac{1}{2}c_0\eps^{-2/3+\gamma+\delta}}.
\]
\[
\begin{split}
    \eps^{1/3-2\gamma}\sum_{j=1}^{k-2} \e^{-\frac{1}{2}c_0\eps^{-2/3+\gamma}(r-r_1-(j-1)\eps^\delta-\eps^\delta)} & \lesssim \eps^{1/3-2\gamma} \e^{-\frac{1}{2}c_0\eps^{-2/3+\gamma}(r-r_1)} \e^{\frac{1}{2}c_0\eps^{-2/3+\gamma+\delta}(k-2)} \\
    & \leq \eps^{1/3-2\gamma} \e^{-c_0\eps^{-2/3+\gamma+\delta}}.
\end{split}
\]
All in all, putting everything together we find for $r\in [r_k,r_{k+1})$,
\[
|\max\{1,r^2\}w_\eps(r-r_0)\G_3 f(r)| \lesssim \|f\|_Y\eps^{1/3-2\gamma}\left( \dsOne_{[r_k,r_k+\eps^\delta)}(r) + \e^{-\frac{1}{2}c_0\eps^{-2/3+\gamma+\delta}}\dsOne_{[0,r_k)\cap[r_{k+1},\infty)}(r) \right),
\]
with the implicit constants in the $\lesssim$ symbol independent of $\eps>0$, $r_k \geq r_0+\eps^\gamma$, which in particular implies \eqref{eq:Grinf}.

The estimate on $\partial_r\G_2$ follows similarly, we can put together the pointwise estimates for $\partial_r\G_{r_j}$ from \eqref{eq:drGinf-pointwise}, i.e.\
\[
\begin{split}
    |\max\{1,r^2\}w_\eps(r-r_0)\partial_r &\G_{r_j} (f\dsOne_{[r_j,r_j+\eps^\delta)})(r)| \lesssim \|f\|_{Y}\frac{\eps^{-1/3}}{\sqrt{|MT(r_j)|}} \e^{-c_0\eps^{-2/3+\gamma}(r_j-r)}\dsOne_{\{r_j-r>\eps^\delta\}}(r) \\
    & + \|f\|_Y\frac{\eps^{-1/3}}{\sqrt{|MT(r_j)|}}\dsOne_{\{r_j-\eps^\delta\leq r\leq r_j+2\eps^\delta\}}(r)\\
     & + \|f\|_Y\frac{\eps^{-1/3}}{\sqrt{|MT(r_j)|}}\left(1+\frac{|r-r_j|^2}{r_j^2}\right) \e^{-c_0\eps^{-2/3+\gamma}(r-r_j-\eps^\delta)}\dsOne_{\{r-r_j>2\eps^\delta\}}(r). \\
\end{split}
\]
Therefore, a completely analogous argument to that used for $\G_3$ yields
\[
|\max\{1,r^2\}w_\eps(r-r_0)\partial_r\G_2 f(r)| \lesssim \|f\|_Y\eps^{-1/3-\gamma}\left( \dsOne_{[r_k,r_k+\eps^\delta)}(r) + \e^{-\frac{1}{2}c_0\eps^{-2/3+\gamma+\delta}}\dsOne_{[0,r_k)\cap[r_{k+1},\infty)}(r) \right),
\]
and thus, as a by-product, we obtain \eqref{eq:drGrinf}.

\emph{Part 2: The error estimate.} We now move on to computing the error committed by the linear approximation in the regime $r\geq r_0+\eps^\gamma$.
We define,
\[
\L_{3,j}V(r) = \eps\partial_r^2 V(r) - i \eps^{-1/3}MT(r_j)V(r),
\]
so that
\[
\L_{3,j}\G_{3,j}(f\dsOne_{[r_j,r_j+\eps^\delta)})(r) = (f\dsOne_{[r_j,r_j+\eps^\delta)})(r),
\]
for any $j\in\N$. The exact operator $\L-\lambda$ has two components, and we want to approximate both with the same operator $\L_3$. We can write the exact operator as
\[
(\L-\lambda)\begin{pmatrix}
    V_1\\
    V_2
\end{pmatrix} = \eps\left(\partial_r^2+\frac{1}{r}\partial_r\right)\begin{pmatrix}
    V_1\\
    V_2
\end{pmatrix} - \begin{pmatrix}
    A_\eps + \lambda + P_\eps & - Q_\eps \\
    Q_\eps & A_\eps + \lambda - P_\eps
\end{pmatrix}\begin{pmatrix}
    V_1\\
    V_2
\end{pmatrix},
\]
where we define
\begin{equation}
\label{eq:PQ definition}
\begin{split}
P_\eps(r) = &\frac{iM\alpha\eps}{r^2}-\sqrt{\frac{-2iM\Omega_0'}{r_0}}\eps^{1/3} - D_\eps^-(r), \quad Q_\eps(r) = \frac{iM\alpha\eps}{r^2}+D_\eps^+(r),\\
&A_\eps(r) = iM\eps^{-1/3}T(r) + \frac{\eps}{r^2} + \eps^{1/3}M^2\left(\frac{1}{r^2} + \left(\frac{\Omega_0'}{U_0'}\right)^2\right),
\end{split}
\end{equation}
with $D_\eps$ defined as in Section \ref{s:equations}.
With this information we can now evaluate the size of the error. For the first component, let $j\in\N$, so that we can write
\[
\begin{split}
(\L-\lambda-\L_{3,j})V_1(r) & = \frac{\eps}{r}\partial_rV_1(r) - iM\eps^{-1/3}(T(r)-T(r_j))V_1(r) \\
& \quad -\left[ \frac{\eps}{r^2} + \eps^{1/3}M^2\left(\frac{1}{r^2} + \left(\frac{\Omega_0'}{U_0'}\right)^2\right)+\lambda + P_\eps(r) \right]V_1 + Q_\eps(r)V_2(r).
\end{split}
\]
Now, we get the following pointwise estimates
\[
\begin{split}
|D_\eps^+(r)|,|D_\eps^-(r)| & \lesssim \eps^{1/3}\left(\frac{r^2-r_0^2}{r_0^2r^2}+ |r\Omega'(r) - r_0\Omega'_0|\right) \\
& \lesssim \eps^{1/3}\left(1 + \frac{1}{r^2} + |r\Omega'(r)|\right),
\end{split}
\]
\[
|P_\eps(r)| \lesssim \frac{\eps}{r^2} + \eps^{1/3} + |D_\eps^-(r)| \lesssim \frac{\eps}{r^2} + \eps^{1/3}\left(1 + \frac{1}{r^2} + |r\Omega'(r)| \right),
\]
\[
|Q_\eps(r)| \lesssim \frac{\eps}{r^2} + |D_\eps^+(r)| \lesssim \frac{\eps}{r^2} + \eps^{1/3}\left(1 + \frac{1}{r^2} +  |r\Omega'(r)| \right).
\]
Therefore, noting that 
\begin{equation*}
(\L-\lambda)\G_3 f-f=\sum_{j \geq 1}(\L-\lambda)\G_{3,j}(f\mathds{1}_{[r_j,r_j+\eps^\delta)})-\sum_{j \geq 1}\L_{3,j}\G_{3,j}(f\mathds{1}_{[r_j,r_j+\eps^\delta)}),
\end{equation*}
and using $\lambda = \mu\eps^{1/3}$ we obtain
\begin{equation}\label{eq:error-Ginf-mid}
\begin{split}
    \left|r^2w_\eps(r-r_0)((\L-\lambda)\G_3-\Id)f\right| & \lesssim \eps\left|rw_\eps(r-r_0)\partial_r\G_3f\right| + \eps^{1/3}\left|\max\{1,r^2\}w_\eps(r-r_0)\G_3f\right| \\
    &  + \eps^{-1/3}\sum_{j=1}^\infty\left|r^2w_\eps(r-r_0)M(T(r)-T(r_j))\G_{3,j}(f\dsOne_{[r_j,r_j+\eps\delta)})\right| \\
 &  + \eps^{1/3}|r\Omega'(r)|\left|r^2w_\eps(r-r_0)\G_3f\right|.
 \end{split} 
\end{equation}
For the term involving the derivative of the Green's function we find that
\[
\eps \left|rw_\eps(r-r_0)\partial_r\G_3f\right| \leq \eps \|\partial_r\G_3f\|_X \lesssim \eps^{2/3-2\gamma}\|f\|_Y,
\]
by a direct application of \eqref{eq:drGrinf}. The second addend in the right hand side in \eqref{eq:error-Ginf-mid} can be bounded using \eqref{eq:Grinf} by
\[
\eps^{1/3}\left|\max\{1,r^2\}w_\eps(r-r_0)\G_3f \right| \leq \eps^{2/3-2\gamma}\|f\|_Y.
\]
We further see that all of the above bounds are uniform for $r_0 \in \mathcal{R}$, $M \in \mathcal{N}$.
In order to estimate the terms involving $T(r)$ it will not suffice to use \eqref{eq:Grinf}, and so we shall employ the more precise bound \eqref{eq:Ginf-pointwise}. We write for the first component
\[
\begin{split}
    \eps^{-1/3}\sum_{j=1}^\infty & \left|r^2w_\eps(r-r_0)M(T(r)-T(r_j))\G_{3,j}(f_1\dsOne_{[r_j,r_j+\eps^\delta)})(r)\right| \\
    & \leq \|f_1\|_Y\sum_{j=1}^\infty\frac{|T(r)-T(r_j)|}{|T(r_j)|} \left(1+\frac{|r-r_j|^2}{r_j^2}\right) \e^{-c_0\eps^{-2/3+\gamma}(r-r_j-\eps^\delta)}\dsOne_{\{r>r_{j+1}\}} \\
    & \quad + \|f_1\|_Y\sum_{j=1}^\infty\frac{|T(r)-T(r_j)|}{|T(r_j)|}\e^{-c_0\eps^{-2/3+\gamma}(r_j-r)}\dsOne_{\{r<r_{j-1}\}} \\
    & \quad + \|f_1\|_Y\sum_{j=1}^\infty\frac{|T(r)-T(r_j)|}{|T(r_j)|}\dsOne_{\{r_{j-1}\leq r\leq r_{j+1}\}}\\
    & = \I_{\eps}(r) + \II_{\eps}(r) + \III_{\eps}(r). 
\end{split}
\]
To deal with the first two terms in the right hand side, we will use the fast decay of the exponentials to compensate for the (possible) growth of $T(r)$ as $r\to\infty$. Throughout this analysis we shall assume that $r \geq r_0+\eps^\gamma$, since the general case is analogous (and in fact easier). Let us start analysing the first addend. On the one hand, notice that there must exist some $k \geq 0$ so that $r\in[r_k,r_k+\eps^\delta)$ with $k>2$, and hence by Property \ref{P3} we have
\[
\begin{split}
    \I_{\eps}(r) & = \|f_1\|_Y\sum_{j=1}^{k-2}\frac{|T(r)-T(r_j)|}{|T(r_j)|} \left(1+\frac{|r-r_j|^2}{r_j^2}\right)\e^{-c_0\eps^{-2/3+\gamma}(r-r_j-\eps^\delta)} \\
    & \leq \|f_1\|_Y\sum_{n=1}^{N_1}  B_n(\eps) \sum_{j=1}^{k-2} |r-r_j|^n \left(1+|r-r_j|^2\right)\e^{-c_0\eps^{-2/3+\gamma}(r-r_j-\eps^\delta)}.
\end{split}
\]
In order to compute the partial sums of a polynomial multiplying a decaying exponential we argue as follows. Let $n\in\N$ and $r_j = r_1 + (j-1)\eps^\delta$, then we write
\[
\begin{split}
    \sum_{j=1}^{k-2} |r-r_j|^n \e^{-c_0\eps^{-2/3+\gamma}(r-r_j-\eps^\delta)} & \leq n!\sum_{j=1}^{k-2} \e^{n(r-r_j)-c_0\eps^{-2/3+\gamma}(r-r_j-\eps^\delta)} \\
    & = n!\e^{-\left(c_0\eps^{-2/3+\gamma}-n\right)(r-r_1)} \sum_{j=1}^{k-2} \e^{\left(c_0\eps^{-2/3+\gamma}-n\right)j\eps^\delta} \\
    & \lesssim \e^{-\left(c_0\eps^{-2/3+\gamma}-n\right)(r-r_1)} \e^{\left(c_0\eps^{-2/3+\gamma}-n\right)(k-2)\eps^\delta} \\
    & = \e^{-\left(c_0\eps^{-2/3+\gamma}-n\right)(r-r_{k-1})} \lesssim \e^{-\left(c_0\eps^{-2/3+\gamma}-n\right)\eps^\delta}
\end{split}
\]
uniformly in $k \geq 0$, provided that we choose $\eps>0$ small enough so that $\eps^{-1/3+\gamma}>c_0^{-1}n$. Therefore we arrive at the estimate
\[
\I_{\eps}(r) \lesssim \|f_1\|_Y\e^{-\left(c_0\eps^{-2/3+\gamma}-2-N_1\right)\eps^\delta}\sum_{n=1}^{N_1} B_n(\eps) \lesssim \|f_1\|_Y\eps^{-N_1\delta+\beta}\e^{-\left(c_0\eps^{-2/3+\gamma}-2-N_1\right)\eps^\delta},
\]
uniformly in $k$, and where $\beta>0$ comes from Property \ref{P3}. Notice that it might possible that $-N_1\delta+\beta<0$, however this estimate is still rapidly converging to zero as $\eps\to 0$. We apply a similar argument for the second term,
\[
\begin{split}
    \II_{\eps}(r) & = \|f_1\|_Y\sum_{j=k+2}^\infty\frac{|T(r)-T(r_j)|}{|T(r_j)|}\e^{-c_0\eps^{-2/3+\gamma}(r_j-r)} \\
    & \leq \|f_1\|_Y\sum_{n=1}^{N_1} B_n(\eps)\sum_{j=k+2}^{\infty}|r_j-r|^n\e^{-c_0\eps^{-2/3+\gamma}(r_j-r)} \\
    & \lesssim \|f_1\|_Y\e^{-\left(c_0\eps^{-2/3+\gamma}-N\right)\eps^\delta}\sum_{n=1}^{N_1}  B_n(\eps) \\
    & \lesssim \|f_1\|_Y\eps^{-N_1\delta+\beta}\e^{-\left( c_0\eps^{-2/3+\gamma}-N_1\right)\eps^\delta}
\end{split}
\]
Last, for the third addend in the error estimate we use Property \ref{P3} once again to achieve
\[
\begin{split}
    \III_{\eps}(r) & = \|f_1\|_Y\left(\frac{|T(r)-T(r_{k-1})|}{|T(r_{k-1})|} + \frac{|T(r)-T(r_k)|}{|T(r_k)|} + \frac{|T(r)-T(r_{k+1})|}{|T(r_{k+1})|}\right) \\
    & \leq \|f_1\|_Y\sum_{n=1}^{N_1} B_n(\eps)\left(|r-r_{k-1}|^n + |r-r_{k}|^n + |r-r_{k+1}|^n\right).
\end{split}
\]
Moreover, since $r\in [r_k,r_k+\eps^\delta)$, we can use that $|r-r_{k-1}|\leq 2\eps^\delta$, $|r-r_{k}|\leq \eps^\delta$, and $|r-r_{k+1}|\leq 2\eps^\delta$, to write
\[
\III_{\eps}(r) \lesssim \|f_1\|_Y\sum_{n=1}^{N_1} B_n(\eps)\eps^{n\delta} \lesssim \eps^\beta \|f_1\|_Y.
\]
Since $r \geq r_0+\eps^\gamma$ was arbitrary, all in all we may write
\[
\begin{split}
    \eps^{-1/3}\sum_{j=1}^\infty & \left|w_\eps(r-r_0)r^2(T(r)-T(r_j))\G_{r_j}(f_1\dsOne_{[r_j,r_j+\eps^\delta)})(r)\right| \\
    & \lesssim \eps^\beta \|f_1\|_Y\dsOne_{[r_{k-1},r_{k+1})}(r) \\
    & \quad + \|f_1\|_Y\eps^{-N\delta+\beta}\e^{-\left(c_0\eps^{-2/3+\gamma}-2-N\right)\eps^\delta}\dsOne_{[r_0+\eps^\gamma,r_{k-1})\cap [r_{k+1},\infty)}(r) \\
    & \lesssim \eps^\beta \|f_1\|_Y,
\end{split}
\]
for all $r \geq r_0+\eps^\gamma$, and it is straightforward to see that it holds in fact for all $r \geq 0$.
There is only one more term to be addressed in \eqref{eq:error-Ginf-mid}, namely the term concerning $\Omega'(r)$. For this term, we once again need to make use of the more precise estimate \eqref{eq:Ginf-pointwise}. As before, we argue for the first component. Let $r\in [r_k,r_k+\eps^\delta)$, then we write
\[
\begin{split}
    \eps^{1/3}|r\Omega'(r)||r^2w_\eps(r-r_0)\G_{\infty}f_1(r)| & \leq \eps^{2/3}\|f_1\|_Y \sum_{j=1}^{k-2} \left(1+\frac{|r-r_j|^2}{r_j^2}\right) \frac{|r\Omega'(r)|}{|MT(r_j)|} \e^{-c_0\eps^{-2/3+\gamma}(r-r_j-\eps^\delta)} \\
    & \quad + \eps^{2/3}\|f_1\|_Y \sum_{j=k+2}^{\infty} \frac{|r\Omega'(r)|}{|MT(r_j)|} \e^{-c_0\eps^{-2/3+\gamma}(r_j-r)} \\
    & \quad + \eps^{2/3}\|f_1\|_Y \left( \frac{|r\Omega'(r)|}{|T(r_{k-1})|} + \frac{|r\Omega'(r)|}{|MT(r_k)|} + \frac{|r\Omega'(r)|}{|T(r_{k+1})|}\right) \\
    & = \I_{k,\eps}(r) + \II_{k,\eps}(r) + \III_{k,\eps}(r).
\end{split}
\]
In order to deal with these terms we need to impose a new set of conditions on the function $\Omega$ and its relation with the function $T$, and this is where Property \ref{P4} comes into play. Using this hypothesis we can directly estimate the third term by noticing that
\[
\frac{|r\Omega'(r)|}{|T(r_{k-1})|} \leq \eps^{-2\gamma}\left( D_0 + \sum_{n=1}^{N_2} D_n|r-r'|^n\right) \lesssim \eps^{-2 \gamma}\left( 1 + \sum_{n=1}^{N_2} \eps^{n\delta}\right) \lesssim \eps^{-2\gamma},
\]
and analogously with $r_k$ and $r_{k+1}$. Thus, we arrive at the estimate,
\[
\III_{k,\eps}(r) \lesssim \eps^{2/3}\|f_1\|_Y \left( \frac{|r\Omega'(r)|}{|T(r_{k-1})|} + \frac{|r\Omega'(r)|}{|T(r_k)|} + \frac{|r\Omega'(r)|}{|T(r_{k+1})|}\right) \lesssim \eps^{2/3-2\gamma}\|f_1\|_Y.
\]
For the second addend we write,
\[
\begin{split}
    \II_{\eps,k}(r) & \leq \eps^{2/3-2\gamma}\|f_1\|_Y \sum_{j=k+2}^{\infty} \left( D_0 + \sum_{n=1}^{N_2} D_n|r-r'|^n\right) \e^{-c_0\eps^{-2/3+\gamma}(r_j-r)}.
\end{split}
\]
The interactions between the growing polynomial and decreasing exponential inside the infinite sum can be controlled for any $n\in\N$ by
\[
\begin{split}
    \sum_{j=k+2}^{\infty} \left(r_j-r\right)^n\e^{-c_0\eps^{-2/3+\gamma}(r_j-r)} & \leq n!\sum_{j=k+2}^{\infty} \e^{n(r_j-r)-c_0\eps^{-2/3+\gamma}(r_j-r)} \\
    & \lesssim \e^{-\left(c_0\eps^{-2/3+\gamma}-n\right)(r_{k+2}-r)} \lesssim \e^{-\left(c_0\eps^{-2/3+\gamma}-n\right)\eps^\delta}
\end{split}
\]
provided that $\eps>0$ is sufficiently small in comparison to ${N_2}$. Notice that in order to obtain this upper bound, we are only using that $x^n \leq n!e^x$ for any $x\geq 0$, which in not very sharp, but sufficient for our purposes. Putting things together we find that
\[
\begin{split}
\II_{\eps,k}(r) & \lesssim \eps^{2/3-2\gamma}\|f_1\|_Y \e^{-\left(c_0\eps^{-2/3+\gamma}-{N_2}\right)\eps^\delta} \sum_{n=1}^{N_2} D_n \\
& \lesssim \eps^{2/3-2\gamma-{N_2}\delta} \e^{-\left(c_0 \eps^{-2/3+\gamma}-{N_2}\right)\eps^\delta} \|f_1\|_Y.
\end{split}
\]
Finally, for the remaining term we argue similarly, 
\[
\begin{split}
    \III_{\eps,k}(r) & \leq \eps^{2/3-2\gamma}\|f_1\|_Y \sum_{j=1}^{k-2} \left(1+\frac{|r-r_j|^2}{r_j^2}\right) \left(D_0 + \sum_{n=1}^{N_2} D_n|r-r'|^n\right) \e^{-c_0\eps^{-2/3+\gamma}(r-r_j-\eps^\delta)} \\
    & \lesssim \eps^{2/3-2\gamma}\|f_1\|_Y \e^{-\left(c_0\eps^{-2/3+\gamma}-({N_2}+2)\right)\eps^\delta} \sum_{n=1}^{N_2} D_n \\
    & \lesssim \eps^{2/3-2\gamma-{N_2}\delta}\|f_1\|_Y \e^{-\left(c_0\eps^{-2/3+\gamma}-({N_2}+2)\right)\eps^\delta}.
\end{split}
\]
Putting all these estimates together, we find the bound
\[
\begin{split}
    \eps^{1/3}|r\Omega'(r)||r^2w_\eps(r-r_0)& \G_{3}f_1(r)| \lesssim \|f_1\|_Y \eps^{2/3-2\gamma}\dsOne_{[r_{k-1},r_{k+1})}(r) \\
    & + \|f_1\|_Y\eps^{2/3-2\gamma-{N_2}\delta} \e^{-\left(c_0\eps^{-2/3+\gamma}-({N_2}+2)\right)\eps^\delta}\dsOne_{[r_0+\eps^\gamma,r_{k-1})\cap [r_{k+2},\infty)}(r),
\end{split}
\]
namely,
\[
\eps^{1/3}|r\Omega'(r)||r^2w_\eps(r-r_0)\G_{3}f_1(r)| \lesssim \eps^{2/3-2\gamma}\|f_1\|_Y.
\]
Coming back to \eqref{eq:error-Ginf-mid} and introducing all the estimates we were able to derive, we can conclude that for any $r\geq r_0+\eps^\eps$ the following estimate holds true uniformly for $r_0 \in \mathcal{R}$, $M \in \mathcal{N}$,
\[
\left|r^2w_\eps(r-r_0)(\L-\lambda-\L_2)\G_3f\right| \lesssim \left( \eps^{2/3-2\gamma}+ \eps^{\beta} + \eps^{2/3-2\gamma} \right)\|f_1\|_Y.
\]
We remark here that if $r<r_0+\eps^\gamma$ we cannot say that $r\in [r_k,r_{k+1})$ for some $k\in\N$. However, the estimates \eqref{eq:Grj-towinf}, \eqref{eq:drGrj-towinf}, \eqref{eq:Ginf-pointwise} and \eqref{eq:drGrj-towinf} still hold true: with only an exponentially decaying term surviving in $r<r_0+\eps^\gamma-\eps^\delta$, or at most one of the constant contributions if the distance between $r$ and $r_1 = r_0+\eps^\gamma$ is sufficiently small. Therefore, all these arguments carry over for the scenario $r<r_0+\eps^\gamma$, and we arrive at the claim of the proposition.
\end{proof}

\subsection{Green's function towards zero}\label{s:tow-zero}

Next in order we focus on studying the solution towards zero, that is, the solution when the support of $f$ is contained in an interval of the form $[\eps^\gamma,r_0-\eps^\gamma)$, for some $\gamma>0$ to be chosen accordingly. The method we will use is analogous to the method introduced for the solution towards infinity. Again, our approximate operator will be ``strongly invertible'', and so it suffices to approximate the exact operator by means of linear interpolation on a partition in the same vein as for the previous section.

Fix $r_0>0$, $\gamma,\delta>0$, and let $\eps>0$ be arbitrarily small, then we define $\ell=\ell(\eps)$ to be 
\begin{equation}\label{eq:ell-natural}
    \ell = \left\lceil \frac{r_0-2\eps^\gamma}{\eps^\delta} \right\rceil.
\end{equation}
Here $\lceil a\rceil$ denotes the smallest $n\in\N$ for which $n\geq a$. Then, we define the partition
\[
[\eps^\gamma,r_0-\eps^\gamma) = \bigcup_{j=1}^{\ell-1}[r_j,r_j+\eps^\delta) \cup [r_\ell,r_0-\eps^\gamma),
\]
where we identify $r_1 = \eps^\gamma$, and $r_{j+1}=r_j+\eps^\delta$ . We denote this partition by $\mathcal{P}_1(\eps^\delta)$, which is finite for any fixed value of $\eps>0$.

Following the ideas and the notation of Section \ref{sec:tow-inf}, we approximate the exact equations \eqref{eq:v1-r0}--\eqref{eq:v2-r0} for the case of $f$ supported in $[\eps^\gamma,r_0-\eps^\gamma)$, with the simpler ordinary differential equation,
\[
\eps\partial_r^2 V - i\eps^{-1/3}MT(r)V = f,
\]
where the transport function is defined as usual by
\[
T(r) = \left(\Omega(r)-\Omega_0-\frac{\Omega_0'}{U_0'}(U(r)-U_0)\right).
\]
As before, we further to approximate solutions to this simpler ODE by a piecewise linear evaluation of the transport function $T(r)$ in the points $r_j$ of the partition $\mathcal{P}_1(\eps^\delta)$, that is
\begin{equation}\label{eq:r-tow0-linear}
\eps\partial_r^2 V - i\eps^{-1/3}MT(r_j)V = f, \quad r\in [r_j,r_j+\eps^\delta),
\end{equation}
which, as shown in the previous section, has two nontrivial independent solutions of the form
\[
v_1(r) = \e^{\sqrt{iMT(r_j)}\eps^{-2/3}r}, \quad v_1(r) = \e^{-\sqrt{iMT(r_j)}\eps^{-2/3}r}.
\]
In the same manner as for the section towards infinity, and taking into account the their Wronskian is given by $W(v_1(r),v_2(r)) = 2\sqrt{iMT(r_j)}\eps^{-2/3}$, we construct a Green's function associated to \eqref{eq:r-tow0-linear} with the correct decay at infinity as
\[
(\G_{1,j}f)(r) = \frac{\eps^{-1/3}}{2\sqrt{iMT(r_j)}}\left(\int_0^r\e^{-\sqrt{iMT(r_j)}\eps^{-2/3}(r-s)}f(s)\dd s + \int_r^\infty\e^{-\sqrt{iMT(r_j)}\eps^{-2/3}(s-r)}f(s)\dd s \right),
\]
where we make the choice of the branch $\Re(\sqrt{iMT(r_j)})>0$ regardless of the sign of $MT(r_j)$, which again we know it is never zero because of Assumptions \ref{H0}--\ref{H2} and \ref{SH}.

Since the approximated operator is exactly the same as the one defined for the solution towards infinity, we quickly review the essential definitions before analysing the error produced by the approximation in this regime. We define the Green's function associated to the whole interval $[\eps^\gamma,r_0+\eps^\gamma)$ by the linear combination
\begin{equation}\label{eq:Gtow0-def}
\G_1 f(r) = \sum_{j=1}^{\ell-1} \G_{1,j}(f\mathds{1}_{[r_j,r_j+\eps^\delta)})(r)+\G_{1,\ell}(f\mathds{1}_{[r_\ell,r_0-\eps^\gamma)})(r).
\end{equation}
The first result we present here is a pointwise estimate for the terms $\G_{1,j}$ constituting the Green's function $\G_1$. Since the definition of the operator is exactly the same, and the only change comes from the domain in which this operator is applied, these estimates presented now follow trivially from the proof of Lemma \ref{lemma:pointGrj-towinf}, simply being extra careful with the fact that both extrema of the interval now are $\eps$ dependent.

\begin{lemma}\label{lemma:pointGrj-tow0}
Let $\gamma,\delta>0$ and consider the partition $\mathcal{P}_1$ of the interval $[\eps^\gamma,r_0-\eps^\gamma)$ in subintervals of length $\eps^\delta$. Let $\G_{1,j}$ be the Green's function associated to equation \eqref{eq:r-tow0-linear}, and let $f\in Y$. Then, under Assumptions \ref{H0}--\ref{H2} and \ref{SH}, if 
\[
\frac{2}{3}-\gamma-\delta>0, \quad \gamma<\delta,
\]
there exists $c_0>0$ independent of $r_0 \in \mathcal{R}$, $M \in \mathcal{N}$, so that for any $\eps>0$ sufficiently small, independent of $j$, and any $r\geq 0$ the following estimates hold true,
\begin{equation}\label{eq:Grj-tow0}
\begin{split}
    |w_\eps(r-r_0)\G_{1,j} (f\dsOne_{[r_j,r_j+\eps^\delta)})(r)| & \lesssim \|f\|_Y\frac{\eps^{1/3}}{r_j^2|MT(r_j)|} \e^{-c_0\eps^{-2/3+\gamma}(r_j-r)}\dsOne_{\{r_j-r>\eps^\delta\}}(r) \\
    & \quad + \|f\|_Y\frac{\eps^{1/3}}{r_j^2|MT(r_j)|}\dsOne_{\{r_j-\eps^\delta\leq r\leq r_j+2\eps^\delta\}}(r) \\
    & \quad + \|f\|_Y\frac{\eps^{1/3}}{r_j^2|MT(r_j)|} \e^{-c_0\eps^{-2/3+\gamma}(r-r_j-\eps^\delta)}\dsOne_{\{r-r_j>2\eps^\delta\}}(r),
\end{split}
\end{equation}
\begin{equation}\label{eq:drGrj-tow0}
\begin{split}
    |w_\eps(r-r_0)\partial_r\G_{1,j} (f\dsOne_{[r_j,r_j+\eps^\delta)})(r)| & \lesssim \|f\|_Y\frac{\eps^{-1/3}}{r_j^2\sqrt{|MT(r_j)|}} \e^{-c_0\eps^{-2/3+\gamma}(r_j-r)}\dsOne_{\{r_j-r>\eps^\delta\}}(r) \\
    & \quad + \|f\|_Y\frac{\eps^{-1/3}}{r_j^2\sqrt{|MT(r_j)|}}\dsOne_{\{r_j-\eps^\delta\leq r\leq r_j+2\eps^\delta\}}(r) \\
    & \quad + \|f\|_Y\frac{\eps^{-1/3}}{r_j^2\sqrt{|MT(r_j)|}} \e^{-c_0\eps^{-2/3+\gamma}(r-r_j-\eps^\delta)}\dsOne_{\{r-r_j>2\eps^\delta\}}(r).
\end{split}
\end{equation}
\end{lemma}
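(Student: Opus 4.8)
The statement to prove is Lemma~\ref{lemma:pointGrj-tow0}, which asserts pointwise estimates on the building block Green's functions $\G_{1,j}$ in the ``towards zero'' regime. The key observation is that the differential operator being inverted, $\eps\partial_r^2 - i\eps^{-1/3}MT(r_j)$, is \emph{exactly the same} as the one appearing in the ``towards infinity'' regime of Section~\ref{sec:tow-inf}, and $\G_{1,j}$ is defined by exactly the same formula as $\G_{3,j}$. Thus the plan is essentially to invoke the proof of Lemma~\ref{lemma:pointGrj-towinf} verbatim, keeping track of the only structural difference: here both endpoints of the interval $[\eps^\gamma, r_0-\eps^\gamma)$ are $\eps$-dependent (and in particular the left endpoint $\eps^\gamma \to 0$), whereas in the towards-infinity case the left endpoint $r_0+\eps^\gamma$ stays bounded away from zero. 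Hence the quantity $r_j^{-2}$ appearing in the estimates can blow up as $\eps \to 0$, which is why it is retained explicitly in the statement rather than being absorbed into constants.

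\textbf{Key steps.} First, I would recall that for fixed $j$ and $f$ supported in $[r_j, r_j+\eps^\delta)$, the branch choice $\Re(\sqrt{iMT(r_j)}) > 0$ gives the pointwise bound
\[
|\G_{1,j}f(r)| \leq \frac{\eps^{-1/3}}{2\sqrt{|MT(r_j)|}}\int_0^\infty \e^{-\tfrac{1}{\sqrt 2}\sqrt{|MT(r_j)|}\eps^{-2/3}|r-s|}|f(s)|\dd s,
\]
using $\Re(\sqrt i) = 1/\sqrt 2$ independently of the sign of $T(r_j)$. Second, splitting according to whether $r < r_j - \eps^\delta$, $r_j-\eps^\delta \le r \le r_j+2\eps^\delta$, or $r > r_j+2\eps^\delta$, and using that $w_\eps(s-r_0)^{-1}$ is monotone for $s$ on either side of $r_0$, I would bound the integral by $\|f\|_Y r_j^{-2} |MT(r_j)|^{-1/2}\eps^{1/3}$ times either a pure exponential or a constant. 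Third — and this is where the $\eps$-dependence of the endpoints matters — I would invoke Lemma~\ref{lemma:properties-weight} to compare $w_\eps(r-r_0)$ with $w_\eps(r_j-r_0)$ and absorb the resulting growth factor $\e^{\eps^{-\gamma}(\cdot)}$ into the exponential decay $\e^{-c\sqrt{|MT(r_j)|}\eps^{-2/3}(\cdot)}$; this requires $\sqrt{|MT(r_j)|}\eps^{-2/3} \gg \eps^{-\gamma}$, which holds since by Lemma~\ref{lemma:consequences of assumptions} (property \ref{P6}) we have $|T(r_j)| \gtrsim \eps^{\gamma}$ on this partition, so $\sqrt{|MT(r_j)|}\eps^{-2/3} \gtrsim \eps^{-2/3+\gamma/2} \gg \eps^{-\gamma}$ once $\gamma < 1/3$ (which follows from $\gamma < \delta$ and $\gamma + \delta < 2/3$). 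Fourth, the $\partial_r$ estimate follows by differentiating the explicit kernel, which replaces $\eps^{-1/3}/(2\sqrt{iMT(r_j)})$ by $-\eps^{-1}/2$, yielding the stated exponents $\eps^{-1/3}$ and $\sqrt{|MT(r_j)|}$ in place of $\eps^{1/3}$ and $|MT(r_j)|$, with the rest of the argument identical.

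\textbf{Main obstacle.} There is no genuine obstacle here; the lemma is, as the text says, a trivial adaptation of Lemma~\ref{lemma:pointGrj-towinf}. The only point requiring a modicum of care is ensuring the weight-comparison step goes through uniformly in $j$ even as the left endpoint $r_j = \eps^\gamma$ of the first interval approaches zero: one must check that the constant $c_0$ in the exponential can be chosen uniformly over $1 \le j \le \ell$ and over $r_0 \in \mathcal R$, $M \in \mathcal N$. This is exactly what the lower bound $|T(r_j)| \gtrsim \eps^\gamma$ (uniform in $r_0 \in \mathcal R$, $M \in \mathcal N$) from Lemma~\ref{lemma:consequences of assumptions} provides, together with the fact that the geometric-series-type summation over the partition $\mathcal P_1(\eps^\delta)$ (which will be carried out in the subsequent proposition, not here) converges provided $\gamma + \delta < 2/3$. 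I would therefore state the proof as: ``This follows from the proof of Lemma~\ref{lemma:pointGrj-towinf} verbatim, using that the only modification is the $\eps$-dependence of both endpoints of the partition $\mathcal P_1(\eps^\delta)$; the uniform lower bound $|MT(r_j)| \gtrsim \eps^{2\gamma}$ on this partition from Lemma~\ref{lemma:consequences of assumptions} ensures the exponential decay rate $c_0$ may be chosen uniformly in $j$, $r_0 \in \mathcal R$ and $M \in \mathcal N$.''
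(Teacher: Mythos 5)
Your proposal is correct and takes exactly the paper's approach: the paper explicitly omits the proof of this lemma, stating only that ``the proof of this result follows straightforwardly from the proof of Lemma \ref{lemma:pointGrj-towinf},'' and your proposal makes the same argument with the right emphasis on the two points that need checking (the $r_j^{-2}$ factor being retained explicitly since $r_j$ can approach $0$, and the weight comparison requiring a uniform lower bound on $|T(r_j)|$).

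One small calculational slip worth flagging: you first assert ``$|T(r_j)| \gtrsim \eps^\gamma$ on this partition'' and deduce $\sqrt{|MT(r_j)|}\eps^{-2/3} \gtrsim \eps^{-2/3+\gamma/2}$, which would give the condition $\gamma < 4/9$ rather than the $\gamma < 1/3$ you state; but at the end of your paragraph you switch to the weaker bound $|MT(r_j)| \gtrsim \eps^{2\gamma}$, which is what \ref{P6} actually provides for the subinterval adjacent to $r_0$ (the worst case in the partition $\mathcal{P}_1(\eps^\delta)$, since its right endpoint is $r_0-\eps^\gamma$). Using $\gtrsim \eps^{2\gamma}$ gives $\sqrt{|MT(r_j)|}\eps^{-2/3}\gtrsim\eps^{-2/3+\gamma}\gg\eps^{-\gamma}$, which is exactly the condition $\gamma<1/3$. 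The paper's own proof of Lemma \ref{lemma:pointGrj-towinf} contains a nearly identical imprecision, and since $\gamma<1/3$ implies $\gamma<4/9$, your conclusion holds either way; but the two halves of your argument should quote the same lower bound, namely $\eps^{2\gamma}$.
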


The proof of this result follows straightforwardly from the proof of Lemma \ref{lemma:pointGrj-towinf}, so we omit details here for brevity. Nonetheless, we make the following remark regarding the contribution of the smaller extra element in the partition of this domain.

\begin{remark}
    As before, one of the conditions for this lemma to hold true is the relation $\gamma<\delta$, which comes from the fact that the length of our partition should not exceed $\eps^\gamma$. Now, our partition has an extra interval $[r_\ell,r_0-\eps^\gamma)$, but this interval by definition is \emph{at most} of size $\eps^\delta<\eps^\gamma$, and hence this extra piece of the partition does not cause any issues.
\end{remark}

As in the previous section, we also want to estimate the Green's function with an $r^2$ weight to fully recover the $X$ norm. As a direct consequence of \eqref{eq:Grj-tow0} and \eqref{eq:drGrj-tow0} we find that
\begin{equation}\label{eq:Gell-pointwise}
\begin{split}
    |r^2w_\eps(r-r_0)\G_{1,j} (f&\dsOne_{[r_j,r_j+\eps^\delta)})(r)| \lesssim \|f\|_Y\frac{\eps^{1/3}}{|T(r_j)|} \e^{-c_0\eps^{-2/3+\gamma}(r_j-r)}\dsOne_{\{r_j-r>\eps^\delta\}}(r) \\
    &  + \|f\|_Y\frac{\eps^{1/3}}{|T(r_j)|}\dsOne_{\{r_j-\eps^\delta\leq r\leq r_j+2\eps^\delta\}}(r) \\
    &  + \|f\|_Y\frac{\eps^{1/3}}{|T(r_j)|} \left(1+\frac{|r-r_j|^2}{r_j^2}\right) \e^{-c_0\eps^{-2/3+\gamma}(r-r_j-\eps^\delta)}\dsOne_{\{r-r_j>2\eps^\delta\}}(r),
\end{split}
\end{equation}
whereas for the derivative,
\begin{equation}\label{eq:drGell-pointwose}
\begin{split}
    |r^2w_\eps(r-r_0)\partial_r\G_{1,j} (f&\dsOne_{[r_j,r_j+\eps^\delta)})(r)| \lesssim \|f\|_Y\frac{\eps^{-1/3}}{\sqrt{|MT(r_j)|}} \e^{-c_0\eps^{-2/3+\gamma}(r_j-r)}\dsOne_{\{r_j-r>\eps^\delta\}}(r) \\
    &  + \|f\|_Y\frac{\eps^{-1/3}}{\sqrt{|MT(r_j)|}}\dsOne_{\{r_j-\eps^\delta\leq r\leq r_j+2\eps^\delta\}}(r) \\
    &  + \|f\|_Y\frac{\eps^{-1/3}}{\sqrt{|MT(r_j)|}} \left(1+\frac{|r-r_j|^2}{r_j^2}\right)\e^{-c_0\eps^{-2/3+\gamma}(r-r_j-\eps^\delta)}\dsOne_{\{r-r_j>2\eps^\delta\}}(r).
\end{split}
\end{equation}

With these estimates in hand we proceed with the main result of the section, that provides quantitative estimates for the $X$ norm of $\G_1$ and $\partial_r\G_1$, and gives an estimate for the error produced by the approximated operator.

\begin{proposition}\label{proposition:rtow0}
Let $\gamma,\delta>0$ be such that
\[
\gamma<\frac{1}{3}, \quad \gamma<\delta, \quad \gamma+\delta<\frac{2}{3},
\]
consider the partition $\mathcal{P}_1(\eps^\delta)$ of the interval $[\eps^\gamma,r_0-\eps^\gamma)$ in subintervals of length (at most) $\eps^\delta$, and let $\G_1$ be the Green's function introduced in \eqref{eq:Gtow0-def}. Let $f\in Y$ with $\supp(f)\subset [\eps^\gamma,r_0-\eps^\gamma)$. Under Assumptions \ref{H0}--\ref{H2} and \ref{SH}, the following estimates hold true uniformly for any $\eps>0$ sufficiently small, $r_0 \in \mathcal{R}$, $M \in \mathcal{N}$,
\begin{equation}\label{eq:Grell}
\|\G_1 f\|_X \lesssim \eps^{1/3-2\gamma} \|f\|_Y,
\end{equation}
\begin{equation}\label{eq:drGrell}
\|\partial_r\G_1 f\|_X \lesssim \eps^{-1/3-2\gamma}\|f\|_Y,
\end{equation}
Moreover, the error carried out by the approximated operator can be estimated by
\begin{equation}\label{eq:Grell-error}
\begin{split}
    &\left\lVert ((\L-\lambda)\G_{1}-\Id) f\right\rVert_Y \lesssim \left( \eps^{2/3-2\gamma}+ \eps^{2/3-2\gamma} + \eps^\beta \right) \|f\|_Y,
\end{split}
\end{equation}
where the coefficient $beta>0$ is from \ref{P3}.
\end{proposition}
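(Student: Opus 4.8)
The plan is to follow almost verbatim the structure of the proof of Proposition~\ref{proposition:rinf} for the region towards infinity, since the approximate operator here is literally the same, namely the semiclassical operator $\eps\partial_r^2 - i\eps^{-1/3}MT(r_j)\,\cdot$ frozen on each interval of the partition $\mathcal{P}_1(\eps^\delta)$. The only genuine differences are: (i) the domain $[\eps^\gamma, r_0-\eps^\gamma)$ is now \emph{finite} and anchored to the left near $r=0$, so the $\max\{1,r^2\}$ weight is harmless (indeed bounded above by $1+r_0^2$), but the lower bound $|T(r_j)|\gtrsim \eps^{2\gamma}$ from \ref{P6}/Lemma~\ref{lemma:consequences of assumptions} is the relevant one near $r_0$, giving the $\eps^{-2\gamma}$ losses; (ii) there is an extra, smaller terminal interval $[r_\ell, r_0-\eps^\gamma)$ of length at most $\eps^\delta$, which, as noted in the Remark after Lemma~\ref{lemma:pointGrj-tow0}, causes no trouble precisely because $\eps^\delta<\eps^\gamma$. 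So I would first state that the estimates \eqref{eq:Grell}, \eqref{eq:drGrell} are obtained by summing the pointwise bounds \eqref{eq:Gell-pointwise}, \eqref{eq:drGell-pointwose} exactly as in Part~1 of the proof of Proposition~\ref{proposition:rinf}: for $r\in[r_k,r_k+\eps^\delta)$ the nearby terms $j\in\{k-1,k,k+1\}$ contribute $O(\eps^{1/3}/|MT(r_j)|)=O(\eps^{1/3-2\gamma})$ by Lemma~\ref{lemma:consequences of assumptions}, and the far terms are summed as a geometric series with ratio $\e^{-c_0\eps^{-2/3+\gamma+\delta}}$, which is summable and in fact super-polynomially small thanks to $\gamma+\delta<2/3$. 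The factor $\left(1+|r-r_j|^2/r_j^2\right)$ appearing on the right side of \eqref{eq:Gell-pointwise} is absorbed into the exponential decay as in the infinity case. This yields \eqref{eq:Grell}, and the same argument with \eqref{eq:drGell-pointwose} in place of \eqref{eq:Gell-pointwise} yields \eqref{eq:drGrell}, with the extra $\eps^{-2/3}$ reflecting the different $\eps$ and $|T(r_j)|$ powers in \eqref{eq:drGrj-tow0}.

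For the error estimate \eqref{eq:Grell-error}, I would define $\L_{1,j}V = \eps\partial_r^2 V - i\eps^{-1/3}MT(r_j)V$, so that $\L_{1,j}\G_{1,j}(f\dsOne_{[r_j,r_j+\eps^\delta)}) = f\dsOne_{[r_j,r_j+\eps^\delta)}$, and then write, for the first component and $r\in[r_k,r_k+\eps^\delta)$,
\[
(\L-\lambda-\L_{1,j})V_1 = \frac{\eps}{r}\partial_r V_1 - iM\eps^{-1/3}(T(r)-T(r_j))V_1 - \left[\frac{\eps}{r^2} + \eps^{1/3}M^2\left(\frac{1}{r^2}+\left(\frac{\Omega_0'}{U_0'}\right)^2\right) + \lambda + P_\eps(r)\right]V_1 + Q_\eps(r)V_2,
\]
with $P_\eps, Q_\eps$ as in \eqref{eq:PQ definition}. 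Summing over $j$ gives the bound \eqref{eq:error-Ginf-mid}, namely
\[
\left|r^2 w_\eps(r-r_0)((\L-\lambda)\G_1-\Id)f\right| \lesssim \eps\left|rw_\eps(r-r_0)\partial_r\G_1 f\right| + \eps^{1/3}\left|\max\{1,r^2\}w_\eps(r-r_0)\G_1 f\right| + \eps^{-1/3}\sum_j\left|r^2w_\eps(r-r_0)M(T(r)-T(r_j))\G_{1,j}(f\dsOne_{[r_j,r_j+\eps^\delta)})\right| + \eps^{1/3}|r\Omega'(r)|\left|r^2 w_\eps(r-r_0)\G_1 f\right|.
\]
The first term is $\lesssim\eps^{2/3-2\gamma}\|f\|_Y$ by \eqref{eq:drGrell}, the second is $\lesssim\eps^{2/3-2\gamma}\|f\|_Y$ by \eqref{eq:Grell}. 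The transport-difference term is handled with Property~\ref{P3}: split into near terms $j\in\{k-1,k,k+1\}$, where $|T(r)-T(r_j)|/|T(r_j)|\leq \sum_n B_n(\eps)|r-r_j|^n \leq \sum_n B_n(\eps)\eps^{n\delta}\lesssim\eps^\beta$, and far terms, where the polynomial-times-geometric-series estimate $\sum_j |r-r_j|^n \e^{-c_0\eps^{-2/3+\gamma}(r_j-r)} \lesssim \e^{-(c_0\eps^{-2/3+\gamma}-n)\eps^\delta}$ (valid for $\eps$ small using $\gamma<1/3$) kills any $\eps^{-N_1\delta}$ loss; overall this contributes $\lesssim\eps^\beta\|f\|_Y$. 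The $\Omega'$ term is treated with Property~\ref{P4}: the near terms give $\eps^{2/3}\cdot|r\Omega'(r)|/|T(r_j)| \lesssim \eps^{2/3}\cdot\eps^{-2\gamma}(D_0 + \sum_n D_n\eps^{n\delta})\lesssim\eps^{2/3-2\gamma}\|f\|_Y$, and the far terms are again super-polynomially small by the same geometric-series bound. Combining the four contributions gives \eqref{eq:Grell-error}. All bounds are uniform for $r_0\in\mathcal{R}$, $M\in\mathcal{N}$ because the constants in \ref{P1}--\ref{P6} are.

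I do not expect any serious obstacle: this proposition is essentially the ``towards zero'' mirror image of Proposition~\ref{proposition:rinf}, and the arguments transfer with only cosmetic changes. The one point requiring a moment's care is the behaviour near $r=0$, i.e. that $r^2 w_\eps(r-r_0)$ stays controlled as $r\to 0^+$ on the interval $[\eps^\gamma, r_0-\eps^\gamma)$: here $r^2\leq r_0^2$ and $w_\eps(r-r_0)\leq w_\eps(r_0)$ are both bounded, so the weight is in fact \emph{better behaved} than towards infinity, and no quadratic divergence arises (that issue is deferred to the ``near zero'' region handled by $\G_0$ in Section~\ref{sec:near-0}). The only other subtlety is the terminal interval $[r_\ell, r_0-\eps^\gamma)$, whose length is $\leq \eps^\delta$ by \eqref{eq:ell-natural}, so the interpolation error there is no worse than on a full-length interval; this is exactly the content of the Remark following Lemma~\ref{lemma:pointGrj-tow0}. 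Hence the proof is a direct adaptation, and I would present it by pointing to the proof of Proposition~\ref{proposition:rinf} for the repeated computations and spelling out only the modifications above.
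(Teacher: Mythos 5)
Your overall plan—mirror the proof of Proposition~\ref{proposition:rinf}, reuse the frozen-coefficient Green's functions, sum the geometric tail, and then run the same $P_\eps,Q_\eps$-splitting for the error—is correct and is exactly what the paper does. However, there is one genuinely new technical point in the ``towards zero'' region that your proposal does not address, and it is precisely the point that is not cosmetic.

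In Part~1 (the $X$-estimate \eqref{eq:Grell}), the relevant pointwise bound is \eqref{eq:Grj-tow0}, not \eqref{eq:Gell-pointwise}: for $r \leq r_0-\eps^\gamma$ one has $\max\{1,r^2\}\lesssim 1$, so to control $\|\G_1 f\|_X$ one needs $|w_\eps(r-r_0)\G_{1,j}f(r)|$, and that bound carries the factor $\eps^{1/3}/(r_j^2|MT(r_j)|)$. (The bound \eqref{eq:Gell-pointwise}, which you cite, has $r^2$ on the \emph{left} and is only used later in the error estimate, where that $r^2$ partially cancels against the $Y$-norm weight.) Now, unlike the towards-infinity case where $r_j \geq r_0+\eps^\gamma$ so $r_j^{-2}\lesssim 1$ automatically, here $r_j$ can be as small as $\eps^\gamma$, giving $r_j^{-2}\leq \eps^{-2\gamma}$. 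Combining this with $|T(r_j)|^{-1}\lesssim \eps^{-2\gamma}$ naively would give $\eps^{1/3-4\gamma}$, which is \emph{not} the claimed rate $\eps^{1/3-2\gamma}$. The crucial observation, present in the paper's proof but absent from your proposal, is that these two degeneracies are \emph{mutually exclusive}: $r_j \sim \eps^\gamma$ only near the origin, where \ref{H1}/\ref{SH} guarantee $|T(r_j)|\gtrsim 1$; and $|T(r_j)| \sim \eps^{2\gamma}$ only near $r_0$, where $r_j \sim r_0$ is of order one. Hence $r_j^2 |T(r_j)|\gtrsim \eps^{2\gamma}$ uniformly for $r_j\in[\eps^\gamma,r_0-\eps^\gamma]$, which is what gives $\eps^{1/3-2\gamma}$. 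The same observation is needed in Part~2 for $\partial_r\G_1$ (with exponent $-1/2$ on $|T(r_j)|$, so one takes $\max\{\eps^{-1/3-2\gamma},\eps^{-1/3-\gamma}\}$) and implicitly in Part~3 wherever \eqref{eq:drGrell} and \eqref{eq:Grell} are invoked. Since your proposal attributes the whole $\eps^{-2\gamma}$ loss to $|T(r_j)|^{-1}$ alone and dismisses the left-endpoint weight as harmless, this is a real gap: without the exclusivity argument the argument as stated would yield a worse power of $\eps$ in \eqref{eq:Grell}--\eqref{eq:Grell-error}. The rest of your proof—the geometric tail using $\gamma+\delta<2/3$, the handling of the terminal subinterval $[r_\ell,r_0-\eps^\gamma)$, and the error terms via \ref{P3} and \ref{P4}—matches the paper.
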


\begin{proof}
\emph{Part 1: Estimates on $\G_1$.} We will obtain the bounds on the corresponding weighted $L^\infty$ spaces using linearity
\[
(\G_1 f)(r) = \sum_{j=1}^{\ell_\eps} \G_{1,j}(f\mathds{1}_{[r_j,r_j+\eps^\delta)})(r),
\]
and Lemma \ref{lemma:pointGrj-tow0}. We begin by assuming that $r\in [\eps^\gamma,r_0-\eps^\gamma)$, and hence $r\in [r_k,r_k+\eps^\delta)$ for some $k$. By means of \eqref{eq:Grj-tow0}, we find the estimate
\[
\begin{split}
    |w_\eps(r-r_0)\G_1 f(r)| & \lesssim \|f\|_Y \sum_{j=1}^{k-2} \frac{\eps^{1/3}}{r_j^2|MT(r_j)|}\e^{-c_0\eps^{-2/3+\gamma}(r-r_j-\eps^\delta)} \\
    & \quad + \|f\|_Y \sum_{j=k+2}^{\ell} \frac{\eps^{1/3}}{r_j^2|MT(r_j)|} \e^{-c_0\eps^{-2/3+\gamma}(r_j-r)} \\
    & \quad + \|f\|_Y\eps^{1/3}\left(\frac{1}{r_{k-1}^2|MT(r_{k-1})|}+\frac{1}{r_k^2|MT(r_{k})|}+\frac{1}{r_{k+1}^2|MT(r_{k+1})|}\right).
\end{split}
\]
Let us start analysing the third term in the estimate. By Assumptions \ref{H1} and \ref{SH} there holds that $|MT(r_k)|\gtrsim \eps^{2\gamma}$, with $|T(r_k)|\sim \eps^{2\gamma}$ only near $r_0$. Since $r_k\geq \eps^\gamma$, but $r_k\sim \eps^\gamma$ only near the origin, i.e.\ far away from $r_0$, we can write
\[
\frac{\eps^{1/3}}{r_k^2|MT(r_{k})|} \lesssim \eps^{1/3-2\gamma},
\]
uniformly in $k$.
The other two addends follow readily as in the proof of Proposition \ref{proposition:rinf}. That is, on the one hand we have
\[
\begin{split}
    \|f\|_Y \sum_{j=k+2}^{\ell} \frac{\eps^{1/3}}{r_j^2|MT(r_j)|} \e^{-c_0\eps^{-2/3+\gamma}(r_j-r)} & \lesssim \|f\|_Y \eps^{1/3-2\gamma} \sum_{j=k+2}^{\infty} \e^{-c_0\eps^{-2/3+\gamma}(r_j-r)} \\
    & \lesssim \|f\|_Y \eps^{1/3-2\gamma} \e^{-c_0\eps^{-2/3+\gamma}(r_{k+2}-r)} \\
    & \leq \|f\|_Y \eps^{1/3-2\gamma} \e^{-c_0\eps^{-2/3+\gamma+\delta}}. 
\end{split}
\]
On the other hand we also have the bound,
\[
\begin{split}
    \|f\|_Y\sum_{j=1}^{k-2} \frac{\eps^{1/3}}{r_j^2|MT(r_j)|} \e^{-c_0\eps^{-2/3+\gamma}(r-r_j-\eps^\delta)} & \lesssim \|f\|_Y \eps^{1/3-2\gamma}\sum_{j=1}^{k-2} \e^{-c_0\eps^{-2/3+\gamma}(r-r_j-\eps^\delta)} \\
    & \lesssim  \|f\|_Y \eps^{1/3-2\gamma}\e^{-c_0\eps^{-2/3+\gamma}(r-r_{k-1})}\\
    & \leq \|f\|_Y \eps^{1/3-2\gamma}\e^{-c_0\eps^{-2/3+\gamma+\delta}}.
\end{split}
\]
Note that for the above bounds we once again require the condition $\gamma+\delta<2/3$.
Since all of these bounds are uniform in $k$, we arrive at the estimate
\[
\begin{split}
    |w_\eps(r-r_0)\G_1 f(r)| \lesssim \eps^{1/3-2\gamma}\|f\|_Y ,
\end{split}
\]
which in tandem with the observation that $\max\{1,r^2\}\lesssim 1$ for $r \leq r_0-\eps^\gamma$ implies that
\begin{equation}
|\max\{1,r^2\}w_\eps(r-r_0)\G_1 f(r)|  \lesssim \eps^{1/3-2\gamma}\|f\|_Y
\end{equation}
for $\eps^\gamma \leq r \leq r_0-\eps^\gamma$. 

If now $r\in[0,\eps^\gamma)$, using \eqref{eq:Grj-tow0} we find the corresponding estimate
\[
\begin{split}
    |\max\{1,r^2\}w_\eps(r-r_0)\G_1 f(r)| & \lesssim \|f\|_Y \sum_{j=3}^{\ell} \frac{\eps^{1/3}}{r_j^2|MT(r_j)|} \e^{-c_0\eps^{-2/3+\gamma}(r_j-r)} \\
    & \quad + \|f\|_Y\eps^{1/3}\left(\frac{1}{r_1^2|MT(r_{1})|}+\frac{1}{r_{2}^2|MT(r_{2})|}\right).
\end{split}
\]
Proceeding exactly as before, this is bounded in magnitude by $\eps^{-1/3+2\gamma}\|f\|_{Y}$. On the contrary, if $r>r_0+\eps^\gamma$ we find an analogous expression via a combination of \eqref{eq:Grj-tow0} and \eqref{eq:Gell-pointwise}. We write
\[
\begin{split}
    | \max\{1,r^2\} w_\eps(r-r_0)\G_1 f(r)| & \lesssim \|f\|_Y \sum_{j=1}^{\ell-1} \frac{\eps^{1/3}}{|MT(r_j)|}\left(1+\frac{|r-r_j|^2}{r_j^2}\right)\e^{-c_0\eps^{-2/3+\gamma}(r-r_j-\eps^\delta)} \\
    &  \quad + \|f\|_Y\frac{\eps^{1/3}}{|MT(r_{\ell})|}\dsOne_{\{|r-r_\ell|\leq 2\eps^\delta\}}.
\end{split}
\]
Observe that the last addend vanishes if $r$ and the last point of the partition $r_\ell$ are not sufficiently close. Arguing as in the previous section, we thus see that
\[
| \max\{1,r^2\} w_\eps(r-r_0)\G_1 f(r)| \lesssim \|f\|_Y \eps^{1/3-2\gamma},
\]
uniformly for $r \geq r_0-\eps^\gamma$.
Combining the estimates for the intervals $[0,\eps^\gamma]$, $[\eps^\gamma, r_0-\eps^\gamma]$, $[r_0-\eps^\gamma, \infty)$, we find the estimate \eqref{eq:Grell} in the claim of the proposition.

\emph{Part 2: Estimates on $\partial_r\G_1$.} We want to derive bounds for $\partial_r\G_1$ in $X$. We start from the pointwise bound \eqref{eq:Grj-tow0} from Lemma \ref{lemma:pointGrj-tow0}. Arguing as in the previous part of the proof, for any $r\in [r_k,r_{k+1})$ we obtain an estimate of the form
\[
\begin{split}
    |\max\{1,r^2\}w_\eps(r-r_0)& \partial_r \G_1 f(r)| \lesssim \|f\|_Y \sum_{j=1}^{k-2} \frac{\eps^{-1/3}}{r_j^2\sqrt{|MT(r_j)|}} \e^{-c_0\eps^{-2/3+\gamma}(r-r_j-\eps^\delta)} \\
    &  + \|f\|_Y \sum_{j=k+2}^{\ell} \frac{\eps^{-1/3}}{r_j^2\sqrt{|MT(r_j)|}} \e^{-c_0\eps^{-2/3+\gamma}(r_j-r)} \\
    &  + \|f\|_Y\eps^{-1/3}\left(\frac{1}{r_{k-1}^2\sqrt{|MT(r_{k-1})|}}+\frac{1}{r_k^2\sqrt{|MT(r_k)|}}+\frac{1}{r_{k+1}^2\sqrt{|MT(r_{k+1})|}}\right).
\end{split}
\]
Since $r_j\in [\eps^\gamma,r_0-\eps^\gamma)$ we can write
\[
\frac{\eps^{-1/3}}{r_j^2\sqrt{|MT(r_j)|}} \lesssim \max\left\lbrace \eps^{-1/3-2\gamma}, \eps^{-1/3-\gamma}\right\rbrace = \eps^{1/3-2\gamma},
\]
where again, we are using Assumptions \ref{H1} and \ref{SH} to argue that $r_j$ cannot be close to $\eps^\gamma$ at the same time as $|T(r_j)|$ is close to $\eps^{2\gamma}$. Therefore, following the same arguments that we used for the bound of $\G_1$ in $X$, we find
\[
\begin{split}
    |\max\{1,r^2\}w_\eps(r-r_0)\partial_r\G_1 f(r)| & \lesssim \eps^{-1/3-2\gamma} \|f\|_Y,
\end{split}
\]
uniformly for $r \in [\eps^\gamma, r_0-\eps^\gamma]$.
Similarly, for the case $r\not\in [\eps^\gamma,r_0+\eps^\gamma)$ we obtain again
\[
| \max\{1,r^2\} w_\eps(r-r_0)\partial_r\G_1 f(r)| \lesssim \|f\|_Y \eps^{-1/3-2\gamma},
\]
and thus we arrive at the claim of the proposition \eqref{eq:drGrell}.

\emph{Part 3: The error estimate}. This part of the claim entails the biggest differences with respect to the the case of the solution towards infinity. Even though the setting is initially the same, now we need to address the terms involving $T(r)$ and $\Omega'(r)$, whilst bearing in mind the degeneracies that may occur as $r\to 0$. As before, we define the operator $\L_{1,j}$ by \eqref{eq:r-tow0-linear}. We present here the proof for the first component of the operator, since the argument for the second component is identical. The difference between the exact and the approximate operators is given by
\[
\begin{split}
(\L-\lambda-\L_{1,j})V_1(r) & = \frac{\eps}{r}\partial_rV_1(r) - iM\eps^{-1/3}(T(r)-T(r_j))V_1(r) \\
& \quad -\left[ \frac{\eps}{r^2} + \eps^{1/3}M^2\left(\frac{1}{r^2} + \left(\frac{\Omega_0'}{U_0'}\right)^2\right)+\lambda + P_\eps(r) \right]V_1 + Q_\eps(r)V_2(r),
\end{split}
\]
where $P_\eps(r)$ and $Q_\eps(r)$ are defined by \eqref{eq:PQ definition}. Let $f\in Y$ with $\supp(f)\subset [\eps^\gamma, r_0-\eps^\delta)$. Arguing as in the proof of Proposition \ref{proposition:rinf}, we get that
\[
\begin{split}
    |r^2w_\eps(r-r_0)((\L-\mu\eps^{1/3})\G_1-\Id)f| & \lesssim \eps\left|w_\eps(r-r_0)r\partial_r\G_{1}f_1\right| + \eps^{1/3}\left|\max\{1,r^2\} w_\eps(r-r_0)\G_{1}f\right| \\
    & \quad + \eps^{-1/3}\sum_{j=1}^{\ell}\left|r^2w_\eps(r-r_0)M(T(r)-T(r_j))\G_{1,j}(f\dsOne_{[r_j,r_j+\eps\delta)})\right| \\
 & \quad + \eps^{1/3} |r\Omega'(r)|\left|w_\eps(r-r_0)r^2\G_{1}f\right|.
 \end{split} 
\]
To deal with the first two addends we can directly apply the bounds previously derived in this proposition. On the one hand,
\[
\eps\left|w_\eps(r-r_0)r\partial_r\G_{1}f\right| \lesssim \eps^{2/3-2\gamma} \|f\|_Y.
\]
On the other hand, by \eqref{eq:Grell} we obtain the estimate
\[
\eps^{1/3}\left|\max\{1,r^2\}w_\eps(r-r_0)\G_{1}f\right| \lesssim  \eps^{2/3-2\gamma}  \|f\|_Y.
\]
Only the terms involving $T(r)$ and $\Omega'(r)$ are left to be analysed. Let us start with the addend concerning the transport function $T(r)$. Assume for now that $r\in [r_k,r_k+\eps^\delta)$ for some $k\in\N$. Hence, by means of Lemma \ref{lemma:pointGrj-tow0} we write
\[
\eps^{-1/3}\sum_{j=1}^{\ell}r^2 M|T(r)-T(r_j)||w_\eps(r-r_0)\G_{1,j}(f_1\dsOne_{[r_j,r_j+\eps^\delta)})| \leq \I_{\eps,k}(r) + \II_{\eps,k}(r) + \III_{\eps,k}(r),
\]
where
\[
\I_{\eps,k}(r) = \|f\|_Y \sum_{j=k+2}^{\ell}\frac{r^2|T(r)-T(r_j)|}{r_j^2|T(r_j)|} \e^{-c_0\eps^{-2/3+\gamma}(r_j-r)},
\]
\[
\II_{\eps,k}(r)  = \|f\|_Y \left(\frac{r^2|T(r)-T(r_{k-1})|}{r_{k-1}^2|T(r_{k-1})|} + \frac{r^2|T(r)-T(r_{k})|}{r_k^2|T(r_{k})|} + \frac{r^2|T(r)-T(r_{k+1})|}{r_{k+1}^2|T(r_{k+1})|}\right),
\]
\[
\III_{\eps,k}(r) = \|f\|_Y \sum_{j=1}^{k-2}\frac{r^2|T(r)-T(r_j)|}{r_j^2|T(r_j)|} \e^{-c_0\eps^{-2/3+\gamma}(r-r_j-\eps^\delta)}.
\]
We start by analysing the second addend. Observe that since $r\in [r_k,r_{k+1})$, then  
\[
\frac{r^2}{r_k^2}, \frac{r^2}{r_{k-1}^2}, \frac{r^2}{r_{k+1}^2} \lesssim 1.
\]
This together with Property \ref{P3} yields
\[
\begin{split}
   \II_{\eps,k}(r) & \lesssim \|f\|_Y\frac{{r^2}|T(r)-T(r_{k})|}{r_k^2|T(r_{k})|} \lesssim \|f\|_Y\sum_{n=1}^{N_1} B_n(\eps)|r-r_k|^n \leq  \|f\|_Y\sum_{n=1}^{N_1} B_n(\eps)\eps^{n\delta} \lesssim \|f\|_Y\eps^{\beta}.
\end{split}
\]
Next in order we analyse the first term, corresponding to the indices $k+2\leq j\leq \ell$. We write
\[
\begin{split}
    \I_{\eps,k}(r) & \leq \|f\|_Y\sum_{j=k+2}^{\ell}\sum_{n=1}^{N_1} B_n(\eps)\frac{r^2|r-r_j|^n}{r_j^2} \e^{-c_0\eps^{-2/3+\gamma}(r_j-r)} \\
    & \leq  \|f\|_Y\sum_{n=1}^{N_1} B_n(\eps)\sum_{j=k+2}^{\infty}|r-r_j|^n(1+\eps^{-2\gamma}|r-r_j|^2)\e^{-c_0\eps^{-2/3+\gamma}(r_j-r)},
\end{split}
\]
and therefore, up to a constant depending exclusively on $N_1$ that we absorb with the symbol $\lesssim$,
\[
\begin{split}
    \I_{\eps,k}(r) & \leq  \|f\|_Y \eps^{-2\gamma}\sum_{n=1}^{N_1} B_n(\eps)\sum_{j=k+2}^{\infty}\e^{-\frac{1}{2}c_0\eps^{-2/3+\gamma}(r_j-r)} \lesssim  \|f\|_Y\eps^{-2\gamma}\e^{-\frac{1}{2}c_0\eps^{-2/3+\gamma}(r_{k+2}-r)}\sum_{n=1}^{N_1} B_n(\eps). 
\end{split}
\]
Additionally, notice that Property \ref{P3} implies that
\begin{equation}\label{eq:sum-Bn}
\sum_{n=1}^{N_1} B_n(\eps) \leq \eps^{-N_1\delta}\sum_{n=1}^{N_1} B_n(\eps)\eps^{n\delta} \lesssim \eps^{\beta-N_1\delta},
\end{equation}
and thus
\[
\I_{\eps,k}(r) \lesssim \|f\|_Y\eps^{\beta-2\gamma-N_1\delta}\e^{-\frac{1}{2}c_0\eps^{-2/3+\gamma+\delta}},
\]
which quickly converges to zero as $\eps\to 0$ provided that $\delta+\gamma<2/3$ due to the effect of the exponential.
Last, we address the third addend using a completely analogous argument,
\[
\begin{split}
    \III_{\eps.k}(r) & \lesssim  \|f\|_Y\sum_{j=1}^{k-2}\sum_{n=1}^{N_1} B_n(\eps) (1+\eps^{-2\gamma}|r-r_j|^2)|r-r_j|^n \e^{-c_0\eps^{-2/3+\gamma}(r-r_{j+1})} \\
    & \lesssim   \|f\|_Y\eps^{\beta-2\gamma-N_1\delta}\e^{-\frac{1}{2}c_0\eps^{-2/3+\gamma+\delta}}.
\end{split}
\]
Observe that if $r\not\in [\eps^\gamma,r_0-\eps^\gamma)$ the estimates obtained are analogous, since \eqref{eq:Grj-tow0}, \eqref{eq:drGrj-tow0}, \eqref{eq:Gell-pointwise} and \eqref{eq:drGell-pointwose} hold true still in this range and the bounds the produce are analogous. The principle one should bear in mind is that if there exists $r_k\in \mathcal{P}_1(\eps^\delta)$ such that $|r-r_k|\leq 2\eps^\delta$, then the error is bounded by $\eps^\beta$ as shown before. Otherwise the error will be bounded by a more rapidly decaying exponential function, and can be neglected. In particular, we find that for the term with the transport function, the error is bounded by
\[
\eps^{-1/3}\sum_{j=1}^{\ell}M|T(r)-T(r_j)||w_\eps(r-r_0)\G_{1,j}(f\dsOne_{[r_j,r_j+\eps^\delta)})| \lesssim \eps^\beta,
\]
where $\beta$ is defined in Property \ref{P3}. The last contribution to be addressed is the one corresponding to the terms involving $\Omega'$. Again, for any $r\in [r_k,r_k+\eps^\delta)$, we can write
\[
\eps^{1/3}  |r\Omega'(r)|\left|r^2w_\eps(r-r_0)\G_{1}f\right| \lesssim \I_{\eps,k}(r) + \II_{\eps,k}(r) + \III_{\eps,k}(r),
\]
where
\[
\I_{\eps,k}(r) = \eps^{2/3} \|f\|_Y \sum_{j=k+2}^{\ell} \frac{|r\Omega'(r)|}{r_j^2|MT(r_j)|} \e^{-c_0\eps^{-2/3+\gamma}(r_j-r)},
\]
\[
\II_{\eps,k}(r) = \eps^{2/3} \|f_1\|_Y\sum_{j\in\{k-1,k,k+1\}}\frac{|r\Omega'(r)|}{|MT(r_j)|},
\]
\[
\III_{\eps,k}(r) = \eps^{2/3} \|f\|_Y \sum_{j=1}^{k-2} \frac{r^2|r\Omega'(r)|}{r_j^2|MT(r_j)|}  \e^{-c_0\eps^{-2/3+\gamma}(r-r_j-\eps^\delta)}.
\]
As before, we start by the second addend. On the one hand, observe that by means of Property \ref{P4} we can write
\[
\begin{split}
    \II_{\eps,k}(r) & \lesssim \eps^{2/3-2\gamma}\|f\|_Y \left(D_0 + \sum_{n=1}^{N_2} D_n|r-r_k|^n\right) \\
    & \lesssim \eps^{2/3-2\gamma} \|f\|_Y \left(D_0 + \sum_{n=1}^{N_2} D_n\eps^{n\delta}\right) \lesssim \eps^{2/3-2\gamma} \|f_1\|_Y.
\end{split}
\]
Next in order, we look at the term $\I_{\eps,k}(r)$, for which Lemma \ref{lemma:consequences of assumptions} yields the estimate
\[
\begin{split}
    \I_{\eps,k}(r) & \lesssim \eps^{2/3-4\gamma}\|f\|_Y \sum_{j=k+2}^{\infty} \left(D_0 + \sum_{n=1}^{N_2} D_n|r_j-r|^n\right) \e^{-c_0\eps^{-2/3+\gamma+\delta}} \\
    & \lesssim \|f\|_Y\eps^{2/3-4\gamma}  \e^{-c_0\eps^{-2/3+\gamma+\delta}},
\end{split}
\]
which quickly converges to zero as $\eps\to 0$ due to the effect of the exponential. The last term to be addressed can be analogously bounded by
\[
\begin{split}
    \III_{\eps,k}(r) & \lesssim \eps^{2/3-2\gamma}\|f\|_Y \sum_{j=1}^{k-2} (1+\eps^{-2\gamma}|r-r_j|^2)\left(D_0 + \sum_{n=1}^{N_2} D_n|r-r_j|^n\right)\e^{-c_0\eps^{-2/3+\gamma}(r-r_j-\eps^\delta)} \\
    & \lesssim \|f\|_Y \eps^{2/3-4\gamma} \e^{-\frac{1}{2}c_0\eps^{-2/3+\gamma+\delta}}.
\end{split}
\]
As argued before, we find the ``worst" estimates for values of $r$ such that $|r-r_k|\leq 2\eps^\delta$ for some $r_k\in\mathcal{P}_1(\eps^\delta)$, whereas if $|r-r_k|> 2\eps^\delta$. This argument yields analogous bounds for the case $r\not\in [\eps^\gamma, r_0-\eps^\gamma)$, and thus we can write in general
\[
\begin{split}
    \eps^{1/3}  |r\Omega'(r)|\left|w_\eps(r-r_0)r^2\G_{1}f_1\right| & \lesssim \eps^{2/3-2\gamma}\|f\|_Y.
\end{split}
\]
Combining all the estimates together we find the claim of the proposition,
\[
\begin{split}
    \left|r^2w_\eps(r-r_0)((\L-\lambda)\G_1 f-f)\right| \lesssim  \max\left\lbrace \eps^{2/3-2\gamma}, \eps^{\beta}, \eps^{2/3-2\gamma} \right\rbrace \|f\|_Y,
\end{split}
\]
which, under the assumption $\gamma<\delta$, converges to zero as $\eps\to 0$.
\end{proof}

\subsection{Green's function near zero}\label{sec:near-0}

In this section we consider the last bit of the (positive) real line that is left to study, namely $[0,\eps^\gamma)$. This last bit of the Green's function will turn out to be of vital importance, since it can be shown to have a \emph{regularizing} effect on the solution. Indeed, when computing the error for the Green's functions we have defined so far in Section \ref{s:approx-Green's}, the error bounds we have derived are all in the space $Y$, which allows for quadratic blow up as $r \to 0$. Since we would like to invert $\L$ on the space $X$, it is however essential that our inverse does not blow up near $r=0$. In this regard, we show that the approximate Green's function near zero not only has the properties we expect of ``generic'' approximate Green's functions (such as rapid decay outside of the support of the input), but also defines a bounded map from $Y \to X$. This allows us to close the construction of the operator $\mathcal{G}^\lambda$ in such a way that it defines a bounded linear map $\mathcal{G}^\lambda:Y \to X$, and hence terms of the form $\mathcal{G}^\lambda (((\mathcal{L}-\lambda)\G^\lambda -\Id)^n)$ are well defined maps.

That being said, in order to derive such a regularising map, substantial care has to be taken in defining the approximate Green's function, since the terms of order $r^{-2}$ in the definition of $\L$ may no longer be treated as errors. In fact, as $r\to 0$, these will be the dominant terms in $\mathcal{L}$. We thus begin with some algebraic manipulations of the equation defining $\mathcal{L}$ in order to obtain an expression more amenable to analysis.

Recall from Section \ref{s:equations}, that the kinematic dynamo equations can be written in the following form,
\begin{align*}
    \eps\left( \partial_r^2V_1 + \frac{1}{r}\partial_rV_1\right) & = (A_\eps+\lambda) V_1 - \frac{1}{2}\left(\frac{1}{\alpha}\eps^{-1/3}B_\eps + \alpha\eps^{1/3}C_\eps\right)V_1 - \frac{1}{2}\left(\frac{1}{\alpha}\eps^{-1/3}B_\eps - \alpha\eps^{1/3}C_\eps\right)V_2, \\
    \eps\left( \partial_r^2V_2 + \frac{1}{r}\partial_rV_2\right) & = (A_\eps+\lambda) V_2 + \frac{1}{2}\left(\frac{1}{\alpha}\eps^{-1/3}B_\eps + \alpha\eps^{1/3}C_\eps\right)V_2 + \frac{1}{2}\left(\frac{1}{\alpha}\eps^{-1/3}B_\eps - \alpha\eps^{1/3}C_\eps\right)V_1,
\end{align*}
where
\[
A_\eps = \frac{\eps}{r^2} + \eps^{1/3}M^2\left( \frac{1}{r^2}+\left( \frac{\Omega_0'}{U_0'} \right)^2 \right)  + i\eps^{-1/3}MT(r),
\]
\[
B_\eps = \frac{2iM\eps^{2/3}}{r^2}, \quad C_\eps = -r\Omega' - \frac{2iM\eps^{2/3}}{r^2}, 
\]
and
\[
\alpha^2 = -\frac{2iM}{r_0^3\Omega_0'}.
\]
We now conjugate this equation by the invertible matrix 
\begin{equation}\label{eq:Pmatrix}
P=\begin{pmatrix}
1 & 1\\
1& -1
\end{pmatrix},
\end{equation}
and define the operator $\tilde \L$ via 
\[
(\tilde{\L}-\lambda)\begin{pmatrix}
V_+\\
V_-
\end{pmatrix}=P(\L-\lambda)P^{-1}\begin{pmatrix}
V_+\\
V_-
\end{pmatrix},
\]
with
\begin{align}\label{eq:near0-addsubtract}
    (\tilde\L-\lambda)\begin{pmatrix}
    V_+\\
    V_-
    \end{pmatrix}=\left(\eps \partial_r^2 + \eps \frac{1}{r}\partial_r  -(A_\eps+\lambda) \right)\begin{pmatrix}
    V_+\\
    V_-
    \end{pmatrix}+ \begin{pmatrix}
    \alpha\eps^{1/3}C_\eps V_-\\
    \frac{1}{\alpha}\eps^{-1/3}B_\eps V_+
    \end{pmatrix},
\end{align}
where we denote $V_+ = V_1+V_2$ and $V_-=V_1-V_2$. Recall now that---in the coordinate system $V=(V_1,V_2)$ and $f=(f_1,f_2)$---we want to solve the equation $(\L-\lambda)V=f$. Suppose for a second that we manage to find an operator $\tilde \G_0$, so that for all $g$ supported in $[0,\eps^\gamma)$, it holds 
\begin{equation}\label{eq:estimate-for-g}
\|((\tilde \L-\lambda)\tilde\G_0 -\Id)g\|_{Y} \lesssim \eps^{a}\|g\|_{Y},
\end{equation}
for some $a>0$. Then, setting $f=P^{-1} g$, $\G_0=P^{-1}\tilde \G_0 P$, we have 
\begin{align*}
((\L-\lambda)\G_0 -\Id)f& = (\L-\lambda)P^{-1}\tilde \G_0 g -P^{-1}g\\
&=P^{-1}(P(\L-\lambda)P^{-1}\tilde \G_0 g-g)=P^{-1}((\tilde \L-\lambda)\tilde \G_0 g-g).
\end{align*}
Therefore, since as a linear mapping the matrix $P$ satisfies $\|P\|,\|P^{-1}\|\lesssim 1$, there holds 
\begin{equation}
\label{eq:change of variables Green's function}
\|((\L-\lambda)\G_0-\Id)f\|_{Y} \leq \|P^{-1}\| \|((\tilde \L-\lambda)\tilde \G_0-\Id)g\|_Y \lesssim \eps^a \|P f\|_{Y}\lesssim \eps^a \|f\|_{Y}.
\end{equation}
Thus, it suffices to find an operator $\tilde{\G_0}$ so that for any $g$ supported in $[0,\eps^\gamma)$, estimate \eqref{eq:estimate-for-g} holds true for some $a>0$.
As we have been doing so far, we want to find $\tilde\G_0$ as an exact inverse of an approximate ODE for the system $\tilde\L-\lambda$. In order to write the appropriate approximated operator we use Assumptions \ref{H0} and \ref{H1}, which in particular ensure that $T\in C^3([0,\infty))$ is well defined up to $r=0$, and moreover
\[
\lim_{r\to 0}T(r) = \tau \neq 0.
\]
Taking this into account, we may now define the operator
\begin{equation}\label{eq:L0}
\tilde\L_0\begin{pmatrix}
    V_+\\
    V_-
    \end{pmatrix}=\left(\eps \partial_r^2 + \eps \frac{1}{r}\partial_r  -\frac{\eps^{1/3}M^2}{r^2} + iM\eps^{-1/3}\tau \right)\begin{pmatrix}
    V_+\\
    V_-
    \end{pmatrix}-\frac{\eps^{1/3}}{r^2}\sqrt{-2iMr_0^3\Omega_0'}\begin{pmatrix}
    0\\
    V_+
\end{pmatrix}
\end{equation}
which approximates \eqref{eq:near0-addsubtract}.
To derive it, we follow the same criteria as for the previous sections, neglecting the contributions of the terms in \eqref{eq:near0-addsubtract} that are either multiplied by a sufficiently large power of $\eps$, or in this case, by a non-negative power of $r$, since this region corresponds to the limit $r\to 0$.  We denote this approximated operator by $\tilde\L_0$, and its corresponding Green's function by $\tilde\G_0$, namely 
\[
\tilde\G_0\begin{pmatrix}
    f_+\\
    f_-
\end{pmatrix} = \begin{pmatrix}
    V_+\\
    V_-
\end{pmatrix}
\]
solves
\[
\tilde\L_0\begin{pmatrix}
    V_+\\
    V_-
\end{pmatrix} = \begin{pmatrix}
    f_+\\
    f_-
\end{pmatrix},
\]
with the further boundary condition that $(V_+,V_-) \to 0$ as $r \to \infty$.

We now begin the analysis of the operator $\tilde\L_0$. Observe that the first component of $\tilde\L_0$ in \eqref{eq:L0} can be studied independently of the second component. We introduce a new Green's function associated just to this first component. We consider the operator
\begin{equation}\label{eq:ode-0}
\eps\left( \partial_r^2V + \frac{1}{r}\partial_rV\right) -\left(\frac{\eps^{1/3}M^2}{r^2} + i\eps^{-1/3}M\tau\right)V,
\end{equation}
and we write $\G_0^1$ to denote its Green's function, i.e.\ 
\[
(\tilde\L_0)_1\G_0^1f = f.
\]
We start by studying this first Green's function. Just like in the previous sections, we look for two independent solutions to the approximated ODE \eqref{eq:ode-0}. In general, ODEs of the form
\[
v''(r) + \frac{1}{r}v'(r) = \left(\frac{a}{r^2}+b\right)v(r)
\]
have two independent solutions given by Bessel functions of the first and second kind, see \cite{Olver74} for detailed information. Since our arguments are necessarily complex, we make the choice of defining the two independent solutions as \emph{modified Bessel functions},
\[
v_1(r) = I_{a^{1/2}}\left(b^{1/2}r\right), \quad v_2(r) = K_{a^{1/2}}\left(b^{1/2}r\right),
\]
where $I_\nu(x)$ represents a modified Bessel function of the first kind, whereas $K_\nu(x)$ represents a modified Bessel function of the second kind. For more details about these special functions, we refer the reader to Appendix \ref{s:appendix-bessel}. For our specific case \eqref{eq:ode-0}, notice that
\[
a_\eps = M^2\eps^{-2/3}, \quad b_\eps = i\tau M\eps^{-4/3},
\]
where we include the subindex to stress their $\eps$ dependency now.
The Wronskian between the modified Bessel functions $I_\nu$ and $K_\nu$ is given by
\[
W(I_\nu(z),K_\nu(z)) = I_\nu(z)K_\nu'(z) - I_\nu'(z)K_\nu(z) = -\frac{1}{z},
\]
so therefore, applied to the solutions found for \eqref{eq:ode-0} we get,
\[
W(v_1(r),v_2(r)) = -\frac{1}{r}.
\]
With this in mind, we can construct the following Green's function associated to the problem \eqref{eq:ode-0}. In this section we take $\supp(f)\subset [0,\eps^\gamma)$, for some $\gamma>0$ to be chosen accordingly. Then we write,
\begin{equation}\label{eq:Green-near-0}
\begin{split}
    \G_0^1f(r) & = -\eps^{-1}v_2(r) \int_0^r s v_1(s) f(s)\dd s - \eps^{-1} v_1(r) \int_r^\infty s v_2(s) f(s)\dd s,
\end{split}
\end{equation}
namely
\[
\begin{split}
    \G_0^1f(r) & = -\eps^{-1}K_{a_\eps^{1/2}}\left(b_\eps^{1/2}r\right) \int_0^r sI_{a_\eps^{1/2}}\left(b_\eps^{1/2}s\right) f(s)\dd s \\
    & \quad - \eps^{-1}I_{a_\eps^{1/2}}\left(b_\eps^{1/2}r\right) \int_r^\infty sK_{a_\eps^{1/2}}\left(b_\eps^{1/2}s\right) f(s)\dd s.
\end{split}
\]
Since all we want to compute now are weighted $L^\infty$ bounds for this Green's function, first of all what we need are suitable asymptotic bounds for the modified Bessel functions. From Lemma \ref{lemma:Bessel bounds} we have the bounds which hold uniformly as $\nu \to \infty$ for all $z\in\Co$ with $|\arg(z)|=\frac{\pi}{4}$.
\begin{equation}\label{modBesselI}
|I_\nu(\nu z)|\leq C\frac{\e^{\nu \xi}}{(2\pi \nu)^{1/2}(1+z^2)^{1/4}},
\end{equation}
\begin{equation}\label{modBesselK}
|K_{\nu}(\nu z)|  \leq C \left(\frac{\pi}{2 \nu}\right)^{1/2}\frac{\e^{-\nu \xi}}{(1+z^2)^{1/4}},
\end{equation}
where
\[
\xi = (1+z^2)^{1/2} + \log\left( \frac{z}{1+(1+z^2)^{1/2}} \right).
\]
Since $v_1$ and $v_2$ are represented in terms of modified Bessel function with indices of the form $\nu = a_\eps^{1/2} = M\eps^{-1/3}$, and $|\arg(b_\eps^{1/2}r)|=\frac{\pi}{4}$, these are indeed the bounds we are interested in.
For simplicity's sake, and for the sake of a clearer presentation of the results, we will select the value $M=1$ in the following. The analysis remains largely unchanged for other values of $M$, and it will be clear from the analysis that all our bounds depend continuously on $M$. With this in mind we obtain from \eqref{modBesselI}, \eqref{modBesselK} the estimates
\[
|v_1(r)| \lesssim \eps^{1/6}\frac{|\tau|^{\frac{1}{2}\eps^{-1/3}} \eps^{-\frac{1}{3}\eps^{-1/3}}r^{\eps^{-1/3}}}{\left|1+i\tau\eps^{-2/3}r^2\right|^{1/4}\left|1+(1+i\tau\eps^{-2/3}r^2)^{1/2}\right|^{\eps^{-1/3}}} \e^{\Re\left[\eps^{-1/3}(1+i\tau\eps^{-2/3}r^2)^{1/2}\right]},
\]
\[
|v_2(r)| \lesssim \eps^{1/6}\frac{\left|1+(1+i\tau\eps^{-2/3}r^2)^{1/2}\right|^{\eps^{-1/3}}}{\left|1+i\tau\eps^{-2/3}r^2\right|^{1/4} |\tau|^{\frac{1}{2}\eps^{-1/3}} \eps^{-\frac{1}{3}\eps^{-1/3}}r^{\eps^{-1/3}}} \e^{-\Re\left[\eps^{-1/3}(1+i\tau\eps^{-2/3}r^2)^{1/2}\right]}.
\]
In order to obtain precise bounds from here, we shall break down the required computations into the following pieces
\[
\Re\left[\eps^{-1/3}(1+i\tau\eps^{-2/3}r^2)^{1/2}\right] = \frac{\eps^{-1/3}}{\sqrt{2}} \left( \left(1 + \eps^{-4/3}r^4\tau^2\right)^{1/2} + 1\right)^{1/2};
\]
\[
\left|1+(1+i\tau\eps^{-2/3}r^2)^{1/2}\right|^{\eps^{-1/3}} = \left( 1+\left[2+2\left(1+\eps^{-4/3}r^4\tau^2\right)^{1/2}\right]^{1/2} + \left(1+\eps^{-4/3}r^4\tau^2\right)^{1/2} \right)^{\frac{1}{2}\eps^{-1/3}};
\]
\[
\left|1+i\tau\eps^{-2/3}r^2\right|^{1/4} = \left( 1 + \eps^{-4/3} r^4 \tau^2\right)^{1/8}.
\]
Putting everything together, we find the estimates
\[
\begin{split}
    |v_1(r)| & \lesssim \frac{|\tau|^{\frac{1}{2}\eps^{-1/3}} \eps^{\frac{1}{6}-\frac{1}{3}\eps^{-1/3}} r^{\eps^{-1/3}} \e^{\frac{1}{\sqrt{2}}\eps^{-1/3} \left( \left(1 + \eps^{-4/3}r^4\tau^2\right)^{1/2} + 1\right)^{1/2}} }{\left( 1 + \eps^{-4/3} r^4 \tau^2\right)^{1/8} \left( 1+\left[2+2\left(1+\eps^{-4/3}r^4\tau^2\right)^{1/2}\right]^{1/2} + \left(1+\eps^{-4/3}r^4\tau^2\right)^{1/2} \right)^{\frac{1}{2}\eps^{-1/3}}};
\end{split}
\]
\[
\begin{split}
    |v_2(r)| & \lesssim \frac{|\tau|^{-\frac{1}{2}\eps^{-1/3}} \eps^{\frac{1}{6}+\frac{1}{3}\eps^{-1/3}} r^{-\eps^{-1/3}} \e^{-\frac{1}{\sqrt{2}}\eps^{-1/3} \left( \left(1 + \eps^{-4/3}r^4\tau^2\right)^{1/2} + 1\right)^{1/2}} }{\left( 1 + \eps^{-4/3} r^4 \tau^2\right)^{1/8} \left( 1+\left[2+2\left(1+\eps^{-4/3}r^4\tau^2\right)^{1/2}\right]^{1/2} + \left(1+\eps^{-4/3}r^4\tau^2\right)^{1/2} \right)^{-\frac{1}{2}\eps^{-1/3}}}.
\end{split}
\]
In order to plug in these bounds into a estimate for the Green's function \eqref{eq:Green-near-0}, we first realise that given any positive $n$, the function
\[
x \mapsto \frac{1}{x^{1/8}\left( 1+\sqrt{2}(1+x)^{1/2} + x \right)^n} \e^{\sqrt{2}n(1+x)^{1/2}}
\]
is monotone increasing for all $x>1$. In particular, setting $x=(1+\eps^{-4/3}r^4\tau^2)^{1/2}$, $n=\frac{1}{2}\eps^{-\frac{1}{3}}$, for the first addend in Green's function \eqref{eq:Green-near-0} we can write
\[
\begin{split}
    \left| \eps^{-1} v_2(r) \int_0^r s v_1(s) f(s)\dd s \right| & \lesssim \frac{|\tau|^{-\frac{1}{2}\eps^{-1/3}} \eps^{-\frac{5}{6}+\frac{1}{3}\eps^{-1/3}} r^{-\eps^{-1/3}} }{\left( 1 + \eps^{-4/3} r^4 \tau^2\right)^{1/4}}   \int_0^r |\tau|^{\frac{1}{2}\eps^{-1/3}} \eps^{\frac{1}{6}-\frac{1}{3}\eps^{-1/3}} s^{1+\eps^{-1/3}}  |f(s)|\dd s \\
    & \leq \|f\|_Y \frac{\eps^{-2/3} r^{-\eps^{-1/3}} }{\left( 1 + \eps^{-4/3} r^4 \tau^2\right)^{1/4}}   \int_0^r w_\eps(s-r_0)^{-1}s^{-1+\eps^{-1/3}} \dsOne_{\{s<\eps^\gamma\}}(s) \dd s.
\end{split}
\]
An analogous game allows us to obtain a similar bound for the second addend in \eqref{eq:Green-near-0}. Here we simply use that the mapping
\[
x \mapsto \frac{1}{\left( 1+\sqrt{2}(1+x)^{1/2} + x \right)^{-n}} \e^{-\sqrt{2}n(1+x)^{1/2}}
\]
is monotone decreasing for all $x\geq 1$ and $n$ positive, and that $x(r)=\left( 1 + \eps^{-4/3} r^4 \tau^2\right)^{1/2}$ is monotone increasing for all $r>0$. Therefore, the second addend may be bounded above by
\[
\begin{split}
    \left| \eps^{-1} v_1(r) \int_r^\infty s v_2(s) f(s)\dd s \right| & \lesssim \frac{|\tau|^{\frac{1}{2}\eps^{-1/3}} \eps^{-\frac{5}{6}-\frac{1}{3}\eps^{-1/3}} r^{\eps^{-1/3}} }{\left( 1 + \eps^{-4/3} r^4 \tau^2\right)^{1/8}}   \int_r^\infty \frac{|\tau|^{-\frac{1}{2}\eps^{-1/3}} \eps^{\frac{1}{6}+\frac{1}{3}\eps^{-1/3}} s^{1-\eps^{-1/3}} }{\left( 1 + \eps^{-4/3} s^4 \tau^2\right)^{1/8}} |f(s)|\dd s  \\
    & \leq \|f\|_Y \frac{\eps^{-2/3} r^{\eps^{-1/3}} }{\left( 1 + \eps^{-4/3} r^4 \tau^2\right)^{1/4}}   \int_r^\infty w_\eps(s-r_0)^{-1}  s^{-1-\eps^{-1/3}} \dsOne_{\{s<\eps^\gamma\}}(s)\dd s.
\end{split}
\]
Hence, going back to the expression for the Green's function \eqref{eq:Green-near-0} we arrive at the estimate
\begin{equation}\label{eq:G01-near0}
\begin{split}
    |\G_0^1f(r)| & \lesssim \frac{\eps^{-2/3}\|f\|_Y}{\left( 1 + \eps^{-4/3} r^4 \tau^2\right)^{1/4}} r^{-\eps^{-1/3}}\int_0^r w_\eps(s-r_0)^{-1}s^{-1+\eps^{-1/3}} \dd s \\
    & \quad + \frac{\eps^{-2/3}\|f\|_Y}{\left( 1 + \eps^{-4/3} r^4 \tau^2\right)^{1/4}}r^{\eps^{-1/3}}\int_r^\infty w_\eps(s-r_0)^{-1}s^{-1-\eps^{-1/3}} \dd s,
\end{split}
\end{equation}
for any function $f$.
Next in order, we want to recover an expression for the Green's function $\tilde\G_0$ associated to the full approximated variables-changed operator $\tilde\L_0$ defined in \eqref{eq:L0},
\[
\tilde\L_0 \begin{pmatrix}
    V_+\\
    V_-
\end{pmatrix} = \begin{pmatrix}
    f_+\\
    f_-
\end{pmatrix}.
\]
The first component in \eqref{eq:L0} is already sorted out, i.e.\ $V_+ = \G_0^1f_+$. For the second component we can use the structure of the operator we want to invert,
\[
f_- = (\tilde\L_0)_2V_- = (\tilde\L_0)_1V_- - \frac{\eps^{1/3}}{r^2}\sqrt{-2iMr_0^3\Omega_0'}V_+ = (\tilde\L_0)_1V_- - \frac{\eps^{1/3}}{r^2}\sqrt{-2iMr_0^3\Omega_0'}\G_0^1 f_+,
\]
namely,
\[
V_- = \G_0^1 f_- + \G_0^1 \left(\frac{\eps^{1/3}}{r^2}\sqrt{-2iMr_0^3\Omega_0'}\G_0^1 f_+\right).
\]
Therefore, collecting everything in a main expression we obtain that the total Green's function is given by
\begin{equation}
\label{eq:Green's function near 0}
\begin{pmatrix}
    V_+\\
    V_-
\end{pmatrix} = \tilde\G_0\begin{pmatrix}
    f_+\\
    f_-
\end{pmatrix} = \begin{pmatrix}
    \G_0^1f_+ \\
    \G_0^1 f_- + \eps^{1/3}\sqrt{-2iMr_0^3\Omega_0'}\G_0^1 \left(r^{-2}\G_0^1 f_+\right)
\end{pmatrix}.
\end{equation}
An important observation here is that the Green's function $\G_0^1$ now acts additionally on the function 
\[
h(r) = \frac{1}{r^2}\G_0^1(f_-)(r),
\]
which is no longer supported in the interval $[0,\eps^\gamma)$. This should be carefully taken into account when addressing the corresponding estimates. 

Without further ado, let us introduce the main result about the weighted $L^\infty$ estimates on the Green's function $\tilde\G_0$, and the error produced by the approximated operator $\tilde\L_0$. Notice, that in comparison with the previous sections, we keep the first order derivative in the approximated operator. This implies that we will not need to estimate $\partial_r\tilde \G_0$ as before in order to compute the error of the approximation.

\begin{proposition}\label{prop:G0}
Let $\gamma>2/9$. Define $\tilde\G_0$ as in \eqref{eq:Green's function near 0} and set $\G_0=P^{-1}\tilde \G_0 P$, where $P$ is the invertible matrix defined in \eqref{eq:Pmatrix}. Let $f\in Y$ with $\supp(f)\subset [0,\eps^\gamma)$. Under Assumptions \ref{H0}--\ref{H2}, the following estimates hold true uniformly for all $r_0 \in \mathcal{R}$, $M \in \mathcal{N}$, and for any $\eps>0$ sufficiently small,
\begin{align}
    \left|\max\{1,r^2\}w_\eps(r-r_0)\G_0f(r)\right| & \lesssim \eps^{-1/3} \dsOne_{\{0\leq r<2\eps^\gamma\}}(r) \|f\|_Y \label{eq:G0-Linf} \\
    & \quad + \eps^{-1/3}\left(\frac{\eps^\gamma}{r}\right)^{\frac{1}{4}\eps^{-1/3}} \dsOne_{\{r\geq 2\eps^\gamma\}}(r) \|f\|_Y, \notag \\
    \left|r^2w_\eps(r-r_0)\G_0f(r)\right| & \lesssim \eps^{-1/3+2\gamma} \dsOne_{\{0\leq r<2\eps^\gamma\}}(r) \|f\|_Y \label{eq:G0-Y} \\
    & \quad + \eps^{-1/3+2\gamma}\left(\frac{\eps^\gamma}{r}\right)^{\frac{1}{4}\eps^{-1/3}} \dsOne_{\{r\geq 2\eps^\gamma\}}(r) \|f\|_Y, \notag
\end{align}
and the same bounds also hold true with $\G_0$ replaced by $\tilde \G_0$.
Moreover, the error produced by the approximated operator is bounded by
\begin{equation}\label{eq:G0-error}
    \left\lVert ((\L-\lambda)\G_0-\Id) f\right\rVert_Y \lesssim \left( \eps^{2/3} + \eps^{2\gamma} + \eps^{3\gamma-2/3} \right) \|f\|_Y.
\end{equation}
\end{proposition}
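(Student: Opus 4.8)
The plan is to prove Proposition \ref{prop:G0} by carefully estimating the Green's function $\tilde\G_0$ built from the modified Bessel solutions, and then transferring the estimates to $\G_0 = P^{-1}\tilde\G_0 P$ via the change of variables \eqref{eq:change of variables Green's function}. First I would establish the pointwise bound \eqref{eq:G01-near0} on $\G_0^1 f$ — this is essentially done in the excerpt — and then split according to whether $r < 2\eps^\gamma$ or $r \geq 2\eps^\gamma$. For $r < 2\eps^\gamma$, the factor $r^{-\eps^{-1/3}}\int_0^r w_\eps(s-r_0)^{-1} s^{-1+\eps^{-1/3}}\,\d s$ is controlled by noting $w_\eps(s-r_0)^{-1}\lesssim 1$ for $s$ small and evaluating $\int_0^r s^{-1+\eps^{-1/3}}\,\d s = \eps^{1/3} r^{\eps^{-1/3}}$, which exactly cancels the prefactor; similarly the second integral $r^{\eps^{-1/3}}\int_r^{\eps^\gamma} w_\eps(s-r_0)^{-1}s^{-1-\eps^{-1/3}}\,\d s \lesssim r^{\eps^{-1/3}}\cdot \eps^{1/3} r^{-\eps^{-1/3}} = \eps^{1/3}$ once we observe the dominant contribution is near the lower endpoint $s = r$. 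Combined with the harmless denominator $(1+\eps^{-4/3}r^4\tau^2)^{-1/4}\leq 1$, this yields $|\G_0^1 f(r)|\lesssim \eps^{-1/3}\|f\|_Y$ on $[0,2\eps^\gamma)$. For $r \geq 2\eps^\gamma$ the first integral is empty up to $\eps^\gamma$, so only $r^{-\eps^{-1/3}}\int_0^{\eps^\gamma}s^{-1+\eps^{-1/3}}\,\d s = \eps^{1/3}(\eps^\gamma/r)^{\eps^{-1/3}}$ survives from the first term; the factor $(1+\eps^{-4/3}r^4\tau^2)^{-1/4}$ provides enough decay that multiplying by $\max\{1,r^2\}$ costs at most a reduction $\eps^{-1/3}\to\tfrac14\eps^{-1/3}$ in the exponent, giving \eqref{eq:G0-Linf}. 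The bound \eqref{eq:G0-Y} with the $r^2$ weight follows the same way, using $r^2\lesssim \eps^{2\gamma}$ on $[0,2\eps^\gamma)$ and absorbing $r^2$ into the rapidly decaying exponential for $r$ large.

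Next I would handle the second component of $\tilde\G_0$ in \eqref{eq:Green's function near 0}, namely the composed term $\eps^{1/3}\sqrt{-2iMr_0^3\Omega_0'}\,\G_0^1(r^{-2}\G_0^1 f_+)$. The subtlety flagged in the excerpt is that the inner function $h(r) = r^{-2}\G_0^1(f_+)(r)$ is \emph{not} supported in $[0,\eps^\gamma)$ and moreover may blow up quadratically near the origin. However, precisely because $\G_0^1 f_+$ vanishes to high order ($\sim r^{\eps^{-1/3}}$) as $r\to 0$ by \eqref{eq:G01-near0}, the function $h$ is in fact bounded near zero; and for $r\geq 2\eps^\gamma$ it inherits the decay $(\eps^\gamma/r)^{\eps^{-1/3}/4}\cdot r^{-2}$, which is integrable against the Bessel kernels. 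I would therefore apply the pointwise estimate \eqref{eq:G01-near0} to $h$ directly, tracking that the extra factor $r^{-2}$ is compensated by the regularising behaviour of the first $\G_0^1$, and conclude that the composed term obeys the same bounds \eqref{eq:G0-Linf}–\eqref{eq:G0-Y} up to the harmless constant $\eps^{1/3}|\sqrt{-2iMr_0^3\Omega_0'}|\lesssim \eps^{1/3}$. Transferring from $\tilde\G_0$ to $\G_0$ is then immediate since $\|P\|,\|P^{-1}\|\lesssim 1$.

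Finally, for the error estimate \eqref{eq:G0-error} I would proceed exactly as in the proofs of Propositions \ref{proposition:rinf} and \ref{proposition:rtow0}, comparing $\L-\lambda$ (in the $(V_+,V_-)$ coordinates, i.e.\ $\tilde\L-\lambda$ from \eqref{eq:near0-addsubtract}) against the approximate operator $\tilde\L_0$ from \eqref{eq:L0}, and using \eqref{eq:change of variables Green's function} to pass back. The difference $(\tilde\L-\lambda-\tilde\L_0)$ consists of: the term $i\eps^{-1/3}M(T(r)-\tau)$, which by Property \ref{P5} is $O(\eps^{-1/3}r)$ and hence $O(\eps^{-1/3+\gamma})$ on the support of $f$; the term $\eps^{1/3}M^2(\Omega_0'/U_0')^2$ and $\eps/r^2$, which contribute $O(\eps^{1/3})$ and $O(\eps^{1/3})$ respectively after weighting; the $\lambda$-term $O(\eps^{1/3})$; the cross terms $\alpha\eps^{1/3}C_\eps V_-$ and the $\eps^{-1/3}B_\eps/\alpha$-correction relative to the kept $r^{-2}$ piece, which are lower order; and the contributions from $D_\eps^\pm$. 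Multiplying each by the $Y$-norm of $\tilde\G_0 f$ (using the weighted bound $\|r^2 w_\eps \tilde\G_0 f\|_\infty\lesssim \eps^{-1/3+2\gamma}\|f\|_Y$) produces terms of size $\eps^{2/3}$, $\eps^{2\gamma}$, and $\eps^{3\gamma-2/3}$, matching the claimed bound. The main obstacle I expect is the composed-operator term: one must verify with care that $\G_0^1(r^{-2}\G_0^1 f_+)$ genuinely lands in $X$ — i.e.\ stays bounded at the origin despite the $r^{-2}$ — and that the error committed when approximating the \emph{exact} cross coupling $\tfrac12(\alpha^{-1}\eps^{-1/3}B_\eps)V_+$ by the model term $\eps^{1/3}r^{-2}\sqrt{-2iMr_0^3\Omega_0'}V_+$ (where one has dropped the $r\Omega'$ piece of $C_\eps$ and the subleading $\eps$-powers) remains $o_{\eps\to 0}(1)$ in $Y$; this requires combining the regularising gain with Property \ref{P5} and the asymptotics of the Bessel functions near $r\sim \eps^{2/3}$ where the argument $b_\eps^{1/2}r$ transitions from small to large.
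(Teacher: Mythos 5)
Your overall skeleton is right—bound $\G_0^1$ pointwise via the Bessel asymptotics, treat the composed term $\G_0^1(r^{-2}\G_0^1 f_+)$ separately, pass to $\G_0=P^{-1}\tilde\G_0 P$, and then run the error estimate by comparing $\tilde\L-\lambda$ with $\tilde\L_0$—and the error-estimate scales you list ($\eps^{2/3}$, $\eps^{2\gamma}$, $\eps^{3\gamma-2/3}$) are correct. However, two of the crucial intermediate steps are not correct as stated.

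First, the weight bookkeeping. You bound $w_\eps(s-r_0)^{-1}\lesssim 1$ inside the integral and conclude $|\G_0^1 f(r)|\lesssim\eps^{-1/3}\|f\|_Y$ on $[0,2\eps^\gamma)$. That intermediate inequality is true, but the proposition asks for the \emph{weighted} quantity $w_\eps(r-r_0)\max\{1,r^2\}\G_0 f(r)$, and for $r\leq 2\eps^\gamma$ (with $r_0$ bounded away from $0$) the external weight satisfies $w_\eps(r-r_0)\sim(\eps^{-1/3}r_0)^N$, which is huge. Your estimate therefore produces a bound of order $\eps^{-1/3}\cdot\eps^{-N/3}$, losing the entire weight. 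The fix is exactly what Lemma \ref{lemma:properties-weight} is for: the map $s\mapsto w_\eps(s-r_0)^{-1}$ is \emph{increasing} on $[0,r_0]$, so for $s\leq r\leq r_0$ one has $w_\eps(s-r_0)^{-1}\leq w_\eps(r-r_0)^{-1}$, and substituting this into the integral exactly cancels the external $w_\eps(r-r_0)$. For the second integral one instead uses that $w_\eps(r-r_0)/w_\eps(\eps^\gamma-r_0)$ stays bounded uniformly in $\eps$ for $r\leq\eps^\gamma$. Neither is captured by a blanket $w_\eps(s-r_0)^{-1}\lesssim 1$; as written, the bound you obtain is off by $\eps^{-N/3}$.

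Second, the claim that $h(r)=r^{-2}\G_0^1(f_+)(r)$ is ``bounded near zero'' because ``$\G_0^1 f_+$ vanishes to high order $\sim r^{\eps^{-1/3}}$'' is false. For $f_+\in Y$ (which may itself behave like $r^{-2}$ at the origin), $\G_0^1 f_+$ is only \emph{bounded}, not vanishing, near $r=0$: the first integral contributes $r^{-\nu}\cdot O(r^\nu)=O(1)$ once $f_+\sim s^{-2}$ is plugged in. Consequently $h\sim r^{-2}$ does blow up near zero, and the point of the entire section is that $\G_0^1$ is a \emph{regularising} operator, i.e.\ it maps $Y$ (which tolerates quadratic blow-up) into a bounded space, so one never needs $h$ to be bounded — only $\|h\|_Y<\infty$. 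This also means you cannot ``apply the pointwise estimate \eqref{eq:G01-near0} to $h$ directly'' in the naive way you propose: $h$ is \emph{not} supported in $[0,\eps^\gamma)$, and the crucial decay of the Green's kernel away from the support of $f$ is only available once you split $h=h_1+h_2$ with $h_1$ supported in $[0,2\eps^\gamma)$ and $h_2$ in $[2\eps^\gamma,\infty)$. For $h_2$, the estimate is delicate precisely because the weight factor $w_\eps(r_0)\sim\eps^{-N/3}$ reappears when measuring at small $r$; one must show that the geometric factor $\bigl(\tfrac{\eps^\gamma}{s}\bigr)^{\frac12\eps^{-1/3}}$ integrated over $s\geq 2\eps^\gamma$ yields a factor $2^{-\frac12\eps^{-1/3}}$ that beats this polynomial loss. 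That competition is not addressed in your write-up, and without it the bound for the composed term does not close.
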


\begin{remark}
    Notice that from \eqref{eq:G0-Linf} we straightforwardly obtain the following estimate for the approximate Green's function in $X$,
    \begin{equation}\label{eq:G0-X-Y}
    \|\G_0f\|_X \lesssim \eps^{-1/3}\|f\|_Y,
    \end{equation}
    provided that $\supp(f)\subset [0,\eps^\gamma)$.
\end{remark}

\begin{proof}
\emph{Part 1: Estimate on $\tilde \G_0$.} We want to derive estimates for a Green's function that can be written as in equation \eqref{eq:G0-Linf}. This implies that we need to find appropriate bounds for expressions of the form $|\G_0^1f(r)|$, and $|\G_0^1(r^{-2}\G_0^1f)(r)|$, where in both cases $f$ is a bounded function supported in the interval $[0,\eps^\gamma)$. Let us begin with the first case, since we have already presented suitable estimates at the beginning of Section \ref{sec:near-0}. In particular, since $\supp(f)\subset [0,\eps^\gamma)$, we can use the estimate \eqref{eq:G01-near0} to write
\begin{equation}\label{eq:near zero expression}
\begin{split}
    |w_\eps(r-r_0)\G_0^1f(r)|  & \lesssim \frac{\eps^{-2/3}\|f\|_Yw_\eps(r-r_0)}{\left( 1 + \eps^{-4/3} r^4 \tau^2\right)^{1/4}}  r^{-\eps^{-1/3}}\int_0^r w_\eps(s-r_0)^{-1}s^{-1+\eps^{-1/3}} \dd s \\
    & \quad + \frac{\eps^{-2/3}\|f\|_Y w_\eps(r-r_0)}{\left( 1 + \eps^{-4/3} r^4 \tau^2\right)^{1/4}} r^{\eps^{-1/3}}\int_r^{\eps^\gamma} w_\eps(s-r_0)^{-1} s^{-1-\eps^{-1/3}} \dd s.
\end{split}
\end{equation}
The second addend is nonzero only when $r<\eps^\gamma$, therefore let us start by taking $r\in [0,\eps^\gamma)$ and estimate both terms separately. Firstly, we note that $w_\eps(s-r_0)^{-1}$ is increasing for $s \in [0,r_0]$, and so the first addend from \eqref{eq:near zero expression} may be bounded by 
\[
w_\eps(r-r_0)\eps^{-2/3}\|f\|_{Y} r^{-\eps^{-1/3}}\int_0^r s^{-1+\eps^{-1/3}}w_\eps(s-r_0)^{-1}\dd s \leq \eps^{-1/3}\|f\|_{Y},
\]
for $r \leq \eps^\gamma$.
The second term in \eqref{eq:near zero expression} will require slightly more care. Indeed, we want to bound
\[
w_\eps(r-r_0)r^{\eps^{-1/3}}\int_r^{\eps^\gamma}w_\eps(s-r_0)^{-1}s^{-1-\eps^{-1/3}}\dd s.
\]
Since $w_\eps(s-r_0)^{-1}$ is increasing, we estimate it by its value at the right endpoint, ending up with an upper bound of the form 
\[
\frac{w_\eps(r-r_0)}{w(\eps^\gamma-r_0)}r^{\eps^{-1/3}}\int_r^{\eps^\gamma}s^{-1-\eps^{-1/3}}\dd s \leq \frac{w_\eps(r-r_0)}{w(\eps^\gamma-r_0)} \eps^{1/3}.
\]
For $r \in [0,\eps^\gamma)$, we further show in Lemma \ref{lemma:properties-weight} that the quotient
\[
\frac{w_\eps(r-r_0)}{w(\eps^\gamma-r_0)}
\]
is bounded for any $\eps$ sufficiently small by a constant that depends only on $N$. Therefore, we deduce that there exists a constant $C>0$ independent of $\eps \to 0$, so that for all $r \in [0,\eps^\gamma)$,
\[
|w_\eps(r-r_0)\G_0^1f(r)|\leq C\eps^{-1/3}\|f\|_Y,
\]
for any $f$ with support contained in $[0,\eps^\gamma)$.
Furthermore, if $r >\eps^\gamma$, the second addend in \eqref{eq:near zero expression} vanishes and we can estimate 
\begin{align}
\label{eq:near zero bounds}
\begin{split}
    |w_\eps(r-r_0)\G_0^1f(r)| & \leq w_\eps(r-r_0)\eps^{-2/3}\|f\|_Y r^{-\eps^{-1/3}}\int_0^{\eps^\gamma}w_\eps(s-r_0)^{-1} s^{-1+\eps^{-1/3}} \dd s  \\
    &\lesssim \frac{w_\eps(r-r_0)}{w_\eps(\eps^\gamma-r_0)} \eps^{-1/3}\|f\|_Y \left(\frac{\eps^\gamma}{r}\right)^{\eps^{-1/3}}.
\end{split}
\end{align}
Now, thanks to Lemma \ref{lemma:properties-weight} about the properties of the weight function, we argue that there exists a (uniform) constant $C$ such that for all $\eps$ small and all $r\geq \eps^\gamma$ there holds
\[
\frac{w_\eps(r-r_0)}{w_\eps(\eps^\gamma-r_0)}\leq \left (\frac{\eps^\gamma}{r} \right )^{-\frac{1}{2}\eps^{-1/3}},
\]
and thus we can write
\begin{equation*}
\frac{w_\eps(r-r_0)}{w_\eps(\eps^\gamma-r_0)}\left(\frac{\eps^\gamma}{r}\right)^{\eps^{-1/3}} \leq C \left (\frac{\eps^\gamma}{r} \right )^{\frac{1}{2}\eps^{-1/3}}.
\end{equation*}
All in all, we obtain the estimate
\[
|w_\eps(r-r_0)\G_0^1f(r)| \lesssim \eps^{-1/3}\|f\|_Y\left(\dsOne_{\{0\leq r<\eps^\gamma\}}(r) + \left(\frac{\eps^\gamma}{r}\right)^{\frac{1}{2}\eps^{-1/3}}\dsOne_{\{r\geq\eps^\gamma\}}(r)  \right).
\]
From this, we also easily obtain bounds with an extra $r^2$ weight in the $L^\infty$ norm. Indeed, if $r\in [0,\eps^\gamma)$, then trivially we obtain
\[
|r^2w_\eps(r-r_0)\G_0^1f(r)| \lesssim r^2\eps^{-1/3}\|f\|_Y \leq \eps^{2\gamma-1/3}\|f\|_Y.
\]
On the other hand, for $r\geq \eps^\gamma$ we write $r^2 =\eps^{2\gamma}(r\eps^{-\gamma})^2$ so that
\begin{equation}
\label{eq:near0 decay bound}
|r^2w_\eps(r-r_0)\G_0^1f(r)| \lesssim \eps^{-1/3}\|f\|_Y r^2\left(\frac{\eps^\gamma}{r}\right)^{\frac{1}{2}\eps^{-1/3}} \lesssim \eps^{-1/3+2\gamma}\|f\|_Y\left(\frac{\eps^\gamma}{r}\right)^{\frac{1}{4}\eps^{-1/3}}.
\end{equation}
With this, we obtained the desired estimate for the $X$ norm of $\G_0^1f$. To find the sought estimate for $\tilde\G_0f$ we also need to control terms of the form
\[
\eps^{1/3}\G_0^1\left( \frac{1}{r^2}\G_0^1f \right),
\]
where $\supp(f)\subset [0,\eps^\gamma)$, but $\supp(r^{-2}\G_0^1f)$ is not contained in the interval $[0,\eps^\gamma)$. In this case we split the function inside the Green's function in two contributions,
\[
\frac{1}{r^2}\G_0^1f = h_1(r) + h_2(r),
\]
with
\[
h_1(r) = \left(\frac{1}{r^2}\G_0^1f\right)\dsOne_{\{0\leq r<2\eps^\gamma\}}(r), \quad h_2(r) = \left(\frac{1}{r^2}\G_0^1f\right)\dsOne_{\{r\geq 2\eps^\gamma\}}(r).
\]
The term corresponding to $h_1$ is handled identically to before, and we obtain the pointwise upper bound 
\[
|w_\eps(r-r_0)\G_0^1h_1(r)|\lesssim \eps^{-1/3}\|h_1(r)\|_Y \left(\mathds{1}_{\{r \leq 2\eps^\gamma\}}+\left(\frac{2\eps^\gamma}{r}\right)^{\frac{1}{2}\eps^{-1/3}}\mathds{1}_{\{r >2\eps^\gamma\}}\right).
\]
Recall further that we already showed that $\|h_1(r)\|_Y \lesssim \eps^{-1/3}\|f(r)\|_Y$. Therefore, we observe 
\[
|w_\eps(r-r_0)\G_0^1 h_1(r)|\lesssim \eps^{-2/3}\|f(r)\|_Y\left(\mathds{1}_{\{r \leq 2\eps^\gamma\}}+\left(\frac{2\eps^\gamma}{r}\right)^{\frac{1}{2}\eps^{-1/3}}\mathds{1}_{\{r >2\eps^\gamma\}}\right).
\]
The remaining bit $h_2(r)$ is no longer supported in a small interval around zero. On the contrary, it is supported strictly away from zero, and it is precisely this property that we seek to exploit in order to derive suitable bounds. From \eqref{eq:G01-near0}, the Green's function $\G_0^1$ applied to $h_2(r)$ is bounded by an expression of the form
\[
\begin{split}
    |w_\eps(r-r_0)& \G_0^1h_2(r)| \\
    & \lesssim \frac{\eps^{-2/3}w_\eps(r-r_0)}{\left( 1 + \eps^{-4/3} r^4 \tau^2\right)^{1/4}} r^{-\eps^{-1/3}}\int_{2\eps^\gamma}^{\max\{r,2\eps^\gamma\}} w_\eps(s-r_0)^{-1}s^{-1+\eps^{-1/3}} |s^2 w_\eps(s-r_0) h_2(s)|\dd s \\
    & \quad + \frac{\eps^{-2/3}w_\eps(r-r_0)}{\left( 1 + \eps^{-4/3} r^4 \tau^2\right)^{1/4}}r^{\eps^{-1/3}}\int_{\max\{r,2\eps^\gamma\}}^\infty w_\eps(s-r_0)^{-1}s^{-1-\eps^{-1/3}} |s^2 w_\eps(s-r_0) h_2(s)|\dd s.
\end{split}
\]
Recalling the bounds \eqref{eq:near0 decay bound} for $|\G_0^1f(s)|$ when $s\geq \eps^\gamma$, we have that for $r\in [0,2\eps^\gamma)$,
\[
\begin{split}
    |w_\eps(r-r_0)\G_0^1h_2(r)| & \lesssim \eps^{-2/3} w_\eps(r-r_0)r^{\eps^{-1/3}}\int_{2\eps^\gamma}^\infty w_\eps(s-r_0)^{-1}s^{-1-\eps^{-1/3}} |w_\eps(s-r_0)\G_0^1f(s)| \dd s. \\
    & \lesssim \eps^{-1}\|f\|_Y w_\eps(r_0)\int_{2\eps^\gamma}^\infty s^{-1-\eps^{-1/3}} \left(\frac{\eps^\gamma}{s}\right)^{\frac{1}{2}\eps^{-1/3}} \dd s\\
    & \lesssim \eps^{-\frac{2}{3}}\|f\|_Y w_\eps(r_0) 2^{-\frac{1}{2}\eps^{-1/3}}.
\end{split}
\]
In the second inequality we used both the facts that $w_\eps(r-r_0) \leq w_\eps(r_0)$ for $r \leq r_0$, and that $w_\eps(s-r_0)^{-1}\leq 1$ for all $s \geq 0$.
From the definition of the weight function \eqref{eq:weight} it automatically follows that $w_\eps(r_0) \lesssim \eps^{-N/3}$. We conclude that, since the prefactor $2^{-\frac{1}{2}\eps^{-1/3}}$ decays geometrically fast to zero as $\eps\to 0$, we can bound this term uniformly as $\eps\to 0$ for any $r \leq 2\eps^\gamma$ by
\[
|w_\eps(r-r_0)\G_0^1h_2(r)|\leq \eps^{-2/3}2^{-\frac{1}{4}\eps^{-1/3}}\|f\|_Y
\]
If $r\geq 2\eps^\gamma$ instead, we can compute the estimate 
\begin{align*}
    |w_\eps(r-r_0)\G_0^1h_2(r)| &\lesssim \eps^{-2/3} w_\eps(r-r_0) r^{-\eps^{-1/3}}\int_{2\eps^\gamma}^r s^{-1+\eps^{-1/3}}  w_\eps(s-r_0)^{-1}| w_\eps(s-r_0)\G_0^1f(s)|\dd s \\
    & \quad + \eps^{-2/3} w_\eps(r-r_0) r^{\eps^{-1/3}}\int_r^\infty s^{-1-\eps^{-1/3}}  w_\eps(s-r_0)^{-1}| w_\eps(s-r_0)\G_0^1f(s)| \dd s,
\end{align*}
thus, using the previous estimates for $\G_0^1f$ we obtain
\[
\begin{split}
    |w_\eps(r-r_0)\G_0^1h_2(r)| &\lesssim \eps^{-1}\|f\|_Yw_\eps(r-r_0) r^{-\eps^{-1/3}}\int_{2\eps^\gamma}^r s^{-1+\eps^{-1/3}}\left(\frac{\eps^\gamma}{s}\right)^{\frac{1}{2}\eps^{-1/3}} \dd s \\
    & \quad + \eps^{-1}\|f\|_Y w_\eps(r-r_0) r^{\eps^{-1/3}}\int_r^\infty s^{-1-\eps^{-1/3}}\left(\frac{\eps^\gamma}{s}\right)^{\frac{1}{2}\eps^{-1/3}}  \dd s \\
    & \lesssim \eps^{-2/3}\|f\|_Y w_\eps(r-r_0) \left(\frac{\eps^\gamma}{r}\right)^{\frac{1}{2}\eps^{-1/3}}.
\end{split}
\]
But observe from Lemma \ref{lemma:properties-weight} about the properties of the wight function, since $w_\eps(\eps^\gamma-r_0)\geq 1$, we can readily deduce that for any $r \geq 2\eps^\gamma$ there holds 
\[
w_\eps(r-r_0) \leq \left (\frac{\eps^\gamma}{r}\right )^{-\frac{1}{4}\eps^{-1/3}}.
\]
Therefore, we deduce that
\[
|w_\eps(r-r_0)\G_0^1h_2(r)| \lesssim \eps^{-2/3}\left(\frac{\eps^\gamma}{r}\right)^{\frac{1}{4}\eps^{-1/3}}\|f\|_Y
\]
for any $r \geq 2\eps^\gamma$. All in all, we obtain a final estimate for the Green's function $\G_0^1$ acting on $r^{-2}\G_0^1f$ of the form
\[
\begin{split}
    \left|w_\eps(r-r_0)\G_0^1\left(\frac{1}{r^2}\G_0^1f\right)\right| & \lesssim \eps^{-2/3}\|f\|_Y \left( 1+2^{-\frac{1}{4}\eps^{-1/3}}\right) \dsOne_{\{0\leq r<2\eps^\gamma\}}(r) \\
    & \quad + \eps^{-2/3}\|f\|_Y \left(\frac{\eps^\gamma}{r}\right)^{\frac{1}{4}\eps^{-1/3}} \dsOne_{\{r\geq 2\eps^\gamma\}}(r),
\end{split}
\]
which implies in particular
\[
\begin{split}
    \left|w_\eps(r-r_0)\G_0^1\left(\frac{1}{r^2}\G_0^1f\right)\right| & \lesssim \eps^{-2/3}\|f\|_Y \dsOne_{\{0\leq r<2\eps^\gamma\}}(r) \\
    & \quad + \eps^{-2/3}\|f\|_Y  \left(\frac{\eps^\gamma}{r}\right)^{\frac{1}{4}\eps^{-1/3}} \dsOne_{\{r\geq 2\eps^\gamma\}}(r).
\end{split}
\]
Finally, in order to obtain the claim for the $X$ norm, we need to obtain an analogous estimate with an additional $r^2$ weight. This is thus straightforwardly deduced from the previous estimate,
\[
\begin{split}
    \left|r^2w_\eps(r-r_0)\G_0^1\left(\frac{1}{r^2}\G_0^1f\right)\right| & \lesssim \eps^{2\gamma-2/3}\|f\|_Y \dsOne_{\{0\leq r<2\eps^\gamma\}}(r) \\
    & \quad + \eps^{2\gamma-2/3}\|f\|_Y  \left(\frac{\eps^\gamma}{r}\right)^{\frac{1}{4}\eps^{-1/3}} \dsOne_{\{r\geq 2\eps^\gamma\}}(r).
\end{split}
\]
Now we can go back to the explicit expression for the Green's function $\tilde\G_0$ in \eqref{eq:Green's function near 0} and bring together all the estimates we derived thus far. Something to bear in mind is that the first estimate derived for $\tilde\G_0^1f$ is split into the intervals $[0,\eps^\gamma)$ and $[\eps^\gamma,\infty)$. However a quick inspection reveals that this estimate is also true if split into the intervals $[0,2\eps^\gamma)$ and $[2\eps^\gamma,\infty)$. We analyse the second component of \eqref{eq:Green's function near 0}, since it is ever so slightly more complicated and the estimate for the first component follows analogously.
\[
\begin{split}
    |w_\eps(r-r_0)\tilde \G_0f_{\pm}| & \lesssim |w_\eps(r-r_0)\G_0^1f_-| + \eps^{1/3}\left|w_\eps(r-r_0)(\G_0^1\left(\frac{1}{r^2}\G_0^1(f_+)\right)\right| \\
    & \lesssim \left(\eps^{-1/3} \dsOne_{\{0\leq r<2\eps^\gamma\}}(r) + \eps^{-1/3}\left(\frac{\eps^\gamma}{r}\right)^{\frac{1}{4}\eps^{-1/3}} \dsOne_{\{r\geq 2\eps^\gamma\}}(r) \right) \|f_{\pm}\|_Y.
\end{split}
\]
Here we have used the notation $f_\pm=(f_+,f_-)$.
Moreover, if we put together the pointwise estimates with the $r^2$ weight, we can write
\[
\begin{split}
    |r^2w_\eps(r-r_0)\tilde \G_0f_{\pm}| & \lesssim |w_\eps(r-r_0)\G_0^1f_-| + \eps^{1/3}\left|w_\eps(r-r_0)\G_0^1\left(\frac{1}{r^2}\G_0^1(f_+)\right)\right| \\
    & \lesssim \left(\eps^{2\gamma-1/3} \dsOne_{\{0\leq r<2\eps^\gamma\}}(r) + \eps^{2\gamma-1/3}\left(\frac{\eps^\gamma}{r}\right)^{\frac{1}{4}\eps^{-1/3}} \dsOne_{\{r\geq 2\eps^\gamma\}}(r) \right) \|f_{\pm}\|_Y.
\end{split}
\]
We obtained in fact \eqref{eq:G0-Y} as claimed in the lemma, and combining the last two estimates we arrive as well at \eqref{eq:G0-Linf}.

\emph{Part 2: The error estimate}. The next claim to be addressed corresponds to the error estimate \eqref{eq:G0-error}. We need to compare the exact operator with the approximated operator that we constructed for functions $f$ supported in a neighbourhood of $r=0$. The goal is to find an estimate for 
\[
\left ((\tilde\L-\lambda)-\tilde \L_0 \right )\tilde\G_0 f_\pm.
\]
Indeed, if we find a suitable estimate on this difference, by \eqref{eq:change of variables Green's function} we conclude \eqref{eq:G0-error}.
Combining \eqref{eq:near0-addsubtract} with \eqref{eq:L0} and writing $\lambda = \mu\eps^{1/3}$, we find that
\[
\begin{split}
(\tilde\L-\lambda)\begin{pmatrix}
    V_+\\
    V_-
\end{pmatrix} - \tilde\L_0\begin{pmatrix}
    V_+\\
    V_-
\end{pmatrix} & = - \left( \frac{\eps}{r^2} + \eps^{1/3}M^2\left(\frac{\Omega_0'}{U_0'}\right)^2 + \mu\eps^{1/3} \right) \begin{pmatrix}
    V_+\\
    V_-
\end{pmatrix} \\
& \quad - i\eps^{-1/3}(T(r)-\tau)\begin{pmatrix}
    V_+\\
    V_-
\end{pmatrix} - \alpha\eps^{1/3}\left(r\Omega' + \frac{2iM\eps^{2/3}}{r^2}\right)\begin{pmatrix}
    V_-\\
    0
\end{pmatrix}.
\end{split}
\]
Introducing weights corresponding to the $Y$ norm, we observe that it suffices to estimate 
\begin{equation}
\label{eq:error expression near zero}
\begin{split}
&\left| r^2w_\eps(r-r_0) \left ((\tilde\L-\lambda)-\tilde \L_0 \right )\tilde\G_0 f_\pm \right| \lesssim \left |r^2w_\eps(r-r_0)\eps^{-1/3}(T(r)-\tau)\tilde{\G_0}f_\pm\right |   \\
& \quad \lesssim \left |r^2 w_\eps(r-r_0)\left( \frac{\eps}{r^2} + \eps^{1/3}M^2\left(\frac{\Omega_0'}{U_0'}\right)^2 + \mu\eps^{1/3}\right) \tilde{\G_0}f_\pm\right |\\
& \qquad +\left |w_\eps(r-r_0)\left (\alpha \eps^{1/3}(r^3\Omega'(r)+2iM\eps^{2/3}\right )\tilde{\G_0}f_\pm\right |.
\end{split}
\end{equation}
Using \eqref{eq:G0-Linf}, \eqref{eq:G0-Y} the first second from \eqref{eq:error expression near zero} may be controlled by
\begin{align*}
\eps |w_\eps(r-r_0)\tilde{\G_0}f_\pm|+\eps^{1/3}\left | M^2\left(\frac{\Omega_0'}{U_0'}\right)^2 + \mu\right | \left |r^2 w_\eps(r-r_0)\tilde \G_0 f_\pm  \right|\lesssim (\eps^{2/3}+\eps^{2\gamma})\|f_\pm\|_{Y}.
\end{align*}
To find the corresponding bound for the first addend in \eqref{eq:error expression near zero}, we first note that by a Taylor expansion near zero, there holds $|T(r)-\tau|\lesssim r\mathds{1}_{\{0 \leq r \leq 2\eps^\gamma\}} + \BigO_{\eps\to 0}(\eps^{2\gamma})$. On the other hand, far away from the origin we use Hypothesis \ref{H2}, which in turn produces the estimate
\[
|T(r)-\tau| \lesssim r\dsOne_{\{0\leq r<2\eps^\gamma\}}(r) + \left(1+\sum_{n=1}^{N_1} B_n(\eps)r^n \right)\dsOne_{\{r \geq 2\eps^\gamma\}}(r)
\]
Putting this together with \eqref{eq:G0-Y} we find
\[
\begin{split}
    \eps^{-1/3}|T(r)-\tau| \left|r^2w_\eps(r-r_0)\tilde\G_0 f_\pm\right| & \lesssim  \eps^{2\gamma-2/3} r \dsOne_{\{0\leq r<2\eps^\gamma\}}(r) \|f_\pm\|_Y \\
& \quad +  \eps^{2\gamma-2/3} \left(1+\sum_{n=1}^{N_1} B_n(\eps)r^n \right)\left(\frac{\eps^\gamma}{r}\right)^{\frac{1}{4}\eps^{-1/3}} \dsOne_{\{r\geq 2\eps^\gamma\}}(r) \|f_\pm\|_Y  \\
& \lesssim  \eps^{3\gamma-2/3} \|f_\pm\|_Y, \\
\end{split}
\]
where we used the estimate from Property \ref{P3}
\[
\sum_{n=1}^{N_1} B_n(\eps) \lesssim \eps^{\beta-N_1\delta}.
\]
Hence, we obtain
\[
 \eps^{-1/3}|T(r)-\tau| \left|r^2w_\eps(r-r_0)\tilde\G_0 f_\pm\right| \lesssim \eps^{3\gamma-2/3}\|f_\pm\|_Y.
\]
The last term to be addressed can be split in two. On the one hand, the term without the $\Omega'$ can be directly bounded by
\[
\eps^{1/3}\left|2iM\eps^{2/3}\right|\left|w_\eps(r-r_0)\tilde\G_0 f_\pm\right| \lesssim \eps\left|w_\eps(r-r_0)\tilde\G_0 f_\pm\right|,
\]
which together with \eqref{eq:G0-Linf} yields
\[
\eps\left|w_\eps(r-r_0)\tilde \G_0 f_\pm\right| \lesssim \eps^{2/3}\|f_\pm\|_Y.
\]
On the other hand, the term with the azimuthal component of the vector field can be estimated by using Property \ref{P4} far away from the origin, and the mean value theorem near the origin. Namely, there exist $N_2\in \N$ such that
\[
|r\Omega'(r)| \lesssim  r\dsOne_{\{0\leq r<2\eps^\gamma\}}(r) + C\left (1 + \sum_{n=1}^{N_2} D_nr^n\right)\dsOne_{\{2\eps^\gamma\leq r\}}(r).
\]
Hence, putting this together with \eqref{eq:G0-Y} we can write the estimate
\[
\begin{split}
    \eps^{1/3}\left|r\Omega'\right|\left|r^2w_\eps(r-r_0)\tilde\G_0f_\pm\right| & \lesssim r\eps^{2\gamma}\dsOne_{\{0\leq r<2\eps^\gamma\}}(r)\|f_\pm\|_Y \\
& \quad + \eps^{-1/3}\left(1 + \sum_{n=1}^{N_2} D_nr^n\right)\left(\frac{\eps^\gamma}{r}\right)^{\frac{1}{4}\eps^{-1/3}}\dsOne_{\{2\eps^\gamma\leq r\}}(r)\|f_\pm\|_Y \\
& \lesssim \left( \eps^{3\gamma} + \eps^{-1/3-N_2\delta}\left(\frac{1}{2}\right)^{\frac{1}{8}\eps^{-1/3}} \right) \|f_\pm\|_Y.
\end{split}
\]

As in the previous scenario, the term that is exponentially decaying becomes negligible in comparison with the other term. All in all we find that 
\[
r^2w_\eps(r-r_0)\left|(\tilde\L-\lambda)\tilde\G_0 f_\pm - f_\pm\right| \lesssim \left( \eps^{2/3} + \eps^{2\gamma} + \eps^{3\gamma-2/3} \right)\|f_\pm\|_Y,
\]
and so we conclude the proof of the Proposition by \eqref{eq:change of variables Green's function}.
\end{proof}

\subsection{Transport functions with linear zeroes.}\label{s:linear-zeroes}

Thus far we have assumed that $T(r) \neq 0$ for all $r \neq r_0$, however Assumption \ref{SH} is not strictly necessary. In fact, we shall show that if there exists a (finite) collection of points $s_j>0$ with $T(s_j)=0$, and $T'(s_j) \neq 0$, we can still construct a local inverse to our operator in a neighbourhood of $s_j$, which can be glued together with the remaining inverses as before. This yields the only bit remaining in order to obtain the claim of Proposition \ref{proposition:approx-Green's}

As mentioned at the beginning of Section \ref{s:approx-Green's}, the form of the Green's operator $\G^\lambda$ depends on the specific form of the transport function $T(r)$. We want to adapt the proof of the previous sections to the general case allowing for linear zeros, and we will do so by dividing the support of $f$ in different regions as earlier, but taking into account the new zeros. In particular, around the origin $\supp(f)\subset[0,\eps^\gamma)$, and around the critical radius $\supp(f)\subset[r_0-\eps^\gamma,r_0+\eps^\gamma)$, we define $\G_0$ and $\G_2^\lambda$ just like in Sections \ref{sec:near-0} and \ref{s:around-r0} respectively, whereas around every linear zero $s_j$ we introduce new operators $\G^\LV_j$ defined for $\supp(f)\subset [s_j-\eps^\omega,s_j+\eps^\omega)$. The regions between the origin and the first zero of $T$, as well as between any two zeros, will be covered by $\G_1$. The region from the largest zero $\max\{r_0,s_j\}$ to infinity will be covered by $\G_3$ as in the previous section. All in all, we can write the Green's function as in Section \ref{s:proof}
\[
\G^\lambda f = \G_2^\lambda (f\dsOne_{[r_0-\eps^\gamma,r_0+\eps^\gamma)}) + \G_{\mathrm{extra}}(f\dsOne_{\R_+\setminus [r_0-\eps^\gamma,r_0+\eps^\gamma)}),
\]
where crucially all Green's operators included in $\G_{\mathrm{extra}}$ are independent of the choice of $\lambda$.

Notice that we can glue together all these Green's function due to the exponential decay exhibited for all $r\not\in\supp(f)$ on each corresponding case---see Remark \ref{rmk:exp-decay} for further details. Therefore, to prove Proposition \ref{proposition:approx-Green's} it only remains to focus on the local problem around one of the linear zeros $s_j$, since every other region follows from the analysis in the previous sections. 

Suppose that $s_j>0$ is a linear zero of $T(r)$, and set $MT'(s_j)=\tau_j$, which without loss of generality we take $\tau_j >0$. In this section we will assume that $\supp(f)\subset [s_j-\eps^\omega,s_j+\eps^\omega)$, for some $\omega>0$ to be specified later, and we define the following operator 
\begin{equation}
\label{eq:airy approximation}
\mathcal{L}^{\mathrm{LV}}_j=\eps \partial_r^2 -i\eps^{-1/3}\tau_j(r-s_j).
\end{equation}
that we claim it approximates the true $\L$ sufficiently well if $f$ is supported around $s_j$.
We now set $v(r)$ as a solution of the differential equation
\begin{equation*}
\partial_r^2 v-r v=0,
\end{equation*}
and let $w(r)=v(\gamma (r-s_j))$. Then, 
\begin{equation*}
\L^{\mathrm{LV}}_j w(r)=\eps \gamma^2 v''(\gamma(r-s_j))-i \eps^{-1/3}\gamma^{-1}\tau_j (\gamma(r-s_j)v(\gamma(r-s_j))).
\end{equation*}
In particular, as soon as $i\eps^{-4/3}\gamma^{-3}\tau_j=1$, then $\L^{\LV}_j w=0$. Therefore, we pick $\gamma=i^{1/3}\tau_j^{1/3}\eps^{-4/9}$, where we also make the choice of the branch in the cube root so that $\arg(i^{1/3})=\pi/6$.
Now, the solutions of 
\begin{equation*}
\partial_r^2 v-rv=0
\end{equation*}
are the so called \emph{Airy functions}. In particular, as further elaborated in Appendix \ref{s:Appendix airy}, we have two linearly independent solutions given by 
\begin{equation*}
v_1(r)=\Ai(r), \quad v_2(r)=\Ai(\e^{2 \pi i/3}r). 
\end{equation*}
Furthermore, subtracting the equations 
\begin{align}
v_2 v_1''-rv_1v_2=0\\
v_1 v_2''-rv_2v_1=0
\end{align}
we see that $\frac{\dd}{\dd r}W(v_1,v_2)(r)=0$, where $W(v_1,v_2)=v_1'v_2-v_2'v_1$ is the Wronskian. Hence it is clear that the Wronskian is given by $W(\Ai(r),\Ai(\e^{2 \pi i/3}r))=\text{constant}$.
Furthermore, from Lemma \ref{lemma:Airy bounds} in Appendix \ref{s:Appendix airy}, we have the following asymptotic bounds valid uniformly in $z$ with $|\arg(z)|=\frac{\pi}{6}$,
\begin{align*}
&|\Ai(z)|, |\Ai(-\e^{2\pi i/3}z)| \leq C\e^{-\frac{\sqrt{2}}{3}|z|^{3/2}},\\
&|\Ai(-z)|, |\Ai(\e^{2\pi i/3}z)| \leq C\e^{\frac{\sqrt{2}}{3}|z|^{3/2}},\\
&|\Ai'(z)|, |\Ai'(-\e^{2\pi i/3}z)|\leq C(1+|z|^\frac{1}{4}) \e^{-\frac{\sqrt{2}}{3} |z|^{3/2}},\\
&|\Ai'(-z)|, |\Ai'(\e^{2\pi i/3}z)|\leq C(1+|z|^\frac{1}{4}) \e^{\frac{\sqrt{2}}{3} |z|^{3/2}},
\end{align*}
where we write $\Ai'(z)$ to denote the (complex) derivative of $\Ai(z)$ with respect to $z$.
Therefore, under the boundary condition that our solutions should decay as $r \to \pm \infty$, we obtain the Green's function 
\begin{equation*}
\G^{\LV}_j f(r) = C\eps^{-5/9} \left[a_1(r)\int_{s_j-\eps^\omega}^ra_2(s) f(s) \dd s+a_2(r)\int_r^{s_j+\eps^\omega}a_1(s)f(s)\dd s \right],
\end{equation*}
where 
\[
a_1(r)=\Ai \left(i^{1/3}\tau_j^{1/3}\eps^{-4/9}(r-s_j)\right), \quad a_2(r)=\Ai \left(\e^{2\pi i/3}i^{1/3}\tau_j^{1/3}\eps^{-4/9}(r-s_j)\right).
\]
Recall that here we assume that $\supp(f)\subset [s_j-\eps^\omega,s_j+\eps^\omega)$, for some $\omega>0$ to be specified later. 

We now shall state the main result of the section.

\begin{lemma}\label{lemma:linear-vanish}
Suppose there exists a point $s_j>0$ so that $T(s_j) =0$, but $T'(r_j) \neq 0$, and let $\omega>\frac{2}{9}$.
Then, there exists an operator $\G^{\LV}_j$ so that for all $f$ with $\supp (f) \subset [r_j-\eps^\omega, r_j+\eps^\omega)$ there holds 
\begin{equation}\label{eq:LV-X}
\|\G^{\LV}_j f\|_X \lesssim \eps^{-1/9}\|f\|_Y, \quad \|\partial_r \G^{\LV}_j f\|_{X} \lesssim \eps^{-5/9}\|f\|_Y.
\end{equation}
Furthermore, we have the following error estimate 
\begin{equation}\label{eq:error-LV}
\|((\mathcal{L}-\lambda)\G^{\LV}_j - \Id)f\|_Y \lesssim \left(\eps^{2/9} + \eps^{2\omega-4/9} \right)\|f\|_{Y}.
\end{equation}
\end{lemma}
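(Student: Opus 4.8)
The plan is to mirror, essentially verbatim, the arguments of Propositions \ref{proposition:rinf} and \ref{proposition:rtow0}, but now with the Airy-function Green's function $\G^{\LV}_j$ replacing the piecewise-exponential ones, and with all powers of $\eps$ rescaled according to the natural boundary-layer width $\eps^{4/9}$ appearing in the argument of the Airy functions. First I would record precise pointwise bounds on $\G^{\LV}_j f$ and $\partial_r\G^{\LV}_j f$. Using the asymptotic bounds on $\Ai$, $\Ai'$ from Lemma \ref{lemma:Airy bounds} with $z = i^{1/3}\tau_j^{1/3}\eps^{-4/9}(r-s_j)$, the solutions $a_1(r)$, $a_2(r)$ decay (resp.\ grow) like $\exp(\mp \tfrac{\sqrt2}{3}\tau_j^{1/2}\eps^{-2/3}|r-s_j|^{3/2})$, so the Green's kernel produces, for $f$ supported in $[s_j-\eps^\omega,s_j+\eps^\omega)$, a bound of the schematic form
\[
|w_\eps(r-r_0)\G^{\LV}_j f(r)| \lesssim \frac{\eps^{-5/9}}{|\tau_j|^{?}} \cdot \eps^\omega \cdot \e^{-c\eps^{-2/3}\bigl(|r-s_j|^{3/2}-\eps^{3\omega/2}\bigr)}\|f\|_Y + (\text{const.\ on }[s_j-2\eps^\omega,s_j+2\eps^\omega)).
\]
The precise book-keeping of the $\eps$-powers: the $\eps^{-5/9}$ prefactor from the definition, the Wronskian constant (which is $\eps$-independent since it is the Wronskian of fixed Airy functions composed with a scaling — one must divide by $W(a_1,a_2) \sim \eps^{-4/9}$, leaving $\eps^{-5/9+4/9}=\eps^{-1/9}$ after integrating over an interval of length $\eps^\omega$ with $\omega>2/9$), times the length $\eps^\omega$ of the support, together with the integrability of the Airy decay, gives $\|\G^{\LV}_j f\|_X \lesssim \eps^{-1/9}\|f\|_Y$, and one extra $\eps^{-4/9}$ from differentiating $a_1,a_2$ yields the $\eps^{-5/9}$ bound for $\partial_r\G^{\LV}_j f$. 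As in Section \ref{sec:tow-inf}, I would also need the weight-comparison Lemma \ref{lemma:properties-weight} to absorb the ratio $w_\eps(r-r_0)/w_\eps(s-r_0)$ into half of the Airy exponential, which works since $|T| \gtrsim \eps^\gamma$ away from $s_j$ (Lemma \ref{lemma:consequences of assumptions}, \ref{P6}) forces the exponential rate $\eps^{-2/3}$ to dominate $\eps^{-\gamma}$ for $\eps$ small; the $\max\{1,r^2\}$ weight is harmless here since $s_j$ is a fixed finite point, so $\max\{1,r^2\}\lesssim 1$ on the relevant support, with the polynomial tail absorbed into the exponential exactly as in Step 4 of the proof of Proposition \ref{proposition:r0}.

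For the error estimate I would write $\L - \lambda - \L^{\LV}_j$ explicitly using the representation from Section \ref{s:equations}: the discarded terms are $\eps r^{-1}\partial_r$, the curvature/Laplacian terms $\eps r^{-2}$ and $\eps^{1/3}M^2(r^{-2}+(\Omega_0'/U_0')^2)$, the eigenvalue shift $\lambda = \mu\eps^{1/3}$, the cross terms $P_\eps,Q_\eps$ (hence $D_\eps^\pm$, $\alpha\eps^{1/3}r\Omega'$), and crucially the quadratic correction in the transport function, $i\eps^{-1/3}M\bigl(T(r)-T'(s_j)(r-s_j)\bigr)$, which by \ref{P1} of Lemma \ref{lemma:consequences of assumptions} is $O(\eps^{-1/3}|r-s_j|^2) = O(\eps^{-1/3+2\omega})$ on the support. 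Multiplying each discarded term by $\G^{\LV}_j f$ and its derivative, using the $X$-bounds just established (here $\eps^{-1/9}$ and $\eps^{-5/9}$), and the fact that everything is supported near the fixed point $s_j$ (so $r\Omega'$, $r^{-2}$ etc.\ are bounded), one gets contributions $\eps\cdot\eps^{-5/9} = \eps^{4/9}$ from the first-derivative term, $\eps^{1/3}\cdot\eps^{-1/9}=\eps^{2/9}$ from the $\eps^{1/3}$-order terms, and $\eps^{-1/3+2\omega}\cdot\eps^{-1/9}\cdot(\text{the }r^2\text{ factor in }Y)$ — but more carefully, paralleling the $T(r)-T(r_j)$ analysis in Proposition \ref{proposition:rinf}, the contribution is $\eps^{-1/3}\cdot|r-s_j|^2\cdot|\G^{\LV}_jf|$ which, since $|r-s_j|\le\eps^\omega$ on the support and the exponential kills off-support contributions, gives $\eps^{-1/3+2\omega}\cdot\eps^{-1/9} = \eps^{2\omega - 4/9}$. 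Collecting, $\|((\L-\lambda)\G^{\LV}_j-\Id)f\|_Y \lesssim (\eps^{2/9} + \eps^{2\omega-4/9})\|f\|_Y$, which is exactly \eqref{eq:error-LV} and goes to zero precisely when $\omega > 2/9$.

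I expect the main obstacle to be getting the $\eps$-power bookkeeping for the Green's function norm exactly right — in particular correctly accounting for the $\eps$-dependence hidden in the Wronskian of the rescaled Airy functions and in the scaling $\tfrac{\dd z}{\dd r} = i^{1/3}\tau_j^{1/3}\eps^{-4/9}$, and then verifying that integrating the Airy decay against the weight $w_\eps$ over the $\eps^\omega$-window really produces the stated $\eps^{-1/9}$ (and not something worse) uniformly. A secondary technical point is ensuring the weight-ratio estimate $w_\eps(r-r_0)/w_\eps(s-r_0) \lesssim \e^{\tfrac12 c\eps^{-2/3}|r-s_j|^{3/2}}$ holds uniformly for $r$ ranging over all of $[0,\infty)$ (not just near $s_j$), which follows from Lemma \ref{lemma:properties-weight} combined with $\eps^{-2/3}\gg\eps^{-\gamma}$; this is the analogue of \eqref{eq:linear interpolation weight bound 2} and is routine once set up, but must be stated so that the gluing in Section \ref{s:linear-zeroes} — where $\G^{\LV}_j$ is combined with $\G_0,\G_1,\G_2^\lambda,\G_3$ — goes through with the exponentially-small off-support tails, exactly as in the Neumann-series argument of Section \ref{s:heuristics}. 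Since $\omega>\tfrac29$ and also $\omega<\gamma$ as required in Proposition \ref{proposition:approx-Green's}, all the error exponents are positive and the construction closes. I would also remark that the argument for both components of the (decoupled, at leading order) system is identical, and that the full Proposition \ref{proposition:approx-Green's} in the general case then follows by combining this lemma with Propositions \ref{proposition:r0}, \ref{proposition:rinf}, \ref{proposition:rtow0}, \ref{prop:G0} exactly as described at the end of Section \ref{s:linear-zeroes}.
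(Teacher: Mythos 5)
Your proposal follows the same route as the paper's proof: the same Airy-function Green's function for the linearised operator $\eps\partial_r^2 - i\eps^{-1/3}\tau_j(r-s_j)$, the same asymptotic Airy bounds to establish $\|\G^{\LV}_jf\|_X\lesssim\eps^{-1/9}\|f\|_Y$ and $\|\partial_r\G^{\LV}_jf\|_X\lesssim\eps^{-5/9}\|f\|_Y$, and the same identification and weighing of the discarded terms (in particular the quadratic Taylor remainder $\eps^{-1/3}|T(r)-T'(s_j)(r-s_j)|\lesssim\eps^{-1/3+2\omega}$ via \ref{P1}, giving $\eps^{2\omega-4/9}$), so the approach and all final exponents agree with the paper. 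The one muddled point is your bookkeeping for the $\eps^{-1/9}$ bound: the $\eps^{-5/9}$ prefactor already absorbs the $1/(\eps W(a_1,a_2))$ factor (with $W\sim\eps^{-4/9}$), and the remaining $\eps^{4/9}$ gain then comes from the Jacobian $\dd s = \eps^{4/9}\dd v$ in the rescaled Airy integral rather than from ``dividing by the Wronskian'' again — but this is expository confusion, not a mathematical gap, and the final answer is unchanged.
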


\begin{remark}\label{rmk:why Gilbert condition}
An interesting consequence of our analysis of the Airy functions is its connection to Gilbert's condition $M\Omega'(r_0)+KU'(r_0)=0$ from Section \ref{s:gilbert-scaling}, see \eqref{eq:Gilbert2}. Indeed, if we assume $M\Omega'(r_0)+KU'(r_0) \neq 0$, then by an order of magnitude analysis, the dominant contribution of the equation near $r_0$ would be given by $\eps \partial_r^2 -i(M\Omega'(r_0)+KU'(r_0))\eps^{-1/3}(r-r_0)$, just as in \eqref{eq:airy approximation}. Hence, to first order, any growing mode must look like a 
\[
v(i^{1/3}\tau_j^{1/3}\eps^{-4/9}(r-r_0))
\]
near $r_0$, where $v$ is a solution to Airy's equation. But from Lemma \ref{lemma:Airy bounds}, we see that any such solution will grow exponentially either for $r>r_0$ or $r<r_0$. Therefore, we do not expect such an approximate to be a good candidate for a ``boundary layer" solution, i.e.\ a solution for which the growth occurs in a neighbouring area of a critical radius $r_0$.
\end{remark}
\begin{proof}
We proceed following the ideas of Section \ref{s:around-r0}. Set 
\[
\kappa =\frac{\sqrt{2}}{3}\tau_j^{1/3},
\]
then we find the estimate for the Green's function
\[
|\G^{\LV}_jf(r)| \lesssim C\|f\|_Y \eps^{-5/9} \left(\I_\eps + \I'_\eps + \II_\eps + \III_\eps + \IV_\eps + \IV'_\eps\right),
\]
where
\[
\I_\eps = \dsOne_{\{s_j-\eps^\omega\leq r<s_j\}}\e^{\kappa \eps^{-2/3}(s_j-r)^{3/2}}\int_{s_j-\eps^\omega}^{r} \frac{\e^{-\kappa \eps^{-2/3}|s_j-s|^{3/2}}}{s^2w_\eps(s-r_0)}\dd s,
\]
\[
\I'_\eps = \dsOne_{\{s_j\leq r\}}  \e^{-\kappa \eps^{-2/3}(r-s_j)^{3/2}}\int_{s_j-\eps^\omega}^{s_j} \frac{\e^{-\kappa \eps^{-2/3}(s_j-s)^{3/2}}}{s^2w_\eps(s-r_0)}\dd s,
\]
\[
\II_\eps = \e^{-\kappa \eps^{-2/3}(r-s_j)^{3/2}} \int_{\min\{r,s_j\}}^{\min\{r,s_j+\eps^\omega\}} \frac{\e^{\kappa \eps^{-2/3}(s-s_j)^{3/2}}}{s^2w_\eps(s-r_0)} \dd s,
\]
\[
\III_\eps = \e^{-\kappa \eps^{-2/3}(s_j-r)^{3/2}} \int_{\max\{r,s_j-\eps^\omega\}}^{\max\{r,s_j\}} \frac{\e^{\kappa \eps^{-2/3}(s_j-s)^{3/2}}}{s^2w_\eps(s-r_0)} \dd s,
\]
\[
\IV_\eps = \dsOne_{\{s_j\leq r < s_j+\eps^\omega\}}\e^{\kappa \eps^{-2/3}(r-s_j)^{3/2}}\int_{r}^{s_j+\eps^\omega} \frac{\e^{-\kappa \eps^{-2/3}|s_j-s|^{3/2}}}{s^2w_\eps(s-r_0)}\dd s,
\]
\[
\IV'_\eps = \dsOne_{\{r<s_j\}}\e^{-\kappa \eps^{-2/3}(s_j-r)^{3/2}} \int_{s_j}^{s_j+\eps^\omega} \frac{\e^{-\kappa \eps^{-2/3}(s-s_j)^{3/2}}}{s^2w_\eps(s-r_0)} \dd s.
\]
It thus only remains to estimate these terms. Since the arguments are very similar to those we have seen before in Section \ref{s:around-r0}, we shall restrict ourselves to showing the required bounds for $r \in \supp(f)$. For $r \notin \supp(f)$, the Green's function will decay exponentially as in Section \ref{s:around-r0}, and thus the errors are negligible. Without loss of generality we also assume that $s_j>r_0$, since the case $s_j<r_0$ follows in a similar manner. 

We start by considering $r\in [s_j-\eps^\omega,s_j)$. In this case, $\I'_\eps=0$, $\II_\eps=0$ and $\IV_\eps=0$. For the first term we find, after introducing the weight corresponding to the $X$ norm,
\begin{equation*}
w_\eps(r-r_0)\I_\eps = w_\eps(r-r_0)\e^{\kappa \eps^{-2/3}(s_j-r)^{3/2}}\int_{s_j-\eps^\omega}^{r} \frac{\e^{-\kappa \eps^{-2/3}|s_j-s|^{3/2}}}{s^2 w_\eps(s-r_0)} \dd s.
\end{equation*}
However $s^{-2}$ is a decreasing function bounded by its value in the left extreme of the integral, and moreover, as we show in Lemma \ref{lemma:properties-weight}, there exists a constant $C$ only depending on $N$ such that
\begin{equation*}
\frac{w_\eps(r-r_0)}{w_\eps(s-r_0)} \leq C
\end{equation*}
for any $s_j-\eps^\omega\leq s\leq r<s_j$. Notice that $C$ is independent of $\eps$ provided that $\eps$ is small enough. Up to the change of variables $v=\eps^{-4/9}(s_j-s)$, $u=\eps^{-4/9}(s_j-r)$, we find the estimate
\begin{align*}
w_\eps(r-r_0)\I_\eps \lesssim \eps^{4/9} \e^{\kappa u^{3/2}} \int_{u}^{\eps^{\omega-4/9}} \e^{-\kappa v^{3/2}} \dd v \lesssim \eps^{4/9} \e^{\kappa u^{3/2}} \int_{u}^{\infty} \e^{-\kappa v^{3/2}} \dd v.
\end{align*}
Furthermore, arguing as in Section \ref{s:around-r0}, an application of L'H\^opital's rule shows that the function 
\begin{equation}
\label{eq:lhopital Airy}
u \mapsto \e^{\kappa u^{3/2}} \int_u^\infty \e^{-\kappa z^{3/2}} \dd z
\end{equation}
is uniformly bounded in $u$, and decays like $u^{-1/2}$ as $u \to \infty$. Therefore, we observe that the first term is bounded by 
\begin{equation*}
\|f\|_Y \eps^{-5/9} w_\eps(r-r_0)\I_\eps \lesssim \eps^{-1/9}\|f\|_{Y},
\end{equation*}
and differentiating the expression in $\kappa$, we observe uniformity of the estimate in compact intervals of $\kappa$ as in Section \ref{s:around-r0}.
For the third term we argue similarly,
\[
\begin{split}
    w_\eps(r-r_0)\III_\eps & \leq w_\eps(r-r_0)\e^{-\kappa \eps^{-2/3}(s_j-r)^{3/2}} \int_r^{s_j} \frac{\e^{\kappa \eps^{-2/3}(s_j-s)^{3/2}}}{s^2w_\eps(s-r_0)} \dd s \\
    & = \eps^{4/9}\e^{-\kappa u^{3/2}} \int_0^{u} \e^{\kappa v^{3/2}} \dd v,
\end{split}
\]
under the change of variables $v=\eps^{-4/9}(s_j-s)$, $u=\eps^{-4/9}(s_j-r)$. This time, via L'H\^opital we find as well that
\begin{equation}
\label{eq:lhopital Airy2}
u \mapsto \e^{-\kappa s^{3/2}} \int_0^s \e^{\kappa z^{3/2}} \dd z
\end{equation}
is also uniformly bounded in $u$, and moreover, it decays like $u^{-1/2}$ as $u\to \infty$. Hence
\[
\|f\|_Y \eps^{-5/9} w_\eps(r-r_0) \III_\eps \lesssim \eps^{-1/9}\|f\|_{Y}.
\]
Again, uniformity in compact intervals of $\kappa$ follows as in Section \ref{s:around-r0}.
Lastly, the fourth term is easily boundable since both exponentials are decreasing towards infinity, 
\[
\begin{split}
    w_\eps(r-r_0)\IV'_\eps & \leq w_\eps(r-r_0)\e^{-\kappa \eps^{-2/3}(s_j-r)^{3/2}} \int_{s_j}^{s_j+\eps^\omega} \frac{\e^{-\kappa \eps^{-2/3}(s-s_j)^{3/2}}}{s^2w_\eps(s-r_0)} \dd s \\
    & \lesssim \eps^{4/9}\e^{-\kappa u^{3/2}} \int_{0}^{\eps^{\omega-4/9}} \e^{-\kappa v^{3/2}} \dd v \lesssim \eps^{4/9}.
\end{split}
\]
All in all, we obtain the estimate
\[
\|f\|_Y\eps^{-5/9} w_\eps(r-r_0)\IV'_\eps \lesssim \eps^{-1/9}\|f\|_Y \e^{-\eps^{-2/3}(s_j-r)^{3/2}},
\]
which quickly converges to zero as $\eps\to 0$ if $|r- s_j|\gtrsim O_{\eps\to 0}(1)$ due to the effect of the exponential, but is only bounded by $\eps^{-1/9}$ for the limit case $r=s_j$.
The argument exposed for this region carries over to the domain $r\in [s_j,s_j+\eps^\omega)$ by symmetry. Moreover, as mentioned before, if $r\not\in \supp(f)$, following the ideas from Section \ref{s:around-r0} we find that the Green's function decays exponentially, hence we obtain the first estimate in \eqref{eq:LV-X}.

Next we investigate the derivative of the Green's function. We recall the asymptotic bounds for the derivative of the Airy function from Appendix \ref{s:Appendix airy}, i.e.\ 
\begin{equation*}
\Ai'(z) \sim z^{1/4}\e^{-cz^{3/2}}.
\end{equation*}
Observing that the decay of the functions in \eqref{eq:lhopital Airy} and \eqref{eq:lhopital Airy2} is like $u^{-1/2}$ as $u \to \infty$, it follows that upon multiplying the function by $u^{1/4}$, it will still remain bounded uniformly in $u$, and therefore exactly the same arguments as before carry over and yield pointwise bounds for the derivative of the Green's function as stated in \eqref{eq:LV-X}.

It remains to comment on the error estimates \eqref{eq:error-LV}. A direct computation reveals that the error may be bounded by
\begin{equation*}
\begin{split}
    \|((\L-\lambda)\G^{\LV}_j-\Id)f\|_Y & \lesssim \|\eps \partial_r \G^{\LV}_j f\|_{Y}+\|\eps^{1/3}\G^{\LV}_j f\|_Y \\
    & \quad +\|r^{-2}\eps^{1/3}\G^{\LV}_j f\|_Y+\||r-s_j|^2\eps^{-{1/3}}\G^{\LV}_j f\|_Y,
\end{split}
\end{equation*}
where the $|r-s_j|^2$ contribution is \emph{uniform} in $r_0 \in \mathcal{R}$ due to Lemma \ref{lemma:consequences of assumptions}, \ref{P1}.
Due to the bounds on $\G^{\LV}_j f$, the first three error terms are certainly controlled by
\[
\|\eps \partial_r \G^{\LV}_j f\|_{Y} \lesssim \eps^{4/9}\|f\|_Y,
\]
\[
\|\eps^{1/3}\G^{\LV}_j f\|_Y \leq \eps^{1/3}\|\G^{\LV}_j f\|_X \lesssim \eps^{2/9}\|f\|_Y,
\]
\[
\|r^{-2}\eps^{1/3}\G^{\LV}_j f\|_Y \leq \eps^{1/3}\|\G^{\LV}_j f\|_X \lesssim \eps^{2/9}\|f\|_Y.
\]
Finally, by the $L^\infty$ bounds on $\G^{\LV}_j f$, we observe for the final term that we can write
\[
\left||r-s_j|^2 \eps^{-{1/3}}r^2 w_\eps(r-r_0)\G^{\LV}_j f\right| \lesssim \eps^{2\omega}\eps^{-4/9}\|f\|_{Y}
\]
if $|r-s_j|<\eps^\omega$, and due to the exponential decay of the Green's function away from $|r-s_j|<\eps^\omega$, this bound can be extended to all of $[0,\infty)$, which yields the claimed result \eqref{eq:error-LV}. 
\end{proof}

\begin{proof}[Proof of Proposition \ref{proposition:approx-Green's}]
With this result in hand, we can now readily conclude the proof of Proposition \ref{proposition:approx-Green's}. Indeed, by Assumptions \ref{H0}--\ref{H2}, we can cover any interval without zeroes of $T$ that is at least a distance $\eps^\gamma$ away from $\mathcal{F}_0 \cup\{r_0\}$ by linear interpolation steps as in Lemma \ref{lemma:pointGrj-towinf}. Thus, it remains to note that if $|r-s_j|<\eps^\gamma$, $s_j \in \mathcal{F}_0$, then as long as $\eps^\gamma<\eps^\omega$, this regime is covered by Lemma \ref{lemma:linear-vanish}. But recall that $\gamma>2/9$,  and in Lemma \ref{lemma:linear-vanish}, $\omega$ can be chosen arbitrarily in $(2/9,\infty)$. Therefore, simply picking $2/9<\omega<\gamma$, the claim of Proposition \ref{proposition:approx-Green's} follows.
\end{proof}

\section{Extension to domains with boundaries}\label{s:boundaries}

In this section, we extent our analysis to cover the case where the dynamo equations are posed on cylindrical domains with boundary. Given some interval $\mathcal{I}\subset [0,\infty)$ of the form $[0,q]$, $[p,q]$, or $[p,\infty)$, with $0<p<q<\infty$, we consider domains of the form 
\[
\mathcal{M}\times\T = \mathcal{I}\times \T\times\T \subseteq \R^3,
\]
i.e.\ (periodic) cylinders, annular cylinders, or the exterior of cylinders respectively. We hence attempt to solve $(\dyn-\lambda)B=0$ with suitable and physically relevant boundary conditions. 

Note that by construction the velocity field $u$ will always be tangential to the boundary, namely $u\cdot \hat n = 0$ on $\partial (\M\times \T)$, where we recall that $\hat n$ denotes the outward normal to the boundary unit vector. Thus, for the magnetic field we can naturally consider the boundary condition corresponding to \emph{perfectly conducting walls}, namely
\[
B\cdot \hat n = 0, \quad (\nabla\times B)\times \hat n = 0,
\]
we refer to \cites{GerbeauLeBris2007,roberts1967} for further information.
Componentwise, for cylindrical boundaries of the type here considered where the $\theta$ and $z$ variables are periodic, and solutions in modal form \[
B(t,r,\theta,z) = b(r) \e^{\lambda t + i(m\theta+kz)},
\]
this reads,
\[
b_r|_{\partial \cI}=0, \quad \partial_r(rb_\theta) |_{\partial \cI}=0, \quad \partial_r b_z |_{\partial \cI} = 0.
\]
From our analysis it will become clear that we could cover some other mathematically relevant, albeit not so physically interesting scenarios, such as homogeneous Dirichlet boundary conditions $B = 0$ on $\partial\M$. Since they are not standard for the MHD equations, we shall not provide any details however, and focus exclusively on the perfectly conducting walls scenario.

The boundary condition over $b_z$ will be omitted for the moment, and we will only address it later on when we discuss the $z$ component of the equation. We thus consider the the $r$ and $\theta$ components of the operator $\dyn$, i.e.\ corresponding to \eqref{eq:modal-eq-r} and \eqref{eq:modal-eq-theta}, that abusing the notation we will still denote by $\dyn$ for readability. The operator is thus defined on $\mathcal{D}(\dyn) \subset L^\infty(\cI)$, with 
\begin{equation*}
\mathcal{D}(\dyn)=\{B \in L^\infty(\cI) \cap H^2_{\mathrm{loc}}:\dyn B \in L^\infty(\cI), \  B_r|_{\partial \cI}=(rB_\theta)'|_{\partial \cI}=0\}.
\end{equation*}
As before, it is standard to show that $\dyn$ is closed on $\mathcal{D}(\dyn)$. 
The main result of this section will be the following.

\begin{theorem}[Growing modes with boundary conditions]
\label{thm:main result boundaries}
Let $\cI\subset [0,\infty)$ be a domain either of the form $[0,q]$, $[p,q]$, or $[p,\infty)$ with $0<p<q$, and assume \ref{H0}--\ref{H2}. Let $\mathcal{R}$ be the compact set provided by Lemma \ref{lemma:consequences of assumptions}, and let $\mathcal{N} \subset \mathbb{R}\setminus \{0\}$ be compact. Then, for all $\eps>0$ small, uniformly for $r_0 \in \mathcal{R}$, $M \in \mathcal{N}$, the operator $\dyn$ defined in \eqref{eq:modal-eq-r}--\eqref{eq:modal-eq-theta} has an eigenvalue of the form $\lambda=\eps^{1/3}(\mu_\star+o_{\eps \to 0}(1))$ as in Theorem \ref{thm:dynamo}, with an associated finite energy eigenfunction $b\in X_\mathcal{I}$, satisfying the boundary conditions 
\[
b_r|_{\partial \cI}=0, \quad (rb_\theta)'|_{\partial \cI}=0.
\]
Furthermore, assuming \ref{H3} only if $\cI$ is unbounded, the asymptotic expansion of Theorem \ref{thm:dynamo} for the eigenfunction $b$ is valid if $\Re(\mu_\star)>0$.
\end{theorem}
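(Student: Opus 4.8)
The plan is to reduce the boundary-value problem on $\mathcal{I}$ to the already-solved full-space problem by a corrector argument, exactly along the lines sketched in Section~\ref{s:strategy}. The key observation is that the resolvent $(\L-\lambda)^{-1}$ constructed on $[0,\infty)$ via the Neumann series in Theorem~\ref{theorem:mainResult} inherits the exponential-decay properties of its building blocks $\G_0,\G_1,\G_2^\lambda,\G_3$ (and $\G_j^{\LV}$), so that for an input $f$ concentrated near $r_0$---in particular for Gilbert's approximate eigenfunction $f_\star$---the function $(\L-\lambda)^{-1}f$ is exponentially small, together with its derivative, near the endpoints of $\mathcal{I}$. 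Hence one constructs a parametrix for $\dyn_\mathcal{I}-\lambda$ of the form
\[
\G_{\mathrm{pc}}^\lambda f = \G^\lambda f + \sum_{i=1}^{n} c_i(f)\, v_i,
\]
where $n\in\{2,4\}$ is the number of boundary conditions, the $v_i$ are homogeneous solutions of $(\L-\lambda)v_i=0$ on $\mathcal{I}$ obtained (as described after Lemma~\ref{lemma:linear-vanish}) by applying the full-space inverse to data localized just outside $\mathcal{I}$, i.e.\ $v_i=(\L-\lambda)^{-1}(\dsOne_{[p-\eps^\delta,p]}e_j)$ or $(\L-\lambda)^{-1}(\dsOne_{[q,q+\eps^\delta]}e_j)$, and the coefficients $c_i(f)$ are chosen to cancel the boundary values $b_r|_{\partial\mathcal{I}}$, $(rb_\theta)'|_{\partial\mathcal{I}}$ of $\G^\lambda f$.

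First I would carry out the dynamical analysis of the iterative series defining $(\L-\lambda)^{-1}$ promised in Section~\ref{s:strategy}: track how each term $(\L-\lambda)\G^\lambda-\Id)^n$ transports mass, and conclude that (a)~each $v_i$ is not identically zero near the endpoint where its data is localized, with $|v_i|$ of order $\eps^{-1/3}$ there, and (b)~$v_i$ and $\partial_r v_i$ are exponentially small (bounded by $\eps^{-1/3}\e^{-c\eps^{-2/3+\gamma}\cdot|\mathcal{I}|}$, say) at the \emph{opposite} endpoint, and throughout a fixed-size neighbourhood of $r_0$. Then the boundary matrix $M_{\mathrm{bd}}=\big(\text{boundary functionals applied to }v_i\big)_{i}$ is, after rescaling rows by $\eps^{1/3}$, a perturbation of a block-diagonal invertible matrix (the $p$-localized solutions control the $p$-boundary data, the $q$-localized ones the $q$-boundary data), so $M_{\mathrm{bd}}$ is invertible with $\|M_{\mathrm{bd}}^{-1}\|\lesssim\eps^{-1/3}$. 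This gives $c_i(f)$ as a bounded linear functional of the boundary values of $\G^\lambda f$, hence $\|c_i(f)\|\lesssim \eps^{-1/3}\cdot(\text{boundary size of }\G^\lambda f)$; since $\G^\lambda f_\star$ is exponentially small at $\partial\mathcal{I}$, the correction $\sum c_i v_i$ is an $o_{\eps\to 0}(1)$-error in $X_\mathcal{I}$ when $f=f_\star$.

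Next I would verify that $\G_{\mathrm{pc}}^\lambda$ is a genuine right inverse on $X_\mathcal{I}$: $(\dyn_\mathcal{I}-\lambda)\G_{\mathrm{pc}}^\lambda f = f + \big((\L-\lambda)\G^\lambda-\Id\big)f$ with the error still $O(\eps^a)$ in $Y_\mathcal{I}$ (the $v_i$ contribute nothing since $(\L-\lambda)v_i=0$ on $\mathcal{I}$, and the boundary conditions are met by construction), so the same Neumann-series device as in Theorem~\ref{theorem:mainResult} upgrades it to a two-sided inverse for $\lambda$ on a Jordan curve $\Gamma$ around $\eps^{1/3}\mu_\star$; when $\mathcal{I}$ is unbounded, injectivity/isolation of eigenvalues is supplied by Lemmas~\ref{lemma:injectivity1}, \ref{lemma:injectivity2} under \ref{H3}, otherwise compactness of the embedding $H^1\hookrightarrow L^2$ on bounded $\mathcal{I}$ does the job. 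One then repeats the Riesz-projector computation of Theorem~\ref{theorem:mainResult}: $P f_\star = -\tfrac{1}{2\pi i}\int_\Gamma \G_{\mathrm{pc}}^\lambda f_\star\,\dd\lambda = -\tfrac{1}{2\pi i}\int_\Gamma \G_2^\lambda(f_\star\dsOne_{[r_0-\eps^\gamma,r_0+\eps^\gamma]})\,\dd\lambda + \mathrm{error}$, because the $\lambda$-independent pieces $\G_{\mathrm{extra}}$ and the $v_i$'s integrate to zero by Cauchy's theorem (the $v_i$ are $\lambda$-dependent, so this point needs a little care: one shows their $\lambda$-dependence contributes only an $o(1)$ term, using that $c_i$ and $v_i$ depend analytically on $\lambda\in\Gamma$ and their product with the small boundary data stays $o(1)$), so $\|Pf_\star\|_{X_\mathcal{I}}\gtrsim(1-\eps-\eps^a)\|f_\star\|_{X_\mathcal{I}}>0$, and the finite-dimensionality/one-dimensionality of $\mathrm{range}(P)$ follows as before. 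The asymptotic expansion of the eigenfunction then transfers verbatim.

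The main obstacle is step~two: proving invertibility and the norm bound for the boundary matrix $M_{\mathrm{bd}}$. This requires a reasonably delicate propagation-of-mass analysis of the iteration $\G^\lambda\sum(-1)^n((\L-\lambda)\G^\lambda-\Id)^n$ to show simultaneously that $v_i$ is \emph{genuinely nonzero} (of the expected order) near its ``source'' endpoint and \emph{genuinely tiny} near the other endpoint and near $r_0$---the two estimates are in tension, since the series could in principle amplify the solution, and one must exploit that each error factor contributes a gain of $\eps^a$ while the exponential-decay tails survive composition. A secondary subtlety, present only when $\mathcal{I}=[0,q]$, is that the left endpoint $0$ is not a physical boundary, so there only the $q$-endpoint conditions are imposed ($n=2$) and the regularizing role of $\G_0$ must be invoked to keep $\G_{\mathrm{pc}}^\lambda f_\star$ bounded at $r=0$; this is already handled by Proposition~\ref{prop:G0} and requires no new idea.
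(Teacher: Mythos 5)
Your proposal follows the same route as the paper: construct homogeneous solutions $v_i$ to $(\mathcal{L}-\lambda)v_i=0$ on $\mathrm{int}(\mathcal{I})$ by feeding data localized just outside $\mathcal{I}$ into the full-space inverse, prove via a propagation-of-mass analysis of the Neumann series (Lemmas \ref{lemma:bounded domains error 1}, \ref{lemma:sequence lemma}, \ref{lemma:propagation of mass}) that these are ``large'' at the source endpoint and tiny at the opposite one, invert the boundary matrix with $\|\mathcal{J}^{-1}\|\lesssim\eps^{-1/3}$, and run the Riesz-projector argument. That is precisely what Lemma \ref{lemma:homogeneous solutions} and Proposition \ref{prop:boundaries} do.

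Two corrections of detail, and one genuine difference in how the last step is closed. First, your claim that ``$|v_i|$ is of order $\eps^{-1/3}$ near its source endpoint'' has the wrong sign on the exponent: the paper's Lemma \ref{lemma:homogeneous solutions} gives $\|v_i\|_{L^\infty}\lesssim\eps^{1/3}$, with the nontrivial lower bounds being $|v_{1,1}(p)|\gtrsim\eps^{1/3}$ and $|\partial_r v_{3,2}(p)|\gtrsim 1$; the factor $\eps^{-1/3}$ enters only through the inverse matrix $\mathcal{J}^{-1}$, not through the $v_i$ themselves. Second, and more substantively: you justify the smallness of the boundary corrector $\sum_i c_i(f_\star)v_i$ by asserting that $(\mathcal{L}-\lambda)^{-1}f_\star$ and its derivative are \emph{exponentially} small at $\partial\mathcal{I}$. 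Tracking exponential decay through the full Neumann series is delicate (each iterate can, in principle, spread mass), and the paper avoids it entirely: in the proof of Proposition \ref{prop:boundaries} the resolvent is controlled in the stronger weighted space $X^{2N}_\mathcal{I}$, so that its value at $p$ or $q$ is bounded by $\|\cdot\|_{X^{2N}_\mathcal{I}}/w_\eps(p-r_0)\lesssim\eps^{(N-1)/3}$, which is then enough to beat the $\eps^{-N/3}$ amplification coming from $\|v_i\|_{X^N_\mathcal{I}}$. This ``use a bigger polynomial weight'' trick replaces the exponential-decay claim and is both simpler and sufficient. Finally, your remark about Cauchy's theorem and the $\lambda$-dependent $v_i$ is astute but the conclusion is not that the correctors integrate to zero; the paper simply bounds the boundary contribution to $P_{\mathrm{bd}}f_\star$ directly in norm, which is what you back off to anyway.
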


The strategy we follow will be similar to the case of the full space: we construct a resolvent for $\dyn-\lambda$ in a contour enclosing Gilbert's approximate eigenvalue $\lambda_\star = \mu_\star\eps^{1/3}$, and show that the associated Riesz projector has non-empty range. In order to construct the inverse operator, we shall simply employ the inverse we defined for the global equation, and add homogeneous solutions to correct the boundary conditions. Since the previous sections have emphasized uniformity in $r_0\in \mathcal{R}$, $M \in \mathcal{N}$ in great deal already, we shall omit any mention of it here, since our analysis builds entirely on that in Section \ref{s:approx-Green's}.

We give now full details of the proof of Theorem \ref{thm:main result boundaries} for the case of the annular cylinder $\cI=[p,q]$, and we will comment on the other two scenarios---corresponding to a cylinder $\cI=[0,q]$ and the exterior of a cylinder $\cI=[p,\infty)$---later on. Furthermore, throughout this section we shall make the standing assumption that Hypotheses \ref{H0}--\ref{H2} are satisfied, and in particular that the closed set $\mathcal{J}$ from the Hypotheses is given by the interval $[\frac{p}{2},2q]$.

Since there are a total of four different boundary conditions to satisfy in the case where $\mathcal{I}=[p,q]$, we shall construct four homogeneous solutions to $(\dyn-\lambda)B=0$ with $\lambda\in\Co$ being some contour enclosing Gilbert's approximate eigenvalue $\lambda_\star$. From there, it is a matter of inverting a $4\times 4$ matrix in order to fix the boundary conditions for our previously constructed inverse. It thus remains to construct precisely these homogeneous solutions. In order to make sure these are linearly independent, we shall attempt to construct solutions for which either the first or the second component is ``large'' at $p$, and ``small'' at $q$, or vice versa. Indeed, the main technical result of this section reads as follows.

\begin{lemma}
\label{lemma:homogeneous solutions}
Let $[p,q] \subset (0,\infty)$, $r_0 \in (p,q)$, and let $\lambda=\lambda_\star+\eps^{1/3}\eta\in \Co$ as in Proposition \ref{proposition:r0}. Then, for $\eps$ sufficiently small, there exist four homogeneous solutions $v_i=(v_{i,1},v_{i,2})$ with $i=1,2,3,4$, to $(\dyn-\lambda)v_i=0$ on $(p,q)$ such that $\|v_i\|_{L^\infty} \lesssim \eps^{1/3}$ for all $i$, and so that further 
\begin{enumerate}
    \item $|v_{1,1}(p)| \gtrsim \eps^{1/3} $, $|v_{1,1}(q)| \lesssim \eps^{2/3}$, $|\partial_rv_{1,2}(p)|\lesssim 1$, $|\partial_rv_{1,2}(q)|\lesssim \eps^{2/3}$.
    \item $|v_{2,1}(q)| \gtrsim \eps^{1/3}$, $|v_{2,1}(p)| \lesssim \eps^{2/3}$, $|\partial_rv_{2,2}(q)|\lesssim 1$, $|\partial_rv_{2,2}(p)|\lesssim \eps^{2/3}$.
    \item $|v_3(p)|\lesssim \eps^{2/3}$, $|v_3(q)|\lesssim \eps^{2/3}$, $|\partial_rv_{3,2}(p)|\gtrsim 1$, $|\partial_rv_{3,2}(q)|\lesssim \eps^{2/3}$,
     \item $|v_4(q)|\lesssim \eps^{2/3}$, $|v_4(p)|\lesssim \eps^{2/3}$, $|\partial_rv_{4,2}(q)|\gtrsim 1$, $|\partial_rv_{4,2}(p)|\lesssim \eps^{2/3}$.
\end{enumerate}
\end{lemma}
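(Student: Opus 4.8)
The plan is to construct the four homogeneous solutions $v_i$ by applying the global inverse operator $(\dyn - \lambda)^{-1}$ on $[0,\infty)$ (built in Section \ref{s:approx-Green's}) to forcing terms localised in thin intervals of width $\eps^\delta$ just outside the endpoints $p$ and $q$, then restricting to $[p,q]$. Concretely, I would set
\[
v_1 = (\dyn-\lambda)^{-1}\begin{pmatrix} \dsOne_{[p-\eps^\delta,p]} \\ 0 \end{pmatrix}\Big|_{[p,q]}, \quad
v_2 = (\dyn-\lambda)^{-1}\begin{pmatrix} \dsOne_{[q,q+\eps^\delta]} \\ 0 \end{pmatrix}\Big|_{[p,q]},
\]
and similarly $v_3, v_4$ with the indicator in the second component. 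By construction $(\dyn-\lambda)v_i = 0$ on $(p,q)$ since the forcing vanishes there, and the overall $L^\infty$ bound $\|v_i\|_{L^\infty}\lesssim\eps^{1/3}$ follows from the mapping bounds $\|\mathcal{G}^\lambda f\|_X \lesssim \eps^{-1/3}\|f\|_Y$ together with the fact that the $Y$-norm of an $O(1)$ function supported on a set of measure $\eps^\delta$ is $\lesssim \eps^\delta$, and then iterating through the Neumann series (each term contributes a factor $\eps^a$). One must choose $\delta$ small relative to $\gamma, \omega$ so that the thin intervals avoid $r_0$ and the zeros $\mathcal{F}_0$, placing them squarely inside regions handled by $\G_1$ or $\G_3$, where the building-block Green's functions enjoy exponential decay away from the support of the input.

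The core of the argument is the propagation-of-mass analysis: I would track, term by term in the Neumann series $\mathcal{G}^\lambda\sum_{n\geq 0}(-1)^n((\dyn-\lambda)\mathcal{G}^\lambda - \Id)^n$, how the localised forcing is transported through the domain. The key inputs are (i) the exponential decay estimates on each $\G_i$ away from $\supp(f)$ (Remark \ref{rmk:exp-decay} for $\G_2^\lambda$, Lemma \ref{lemma:pointGrj-tow0} and Lemma \ref{lemma:pointGrj-towinf} for the linear-interpolation pieces, Proposition \ref{prop:G0} for $\G_0$, Lemma \ref{lemma:linear-vanish} for the Airy pieces), and (ii) the fact that each application of $(\dyn-\lambda)\mathcal{G}^\lambda - \Id$ costs a power $\eps^a$ while the error operator is essentially a multiplication by a function with at most polynomial growth. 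Combining these, the solution $v_1$ starting from a source near $p$ remains of size $\gtrsim \eps^{1/3}$ near $p$ (the leading Green's function term dominates there with no decay) but decays exponentially as $r$ moves toward $q$, giving $|v_{1,1}(q)|\lesssim \eps^{2/3}$ — in fact much smaller, but $\eps^{2/3}$ is a safe bound absorbing all error terms. The lower bound $|v_{1,1}(p)|\gtrsim \eps^{1/3}$ requires showing the leading-order contribution $\mathcal{G}^\lambda f$ evaluated at $p$ is not cancelled by the remainder, which follows since the remainder is $O(\eps^{1/3+a})$. For the second-component statements ($|\partial_r v_{i,2}|$ bounds), I would use the derivative estimates $\|\partial_r\mathcal{G}^\lambda f\|_X$ from Propositions \ref{proposition:rtow0}, \ref{proposition:rinf}, Lemma \ref{lemma:linear-vanish}, which give $\|\partial_r v_i\|_{L^\infty}\lesssim \eps^{-1/3+\gamma}$ or similar — combined with the exponential decay this yields the required smallness at the far endpoint, and the coupling structure of $\dyn$ (the off-diagonal terms vanish at $r_0$ and are of size $\eps^{1/3}$) ensures the second component of $v_1$ is genuinely subordinate except where the source feeds directly into it, which is the purpose of separately defining $v_3, v_4$.

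Linear independence then follows from the ``large at one endpoint, small at the other'' structure: the $4\times 4$ matrix of boundary data
\[
\begin{pmatrix}
v_{1,1}(p) & v_{2,1}(p) & v_{3,1}(p) & v_{4,1}(p) \\
(rv_{1,2})'(p) & (rv_{2,2})'(p) & (rv_{3,2})'(p) & (rv_{4,2})'(p) \\
v_{1,1}(q) & v_{2,1}(q) & v_{3,1}(q) & v_{4,1}(q) \\
(rv_{1,2})'(q) & (rv_{2,2})'(q) & (rv_{3,2})'(q) & (rv_{4,2})'(q)
\end{pmatrix}
\]
is, after the obvious rescaling of rows and columns, a perturbation of order $\eps^{1/3}$ (relative size) of a block-diagonal invertible matrix, hence invertible for $\eps$ small; this is exactly what is needed in the subsequent step to fix the perfectly-conducting boundary conditions. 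The main obstacle I anticipate is the bookkeeping in step (ii): one must verify that the iterative Neumann construction does not destroy the exponential decay — i.e.\ that $(\dyn-\lambda)^{-1}$ inherits the decay property of its building blocks $\mathcal{G}^\lambda$ — which requires checking that convolving an exponentially-localised function with the error kernel (a polynomially-bounded multiplier composed with another exponentially-decaying Green's function) still produces something that decays exponentially away from the original support, with constants uniform in $\eps$. This is morally the same computation as the geometric-series arguments in Propositions \ref{proposition:rinf} and \ref{proposition:rtow0}, but it must now be carried out with the source localised at an $\eps$-dependent location near the boundary rather than at $r_0$, and the decay rate $\eps^{-2/3+\gamma}$ must be shown to survive each iteration; I would isolate this as a separate lemma on the decay of $(\dyn-\lambda)^{-1}$ applied to localised data before assembling the proof of Lemma \ref{lemma:homogeneous solutions}.
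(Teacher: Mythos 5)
Your overall strategy is the same as the paper's: set $v_i=(\dyn-\lambda)^{-1}$ applied to indicators localised in thin intervals just outside $p$ and $q$, observe that $(\dyn-\lambda)v_i=0$ on $(p,q)$, and argue via propagation of mass through the Neumann series that $v_i$ is large only near the endpoint from which it is sourced. You also correctly identify the key technical obstacle — showing the decay of the building-block Green's functions survives iteration — and your instinct to isolate this as a separate lemma mirrors the paper's auxiliary Lemmas~\ref{lemma:bounded domains error 1}--\ref{lemma:propagation of mass}. However, there is a genuine gap in the way you propose to establish the $L^\infty$ bounds, and it is not cosmetic.

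You write that ``$\|v_i\|_{L^\infty}\lesssim\eps^{1/3}$ follows from the mapping bounds $\|\G^\lambda f\|_X\lesssim\eps^{-1/3}\|f\|_Y$ together with the fact that the $Y$-norm of an $O(1)$ function supported on a set of measure $\eps^\delta$ is $\lesssim\eps^\delta$.'' This is incorrect on two fronts. First, $Y$ is a \emph{weighted $L^\infty$} space (the $\dd r$ in its definition indicates the weight, not an integral), so the $Y$-norm of $\mathds{1}_{[p-\eps^\delta,p]}$ is $\sup_{r\in[p-\eps^\delta,p]}r^2w_\eps(r-r_0)\sim\eps^{-N/3}|p-r_0|^N$, a \emph{huge} quantity, not $\eps^\delta$; the measure of the support is invisible to the sup-norm. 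Second, even granting $\|f\|_Y\lesssim\eps^\delta$ hypothetically, the resulting bound $\eps^{-1/3+\delta}$ is never $\lesssim\eps^{1/3}$ under the constraints $\gamma\in(2/9,1/3)$, $\gamma<\delta$, $\gamma+\delta<2/3$. The $\eps^{1/3}$ scaling does not come from the thinness of the forcing; it comes from the \emph{unweighted $L^\infty\to L^\infty$} smoothing property of the local Green's functions away from $r_0$ — the kernel of $\G_1$ has sup-norm $\sim\eps^{-1/3}/\sqrt{|T|}$ but decays over a length scale $\sim\eps^{2/3}/\sqrt{|T|}$, so its column-$L^1$ norm (and hence the $L^\infty\to L^\infty$ operator norm) is $\sim\eps^{1/3}$. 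That unweighted bound, stated in the paper as Lemma~\ref{lemma:unweighted bounds}, is what must be propagated through the Neumann series; the weighted $Y\to X$ bounds of Proposition~\ref{proposition:approx-Green's} are far from sharp at the $\eps$-independent point $p$, since the weight $w_\eps(p-r_0)\sim\eps^{-N/3}$ is large there. Relatedly, the lower bound $|v_{1,1}(p)|\gtrsim\eps^{1/3}$ requires a pointwise computation of the leading iterate at $r=p$ — which in the paper is done by inserting the explicit exponential kernel of $\G_{1,j}$ — and cannot be extracted from any upper bound in $X$ or $Y$.

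Two further points, secondary but worth flagging. You must address the change of coordinates between $\L$ and $\dyn$: the Green's functions of Section~\ref{s:approx-Green's} invert $\L-\lambda$, and $\dyn=Q\L Q^{-1}$ with $\|Q^{-1}\|\sim\eps^{-1/3}$, so the transformation distorts the components asymmetrically; the paper carries both weighted ($Q^{-1}\dd r$) and unweighted estimates precisely to track this, and the recursion in Lemma~\ref{lemma:sequence lemma} is designed around the asymmetric coupling ($\eps^{1/3}$ in one direction, $\eps$ in the other). Finally, for $v_3,v_4$ the source in the second component must carry an extra prefactor $\eps^{1/3}$ — i.e.\ $g_2=\eps^{1/3}\mathds{1}_{[p-\eps^\delta,p]}$ — otherwise you obtain $|v_3(p)|\sim\eps^{1/3}$ and $|\partial_rv_{3,2}(p)|\sim\eps^{-1/3}$, violating the bounds $|v_3(p)|\lesssim\eps^{2/3}$ and $|\partial_rv_{3,2}(p)|\gtrsim1$. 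This is only a normalisation, but it must be done correctly for the subsequent $4\times4$ boundary matrix $\mathcal{J}$ to have the stated block-dominant structure.
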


We remark that the first two estimates in \emph{(1)} and \emph{(2)} refer to the first component of $v_1$ and $v_2$ respectively, whereas the first two estimates in \emph{(3)} and \emph{(4)} refer to both components of the vectors $v_3$ and $v_4$.
Since the arguments for both endpoints will be essentially identical, we shall only prove the result for the left endpoint $p>0$.
Let us outline the main ideas of this proof, taking $v_1$ as an example. For this case, we shall set 
\[
v_1=(\dyn-\lambda)^{-1}\begin{pmatrix}
\mathds{1}_{[p-\eps^\delta,p]}\\
0
\end{pmatrix},
\]
where $(\dyn-\lambda)^{-1}$ is the inverse operator constructed on $L^\infty(\mathcal{J},\max\{1,r^2\}w_\eps(r-r_0))$.---In other words, it is the inverse constructed throughout section \ref{s:approx-Green's}, but restricted to functions supported on $\mathcal{J}$. In particular, this inverse still exists for functions $\Omega, U$ that are highly degenerate at infinity, since we only care about the local behaviour in the compact interval $\mathcal{J}$.
By the construction of $v_1$, it must hold that $(\dyn-\lambda) v_1=0$ on $(p,q)$. However, simply using the brute force iterative bounds from the construction of $(\dyn-\lambda)^{-1}$ in the previous sections will not be enough the obtain the more subtle quantitative bounds required for Lemma \ref{lemma:homogeneous solutions}---indeed, the most we could say would be that globally, $\|v_1\|_{X} \lesssim \eps^{-1/3}$, which is nowhere near sufficient for our purposes. Instead, we shall undertake a more subtle analysis, which carefully studies the dynamics of the iterative procedure defining $(\dyn-\lambda)^{-1}$. The key insight here is that each ``linear interpolation'' step as in Sections \ref{sec:tow-inf} and \ref{s:tow-zero} propagates the mass of the input forwards by at most $\eps^\delta$, subject to an exponentially small correction. Therefore, after $N$ iterations, the majority of the mass of 
\[
\sum_{n = 0}^N\G^\lambda ((\dyn-\lambda)\G^\lambda -\Id)^n \begin{pmatrix}
\mathds{1}_{[p-\eps^\delta,p]}\\0\end{pmatrix}
\]
will be contained in $[p-(N+2)\eps^\delta, p+(N+1)\eps^{\delta}]$---the additional $\eps^\delta$ term comes from the fact that the action of $\G^\lambda$ can itself be shown to propagate mass by $\eps^\delta$. Since simultaneously each iterative term yields progressive geometric decay, we can conclude that an initial datum concentrated near $p$ will have an inverse with very little mass at $q$ (and vice versa), provided $\eps$ is small enough. This intuition is the essential driving force behind our proof of Lemma \ref{lemma:homogeneous solutions}.

Before proceeding further with the proof, we must however acknowledge that we have only constructed an inverse for the operator $\L-\lambda$ so far, which arises from $\dyn-\lambda$ via a change of variables. More precisely, from Section \ref{s:equations} we see that 
\begin{equation*}
\L V = Q^{-1} \dyn Q V,
\end{equation*}
where $Q$ denotes the invertible matrix
\[
Q=\begin{pmatrix}
-\alpha \eps^{1/3} & \alpha \eps^{1/3}\\
1 & 1
\end{pmatrix}.
\]
Therefore, we see that 
\begin{equation*}
(\dyn -\lambda)^{-1}=Q (\L-\lambda)^{-1} Q^{-1}.
\end{equation*}

Furthermore, since our homogeneous solutions need only be local---and we focus only in the cylindrical shell defined by the radial interval $\cI=[p,q]$---we may define $(\L-\lambda)^{-1}$ using the operator 
\begin{equation*}
\G^\lambda f=\G_1 \left(f\dsOne_{[\frac{1}{2}p,r_0-\eps^\gamma)}\right) +  \G_2^\lambda \left(f\dsOne_{[r_0-\eps^\gamma,r_0+\eps^\gamma)}\right) +  \G_3 \left(f\dsOne_{[r_0+\eps^\gamma,2q]}\right),
\end{equation*}
i.e.\ we discard the components near zero and at infinity. Notice that now the operator $\G^\lambda$ depends on the choice of $p$ and $q$, nonetheless we retain the same notation as before to avoid introducing additional cumbersome symbols. Furthermore, we can write 
\begin{equation*}
(\L-\lambda)^{-1}=\G^\lambda \sum_{n \geq 0}(-1)^n Q^{-1}(Q ((\L-\lambda)\G^\lambda -\Id)Q^{-1})^n Q,
\end{equation*}
and so 
\begin{equation*}
Q(\L-\lambda)^{-1}Q^{-1}=\G^\lambda_{\mathrm{dyn}}\sum_{n \geq 0} (-1)^n ((\dyn -\lambda)\G^\lambda_{\mathrm{dyn}}-\Id)^n,
\end{equation*}
where 
\begin{equation*}
\G^\lambda_{\mathrm{dyn}}f =Q \G^\lambda Q^{-1}f=\G_1\left (f\mathds{1}_{[\frac{1}{2}p,r_0-\eps^\gamma)}\right )+Q \G_2^\lambda (Q^{-1}f\mathds{1}_{[r_0-\eps^\gamma,r_0+\eps^\gamma)})+\G_3\left (f\mathds{1}_{[r_0+\eps^\gamma,2q]}\right ).
\end{equation*}
For this last expression we have used that $\G_1, \G_3$ are diagonal operators with identical entries on the diagonal. The error bounds transfer immediately from Section \ref{s:approx-Green's} into the weighted space 
\[
L^\infty\left(\left[\frac{p}{2},2q\right], Q^{-1}\dd r\right).
\]
Namely, using Proposition \ref{proposition:approx-Green's}, we can write for some suitable $a>0$,
\begin{align}\label{eq:weighted-bounds}
\begin{split}
\|(\dyn -\lambda)\G^\lambda_{\mathrm{dyn}}-\Id)f\|_{L^\infty([\frac{1}{2}p,2q],Q^{-1}\dd r)} & =\|Q^{-1}((\dyn -\lambda)\G^\lambda_{\mathrm{dyn}}-\Id)f)\|_{L^\infty([\frac{1}{2}p,2q])}\\
& = \|((\L-\lambda)\G^\lambda -\Id)Q^{-1}f\|_{L^\infty[(\frac{1}{2}p,2q])} \\
& \lesssim \eps^a \|f\|_{L^\infty([\frac{1}{2}p,2q],Q^{-1}\dd r)}.
\end{split}
\end{align}
However, the unweighted $L^\infty$ bounds for the error are slightly different. Indeed, we collect them first for the Green's operator $\G_1$---that we recall it is defined in Section \ref{s:tow-zero}---in the following lemma.

\begin{lemma}[Unweighted bounds]
\label{lemma:unweighted bounds}
Fix $k_1,k_2>0$ such that $r_0-k_1\geq k_2$. Let $c_0>0$ be as defined in Section \ref{s:tow-zero}. If $f$ is such that $\supp(f)\subset [\frac{1}{2}p,k_1]$, then the following bounds hold true.
\begin{enumerate}
\item Let $f=(f_1,0)$, then the pointwise error bound by component is given by
\begin{align*}
|((\dyn-\lambda)\G_1 f-f)_1|(r) & \lesssim \eps^\beta \|f\|_{L^\infty}\left ( \mathds{1}_{[\frac{1}{2}p, k_1+2\eps^\delta]} +\e^{-c_0\eps^{-2/3}|r-(k_1+\eps^\delta)|}\right),\\
|((\dyn-\lambda)\G_1 f-f)_2|(r) & \lesssim \eps^{1/3} \|f\|_{L^\infty}\left ( \mathds{1}_{[\frac{1}{2}p, k_1+2\eps^\delta]} +\e^{-c_0\eps^{-2/3}|r-(k_1+\eps^\delta)|}\right),
\end{align*}
where $\beta$ is defined in Property \ref{P3}.
The second component of the Green's function satisfies $|(\G_1 f)_2|(r)=0$, whereas for the first component we obtain, 
\begin{equation*}
\begin{split}
|(\G_1 f)_1|(r)& \lesssim \eps^{1/3}\|f\|_{L^\infty}\left ( \mathds{1}_{[\frac{1}{2}p, k_1+2\eps^\delta]} +\e^{-c_0\eps^{-2/3}|r-(k_1+\eps^\delta)|}\right), \\
|\partial_r(\G_1 f)_1|(r) & \lesssim \eps^{-1/3}\|f\|_{L^\infty}\left ( \mathds{1}_{[\frac{1}{2}p, k_1+2\eps^\delta]} +\e^{-c_0\eps^{-2/3}|r-(k_1+\eps^\delta)|}\right).
\end{split}
\end{equation*}
\item If instead $f=(0,f_2)$, the pointwise error bound reads
\begin{align*}
|((\dyn-\lambda)\G_1 f-f)_1|(r) & \lesssim \eps \|f\|_{L^\infty}\left ( \mathds{1}_{[\frac{1}{2}p, k_1+2\eps^\delta]} +\e^{-c_0\eps^{-2/3}|r-(k_1+\eps^\delta)|}\right),\\
|((\dyn-\lambda)\G_1 f-f)_2|(r) & \lesssim \eps^{\beta} \|f\|_{L^\infty}\left ( \mathds{1}_{[\frac{1}{2}p, k_1+2\eps^\delta]} +\e^{-c_0\eps^{-2/3}|r-(k_1+\eps^\delta)|}\right),
\end{align*}
where again $\beta$ comes from Property \ref{P3}. The first component of the Green's function satisfies $|(\G_1 f)_1|(r) = 0$, whereas for the second component we obtain 
\begin{equation*}
\begin{split}
|(\G_1 f)_2|(r) & \lesssim \eps^{1/3}\|f\|_{L^\infty}\left ( \mathds{1}_{[\frac{1}{2}p, k_1+2\eps^\delta]} +\e^{-c_0\eps^{-2/3}|r-(k_1+\eps^\delta)|}\right), \\
|\partial_r(\G_1 f)_2|(r) & \lesssim \eps^{-1/3}\|f\|_{L^\infty}\left ( \mathds{1}_{[\frac{1}{2}p, k_1+2\eps^\delta]} +\e^{-c_0\eps^{-2/3}|r-(k_1+\eps^\delta)|}\right).
\end{split}
\end{equation*}
\end{enumerate}
\end{lemma}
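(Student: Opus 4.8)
The plan is to derive these unweighted bounds by "undoing'' the weight on the bounds already established for $\G_1$ in Section~\ref{s:tow-zero}. Recall that the operator $\G_1$ is block-diagonal with equal scalar entries on the diagonal, so it suffices to work one component at a time; moreover $\G_1 = Q\,\mathrm{diag}(\G_1^{\mathrm{sc}},\G_1^{\mathrm{sc}})\,Q^{-1}$ is in fact already diagonal (the conjugation by $Q$ acts trivially on a scalar multiple of the identity), which explains the ``$|(\G_1 f)_2| = 0$ when $f = (f_1,0)$'' pattern in the statement. First I would recall the pointwise estimates \eqref{eq:Grj-tow0}, \eqref{eq:drGrj-tow0} from Lemma~\ref{lemma:pointGrj-tow0}, which after summing over the partition $\mathcal{P}_1(\eps^\delta)$ give, for $\supp(f)\subset[\tfrac{1}{2}p, k_1]$,
\[
|w_\eps(r-r_0)\,\G_{1}f(r)| \lesssim \eps^{1/3-2\gamma}\|f\|_Y\Big(\dsOne_{[\frac{1}{2}p,k_1+2\eps^\delta]}(r) + \e^{-\frac{1}{2}c_0\eps^{-2/3+\gamma+\delta}}\dsOne_{[0,\infty)\setminus[\frac{1}{2}p,k_1+2\eps^\delta]}(r)\Big),
\]
together with the analogous derivative bound. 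The key observation is that, since $\supp(f)$ is now a fixed compact interval bounded away from both $0$ and $r_0$ up to an $O(\eps^\delta)$ margin, on that support $w_\eps(r-r_0)\sim \eps^{-N\delta}$ (bounded above and below by fixed powers of $\eps$, as $|r-r_0|\gtrsim 1$ generically, or $\sim\eps^\delta$ near the transition), so we may trade the $Y$-norm for the $L^\infty$-norm at the cost of an explicit power of $\eps$, and remove the weight $w_\eps(r-r_0)$ on the left at a similar cost. Crucially, the piecewise-linear building blocks $\G_{1,j}$ each propagate mass by at most $\eps^\delta$ subject to the exponentially small tail $\e^{-c_0\eps^{-2/3+\gamma}|r-r_j-\eps^\delta|}$ inherited from Lemma~\ref{lemma:pointGrj-tow0}; summing the geometric series of these tails as in the proof of Proposition~\ref{proposition:rtow0} yields exactly the claimed support structure $\dsOne_{[\frac{1}{2}p,k_1+2\eps^\delta]} + \e^{-c_0\eps^{-2/3}|r-(k_1+\eps^\delta)|}$, where for clarity of the statement we have absorbed $\eps^{-2/3+\gamma}\gtrsim\eps^{-2/3}$ times a constant and renamed the exponential rate.

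Second I would extract the componentwise error bounds. By definition of $\G_1$ via the linearly-interpolated operators $\L_{1,j}$, one has $(\dyn-\lambda)\G_1 f - f = \sum_j (\dyn-\lambda-\L_{1,j})\G_{1,j}(f\dsOne_{[r_j,r_j+\eps^\delta)})$, and the precise form of $(\dyn-\lambda-\L_{1,j})$ was computed in the proof of Proposition~\ref{proposition:rtow0}: it consists of the term $\frac{\eps}{r}\partial_r$, the term $-iM\eps^{-1/3}(T(r)-T(r_j))$, the scalar factors $A_\eps - iM\eps^{-1/3}T(r_j)$ plus $P_\eps$, $\lambda$, and the off-diagonal coupling $Q_\eps$ (or the lower-triangular $\alpha\eps^{1/3}(r\Omega'+\cdots)$ piece in the $(V_1,V_2)$ variables). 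The point is to track which of these contribute to which \emph{output} component. When $f=(f_1,0)$: the stretching coupling $r\Omega'(r)$ (of size $O(1)$) feeds the first-component input into the second-component output with a factor $\eps^{1/3}$ from the change of variables, giving the $\eps^{1/3}$ bound on the second component; all diagonal error terms for the first component are controlled by $\eps^{-1/3}|T(r)-T(r_j)||\G_{1,j}f|$, and invoking Property~\ref{P3} exactly as in Proposition~\ref{proposition:rtow0} produces the $\eps^\beta$ bound (after again trading the $Y$-weight, using $\|f\|_Y\lesssim\|f\|_{L^\infty}$ on the fixed support). When $f=(0,f_2)$: the coupling now runs through a factor of size $O(\eps)$ (the $\frac{iM\alpha\eps}{r^2}$ piece of $Q_\eps$, the off-diagonal term that carries the second into the first with an extra $\eps^{2/3}$ relative to the other direction), giving the $\eps$ bound on the first component, while the second-component diagonal error is again $\eps^\beta$ by Property~\ref{P3}. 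The derivative bounds $|\partial_r(\G_1 f)_i|\lesssim\eps^{-1/3}\|f\|_{L^\infty}(\cdots)$ follow identically from \eqref{eq:drGrj-tow0} after the same weight-trading.

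The main obstacle, and the only genuinely delicate point, is bookkeeping the exact powers of $\eps$ that appear when converting between the weighted norms ($X$, $Y$) used in Section~\ref{s:approx-Green's} and the unweighted $L^\infty$ norm demanded here, while simultaneously keeping the support-localization structure sharp. One must verify that on $\supp(f)\cup[\tfrac{1}{2}p,k_1+2\eps^\delta]$ the weight $w_\eps(r-r_0)$ and the factor $\max\{1,r^2\}$ are comparable to fixed ($\eps$-dependent but harmless) constants, so that no spurious negative powers of $\eps$ creep in and spoil the clean $\eps^\beta$, $\eps^{1/3}$, $\eps$ scalings — and in particular that the ``$\eps^{-2\gamma}$'' and ``$\eps^{-N_1\delta}$'' losses that appear in the intermediate estimates of Proposition~\ref{proposition:rtow0} are in fact cancelled here because we are no longer near the degenerate points $0$ or $r_0$ (the quantities $r_j^{-2}$ and $|T(r_j)|^{-1}$ are $O(1)$ on this support, not $O(\eps^{-2\gamma})$). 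Once that accounting is done carefully, the estimates are a direct restriction of the global ones, and the proof is a matter of assembling the pieces; I would present it by stating the weight comparisons as a preliminary observation and then citing the relevant steps of Lemma~\ref{lemma:pointGrj-tow0} and Proposition~\ref{proposition:rtow0} for each of the four bullet bounds.
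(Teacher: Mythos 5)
Your strategy is the right one, and you have correctly isolated the crucial point: on $\supp(f)\subset[\tfrac{1}{2}p,k_1]$ the quantities $r_j^{-2}$ and $|T(r_j)|^{-1}$ are $O(1)$, so the $\eps^{-2\gamma}$ losses that appeared in Proposition~\ref{proposition:rtow0} disappear and the clean scalings $\eps^{1/3}$, $\eps^\beta$ survive the weight-trading. The paper itself gives no proof of this lemma, so there is no official route to compare against; your sketch is what the authors have in mind. Two side remarks are nonetheless wrong as written. First, on the support $|r-r_0|\geq k_2$ is a fixed constant, so $w_\eps(r-r_0)\sim\eps^{-N/3}$, not $\eps^{-N\delta}$; the cancellation against $\|f\|_Y\sim\eps^{-N/3}\|f\|_{L^\infty}$ still works, but the order should be stated correctly. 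Second, the inequality $\eps^{-2/3+\gamma}\gtrsim\eps^{-2/3}$ is backwards --- for $\gamma>0$ and $\eps$ small, $\eps^{-2/3+\gamma}<\eps^{-2/3}$. The faster rate $\e^{-c_0\eps^{-2/3}|\cdot|}$ in the statement is available because $|T(r_j)|$ is bounded below by a constant (rather than by $\eps^{2\gamma}$), so the raw rate $\tfrac{1}{2\sqrt{2}}\sqrt{|MT(r_j)|}\,\eps^{-2/3}$ from the proof of Lemma~\ref{lemma:pointGrj-tow0} is already $\gtrsim\eps^{-2/3}$; one cannot simply ``rename the exponential rate.''

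The more substantive issue is that your componentwise coupling analysis conflates the $(V_1,V_2)$ variables of $\L$ with the $(b_r,b_\theta)$ variables of $\dyn$. The operator in the statement is $\dyn-\lambda$, whose off-diagonal structure is recorded explicitly in Lemma~\ref{lemma:eigenfunctions are smooth}: $-2i\eps mr^{-2}b_\theta$ enters the first equation and $(2i\eps mr^{-2}+r\Omega'(r))b_r$ enters the second. Since $\G_1$ is scalar, with $f=(f_1,0)$ the second-component error is simply $(2i\eps mr^{-2}+r\Omega'(r))\G_1^{\mathrm{sc}}f_1$; here $r\Omega'(r)=O(1)$ and $|\G_1^{\mathrm{sc}}f_1|\lesssim\eps^{1/3}\|f\|_{L^\infty}$, which yields the $\eps^{1/3}$ bound --- there is no ``factor from the change of variables'' in this computation. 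With $f=(0,f_2)$, the first-component error is $-2i\eps mr^{-2}\G_1^{\mathrm{sc}}f_2$, and since $\eps m=M\eps^{2/3}$ the coupling factor is $O(\eps^{2/3})$, not $O(\eps)$ as you wrote; multiplying by $|\G_1^{\mathrm{sc}}f_2|\lesssim\eps^{1/3}\|f\|_{L^\infty}$ gives the claimed $\eps$ bound. Your references to $Q_\eps$ and $\tfrac{iM\alpha\eps}{r^2}$ concern the $(V_1,V_2)$ form of $\L$ and not the operator actually being applied here; the final numbers happen to coincide, but as written the arithmetic $O(\eps)\times O(\eps^{1/3})$ would predict $\eps^{4/3}$, not the stated $\eps$, so the mechanism you describe does not actually reproduce the correct scaling.
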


These bounds are straightforward consequences of our analysis in Section \ref{s:approx-Green's}, taking into account that $\supp(f)\subset [\frac{1}{2}p,k_1]$, $k_1<r_0$, where the distance between $k_1$ and $r_0$ is independent of $\eps$.

At this point we want to introduce a partition of the interval $[\frac{1}{2}p,r_0-\eps^\gamma]$, but notice that we can always do it so that $p$ is a boundary point of the partition. Indeed, we set 
\[
r_j=\frac{p}{2}+(j-1)\eps^\delta, \quad j\geq 1,
\]
until $j_0\in\N$ such that $p-2\eps^\delta \leq \frac{1}{2}p + j_0\eps^\delta < p-\eps^\delta$. Then simply set $r_{j_0+1}=p-\eps^\delta$, and inductively $r_{j+1}=r_j+\eps^\delta$ from there. This way we create a partition $\bigcup_j [r_j,r_{j+1})$, which always contains the interval $[p-\eps^\delta,p)$. With this partition defined, we now define 
\[
\mathcal{P}_{\leq J}:=\bigcup_{j=1}^J [r_j,r_{j+1}),
\]
and $\mathcal{P}_{>J}:=[\frac{1}{2}p,2q]\setminus \mathcal{P}_{\leq J}$. With these preliminaries out of the way, we now state the lemma which will be the main workhorse of this section. 

\begin{lemma}
\label{lemma:bounded domains error 1}
Let $f \in L^\infty([\frac{1}{2}p,2q])$ be of the form $f=(f_1, f_2)$, and fix $k_2>0$ and $J$ such that $r_0-r_{J+1}\geq k_2$. Then, there hold the estimates 
\begin{equation*}
\begin{split}
& \|(((\dyn-\lambda)\G^\lambda_{\mathrm{dyn}}-\Id )f)_1\|_{L^\infty(\mathcal{P}_{\leq J+2})} \leq \eps^\beta \|f_1\|_{L^\infty(\mathcal{P}_{\leq J})}+\eps^{1/3}\|f_2\|_{L^\infty(\mathcal{P}_{\leq J})}+\eps^{a}\|Q^{-1}f\|_{L^\infty(\mathcal{P}_{>J})},\\
& \|(((\dyn-\lambda)\G^\lambda_{\mathrm{dyn}}-\Id )f)_2\|_{L^\infty(\mathcal{P}_{\leq J+2})} \leq \eps^\beta \|f_2\|_{L^\infty(\mathcal{P}_{\leq J})}+\eps\|f_1\|_{L^\infty(\mathcal{P}_{\leq J})}+\eps^{a}\|Q^{-1}f\|_{L^\infty(\mathcal{P}_{>J})},\\
& \|Q^{-1}((\dyn-\lambda)\G^\lambda_{\mathrm{dyn}}-\Id )f\|_{L^\infty(\mathcal{P}_{> J+2})} \leq \e^{-\frac{1}{2}c_0\eps^{-2/3+\delta}}\|f\|_{L^\infty(\mathcal{P}_{\leq J})}+\eps^{a}\|Q^{-1}f\|_{L^\infty(\mathcal{P}_{>J})}.
\end{split}
\end{equation*}
\end{lemma}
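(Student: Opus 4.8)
The plan is to prove all three estimates at once by splitting the input along the partition, $f = f\dsOne_{\mathcal{P}_{\leq J}} + f\dsOne_{\mathcal{P}_{>J}}$, and using linearity of the error operator $(\dyn-\lambda)\G^\lambda_{\mathrm{dyn}}-\Id$. The first summand will be controlled by the localised, unweighted bounds of Lemma~\ref{lemma:unweighted bounds}, and the second by the global compact-domain bound \eqref{eq:weighted-bounds}. The structural fact used throughout is that $\mathcal{P}_{\leq J} = [\tfrac{p}{2},r_{J+1})$ with $r_0-r_{J+1}\geq k_2$ independent of $\eps$; hence, once $\eps$ is small enough that $\eps^\gamma < k_2$, the cut-offs defining $\G^\lambda_{\mathrm{dyn}}$ annihilate $f\dsOne_{\mathcal{P}_{\leq J}}$ on the supports of $\G_2^\lambda$ and $\G_3$, so that $\G^\lambda_{\mathrm{dyn}}(f\dsOne_{\mathcal{P}_{\leq J}}) = \G_1(f\dsOne_{\mathcal{P}_{\leq J}})$, reducing the first contribution to the action of the single operator $\G_1$ on a function supported in $[\tfrac{p}{2},r_{J+1}]$ with $r_{J+1}$ a fixed distance below $r_0$.

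For this $\mathcal{P}_{\leq J}$-contribution I would write $f\dsOne_{\mathcal{P}_{\leq J}} = (f_1\dsOne_{\mathcal{P}_{\leq J}},0) + (0,f_2\dsOne_{\mathcal{P}_{\leq J}})$ and apply Lemma~\ref{lemma:unweighted bounds}, cases (1) and (2), each with $k_1 = r_{J+1}$. This produces, componentwise, a bound of the form $\bigl(\eps^\beta\|f_1\|_{L^\infty(\mathcal{P}_{\leq J})} + \eps^\sigma\|f_2\|_{L^\infty(\mathcal{P}_{\leq J})}\bigr)$ in the first component and $\bigl(\eps^\beta\|f_2\|_{L^\infty(\mathcal{P}_{\leq J})} + \eps^\sigma\|f_1\|_{L^\infty(\mathcal{P}_{\leq J})}\bigr)$ in the second, with the exponents $\sigma\in\{1,\tfrac13\}$ supplied by the lemma, multiplied by $\dsOne_{[\frac{p}{2},\,r_{J+1}+2\eps^\delta]}(r) + \e^{-c_0\eps^{-2/3}|r-(r_{J+1}+\eps^\delta)|}$. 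Since $r_{J+1}+2\eps^\delta = r_{J+3}$, both the indicator and the onset of the exponential lie in $\overline{\mathcal{P}_{\leq J+2}}$, which yields the first two terms of the first two claimed inequalities (after the harmless bookkeeping $\eps\le\eps^{1/3}$ to match the stated scalings). Restricting instead to $\mathcal{P}_{>J+2} = [r_{J+3},2q]$, the exponential is evaluated at distance at least $r_{J+3}-(r_{J+1}+\eps^\delta) = \eps^\delta$ from its centre and decreases monotonically, so it is bounded there by $\e^{-c_0\eps^{-2/3+\delta}}\le\e^{-\frac12 c_0\eps^{-2/3+\delta}}$. A point that must be respected here is that Lemma~\ref{lemma:unweighted bounds} has to be applied to the whole block $f\dsOne_{\mathcal{P}_{\leq J}}$ in one stroke, \emph{not} cell by cell: summing the cellwise bounds would introduce a multiplicity of order $\eps^{-\delta}$ (the number of partition cells), which the $\eps^\beta$-gain does not absorb, whereas applying the lemma once to the whole block produces no such factor.

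For the $\mathcal{P}_{>J}$-contribution I would simply invoke \eqref{eq:weighted-bounds}: by linearity and since $\supp(f\dsOne_{\mathcal{P}_{>J}}) \subset \mathcal{P}_{>J}$,
\[
\bigl\|Q^{-1}\bigl((\dyn-\lambda)\G^\lambda_{\mathrm{dyn}}-\Id\bigr)(f\dsOne_{\mathcal{P}_{>J}})\bigr\|_{L^\infty([\frac{p}{2},2q])} \lesssim \eps^a \|Q^{-1}f\|_{L^\infty(\mathcal{P}_{>J})},
\]
and the plain $L^\infty$ norm of either component of the error is dominated by the left-hand side; the weights $\max\{1,r^2\}$ and $w_\eps$ occurring in the $Y$-norm cause no loss on the compact interval $[\tfrac{p}{2},2q]$, because the exponential decay of $\G_1,\G_2^\lambda,\G_3$ away from the support of their input swamps any polynomial growth, exactly as in the derivation of Lemma~\ref{lemma:unweighted bounds}. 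Restricting this bound to $\mathcal{P}_{\leq J+2}$ supplies the $\eps^a\|Q^{-1}f\|_{L^\infty(\mathcal{P}_{>J})}$ terms in the first two claimed inequalities, and restricting to $\mathcal{P}_{>J+2}$ supplies the matching term in the third. Adding the two contributions gives the lemma.

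The main obstacle is the second step: extracting the sharp support-propagation statement — that input on $\mathcal{P}_{\leq J}$ yields error essentially on $\mathcal{P}_{\leq J+2}$, with only a $\e^{-\frac12 c_0\eps^{-2/3+\delta}}$ leakage beyond — simultaneously with the correct pairing of each $\eps$-power to each input component. This rests entirely on the precise componentwise, exponentially-localised form of Lemma~\ref{lemma:unweighted bounds} (which itself encodes the $O(\eps^\beta)$ gain from the piecewise-linear interpolation of $T$ and the $O(\eps)$–$O(\eps^{1/3})$ sizes of the off-diagonal stretching and curvature terms), together with the standing relations $\gamma<\delta$ and $\gamma+\delta<\tfrac23$, which guarantee that $\eps^{-2/3+\delta}\to\infty$ fast enough to beat the $\eps^{-\delta}$-many cells in any place where a crude union bound is unavoidable. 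Everything else is a routine reorganisation of estimates already proved in Section~\ref{s:approx-Green's}.
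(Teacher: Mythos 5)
Your approach coincides with the paper's: you split $f = f\dsOne_{\mathcal{P}_{\leq J}}+f\dsOne_{\mathcal{P}_{>J}}$, control the second piece via \eqref{eq:weighted-bounds}, and apply Lemma~\ref{lemma:unweighted bounds} to the first. The structural observations you add — reduction of $\G^\lambda_{\mathrm{dyn}}$ to $\G_1$ on the $\mathcal{P}_{\leq J}$-block once $\eps^\gamma<k_2$, the single-block rather than cell-by-cell application of Lemma~\ref{lemma:unweighted bounds}, and the domination $\|g\|_{L^\infty}\lesssim\|Q^{-1}g\|_{L^\infty}$ because $\|Q\|\lesssim 1$ — are all correct and are implicit in the paper's argument.

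There is a gap in your treatment of the third estimate. After applying Lemma~\ref{lemma:unweighted bounds} to $f\dsOne_{\mathcal{P}_{\leq J}}$, the componentwise error bounds carry prefactors of sizes $\eps^\beta$, $\eps^{1/3}$, $\eps$ multiplying $\e^{-c_0\eps^{-2/3+\delta}}$ on $\mathcal{P}_{>J+2}$. But the third inequality concerns $\|Q^{-1}(\cdot)\|_{L^\infty(\mathcal{P}_{>J+2})}$, and $\|Q^{-1}\|\lesssim\eps^{-1/3}$. You never apply $Q^{-1}$ to this $\mathcal{P}_{\leq J}$-contribution; doing so inflates the prefactor to roughly $(\eps^{\beta-1/3}+1)$, which can be large when $\beta<1/3$. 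Your passage $\e^{-c_0\eps^{-2/3+\delta}}\le\e^{-\frac12 c_0\eps^{-2/3+\delta}}$ is trivially true but does not absorb this polynomial factor: what is actually required is $(\eps^{\beta-1/3}+1)\e^{-c_0\eps^{-2/3+\delta}}\lesssim\e^{-\frac12 c_0\eps^{-2/3+\delta}}$, which holds only because $\delta<2/3$ makes the exponent $\eps^{-2/3+\delta}\to\infty$ as $\eps \to 0$. The paper records this explicitly: it writes $\|Q^{-1}\|(\eps^\beta+\eps^{1/3})\e^{-c_0\eps^{-2/3+\delta}}\lesssim(\eps^{\beta-1/3}+1)\e^{-c_0\eps^{-2/3+\delta}}$ and closes ``provided $\delta<2/3$''. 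A separate, minor point: your ``harmless bookkeeping $\eps\le\eps^{1/3}$'' tacitly accepts that the off-diagonal exponents produced by Lemma~\ref{lemma:unweighted bounds} (namely $\eps\|f_2\|$ in the first component, $\eps^{1/3}\|f_1\|$ in the second) and those in the statement (namely $\eps^{1/3}\|f_2\|$ and $\eps\|f_1\|$) match up to weakening; in fact for the second component the stated bound is the \emph{tighter} one and cannot be reached by $\eps\le\eps^{1/3}$, though this mismatch is inherited from the paper's statement rather than an error of yours.
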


\begin{proof}
Writing $f=f\mathds{1}_{\mathcal{P}_{\leq J}}+f\mathds{1}_{\mathcal{P}_{>J}}$, we have 
\begin{equation*}
|((\dyn-\lambda)\G^\lambda_{\mathrm{dyn}}-\Id)f(r)|\leq |((\dyn-\lambda)\G_1-\Id)\mathds{1}_{\mathcal{P}_{\leq J}}f(r)|+|((\dyn-\lambda)\G^\lambda_{\mathrm{dyn}}-\Id)\mathds{1}_{\mathcal{P}_{>J}}f(r)|.
\end{equation*}
For the second addend in the right hand side we can directly deduce from \eqref{eq:weighted-bounds},
\begin{align*}
|((\dyn-\lambda)\G^\lambda_{\mathrm{dyn}}-\Id)\mathds{1}_{\mathcal{P}_{>J}}f(r)| \leq \|Q\| |Q^{-1}((\dyn-\lambda)\G^\lambda_{\mathrm{dyn}}-\Id)\mathds{1}_{\mathcal{P}_{>J}}f(r)| \lesssim \eps^a \|Q^{-1} f\|_{L^\infty(\mathcal{P}_{>J})}.
\end{align*}
For the first addend we use Lemma \ref{lemma:unweighted bounds}, so that all in all we can write for any $r \leq r_J+3\eps^\delta$,
\begin{equation*}
\begin{split}
&|(((\dyn-\lambda)\G^\lambda_{\mathrm{dyn}}-\Id)f)_1(r)| \lesssim \eps^\beta \|f_1\|_{L^\infty(\mathcal{P}_{\leq J})}+\eps^{1/3}\|f_2\|_{L^\infty(\mathcal{P}_{\leq J})}+\eps^{a}\|Q^{-1}f\|_{L^\infty(\mathcal{P}_{>J})},\\
&|(((\dyn-\lambda)\G^\lambda_{\mathrm{dyn}}-\Id)f)_2(r)| \lesssim \eps^\beta \|f_2\|_{L^\infty(\mathcal{P}_{\leq J})}+\eps\|f_1\|_{L^\infty(\mathcal{P}_{\leq J})}+\eps^{a}\|Q^{-1}f\|_{L^\infty(\mathcal{P}_{>J})}.
\end{split}
\end{equation*}
Similarly, for $r>r_j+3\eps^\delta$, again using Lemma \ref{lemma:unweighted bounds} we can write
\begin{align*}
|Q^{-1}((\dyn-\lambda)\G^\lambda_{\mathrm{dyn}}-\Id)f(r)| & \lesssim \|Q^{-1}\|\left(\eps^\beta + \eps^{1/3} \right) \e^{-c_0\eps^{-2/3+\delta}} \|f\|_{L^\infty(\mathcal{P}_{\leq J})}+\eps^{a}\|Q^{-1}f\|_{L^\infty(\mathcal{P}_{>J})}\\
&\lesssim \left( \eps^{\beta-1/3} + 1 \right) \e^{-c_0\eps^{-2/3+\delta}} \|f\|_{L^\infty(\mathcal{P}_{\leq J})}+\eps^{a}\|Q^{-1}f\|_{L^\infty(\mathcal{P}_{>J})},
\end{align*}
where we used that $\|Q^{-1}\| \lesssim \eps^{-1/3}$.
From here the proof follows provided that $\delta<2/3$, which is ensured by Lemma \ref{lemma:consequences of assumptions}.
\end{proof}

Before proceeding with the proof of Lemma \ref{lemma:homogeneous solutions} we first need to prove some technical results that will be required in the coming pages.

\begin{lemma}
\label{lemma:sequence lemma}
Let $\sigma>0$, $a\in (0,1/3)$, and consider the iteratively defined sequence 
\begin{equation*}
\begin{split}
&a_{n+1} \leq \eps^a(a_n+c_n)+\eps b_n,\\
&b_{n+1} \leq \eps^a ( b_n+c_n)+\eps^{1/3}a_n,\\
&c_{n+1} \leq \eps^a (c_n+\sigma(a_n+b_n)),
\end{split}
\end{equation*}
with initial configuration $a_0,b_0>0$, $c_0=0$.
Then there holds for all $n \geq 0$
\begin{equation*}
\begin{split}
&a_n \leq 2(\eps^{an/2} a_0+\eps^{1/3+a(n-1)/2}b_0 +\sqrt{\sigma}\eps^{an/2}(a_0+b_0)), \\
&b_n \leq 2(\eps^{an/2} b_0+\eps^{1/3+a(n-1)/2}a_0 +\sqrt{\sigma}\eps^{an/2}(a_0+b_0)), \\
&c_n \leq 2\sqrt{\sigma}\eps^{an/2}(a_0+b_0).
\end{split}
\end{equation*}
\end{lemma}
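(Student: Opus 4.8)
The plan is to prove the three bounds simultaneously by strong induction on $n$, tracking the growth of all three sequences together. The key structural observation is that the recursion for $c_n$ is driven \emph{only} by $a_n+b_n$ (with the small factor $\sqrt{\sigma}$ ultimately absorbed by the powers of $\eps$), so $c_n$ should always be the smallest of the three and genuinely of order $\sqrt{\sigma}\eps^{an/2}(a_0+b_0)$; meanwhile $a_n$ and $b_n$ decay like $\eps^{an/2}$ with an exchange term of lower order $\eps^{1/3}$ coupling them. Since $a<1/3$, the exponent $\eps^a$ is the dominant contraction, and the terms $\eps b_n$ and $\eps^{1/3}a_n$ are strictly better than $\eps^a a_n$, $\eps^a b_n$, so they should never ruin the geometric decay.

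First I would set up the induction hypothesis as stated, with the factor $2$ providing slack. The base case $n=0$ is immediate since $c_0=0$ and the claimed bounds at $n=0$ read $a_0\le 2(a_0+\sqrt\sigma(a_0+b_0))$ etc., which hold trivially (one may even need to note $\eps^{a(n-1)/2}$ at $n=0$ is harmless since that term has a positive coefficient $b_0$; alternatively start the induction at $n=1$ after checking $n=0,1$ by hand). For the inductive step, I would plug the hypothesis for $a_n,b_n,c_n$ into the three recursions. For instance,
\[
a_{n+1}\le \eps^a\big(a_n+c_n\big)+\eps b_n \le \eps^a\cdot 2\big(\eps^{an/2}a_0+\eps^{1/3+a(n-1)/2}b_0+\sqrt\sigma\eps^{an/2}(a_0+b_0)\big)+\eps^a\cdot 2\sqrt\sigma\eps^{an/2}(a_0+b_0)+\eps\cdot 2\big(\eps^{an/2}b_0+\cdots\big),
\]
and then I would check term by term that each resulting summand is bounded by the corresponding summand of $2\big(\eps^{a(n+1)/2}a_0+\eps^{1/3+an/2}b_0+\sqrt\sigma\eps^{a(n+1)/2}(a_0+b_0)\big)$. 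The comparisons reduce to elementary inequalities between powers of $\eps$, e.g. $\eps^a\eps^{an/2}\le \eps^{a(n+1)/2}$ (true since $a\le 2a$, i.e. always), $\eps^a\cdot\eps^{1/3+a(n-1)/2}\le \eps^{1/3+an/2}$, $\eps\cdot\eps^{an/2}\le \eps^{1/3+an/2}$ (true for $\eps$ small since $1>1/3$), and $\eps^a\sqrt\sigma\eps^{an/2}\le \sqrt\sigma\eps^{a(n+1)/2}$. The only slightly delicate point is that the $c_n$-contribution to $a_{n+1}$, which is $\eps^a c_n\le 2\sqrt\sigma\eps^{a}\eps^{an/2}(a_0+b_0)$, must be absorbed into the $\sqrt\sigma\eps^{a(n+1)/2}(a_0+b_0)$ term of the $a_{n+1}$ bound — this works because $\eps^a\le\eps^{a/2}$. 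The argument for $b_{n+1}$ is symmetric. For $c_{n+1}$ one uses $c_{n+1}\le\eps^a(c_n+\sigma(a_n+b_n))$ and the hypothesis bounds on $a_n+b_n\le 2(\eps^{an/2}+\eps^{1/3+a(n-1)/2})(a_0+b_0)+4\sqrt\sigma\eps^{an/2}(a_0+b_0)$; here the factor $\sigma$ multiplying the $a_n+b_n$ part needs to be converted into a factor $\sqrt\sigma$ in the conclusion, which is legitimate precisely when $\sqrt\sigma\lesssim 1$ (or, more carefully, when $\sqrt\sigma\eps^a\le 1$, which holds for $\eps$ small); one should state the smallness hypothesis on $\eps$ (depending on $\sigma$ and $a$) explicitly at the start.

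The main obstacle — though it is more bookkeeping than conceptual — is organizing the case analysis so that the coupling terms ($c_n$ into $a_{n+1}$ and $b_{n+1}$, and $a_n,b_n$ into $c_{n+1}$) are each routed into the correct summand of the target bound without the constants blowing up. The factor-$2$ slack is exactly what makes this possible: each of the three summands in, say, the $a_{n+1}$ bound receives contributions from at most two sources, and each source is at most half the target, so the sum stays below the bound. I would make sure to collect, at the outset, the finitely many smallness conditions on $\eps$ (namely $\eps^{1-1/3}$ small enough, $\sqrt\sigma\,\eps^{a/2}\le 1$, and $\eps^{a/2}\le 1$) so the inductive step goes through uniformly in $n$. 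No result from earlier in the paper is needed; this is a purely elementary lemma about the discrete dynamical system, and its role downstream is to control the iterates $\G^\lambda_{\mathrm{dyn}}((\dyn-\lambda)\G^\lambda_{\mathrm{dyn}}-\Id)^n$ in the construction of the homogeneous solutions $v_i$ in Lemma \ref{lemma:homogeneous solutions}.
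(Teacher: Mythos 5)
Your proof is correct, but it takes a genuinely different route from the paper's. The paper first reduces to the $2\times 2$ system in $(d_n,c_n) := (a_n+b_n, c_n)$, solves it exactly by diagonalization, then substitutes the resulting bounds back into the $(a_n,b_n)$ recursion and solves that by conjugation plus a $z$-transform, finally reading off the asymptotics. You instead prove the target bounds directly by strong induction, checking term-by-term that each source is absorbed into the right slot of the claimed bound. Your approach is more elementary (no need to introduce the auxiliary difference equation, the change of basis to $v_n$, or the $z$-transform) and is arguably easier to audit; the paper's approach has the advantage of giving a closed-form formula \eqref{eq:a_n b_n expression} that identifies the sharp coefficients before the final coarse estimates, which is useful diagnostics even if it costs more algebra. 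One small bookkeeping inaccuracy in your sketch: the $\sqrt\sigma(a_0+b_0)$ slot in the bound for $a_{n+1}$ receives contributions from \emph{three} sources, namely $\eps^a a_n$, $\eps^a c_n$, and $\eps\, b_n$, not two — the sum there is $(4\eps^a+2\eps)\sqrt\sigma\eps^{an/2}(a_0+b_0)$, still comfortably below $2\sqrt\sigma\eps^{a(n+1)/2}(a_0+b_0)$ once $\eps$ is small. Your observation that the induction requires a smallness condition on $\eps$ depending on $\sigma$ and $a$ is correct and worth making explicit; note the paper's proof has the same implicit requirement, hidden in the step ``$(2\eps^a(1+\sqrt\sigma))^n\leq\eps^{an/2}$ for $\eps$ sufficiently small,'' so this is not a defect of your method but a feature of the lemma as stated. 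In the application $\sigma$ is exponentially small in $\eps$, so the smallness condition is automatically satisfied.
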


\begin{proof}
Since all constants are non-negative, the sequence is pointwise bounded above by the same sequence, but with the inequalities replaced by equalities, and further replacing the $\eps b_n$ contribution by $\eps^{1/3}b_n$ i.e. 
\begin{equation*}
\begin{split}
&a_{n+1} = \eps^a(a_n+c_n)+\eps^{1/3}b_n\\
&b_{n+1} = \eps^a ( b_n+c_n)+\eps^{1/3}a_n\\
&c_{n+1} = \eps^a (c_n+\sigma (a_n+b_n)).
\end{split}
\end{equation*}
Whilst this system may be solved via a matrix diagonalisation, the computation becomes extremely messy. Thus, we instead employ the following approach, which yields slightly less sharp bounds, but is sufficient for our purposes.
We first set $d_n=a_n+b_n$, and so obtain a reduced system 
\begin{equation*}
\begin{split}
&d_{n+1} \leq 2 \eps^a (d_n+c_n)\\
&c_{n+1} \leq 2 \eps^a (c_n+\sigma d_n),
\end{split}
\end{equation*}
which we may once again upper bound by replacing the inequalities by equalities.
Via a matrix diagonalization, we see that 
\begin{equation*}
\begin{pmatrix}
d_n\\
c_n
\end{pmatrix}
=\frac{1}{2}(2\eps^{a})^n\begin{pmatrix}
((1-\sqrt{\sigma})^n+(1+\sqrt{\sigma})^n)d_0+\frac{1}{\sqrt{\sigma}}(-(1-\sqrt{\sigma})^n+(1+\sqrt{\sigma})^n)c_0\\
\sqrt{\sigma}(-(1-\sqrt{\sigma})^n+(1+\sqrt{\sigma})^n)d_0+((1-\sqrt{\sigma})^n+(1+\sqrt{\sigma})^n)c_0
\end{pmatrix},
\end{equation*}
and so, since by assumption $c_0=0$,
\[
d_n \leq (2\eps^a (1+\sqrt{\sigma}))^n d_0, \quad c_n \leq \sqrt{\sigma} (2\eps^a (1+\sqrt{\sigma}))^n d_0.
\]
In particular, we can now use these bounds to study the equations for $a_n, b_n,$ which now, via the trivial estimate $(2\eps^a(1+\sqrt{\sigma}))^n \leq \eps^{an/2}$ for $\eps$ sufficiently small, may be bounded by
\begin{equation*}
a_{n+1}=\eps^{a}a_n+\eps^{1/3}b_n+\sqrt{\sigma}\eps^{an/2+a}d_0,\quad b_{n+1}=\eps^{a}b_n+\eps^{1/3}a_n+\sqrt{\sigma}\eps^{an/2+a}d_0.
\end{equation*}
Solving this difference equation is a rather arduous task, but still considerably easier than solving the full $3 \times 3$ system directly. Defining $v_n$ such that
\[
\begin{pmatrix}
    a_n\\
    b_n
\end{pmatrix} = Pv_n = \begin{pmatrix}
-1 & 1\\
1 & 1
\end{pmatrix}\begin{pmatrix}
    v_n^1\\
    v_n^2
\end{pmatrix}
\]
we find that $v_n$ satisfies the difference equation 
\begin{equation*}
v_{n+1}=\eps^{a} \begin{pmatrix}
1-\eps^{1/3-a} & 0\\
0 & 1+\eps^{1/3-a}
\end{pmatrix}v_n+ \eps^{an/2+a}\sqrt{\sigma}d_0 \begin{pmatrix}
0\\
1
\end{pmatrix}.
\end{equation*}
We thus directly observe that $v_n^1=\eps^{na}(1-\eps^{1/3-a})^n v_0^1$.
The computation for $v_n^2$ is more challenging, and can be done using the $z$-transform, a discrete time analogue of the Laplace transform, see \cite{Fleisch2022}*{Chapter 5}. Indeed, multiplying the $n^{\mathrm{th}}$ term in the difference equation by $z^{-n}$ and summing over $n$, we obtain
\begin{equation*}
zY(z)-zv_0^2=\eps^a (1+\eps^{1/3-a})Y(z)+\sqrt{\sigma} \eps^{a} \sum_{n \geq 0}\eps^{an/2}z^{-n}=\eps^a (1+\eps^{1/3-a})Y(z)+\sqrt{\sigma}d_0 \eps^{a} \frac{z}{z-\eps^{a/2}},
\end{equation*}
where we have set
\[
Y(z)=\sum_{n \geq 0}v^2_n z^{-n}.
\]
We thus solve this algebraic equation, and via a partial fractions computation, we thus have 
\begin{equation*}
Y(z)=\frac{z}{z-\eps^a (1+\eps^{1/3-a})}v_0^2+\frac{\sqrt{\sigma} d_0\eps^a}{\eps^{a/2}(1-\eps^{a/2}(1+\eps^{1/3-a}))}\left ( \frac{z}{z-\eps^{a/2}}-\frac{z}{z-\eps^{a}(1+\eps^{1/3-a})}\right ).
\end{equation*}
We recognise the right hand side as the $z$-transform of 
\begin{equation*}
v_0^2 (\eps^a (1+\eps^{1/3-a}))^n +\frac{\sqrt{\sigma}d_0 \eps^a}{\eps^{a/2}(1-\eps^{a/2}(1+\eps^{1/3-a}))}\left ( \eps^{an/2}-(\eps^a (1+\eps^{1/3-a}))^n\right ),
\end{equation*}
and so undoing the change of variables yields
\begin{equation}
\label{eq:a_n b_n expression}
\begin{split}
\begin{pmatrix}
a_n\\
b_n
\end{pmatrix}
& = \frac{1}{2}\eps^{na}\begin{pmatrix}
(a_0+b_0)(1+\eps^{1/3-a})^n-(b_0-a_0)(1-\eps^{1/3-a})^n \\
(a_0+b_0)(1+\eps^{1/3-a})^n+(b_0-a_0)(1-\eps^{1/3-a})^n
\end{pmatrix}\\
& \quad + \frac{\sqrt{\sigma} d_0\eps^a}{\eps^{a/2}(1-\eps^{a/2}(1+\eps^{1/3-a}))}\left ( \eps^{an/2}-(\eps^a (1+\eps^{1/3-a}))^n\right )\begin{pmatrix}
1\\
1
\end{pmatrix}.
\end{split}
\end{equation}
We see that we readily obtain from here the claim of the lemma using on the one hand the straightforward estimates
\[
\eps^{na}\left ((1+\eps^{1/3-a})^n+(1-\eps^{1/3-a})^n \right ) \leq \eps^{an/2}, 
\]
\[
\frac{\eps^a}{\eps^{a/2}(1-\eps^{a/2}(1+\eps^{1/3-a}))}\left ( \eps^{an/2}-(\eps^a (1+\eps^{1/3-a}))^n\right ) \leq \eps^{an/2},
\]
and on the other hand we claim
\begin{equation*}
\eps^{na}\left ((1+\eps^{1/3-a})^n-(1-\eps^{1/3-a})^n \right ) \leq 4\eps^{1/3+a(n-1)/2},
\end{equation*}
for all $n$ and $\eps$ small enough. Indeed, from the binomial theorem, it follows that 
\begin{equation*}
\left ((1+\eps^{1/3-a})^n-(1-\eps^{1/3-a})^n \right ) \leq 2((1+\eps^{1/3-a})^n-1).
\end{equation*}
In turn, we note the expression valid for any $x \neq 1$, 
$$
\frac{x^n-1}{x-1}=\sum_{k=0}^{n-1}x^k,
$$
and so 
$$
((1+\eps^{1/3-a})^n-1) \leq \eps^{1/3-a} \sum_{k=0}^{n-1}(1+\eps^{1/3-a})^k.
$$
Therefore, 
\begin{equation*}
\begin{split}
\eps^{na}((1+\eps^{1/3-a})^n-1) & \leq \eps^{1/3-a+na}+\eps^{1/3-a} \eps^{na/2}\sum_{k=1}^\infty  (\eps^{a/2}+\eps^{1/3-a/2})^k\\
& \leq \eps^{1/3-a+na}+\eps^{1/3-a+na/2}\frac{\eps^{a/2}+\eps^{1/3-a/2}}{1-(\eps^{a/2}+\eps^{1/3-a/2})} \leq 2 \eps^{1/3+a(n-1)/2},
\end{split}
\end{equation*}
as soon as $1/3-a/2>a/2$, i.e.\ as soon as $a<1/3$, and $\eps>0$ is small enough.
Thus, plugging this into \eqref{eq:a_n b_n expression}, the proof is complete. 
\end{proof}

Combining the previous lemmas, we arrive at the following result. 

\begin{lemma}
\label{lemma:propagation of mass}
Fix $k_2,N>0$ and suppose that $f=(f_1,0)$ is supported in $\mathcal{P}_{\leq J}$, where 
\[
r_J+(2N+1)\eps^\delta \leq r_0-k_2.
\]
Then, for all $n \leq N$ there holds
\begin{equation*}
\begin{split}
\|(((\dyn-\lambda)\G^\lambda_{\mathrm{dyn}}-\Id)^nf)_1\|_{L^\infty(\mathcal{P}_{\leq J+2n})} & \lesssim \eps^{an/2}\|f\|_{L^\infty(\mathcal{P}_{\leq J})}, \\
\|(((\dyn-\lambda)\G^\lambda_{\mathrm{dyn}}-\Id)^nf)_2\|_{L^\infty(\mathcal{P}_{ \leq J+2n})} & \lesssim \eps^{1/3+a(n-1)/2}\|f\|_{L^\infty(\mathcal{P}_{\leq J})}, \\
\|((\dyn-\lambda)\G^\lambda_{\mathrm{dyn}}-\Id)^nf\|_{L^\infty(\mathcal{P}_{>J+2n},Q^{-1} \dd r)} & \lesssim \eps^{\frac{1}{2}(\delta-1/3+na)}\e^{-\frac{1}{2}c_0\eps^{-2/3+\delta}}\|f\|_{L^\infty(\mathcal{P}_{\leq J})}.
\end{split}
\end{equation*}
If instead $f=(0,f_2)$, then 
\begin{equation*}
\begin{split}
\|(((\dyn-\lambda)\G^\lambda_{\mathrm{dyn}}-\Id)^nf)_1\|_{L^\infty(\mathcal{P}_{\leq J+2n})} & \lesssim \eps^{1/3+a(n-1)/2}\|f\|_{L^\infty(\mathcal{P}_{\leq J})}, \\
\|(((\dyn-\lambda)\G^\lambda_{\mathrm{dyn}}-\Id)^nf)_2\|_{L^\infty(\mathcal{P}_{ \leq J+2n})} & \lesssim \eps^{an/2}\|f\|_{L^\infty(\mathcal{P}_{\leq J})}, \\
\|((\dyn-\lambda)\G^\lambda_{\mathrm{dyn}}-\Id)^nf\|_{L^\infty(\mathcal{P}_{>J+2n},Q^{-1} \dd r)} & \lesssim \eps^{\frac{1}{2}(\delta-1/3+na)}\e^{-\frac{1}{2}c_0\eps^{-2/3+\delta}}\|f\|_{L^\infty(\mathcal{P}_{\leq J})}.
\end{split}
\end{equation*}
\end{lemma}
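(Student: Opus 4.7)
The proof will proceed by strong induction on $n$, treating the iterated application of the error operator $E := (\dyn-\lambda)\G^\lambda_{\mathrm{dyn}}-\Id$ as a three-variable discrete dynamical system whose behaviour is governed by Lemma \ref{lemma:bounded domains error 1} and whose long-time asymptotics are controlled by Lemma \ref{lemma:sequence lemma}. Concretely, for each $n \leq N$ we set
\[
a_n := \|(E^n f)_1\|_{L^\infty(\mathcal{P}_{\leq J+2n})}, \quad b_n := \|(E^n f)_2\|_{L^\infty(\mathcal{P}_{\leq J+2n})}, \quad c_n := \|E^n f\|_{L^\infty(\mathcal{P}_{>J+2n},Q^{-1}\dd r)},
\]
and observe that the hypothesis $r_J+(2N+1)\eps^\delta\leq r_0-k_2$ ensures that, for every $0\leq n \leq N-1$, the index $J+2n$ satisfies the distance condition $r_0-r_{J+2n+1}\geq k_2$ required to invoke Lemma \ref{lemma:bounded domains error 1}. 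Applying that lemma with $J$ replaced by $J+2n$, we obtain the recursion
\[
a_{n+1}\leq \eps^\beta a_n + \eps^{1/3} b_n + \eps^a c_n, \quad b_{n+1}\leq \eps^\beta b_n + \eps\, a_n + \eps^a c_n,
\]
\[
c_{n+1}\leq \e^{-\frac{1}{2}c_0\eps^{-2/3+\delta}}(a_n+b_n) + \eps^a c_n.
\]

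The next step is to match this recursion to the one solved in Lemma \ref{lemma:sequence lemma}. The coupling in our system is asymmetric: the first component is ``fed'' by the second at strength $\eps^{1/3}$, while the second is fed by the first at strength $\eps$, whereas the model recursion has the opposite allocation. This is cosmetic---setting $\tilde{a}_n := b_n$ and $\tilde{b}_n := a_n$ transforms our inequalities into exactly the form treated by Lemma \ref{lemma:sequence lemma}, with effective small parameter $a$ chosen so that $a<\min(\beta,1/3)$ and with the coupling parameter identified as $\sigma = \eps^{-a}\e^{-\frac{1}{2}c_0\eps^{-2/3+\delta}}$ (which is super-exponentially small as $\eps\to 0$, hence $\sqrt{\sigma}$ is negligible against any power of $\eps$).

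For the first case $f=(f_1,0)$ we have the initial data $a_0=\|f_1\|_{L^\infty(\mathcal{P}_{\leq J})}$, $b_0=c_0=0$, i.e.\ $\tilde{a}_0 = 0$ and $\tilde{b}_0 = \|f_1\|_{L^\infty}$. Plugging into the explicit bounds of Lemma \ref{lemma:sequence lemma} yields
\[
b_n = \tilde{a}_n \lesssim \eps^{1/3+a(n-1)/2}\|f_1\|_\infty + \sqrt{\sigma}\,\eps^{an/2}\|f_1\|_\infty, \quad a_n = \tilde{b}_n \lesssim \eps^{an/2}\|f_1\|_\infty,
\]
together with $c_n\lesssim \sqrt{\sigma}\,\eps^{an/2}\|f_1\|_\infty$. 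The $\sqrt{\sigma}$-terms are absorbed into the leading order behaviour, producing exactly the first three bounds claimed in the statement (the form of the exponential prefactor in the $c_n$-bound is a rough consequence of the super-exponential decay of $\sigma$ and can be rewritten as $\eps^{(\delta-1/3+na)/2}\e^{-\frac{1}{2}c_0\eps^{-2/3+\delta}}$ by trivial estimates). The case $f=(0,f_2)$ is entirely symmetric: now $\tilde{a}_0 = \|f_2\|_\infty$ and $\tilde{b}_0 = 0$, so the roles of the estimates for $a_n$ and $b_n$ swap, giving the second set of bounds.

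The only real subtlety to verify in the proof will be that the distance bound $r_{J+2n+1}\leq r_0-k_2$ is indeed preserved throughout the induction; this is guaranteed directly by the uniform hypothesis. All other ingredients are mechanical book-keeping combined with the two preparatory lemmas already at our disposal.
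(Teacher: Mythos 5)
Your proposal follows the paper's proof essentially verbatim: the paper likewise defines $a_n,b_n,c_n$ to be exactly the three norms you introduce, obtains the recursion by applying Lemma \ref{lemma:bounded domains error 1} with $J$ replaced by $J+2n$ (your verification that $r_J+(2N+1)\eps^\delta\leq r_0-k_2$ keeps the distance condition valid at every step is correct), and then feeds the recursion into Lemma \ref{lemma:sequence lemma}. Your component swap $\tilde a_n=b_n$, $\tilde b_n=a_n$ is a legitimate way to match the asymmetric $\eps$ versus $\eps^{1/3}$ coupling, and your requirement on the exponent is automatic rather than a choice: the $a$ from Proposition \ref{proposition:approx-Green's} already satisfies $a\leq\beta$ and $a<1/3$, so $\eps^\beta$ can be absorbed into $\eps^a$ as you need.

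The one inaccuracy is in the third estimate. With your choice $\sigma=\eps^{-a}\e^{-\frac{1}{2}c_0\eps^{-2/3+\delta}}$, Lemma \ref{lemma:sequence lemma} returns $c_n\lesssim \eps^{a(n-1)/2}\e^{-\frac{1}{4}c_0\eps^{-2/3+\delta}}\|f\|_{L^\infty(\mathcal{P}_{\leq J})}$, and this cannot be ``rewritten by trivial estimates'' into $\eps^{\frac{1}{2}(\delta-1/3+na)}\e^{-\frac{1}{2}c_0\eps^{-2/3+\delta}}\|f\|_{L^\infty(\mathcal{P}_{\leq J})}$: the latter carries a strictly stronger exponential factor, and no pointwise manipulation upgrades $\e^{-\frac{1}{4}c_0\eps^{-2/3+\delta}}$ to $\e^{-\frac{1}{2}c_0\eps^{-2/3+\delta}}$. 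This is a constant-tracking slip rather than a flaw in the method: the paper's own one-line proof, which takes $\sigma=\eps^{\delta-1/3}\e^{-c_0\eps^{-2/3+\delta}}$, does not literally verify the hypothesis of Lemma \ref{lemma:sequence lemma} against the $\e^{-\frac{1}{2}c_0\eps^{-2/3+\delta}}$ input of Lemma \ref{lemma:bounded domains error 1} either, and every later use of the present lemma (in the proof of Lemma \ref{lemma:homogeneous solutions}) only invokes the weaker factor $\e^{-\frac{1}{4}c_0\eps^{-2/3+\delta}}$, so your version suffices downstream. To be fully rigorous you should either state the third bound with the exponential constant your argument actually produces, or sharpen the constant in Lemma \ref{lemma:bounded domains error 1} before applying the sequence lemma.
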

\begin{proof}
We set
\begin{align*}
    a_n & = \|(((\dyn-\lambda)\G^\lambda_{\mathrm{dyn}}-\Id)^nf)_1\|_{L^\infty(\mathcal{P}_{\leq J+2n})}, \\
    b_n & = \|(((\dyn-\lambda)\G^\lambda_{\mathrm{dyn}}-\Id)^nf)_2\|_{L^\infty(\mathcal{P}_{ \leq J+2n})}, \\
    c_n & = \|((\dyn-\lambda)\G^\lambda_{\mathrm{dyn}}-\Id)^nf\|_{L^\infty(\mathcal{P}_{>J+2n},Q^{-1} \dd r)}.
\end{align*}
Then, using the bounds from Lemma \ref{lemma:bounded domains error 1} and setting $\sigma=\eps^{\delta-1/3}\e^{-c_0\eps^{-2/3+\delta}}$, we may directly apply Lemma \ref{lemma:sequence lemma} and deduce the result.
\end{proof}

With this result in hand, we may now commence the proof of Lemma \ref{lemma:homogeneous solutions}.
\begin{proof}[Proof of Lemma \ref{lemma:homogeneous solutions}] The proof will be divided into multiple steps. 

\medskip

\emph{Step 1.} We begin by constructing $v_1$. Take $f=(f_1,f_2)$ with $f_1(r)=\mathds{1}_{[p-\eps^\delta,p]}(r)$, $f_2=0$, and set $v_1(r)=(\dyn-\lambda)^{-1}f$. In particular, it holds that $(\dyn-\lambda) v_1=0$ in $(p,q)$, and so we have a homogeneous solution. It remains to prove the claimed bounds at the endpoints. We begin by constructing the first iterate by hand. Indeed, we have via a direct computation that 
\begin{equation*}
f^0(r) :=\G^\lambda_{\mathrm{dyn}}f=\begin{pmatrix}
\frac{\eps^{1/3}}{2i T(p-\eps^\delta)}\e^{-\sqrt{iT(p-\eps^\delta)} \eps^{-2/3} r} \left(\e^{-\sqrt{iT(p-\eps^\delta)} \eps^{-2/3} p}-\e^{-\sqrt{iT(p-\eps^\delta)} \eps^{-2/3} (p-\eps^\delta)}\right )\\
0
\end{pmatrix},
\end{equation*}
for $r \geq p$. In particular, at $r=p$ we have 
\[
|f^1_0(r)|=\left|\frac{\eps^{1/3}}{2|T(p-\eps^\delta)|}(1-\e^{-\sqrt{iT(p-\eps^\delta)}\eps^{-2/3+\delta}})\right| \gtrsim \eps^{1/3}.
\]
Fix now $N$ large enough so that $Na>4/3$, and $a+3N\eps^\delta<r_0-|r_0-p|/2$. Then, we write 
\begin{equation*}
v_1(r)=f^0(r)+\sum_{n=1}^N(-1)^n \G^\lambda_{\mathrm{dyn}} ((\dyn-\lambda)\G^\lambda_{\mathrm{dyn}}-\Id)^n f+ \sum_{n \geq N+1}^\infty (-1)^n \G^\lambda_{\mathrm{dyn}} ((\dyn-\lambda)\G^\lambda_{\mathrm{dyn}}-\Id)^n f.
\end{equation*}
By our error bounds, there holds 
\begin{align*}
&\left\|\sum_{n \geq N+1}^\infty (-1)^n \G^\lambda_{\mathrm{dyn}} ((\dyn-\lambda)\G^\lambda_{\mathrm{dyn}}-\Id)^n f\right\|_{L^\infty([\frac{p}{2},2q])}\\
&\leq \|Q \|\left\|\sum_{n \geq N+1}^\infty (-1)^n \G^\lambda_{\mathrm{dyn}} ((\dyn-\lambda)\G^\lambda_{\mathrm{dyn}}-\Id)^n f\right\|_{L^\infty([\frac{p}{2},2q],Q^{-1} \dd r)}\\
&\lesssim \eps^{-1/3} \sum_{n \geq N+1}\eps^{n a}\|f\|_{L^\infty([\frac{p}{2},2q],Q^{-1} \dd r)} \lesssim \eps^{-1/3+(N+1)a}\|Q^{-1}f\|_{L^\infty([\frac{p}{2},2q])}\lesssim \eps^{-2/3+(N+1)a}.
\end{align*}
Next, we note that we can split the error produced at the $\mathrm{n}^{th}$ iteration into two parts as follows:
\begin{align*}
((\dyn-\lambda)\G^\lambda_{\mathrm{dyn}}-\Id)^n f & = \left (((\dyn-\lambda)\G^\lambda_{\mathrm{dyn}}-\Id)^n f \right )\mathds{1}_{r \leq p+2n \eps^{\delta}}\\
& \quad +\left (((\dyn-\lambda)\G^\lambda_{\mathrm{dyn}}-\Id)^n f \right )\mathds{1}_{r > p+2n \eps^{\delta}} =:f_\leq^n+f_>^n.
\end{align*}
By Lemma \ref{lemma:propagation of mass}, we observe that 
\begin{equation*}
\|f_\leq^n\|_{L^\infty([\frac{p}{2},2q])} \lesssim \eps^{\frac{na}{2}}, \quad \|f_>^n\|_{L^\infty([\frac{p}{2},2q], Q^{-1} \dd r)} \lesssim \eps^{\frac{an}{2}}\e^{-\frac{c_0}{4}\eps^{-2/3+\delta}},
\end{equation*}
so long as $\eps$ is small enough.
Furthermore, $f_\leq^n$ is supported in $r<r_0-|r_0-p|/2$, meaning $\G^\lambda_{\mathrm{dyn}}f_\leq^n=\G_1 f_\leq^n$, and the bounds from Lemma \ref{lemma:unweighted bounds} are applicable. We begin by estimating
\begin{align*}
&|\G^\lambda_{\mathrm{dyn}} ((\dyn-\lambda)\G^\lambda_{\mathrm{dyn}}-\Id)^n f|(r) \leq |\G_1 f_\leq^n|(r)+\|Q\|\|\G_{\mathrm{dyn}}^\lambda f_>^n\|_{L^\infty([\frac{p}{2},2q],Q^{-1} \dd r)}.
\end{align*}
In particular, for $r \leq p+3n \eps^\delta$ we deduce from Lemma \ref{lemma:propagation of mass} the bound
\begin{equation*}
|\G^\lambda_{\mathrm{dyn}}f_\leq^n|(r) \leq \eps^{1/3+\frac{na}{2}}+\eps^{-1/3+\frac{na}{2}}\e^{-\frac{c_0} {4}\eps^{-2/3+\delta}}\lesssim \eps^{1/3+\frac{na}{2}},
\end{equation*}
whereas for $r >p+3n \eps^\delta$ it holds 
\begin{equation*}
|\G^\lambda_{\mathrm{dyn}} f_2^n|\leq \eps^{1/3+\frac{na}{2}} \e^{-c_0\eps^{-2/3+\delta}}+\eps^{-1/3+\frac{na}{2}}\e^{-\frac{c_0} {4}\eps^{-2/3+\delta}}\lesssim \eps^{\frac{na}{2}}\e^{-\frac{c_0}{8}\eps^{-2/3+\delta}}.
\end{equation*}
Therefore, we deduce
\begin{align*}
|v_{1,1}(p)| \gtrsim \eps^{1/3}-\sum_{n=1}^\infty \eps^{1/3+\frac{na}{2}}-\eps^{-2/3+(N+1)a} \gtrsim \eps^{1/3},
\end{align*}
and similarly
\begin{equation*}
|v_{1,1}(q)|\lesssim \e^{-\frac{c_0}{4}\eps^{-2/3+\delta}}\sum_{n=0}^\infty \eps^{\frac{an}{2}} +\eps^{-2/3+(N+1)a} \lesssim \eps^{2/3}.
\end{equation*}
It thus remains to verify the derivative bounds. Recalling that we split 
$$
f_\leq^n=\left (((\dyn-\lambda)\G^\lambda_{\mathrm{dyn}}-\Id)^n f \right )\mathds{1}_{r \leq p+2n \eps^{\delta}}, \quad f_>^n=\left (((\dyn-\lambda)\G^\lambda_{\mathrm{dyn}}-\Id)^n f \right )\mathds{1}_{r > p+2n \eps^{\delta}},
$$
we now denote the components of $f_\leq^n$ by $f_{\leq,1}^{n}, f_{\leq,2}^{n}$, and analogously for $f_<^n$. By Lemma \ref{lemma:propagation of mass}, recalling that $f_{\leq,2}^{0}=0$, we get the more precise bounds for $f_{\leq,1}^{n}, f_{\leq,2}^{n}$
\begin{equation*}
\|f_{\leq,1}^{n}\|_{L^\infty([\frac{p}{2},2q])} \lesssim \eps^{\frac{na}{2}}, \quad \|f_{\leq,2}^{n}\|_{L^\infty([\frac{p}{2},2q])} \lesssim \eps^{1/3+(n-1)\frac{a}{2}}.
\end{equation*}
Therefore, we may bound the derivative of the second component for $r<a+3n \eps^\delta$ by 
\begin{align*}
|\partial_r (\G^\lambda_{\mathrm{dyn}}((\dyn-\lambda)\G^\lambda_{\mathrm{dyn}}-\Id)^nf)^2|(r) & \leq |\partial_r \G_1 f_{\leq,2}^{n}|(r) +\|Q\|\|\partial_r \G_{\mathrm{dyn}}^\lambda f_{>}^{n}\|_{L^\infty([\frac{p}{2},2q],Q^{-1} \dd r)} \\
& \lesssim \eps^{(n-1)\frac{a}{2}}+\eps^{-2/3+\frac{n a}{2}}\e^{-\frac{c_0}{4}\eps^{-2/3+\delta}}.
\end{align*}
Thus,
\begin{equation*}
|(v_{1,2})'(p)|\lesssim \sum_{n=1}^\infty \eps^{\frac{(n-1)a}{2}}+\eps^{-4/3+(N+1)a}\lesssim 1.
\end{equation*}
Similarly, we can show that $|(v_{1,2})'(q)|\lesssim \eps^{2/3}$, and so the construction of $v_1$ is complete.
\medskip

\emph{Step 2}. We now construct $v_3$. To do so, we pick $g = (g_1,g_2)$ with $g_1 = 0$, $g_2=\eps^{1/3}\mathds{1}_{[p-\eps^\delta,p]}(r),$ and set $v_3(r)=(\dyn-\lambda)^{-1}g$. Following an entirely analogous strategy as for the construction of $v_1$, we compute the first iterate by hand, yielding for $r \geq p$
\begin{equation*}
g^0:=\G^\lambda_{\mathrm{dyn}}g=\begin{pmatrix}
0\\
\frac{\eps^{2/3}}{2i T(p-\eps^\delta)}\e^{-\sqrt{iT(p-\eps^\delta)} \eps^{-2/3} r} \left(\e^{-\sqrt{iT(p-\eps^\delta)} \eps^{-2/3} p}-\e^{-\sqrt{iT(p-\eps^\delta)} \eps^{-2/3} (p-\eps^\delta)}\right )
\end{pmatrix}.
\end{equation*}
In particular, its derivative at $r=p$ is of size $O(1)$ in $\eps$. Writing as before 
$$
g_\leq^n=\left (((\dyn-\lambda)\G^\lambda_{\mathrm{dyn}}-\Id)^n g \right )\mathds{1}_{r \leq p+2n \eps^{\delta}}, \quad g_>^n=\left (((\dyn-\lambda)\G^\lambda_{\mathrm{dyn}}-\Id)^n g \right )\mathds{1}_{r > p+2n \eps^{\delta}},
$$
with respective components $g_\leq^n = (g_{\leq,1}^{n}, g_{\leq,2}^{n})$, an analogously for $g_>^n$, we can bound 
\begin{align*}
|\partial_r (\G^\lambda_{\mathrm{dyn}}((\dyn-\lambda)\G^\lambda_{\mathrm{dyn}}-\Id)^ng)_2|(r) & \leq |\partial_r \G_1 g_{\leq,2}^{n}|(r) + \|Q\|\|\partial_r \G_{\mathrm{dyn}}^\lambda g_{>}^{n}\|_{L^\infty([\frac{p}{2},2q],Q^{-1} \dd r)} \\
& \lesssim \eps^{(n-1)\frac{a}{2}}+\eps^{-1/3+\frac{n a}{2}}\e^{-\frac{c_0}{4}\eps^{-2/3+\delta}}.
\end{align*}
Following the arguments of Step 1, we can show that $|(v_{3,2})'(p)|\gtrsim 1$.  For the bound of $|v_3(p)|$, we note that at the first iterate, its value is $\eps^{2/3}$. Furthermore, we have the bounds 
\begin{align*}
|\G^\lambda_{\mathrm{dyn}} ((\dyn-\lambda)\G^\lambda_{\mathrm{dyn}}-\Id)^n g|(r) & \leq |\G_1 g_\leq^{n}(r)| + \|Q\|\|\G^\lambda_{\mathrm{dyn}} g_>^n\|_{L^\infty([\frac{p}{2},2q],Q^{-1} \dd r)} \\
& \lesssim \eps^{2/3+\frac{na}{2}}+\eps^{-2/3+\frac{na}{2}}\e^{-\frac{c_0}{4}\eps^{-2/3+\delta}} \lesssim \eps^{2/3+(n-1)\frac{a}{2}}.
\end{align*}
Hence, summing over $n$ once again yields the desired upper bound $|v_3(p)|\lesssim \eps^{2/3}$, and following the arguments from Step 1 verbatim, we also see that $|v_3(q)|\lesssim \eps^{2/3}$, as desired. The bounds for $v_2$ and $v_4$ thus follow analogously.
\end{proof}

Finally, we can prove the analogue of Theorem \ref{thm:dynamo}. Indeed, we begin by setting 
\begin{equation*}
(\dyn-\lambda)^{-1}_{\bd}f=(\dyn-\lambda)^{-1}f-\mathcal{J}^{-1}\begin{pmatrix}
((\dyn-\lambda)^{-1}f)_1(p)\\
((\dyn-\lambda)^{-1}f)_1(q)\\
\partial_r (r((\dyn-\lambda)^{-1}f)_2(p)\\
\partial_r (r((\dyn-\lambda)^{-1}f)_2(q)
\end{pmatrix} \cdot
\begin{pmatrix}
v_1(r)\\
v_2(r)\\
v_3(r)\\
v_4(r)
\end{pmatrix},
\end{equation*}
where $\mathcal{J}$ is the matrix given by 
\begin{equation}\label{eq:matrix-J}
\mathcal{J} = \begin{pmatrix}
v_{1,1}(p) &v_{2,1}(p) & v_{3,1}(p) & v_{4,1}(p)\\
v_{1,1}(q) &v_{2,1}(q) & v_{3,1}(q) & v_{4,1}(q)\\
(rv_{1,2})'(p) &(rv_{2,2})'(p) & (rv_{3,2})'(p) & (rv_{4,2})'(p)\\
(rv_{1,2})'(q) &(rv_{2,2})'(q) & (rv_{3,2})'(q) & (rv_{4,2})'(q)
\end{pmatrix}.
\end{equation}
A computation reveals that with our definition, $(\dyn-\lambda)^{-1}_{\bd}$ indeed satisfies the desired boundary conditions on $[p,q]$. We note the following: $\mathcal{J}$ is invertible, and its inverse has operator norm of order $\eps^{-1/3}$. Indeed, write $\mathcal{J}=\mathcal{J}_\main+\mathcal{J}_{\error}$, with 
\begin{equation*}
\mathcal{J}_{\main}=\begin{pmatrix}
v_{1,1}(p) &0 & 0 & 0\\
0 &v_{2,1}(q) & 0 & 0\\
(rv_{1,2})'(p) &0 & (rv_{3,2})'(p) & 0\\
0 &(rv_{2,2})'(q) & 0 & (rv_{4,2})'(q)
\end{pmatrix}.
\end{equation*}
Using Lemma \ref{lemma:homogeneous solutions}, we see that $\|\mathcal{J}_{\error}\|\lesssim \eps^{2/3}$. Furthermore, a computation yields that 
\begin{equation*}
\mathcal{J}^{-1}_{\main}=\begin{pmatrix}
\frac{1}{v_{1,1}(p)} &0 & 0 & 0\\
0 &\frac{1}{v_{2,1}(q)} & 0 & 0\\
-\frac{(rv_{1,2})'(p)}{v_{1,1}(p)(rv_{3,2})'(p)} &0 & \frac{1}{(rv_{3,2})'(p)} & 0\\
0 &-\frac{(rv_{2,2})'(q)}{v_{2,1}(q)(rv_{4,2})'(q)} & 0 & \frac{1}{(rv_{4,2})'(q)}
\end{pmatrix},
\end{equation*}
which has components of order at most $\eps^{-1/3}$, and so its operator norm is bounded by $\eps^{-1/3}$. Thus, we write 
\begin{equation*}
\mathcal{J}^{-1}=\mathcal{J}_{\main}^{-1}(\Id+\mathcal{J}_{\error}\mathcal{J}_{\main}^{-1})^{-1},
\end{equation*}
where $(\Id+\mathcal{J}_{\error}\mathcal{J}_{\main}^{-1})^{-1}$ is well-defined and has operator norm of order $O(1)$ in $\eps$ because of $\|\mathcal{J}_{\error}\mathcal{J}_{\main}^{-1}\|\lesssim \eps^{1/3}$. Hence, we obtain $\|\mathcal{J}^{-1}\|\lesssim \eps^{-1/3}$ as claimed. We can now finally proceed with the proof of the existence of growing modes in the case where $\cI=[p,q]$, that we state in more detail in the following proposition.

\begin{proposition}[Growing mode with boundaries]
\label{prop:boundaries}
Consider the operator $\dyn$ from \eqref{eq:modal-eq-r}--\eqref{eq:modal-eq-theta} on the interval $\cI = [p,q]$ with boundary conditions $b_r|_{\partial \cI}=0$, $(rb_\theta)'|_{\partial \cI}=0$. Assume the conditions of \ref{thm:main result boundaries}. Then, there exists an eigenvalue $\lambda$ of $\dyn$ that satisfies $\lambda=\eps^{1/3}(\lambda_\star+o_{\eps \to 0}(1))$. Furthermore, there holds the asymptotic expansion for the growing mode
    \begin{align*}
    \begin{pmatrix}
    (b_{\mathrm{in}}^\eps)_{r}\\
    (b_{\mathrm{in}}^\eps)_{\theta}
    \end{pmatrix}
     = 
    \begin{pmatrix}
    \eps^{1/3}\sqrt{\frac{-2iM}{r_0^3\Omega'(r_0)}}\\
    1
    \end{pmatrix}\left (\e^{-\frac{1}{2}\eps^{-2/3}c_2^{1/2}(r-r_0)^2}+\psi_{\mathrm{err}}^\eps \right)
    \end{align*}
   for some explicitly computable constant $c_2 \in \mathbb{C}$ with $\Re(c_2^{1/2})>0$, and
   \[
   \sup_{r \in [0,\infty)}\left|\left(1+(|r-r_0|\eps^{-1/3})^N\right)\psi_{\mathrm{err}}^\eps\right| \lesssim \eps^a,
   \]
   for some $a>0$ and some $N>0$ large enough.
\end{proposition}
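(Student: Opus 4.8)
The plan is to mirror the strategy of Theorem \ref{theorem:mainResult}, using the modified resolvent $(\dyn-\lambda)^{-1}_{\bd}$ that has already been assembled from the global inverse $(\dyn-\lambda)^{-1}$ together with the four homogeneous solutions $v_1,\dots,v_4$ from Lemma \ref{lemma:homogeneous solutions}, correcting the boundary conditions via the matrix $\mathcal{J}$ in \eqref{eq:matrix-J}. First I would verify that $(\dyn-\lambda)^{-1}_{\bd}$ is genuinely a bounded inverse of $\dyn-\lambda$ on $X_\mathcal{I}$ for $\lambda$ in a neighbourhood of the circle $\Gamma$ around $\lambda_\star$: surjectivity and the boundary conditions are immediate from the construction, and closedness of $\dyn$ (proved as in Lemma \ref{lemma:L-closed}) together with injectivity — which on the bounded domain $[p,q]$ follows from the compact embedding $H^1 \hookrightarrow L^2$, and on the unbounded $[p,\infty)$ from \ref{H3} via Lemmas \ref{lemma:injectivity1}, \ref{lemma:injectivity2} — lets us deform $\Gamma$ slightly so that $(\dyn-\lambda)^{-1}_{\bd}$ coincides with the functional-analytic resolvent. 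The key quantitative input is that, since $\|\mathcal{J}^{-1}\|\lesssim \eps^{-1/3}$, $\|v_i\|_{L^\infty}\lesssim \eps^{1/3}$, and $(\dyn-\lambda)^{-1}$ maps $Y$ to $X$ with norm $\lesssim \eps^{-1/3}$, the correction terms added to $(\dyn-\lambda)^{-1}f$ are controlled; crucially we only feed in $f=f_\star$, and $(\dyn-\lambda)^{-1}f_\star$ decays exponentially away from $r_0$ (Remark \ref{rmk:exp-decay}), so its traces at $p$ and $q$ are of order $\e^{-c\eps^{-2/3}}$, making the boundary correction an $o_{\eps\to 0}(1)$ error in $X$.

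Next I would run the Riesz-projector argument exactly as in Steps 2 and 3 of the proof of Theorem \ref{theorem:mainResult}. Define $P_{\bd}f=\frac{1}{2\pi i}\int_\Gamma (\lambda-\dyn)^{-1}_{\bd}f\,\dd\lambda$, and split $(\dyn-\lambda)^{-1}_{\bd} = Q\G^\lambda_{\mathrm{dyn}}Q^{-1}\sum_{n\geq 0}(-1)^n(\cdots)^n + (\text{boundary correction})$, noting that the only $\lambda$-dependent piece of $\G^\lambda_{\mathrm{dyn}}$ is $\G_2^\lambda$, that the Neumann tail is $O(\eps^a)$, and that the boundary correction, being built from $(\dyn-\lambda)^{-1}f_\star$ evaluated at the endpoints, is exponentially small. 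Hence $P_{\bd}f_\star = \frac{1}{2\pi i}\int_\Gamma(-\G_2^\lambda)(f_\star\dsOne_{[r_0-\eps^\gamma,r_0+\eps^\gamma)})\dd\lambda + \text{error}$, which equals $f_\star$ up to $o_{\eps\to 0}(1)\|f_\star\|_X$ by the standard Riesz-projector identity \eqref{eq:riesz-prop1} for $\L_2$ (valid since $\lambda_\star\in\mathrm{int}(\Gamma)$). Therefore $\|P_{\bd}f_\star\|_X \gtrsim (1-\eps^a)\|f_\star\|_X>0$, so $\dyn$ has a spectral point inside $\Gamma$, i.e. an eigenvalue $\lambda = \eps^{1/3}(\lambda_\star+o_{\eps\to 0}(1))$; and the range of $P_{\bd}$ is one-dimensional by the same Riesz-lemma argument, yielding an eigenfunction $\psi_{\bd} = Cf_\star + \psi_{\mathrm{err}}$ with $\|\psi_{\mathrm{err}}\|_X\lesssim \eps^a$. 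Converting back through $Q$ to the $(b_r,b_\theta)$ coordinates via \eqref{eq:change-variable} gives the stated asymptotic expansion, with $\psi_{\mathrm{err}}^\eps$ absorbing the difference from the profile $(\eps^{1/3}\sqrt{-2iM/(r_0^3\Omega'(r_0))},1)^T\e^{-\frac12\eps^{-2/3}c_2^{1/2}(r-r_0)^2}$.

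Finally I would indicate the (routine) modifications for $\cI=[0,q]$ and $\cI=[p,\infty)$: for $[0,q]$ the endpoint $r=0$ is not a physical boundary, so one uses the Green's function $\G_0$ near zero (which regularises, mapping $Y$ to $X$) and only needs \emph{two} homogeneous solutions to correct the boundary data at $q$, shrinking $\mathcal{J}$ to a $2\times 2$ matrix; for $[p,\infty)$ one keeps $\G_3$ towards infinity and still needs the two homogeneous solutions localised near $p$, with the behaviour at infinity handled exactly as in Section \ref{s:approx-Green's}. The main obstacle is the first paragraph: establishing that the boundary-corrected operator really is \emph{the} resolvent (injectivity plus the deformation of $\Gamma$), and in particular checking that the homogeneous solutions $v_i$ remain linearly independent and satisfy the sharp endpoint estimates of Lemma \ref{lemma:homogeneous solutions} \emph{uniformly} for $\lambda\in\Gamma$ — this is where the delicate mass-propagation analysis of Lemmas \ref{lemma:sequence lemma}--\ref{lemma:propagation of mass} is genuinely needed, since the crude iterative bound $\|v_i\|_X\lesssim\eps^{-1/3}$ would be far too weak to make $\|\mathcal{J}^{-1}\|$ and the boundary correction controllable.
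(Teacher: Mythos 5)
Your overall strategy is the same as the paper's: define the boundary-corrected resolvent via the homogeneous solutions and $\mathcal{J}^{-1}$, run the Riesz-projector argument on $f_\star$, and treat the boundary correction as an error. However, there is a genuine gap in how you control that boundary correction. You claim that $(\dyn-\lambda)^{-1}f_\star$ decays exponentially away from $r_0$, citing Remark~\ref{rmk:exp-decay}, and conclude that its traces at $p$ and $q$ are $O(\e^{-c\eps^{-2/3}})$. But Remark~\ref{rmk:exp-decay} only asserts exponential decay for a \emph{single} application of the approximate Green's function $\G_2^\lambda$ to a compactly supported input; the full resolvent is the Neumann series $\G^\lambda\sum_n(-1)^n((\L-\lambda)\G^\lambda-\Id)^n$, whose iterates spread mass outward (this is precisely the phenomenon quantified by Lemmas~\ref{lemma:bounded domains error 1}--\ref{lemma:propagation of mass}). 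To justify exponential smallness of the full resolvent's boundary trace, you would need to re-run the mass-propagation analysis with the Gaussian $f_\star$ as input and track both the $O(n\eps^\delta)$ spread and the geometric gain per iterate all the way out to distance $|p-r_0|\sim 1$; you don't do this, and it is not automatic.

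The paper avoids this entirely with a cleaner device: it works simultaneously in the weighted spaces $X^{N}_{\mathcal{I}}$ and $X^{2N}_{\mathcal{I}}$. Since the Gaussian $f_\star$ is bounded in $X^{2N}_{\mathcal{I}}$ \emph{uniformly} in $N$ (it beats any polynomial weight), and the resolvent is bounded on $X^{2N}_{\mathcal{I}}$ with norm $\lesssim\eps^{-1/3}$, the trivial trace estimate
\[
|(\dyn-\lambda)^{-1}Qf_\star|(p) \;\leq\; \frac{1}{w_\eps(p-r_0)}\,\|(\dyn-\lambda)^{-1}Qf_\star\|_{X^{2N}_{\mathcal{I}}} \;\lesssim\; \frac{\eps^{-1/3}}{1+\eps^{-2N/3}|p-r_0|^{2N}} \;\lesssim\; \eps^{(2N-1)/3}
\]
yields boundary traces of arbitrarily high polynomial order in $\eps$, which, after multiplying by $\|\mathcal{J}^{-1}\|\lesssim\eps^{-1/3}$ and $\|v_i\|_{X^N_{\mathcal{I}}}\lesssim\eps^{1/3-N/3}$, leaves a net smallness of $\eps^{2/3}$ in $X^N_{\mathcal{I}}$. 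This polynomial mechanism, rather than exponential decay of the resolvent, is what makes the boundary correction a genuine error term. Without it, your proof as written is incomplete.
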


\begin{proof}
In this proof, we will make explicit the power of the weight $w_\eps$ in the definition of $X_{\mathcal{I}}$ by writing 
\[
X^{N}_{\mathcal{I}}=L^\infty \left([p,q],\max\{1,r^2\}(1+(\eps^{-1/3}|r-r_0|)^{N}\dd r\right).
\]
Analogously, we do the same with the Banach space $Y$ and denote
\[
Y^N_{\mathcal{I}} = L^\infty \left([p,q],r^2(1+(\eps^{-1/3}|r-r_0|)^{N}\dd r\right).
\]
Given $N \in \mathbb{N}$, consider $\eps$ small enough so that $(\L-\lambda)\G^\lambda -\Id$ is contractive on $Y_{2N}$. We then define the Riesz projector
\begin{equation*}
P_{\bd}=\frac{1}{2 \pi i}\int_{\Gamma} -(\dyn-\lambda)^{-1}_{\bd} \dd \lambda =QP Q^{-1}+\mathrm{boundary  \ terms},
\end{equation*}
where, following the conventions of this section we abuse notation slightly and set 
\[
P=\frac{1}{2 \pi i}\int_{\gamma} -(\L-\lambda)^{-1} \dd \lambda,
\]
with 
\[
(\L-\lambda)^{-1}= \G^\lambda\sum_{n \geq 0}(-1)^n((\L-\lambda) \G^\lambda-\Id)^n.
\]
We aim to show that there exists $\phi$ so that $P \phi \neq 0$. From the proof of Theorem \ref{thm:dynamo} in Section \ref{s:proof}, it is direct that there exists $a>0$ such that $\|Q^{-1} (QP Q^{-1}) Q f_\star\|_{X^N_{\mathcal{I}}} \gtrsim 1-\eps^{a} $, where $f_\star$ is defined in \eqref{eq:fvec0}. Therefore, it only remains to deal with the contributions from the boundary. Note that $\|v_i\|_{L^\infty} \lesssim \eps^{1/3}$, so that 
$$
\|v_i\|_{X^{N}_\mathcal{I}}\lesssim \eps^{1/3}\eps^{-N/3}\max\{|p-r_0|^{N}, |q-r_0|^{N}\}.
$$
Thus, we estimate
\begin{equation*}
\left \|\mathcal{J}^{-1}\begin{pmatrix}
((\dyn-\lambda)^{-1}f)^1(p)\\
((\dyn-\lambda)^{-1}f)^1(q)\\
\partial_r (r((\dyn-\lambda)^{-1}f)^2(p)\\
\partial_r (r((\dyn-\lambda)^{-1}f)^2(q)
\end{pmatrix}
\cdot \begin{pmatrix}
v_1(r)\\
v_2(r)\\
v_3(r)\\
v_4(r)
\end{pmatrix}\right \|_{X^{N}_\mathcal{I}} \lesssim \eps^{-N/3}\left | \begin{pmatrix}
((\dyn-\lambda)^{-1}f)^1(p)\\
((\dyn-\lambda)^{-1}f)^1(q)\\
\partial_r (r((\dyn-\lambda)^{-1}f)^2(p)\\
\partial_r (r((\dyn-\lambda)^{-1}f)^2(q)
\end{pmatrix}\right |.
\end{equation*}
However, note from our discussion in Section \ref{s:approx-Green's}, and the assumption at the beginning of the proof that 
$\|(\L-\lambda)^{-1}f \|_{X^{2 N}_\mathcal{I}} \lesssim \eps^{-1/3}\|f\|_{X_\mathcal{I}^{2  N}}$, $\|\partial_r(\L-\lambda)^{-1}f \|_{X^{2 N}_\mathcal{I}} \lesssim \eps^{-1}\|f\|_{X^{2  N}_\mathcal{I}}$. Recalling that we can recover $(\dyn-\lambda)^{-1}$ from $(\L-\lambda)^{-1}$ via conjugation by $Q$, we thus have 
\begin{equation*}
\|(\dyn-\lambda)^{-1} Qf \|_{X^{2  N}_\mathcal{I}} \lesssim \eps^{-1/3}\|f\|_{X^{2  N}_\mathcal{I}}, \quad \|\partial_r(\dyn-\lambda)^{-1} Qf \|_{X^{2  N}_\mathcal{I}} \lesssim \eps^{-1}\|f\|_{X^{2  N}_\mathcal{I}}.
\end{equation*}
But now, note that $\|f_\star\|_{X^{2  N}_\mathcal{I}} \lesssim 1$. Thus, it holds that 
\begin{equation*}
\eps^{-N/3}|(\dyn-\lambda)^{-1}Qf_\star|(p) \leq \frac{\eps^{-N/3}}{w_\eps(p-r_0)}\|(\dyn-\lambda)^{-1}Q f_\star\|_{X^{2 N}_\mathcal{I}}\lesssim \frac{\eps^{-(1+N)/3}}{1+\eps^{-2 N/3}|r_0-p|^{2N}}\lesssim \eps^{\frac{N-1}{3}},
\end{equation*}
and the same game can be played both for other endpoint and for the derivatives. Thus, for $N$ large enough, we see that the boundary terms are of order $\eps^{1/3}$ in $X_{N}$, and thus it holds 
\begin{equation*}
\|Q^{-1}PQf_\star\|_{X^{N}_\mathcal{I}} \gtrsim 1-\eps^{a}-\eps^{2/3} \gtrsim 1,
\end{equation*}
and the result is proven, with the asymptotic expansion being deduced precisely as in the proof of Theorem \ref{thm:dynamo} for the full space in Section \ref{s:proof}.
\end{proof}
\begin{remark}
To extend the result to other domains with boundaries, the construction used above is entirely sufficient. Indeed, for domains of the form $[0,q]$ or $[p,\infty)$, we only need to ``correct'' the boundary conditions at $r=q$ and $r=p$ respectively, which can be done using $v_2$ and $v_4$ for $r=q$, and $v_1$ and $v_3$ for $r=p$. From there, the proof proceeds exactly as for the case $\mathcal{I}=[p,q]$.
\end{remark}

\section{The vertical component}\label{s:z-component}

This section is devoted to the completion of the missing bit that is needed for the slow dynamo effect enunciated in Theorem \ref{thm:dynamo}. So far we have shown that the two-dimensional problem \eqref{eq:1}--\eqref{eq:2} has a growing mode with an eigenvalue of the form $\lambda = \mu\eps^{1/3}$. This yields the result for the radial and azimuthal components of the magnetic field $B_r$ and $B_\theta$. However, the dynamo problem is three dimensional, and the behaviour of the $z$ component must thus also be addressed. Recall from \eqref{eq:modal-eq-z} that the $z$ component solves a forced advection-diffusion equation, with forcing given by the $B_r$ component of our previously constructed growing mode.
\begin{equation}
\label{eq:z-component}
\partial_t B_z + (u\cdot\nabla)B_z = \eps\Delta B_z +  U'(r)B_r.
\end{equation}
Thus, to construct such $z$ component, we need not worry about finding an eigenvalue, but instead it suffices to simply find a finite energy solution to \eqref{eq:z-component}.
In other words, we want to invert the operator $\L_z-\lambda$, where, up to a change of the imaginary part of $\lambda \mapsto \lambda+i\eps^{-1/3}(M\Omega(r_0)+KU(r_0))$, we define
\[
\L_z=\eps \left(\partial_r^2+\frac{1}{r}\partial_r -\eps^{-2/3}M^2-\eps^{-2/3}K^2 \right)-i\eps^{-1/3}(M(\Omega(r)-\Omega(r_0))+K(U(r)-U(r_0))).
\]
Recall that we have showed that 
\begin{equation}\label{eq:lambda-z}
\lambda= \eps^{1/3} \left(\eta - M^2\left[ \frac{1}{r_0^2} + \left(\frac{\Omega_0'}{U_0'}\right)^2 \right] + \sqrt{\frac{-2iM\Omega_0'}{r_0}} - c_2^{1/2} \right),
\end{equation}
for some $\eta$ so that $|\Re(\eta c_2^{1/2})|\leq 2$. From here, note that we can proceed in essentially the same way as for the $(B_r,B_z)$ components. For the sake of brevity, we therefore shall only comment on the slight differences we encounter here. We once again subdivide the interval $[0,\infty)$ into a number of intervals depending on the number of zeros of $T(r)$---four intervals, if $T(r)$ only vanishes at $r_0$---, and construct approximate Green's functions for each of them. It will turn out that we may in fact simply use the same approximate inverses as for the $B_r, B_\theta$ components. The only real difference occurs in the range $r \sim r_0$, since there the cross terms from Section \ref{s:around-r0} disappear when studying the vertical component. Namely, in this regime the approximate operator is given by 
\[
\L_{z,2} - \lambda =\eps \partial_r^2 -i\eps^{-1/3}c_2 (r-r_0)^2 -\eps^{1/3} \left( M^2\left[ \frac{1}{r_0^2}+\left(\frac{\Omega_0'}{U_0'}\right)^2 \right] + \mu\right),
\]
or equivalently
\[
\L_{z,2} - \lambda =\eps \partial_r^2 -i\eps^{-1/3}c_2 (r-r_0)^2 -\eps^{1/3} \left( \eta + \sqrt{\frac{-2iM\Omega'(r_0)}{r_0}}-c_2^{1/2}\right).
\]
Proceeding as in Section  \ref{s:around-r0}, we see that the homogeneous solutions to this operator are given in terms of the parabolic cylinder functions by 
\[
v_1(r) = D_{-\frac{1}{2}c_2^{-1/2} \left(\eta+\sqrt{\frac{-2iM\Omega'(r_0)}{r_0}}\right)}\left(\sqrt{2}c_2^{1/4}\eps^{-1/3}(r-r_0)\right),
\]
\[
v_2(r) = D_{-\frac{1}{2}c_2^{-1/2} \left(\eta+\sqrt{\frac{-2iM\Omega'(r_0)}{r_0}}\right)}\left(-\sqrt{2}c_2^{1/4}\eps^{-1/3}(r-r_0)\right).
\]
From here, the estimates proceed exactly as in the case of the $B_r$ and $B_\theta$ components, see Section \ref{s:around-r0} for further details. All in all, we obtain the following result, where only for this statement, we make explicit the dependence of the Banach space $X$ on the parameter $N\in\N$ by writing
\begin{equation*}
X_{N}=L^\infty\left((0,\infty),\max\{1,r^2\}(1+(\eps^{-1/3}|r-r_0|)^N)\dd r\right).
\end{equation*}

\begin{lemma}
\label{lemma:z-component}
Assume \ref{H0}--\ref{H2}, and let $(B_r,B_\theta)$ be the solution we constructed in the previous sections to the two-dimensional problem \eqref{eq:r0-1}--\eqref{eq:r0-2}. Assume further that there exists $N'\in\N$ such that $|U'(r)| \lesssim r^{N'}$ as $ r \to \infty$. Then, letting $\mathcal{R}$ be the compact interval from Lemma \ref{lemma:consequences of assumptions}, and taking $\mathcal{N}\subset \mathbb{R}\setminus \{0\}$ to be any compact set, for any $N>N'$, and any $\eps>0$ sufficiently small (depending on $N$), there exists an operator $\G_z:X_\mathcal{I}^N \to \mathcal{D}(\dyn_\mathcal{I}) \subset X_\mathcal{I}^N$ so that $\L_z \G_z= \Id$. In particular, setting $B_z=\G_z(-U'(r)B_r)$, we find that 
\[
B=(B_r,B_\theta,B_z)\in X_\mathcal{I}^{N-N'}
\]
defines an eigenfunction to the equations \eqref{eq:dynamo} with eigenvalue $\lambda$ as defined in \eqref{eq:lambda-z}, and satisfies perfectly conducting boundary conditions in the case where $\partial \mathcal{I} \neq \emptyset$.
\end{lemma}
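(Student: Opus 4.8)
The plan is to reduce the construction of the vertical component $B_z$ to an exact application of the resolvent machinery already developed for the coupled $(B_r,B_\theta)$ system in Sections~\ref{s:proof} and~\ref{s:approx-Green's}, with only minor modifications. First I would observe that $\L_z$ is a \emph{scalar} advection--diffusion operator with a confining-type potential $-i\eps^{-1/3}MT(r)$ (after absorbing the $z$-dependent shear into the transport function), plus the lower-order Laplacian terms $\eps r^{-1}\partial_r - \eps^{1/3}(M^2+K^2)$. Crucially, the spectral parameter $\lambda$ from \eqref{eq:lambda-z} lies a distance $O(\eps^{1/3})$ from Gilbert's eigenvalue for the \emph{first} component of $\L_2$; but the relevant approximate operator near $r_0$ is now $\L_{z,2}-\lambda$ with parabolic-cylinder index $-\tfrac12 c_2^{-1/2}(\eta+\sqrt{-2iM\Omega_0'/r_0})$. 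Since we chose the branch with $\Re(\sqrt{-2iM\Omega_0'/r_0})>0$, this index has strictly \emph{smaller} real part than the one appearing for $V_1$ in Section~\ref{s:around-r0} (exactly the point of Remark~\ref{rmk:eta}). Therefore Gilbert's quantization condition \eqref{eq:gilbert condition parabolic} is \emph{not} satisfied for this shifted index when $|\eta|$ is small: the operator $\L_{z,2}-\lambda$ has \emph{no} eigenvalue near $\lambda$, so its Green's function $\G_{z,2}^\lambda$ is bounded uniformly in $\eps$ on the circle $\Gamma$, with the same $\eps^{-1/3}$ and $\eps^{-1+\gamma}$ bounds and the same Gaussian-decay estimates as in Proposition~\ref{proposition:r0}. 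This is the key structural simplification: unlike the $(B_r,B_\theta)$ case, there is no need for a Riesz projector argument, since $\L_z-\lambda$ is genuinely invertible.

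Second I would assemble the global approximate inverse $\G_z$ by gluing: use $\G_0$ near zero (the operator from Section~\ref{sec:near-0}, which still regularizes since the $r^{-2}$ structure is identical), $\G_1$ towards zero, $\G_{z,2}^\lambda$ around $r_0$, $\G_3$ towards infinity, and the Airy-function operators $\G_j^{\LV}$ near any linear zeroes $s_j$ of $T$ (Section~\ref{s:linear-zeroes}) --- all of these are exactly the constructions already proven, since away from $r_0$ the vertical equation and the $(V_1,V_2)$ equations have the \emph{same} leading-order structure. The error estimates from Propositions~\ref{proposition:rinf}, \ref{proposition:rtow0}, \ref{prop:G0}, Lemma~\ref{lemma:linear-vanish} and the modified Proposition~\ref{proposition:r0} then give $\|((\L_z-\lambda)\G_z-\Id)f\|_Y\lesssim \eps^a\|f\|_Y$ for some $a>0$, whence the Neumann series
\[
\L_z^{-1}f=\G_z\sum_{n\geq 0}(-1)^n\bigl((\L_z-\lambda)\G_z-\Id\bigr)^n f
\]
converges and defines a bounded map $Y\to X_\mathcal{I}^N$. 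One closes this exactly as in Step~1 of the proof of Theorem~\ref{theorem:mainResult}: mimic the argument of Lemma~\ref{lemma:L-closed} to show $\G_z$ maps into $\mathcal{D}(\dyn_\mathcal{I})$, and invoke Lemma~\ref{lemma:eigenfunctions are smooth} together with the injectivity granted here by direct invertibility (no need for Lemmas~\ref{lemma:injectivity1}--\ref{lemma:injectivity2}, since we exhibit a two-sided inverse outright). For the bounded-domain case one adds corrective homogeneous solutions for the single boundary condition $(b_z)'|_{\partial\cI}=0$, precisely as in Section~\ref{s:boundaries} but needing only \emph{one} homogeneous solution per endpoint, constructed by the same mass-propagation analysis (Lemmas~\ref{lemma:homogeneous solutions}--\ref{lemma:propagation of mass}).

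Third, having $\G_z$, I would set $B_z=\G_z(-U'(r)B_r)$. Since $B_r\in X_\mathcal{I}^N$ and, by hypothesis, $|U'(r)|\lesssim r^{N'}$, the product $-U'B_r$ has at most polynomial growth and lies in $Y_\mathcal{I}^{N}$ provided $N$ is large enough to kill the growth --- in fact the loss is precisely $N'$ powers of the weight, so $-U'B_r\in Y_\mathcal{I}^{N-N'}$ and hence $B_z\in X_\mathcal{I}^{N-N'}$. By construction $(\L_z-\lambda)B_z=-U'B_r$, which upon undoing the modal Ansatz and the imaginary shift of $\lambda$ is exactly equation \eqref{eq:modal-eq-z}. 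Combined with the previously constructed $(B_r,B_\theta)$ solving \eqref{eq:modal-eq-r}--\eqref{eq:modal-eq-theta} with the same $\lambda$, the triple $B=(B_r,B_\theta,B_z)$ solves the full system with eigenvalue \eqref{eq:lambda-z}; the divergence-free condition follows from Lemma~\ref{lemma:divergence free} (or its boundary analogue), and the perfectly conducting boundary conditions for $B_z$ are built into $\G_z$ in the bounded-domain case.

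The main obstacle I anticipate is purely bookkeeping rather than conceptual: one must verify that the \emph{shifted} parabolic-cylinder index $-\tfrac12 c_2^{-1/2}(\eta+\sqrt{-2iM\Omega_0'/r_0})$ stays in the admissible range $|\Re(\nu)|\leq 1$ uniformly for $\eta$ on the contour $\Gamma$ and for $r_0\in\mathcal{R}$, $M\in\mathcal{N}$, so that the bounds of Lemma~\ref{lemma:bounds-v1v2} and the L'H\^opital arguments of Section~\ref{s:around-r0} apply verbatim; since $|\eta|$ is taken small and $\Re(\sqrt{-2iM\Omega_0'/r_0})$ depends continuously on $(r_0,M)$ over a compact set, this is a matter of choosing the modulus of $\eta$ small enough, but it deserves an explicit check. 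A secondary subtlety is ensuring the weight exponent $N$ is chosen compatibly across all regimes --- large enough that $U'B_r$ decays in $Y$, yet with the final output $B\in X_\mathcal{I}^{N-N'}$ still a legitimate finite-energy space; this is handled by simply taking $N$ sufficiently large at the outset, exactly as in Lemma~\ref{lemma:eigenfunctions are smooth}.
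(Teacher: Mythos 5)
Your proposal is correct and follows the same construction the paper sketches in Section~\ref{s:z-component}: glue the existing local Green's functions together (reusing $\G_0$, $\G_1$, $\G_3$, $\G_j^{\LV}$ and a new parabolic-cylinder Green's function near $r_0$), sum the Neumann series to get a right inverse $\G_z$, apply it to the forcing $-U'(r)B_r$, and in the bounded case add one homogeneous correction per endpoint as in the construction of $v_3$, $v_4$. You also correctly note that the product $U'B_r$ costs exactly $N'$ powers of the polynomial weight, matching the paper's one-line remark on why $B\in X_\mathcal{I}^{N-N'}$.

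The added value of your write-up is the explicit explanation of \emph{why} no Riesz projector is needed: the parabolic-cylinder index for $\L_{z,2}-\lambda$ is shifted by $-\tfrac12 c_2^{-1/2}\sqrt{-2iM\Omega_0'/r_0}$, which strictly decreases its real part (this is precisely the sign observation of Remark~\ref{rmk:eta}), so the Wronskian $\sqrt{2\pi}/\Gamma(-\nu_z)$ stays bounded away from zero for $|\eta|$ small and $\L_{z,2}-\lambda$ is directly invertible with uniform bounds. The paper only says ``we need not worry about finding an eigenvalue''; your version makes the mechanism transparent and is a worthwhile expansion.

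One place where your reasoning is slightly off: you suggest that keeping $|\Re(\nu_z)|\le 1$ is ``a matter of choosing the modulus of $\eta$ small enough.'' That is not what controls the size of $\Re(\nu_z)$ — the shift $-\tfrac12 c_2^{-1/2}\sqrt{-2iM\Omega_0'/r_0}$ is a fixed quantity independent of $\eta$, so shrinking $|\eta|$ does not help. What actually closes the argument is that (i) Remark~\ref{rmk:eta} gives $\Re(\nu_z)<\Re(\nu_1)\le 1$, which is the only inequality used in the replacement $(1+\cdot)^{\Re(\nu)}\le(1+\cdot)$ inside the L'H\^opital estimates, and (ii) when $\Re(\nu)<-1$ the bounds from Lemma~\ref{lemma:bounds-v1v2} only improve, so the lower bound $\Re(\nu)\ge -1$ is not actually needed despite the paper's phrasing. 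This is a small slip in your justification, not in the conclusion — the construction goes through — but the uniform-in-$(r_0,M)$ admissibility of the shifted index is a genuine fact that deserves the correct reason.
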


The proof of this lemma in the case where $\mathcal{I}=[0,\infty)$ follows from the arguments indicated prior to the statement, and from the observation that if $B_r\in X_N$ and $U'\in X_\mathcal{I}^{N'}$, then $f = U'(r)B_r\in X_\mathcal{I}^{N-N'}$. When $\partial \mathcal{I} \neq \emptyset$, the only adjustment that needs to be made is to add homogeneous solutions in order to correct the boundary condition for $B_z$, which reads $B'_z|_{\partial \mathcal{I}}=0$. But this can be done in precisely the same way as the construction of $v_3, v_4$ in Section \ref{s:boundaries}.

\appendix

\section{Special functions}\label{s:special-functions}

In this section we shall lay out the basic theory of the modified Bessel functions, the parabolic cylinder and Hermite functions, as well as the Airy functions, which play a crucial role in the analysis of the approximated operators defined along this paper. In particular we shall obtain results regarding their asymptotic expansions. Throughout this section, we shall work frequently with complex numbers and their fractional powers. To clarify notation, we thus fix the following convention: 
\begin{itemize}
    \item $\arg(z) \in (-\pi ,\pi]$ denotes the \emph{principal argument} of a complex number.\\
    \item When writing $z^{\alpha}$, for $\alpha \in \mathbb{R} \setminus \mathbb{N}$, we are referring to the \emph{principal value} of $z^{\alpha}$, i.e.\ for $z=|z|\e^{i \theta}$, $\theta \in (-\pi,\pi]$, $z^\alpha=|z|^\alpha \e^{i \alpha \theta}$.
\end{itemize}

\subsection{Modified Bessel functions}\label{s:appendix-bessel} 

The modified Bessel functions arise as solutions to the differential equation 
\begin{equation}
\label{eq:modified Bessel}
z^2\frac{\dd^2 v_\nu}{\dd z^2}+z \frac{\dd v_\nu}{\dd z}- z^2 v_\nu-\nu^2 v_\nu=0.
\end{equation}
In particular, they can be recovered from solutions to Bessel's equation 
\begin{equation*}
z^2\frac{\dd^2 w_\nu}{\dd z^2}+z \frac{\dd w_\nu}{\dd z}-z^2 w_\nu-\nu^2 w_\nu=0
\end{equation*}
via the change of variables $v_\nu(z)=w_\nu(iz)$. 

As a second order ODE, the solution space of \eqref{eq:modified Bessel} is a two-dimensional vector space. In particular, it is customary to pick the basis vectors $I_\nu(z)$ and $K_\nu(z)$, which are known as the \emph{modified Bessel functions of the first (resp.\ second) kind.} For real values of the parameter $\nu$ and all $z$ in a sector of the form $\{ z\in\Co : \arg(z) <\pi/2\}$, $I_\nu(z)$ is bounded at zero, and grows exponentially as $|z| \to \infty$, whereas $K_\nu(z)$ diverges near zero, and decays exponentially as $|z| \to \infty$. In, particular, from \cite{Olver74}*{Chapter 7.8} there holds 
\[
K_\nu(z) \sim \frac{1}{2}\Gamma(\nu)\left(\frac{z}{2}\right)^{-\nu}, \quad \text{for } \nu >0 \text{ and } z \to 0,
\]
and from the series expansion 
\begin{equation*}
I_\nu(z)=\left(\frac{z}{2}\right)^{\nu} \sum_{n \geq 0}\frac{(\frac{1}{4}z^2)^n}{n!\Gamma(\nu+n+1)}
\end{equation*}
it is clear that $I_\nu(z) \sim (\frac{1}{2}z)^\nu$ near zero.

In this work, our particular interests lie in the asymptotic behaviour of $I_\nu(z)$, $K_\nu(z)$ as $\nu \to \infty$. We therefore require bounds on these functions in the limit $\nu \to \infty$ which hold \emph{uniformly} for all values of $z$ in some complex sector as specified above. Fortunately, such bounds have been constructed in \cite{Olver74}*{Chapter 10.7}, and we aim to give a brief summary here of the results therein. In particular, since the bounds we require for the modified Bessel functions are the most subtle of all the special function bounds we employ, we shall sketch the proof of the following result.

\begin{lemma}
\label{lemma:Bessel bounds}
There exists a constant $C>0$ so that uniformly in $z\in\Co$ with $\arg(z)=\pi/4$, and for all $\nu \geq 1$ there holds 
\begin{align*}
&|I_\nu(\nu z)|\leq C\frac{\e^{\nu \xi}}{(2\pi \nu)^{1/2}(1+z^2)^{1/4}},\\
& |K_{\nu}(\nu z)|  \leq C \left(\frac{\pi}{2 \nu}\right)^{1/2}\frac{\e^{-\nu \xi}}{(1+z^2)^{1/4}},
\end{align*}
where we have set 
\[
\xi(z)=(1+z^2)^{1/2}+\log\left(\frac{z}{1+(1+z^2)^{1/2}}\right).
\]
\end{lemma}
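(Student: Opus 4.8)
The plan is to derive these bounds from the classical uniform asymptotic expansions for modified Bessel functions of large order due to Olver, as presented in \cite{Olver74}*{Chapter 10.7}. The starting point is the observation that the substitution $v_\nu(z) = w_\nu(\nu z)$ transforms the modified Bessel equation into Liouville--Green (WKB) form with a large parameter $\nu$, whose turning point structure is governed by the function $\xi(z) = (1+z^2)^{1/2} + \log\big(z/(1+(1+z^2)^{1/2})\big)$. Olver's theory then furnishes the expansions
\[
I_\nu(\nu z) = \frac{\e^{\nu\xi(z)}}{(2\pi\nu)^{1/2}(1+z^2)^{1/4}}\Big(1 + \mathcal{O}(\nu^{-1})\Big), \qquad
K_\nu(\nu z) = \Big(\frac{\pi}{2\nu}\Big)^{1/2}\frac{\e^{-\nu\xi(z)}}{(1+z^2)^{1/4}}\Big(1 + \mathcal{O}(\nu^{-1})\Big),
\]
where — and this is the crucial point — the error terms are controlled \emph{uniformly} in $z$ throughout a sector $|\arg z|\le \pi/2 - \delta$ for any fixed $\delta>0$. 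First I would fix the ray $\arg(z) = \pi/4$, which sits comfortly inside such a sector, and invoke these expansions directly. The bounds in the Lemma then follow by absorbing the $(1+\mathcal{O}(\nu^{-1}))$ factor into the constant $C$, valid once $\nu$ is large enough; for the remaining finitely many (or rather, the bounded range of) values $\nu \ge 1$ below that threshold, continuity and the non-vanishing of the right-hand sides on the ray give a (possibly larger) constant, and we take the maximum.

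The key steps, in order, would be: (i) recall the error-bounded form of Olver's uniform asymptotics for $I_\nu(\nu z)$ and $K_\nu(\nu z)$, including the explicit statement that the relative error is $\mathcal{O}(\nu^{-1})$ uniformly on the sector; (ii) verify that the ray $\arg z = \pi/4$ lies in the region of validity, so that the function $\xi(z)$ is well defined (choosing the principal branch of the square root and logarithm as in the paper's stated conventions) and the expansions apply; (iii) bound $|1 + \mathcal{O}(\nu^{-1})| \le C_0$ for all $\nu \ge \nu_0$ with $\nu_0$ fixed, yielding the claimed inequalities on that range; (iv) handle $1 \le \nu < \nu_0$ by a compactness/continuity argument, noting that $|I_\nu(\nu z)|(1+z^2)^{1/4}\e^{-\nu\xi(z)}$ and the analogous quantity for $K_\nu$ extend to bounded continuous functions of $z$ along the ray (including the behaviour as $|z|\to 0$ and $|z|\to\infty$, where the asymptotics of $I_\nu, K_\nu$ near the origin and at infinity quoted above pin down the growth), so their suprema are finite; (v) take $C$ to be the larger of the two constants produced.

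The main obstacle I anticipate is step (iv), namely making the uniformity genuinely global along the ray — in particular confirming that nothing degenerates at the endpoints $z \to 0$ and $|z|\to\infty$. Near $z=0$ one must check that $\e^{\nu\xi(z)}$ correctly captures the $z^{\pm\nu}$ behaviour of $I_\nu, K_\nu$, which amounts to expanding $\xi(z) = \log(z/2) + 1 + \mathcal{O}(z^2)$ and matching against the series expansions; as $|z|\to\infty$ one uses $\xi(z) = z + \mathcal{O}(\log z)$ together with the classical exponential asymptotics $I_\nu(\nu z)\sim \e^{\nu z}/\sqrt{2\pi\nu z}$, $K_\nu(\nu z)\sim \sqrt{\pi/(2\nu z)}\,\e^{-\nu z}$. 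Both limits are standard but need to be stated to rule out a blow-up of the purported constant. A secondary, purely bookkeeping concern is branch consistency: one must ensure the square roots $(1+z^2)^{1/2}$ and $(1+z^2)^{1/4}$ appearing in $\xi$ and in the denominators are taken on the branch that is positive for $z$ real and positive and continued analytically along the ray, which is exactly the principal-value convention already fixed at the start of the appendix, so no new difficulty arises there. Once these endpoint checks are in place, the Lemma is immediate.
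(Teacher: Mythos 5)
Your proposal follows the same high-level strategy as the paper -- invoke Olver's uniform asymptotics for $I_\nu(\nu z)$ and $K_\nu(\nu z)$ on a complex ray -- but your step (i) asserts the crucial uniformity of the $\mathcal{O}(\nu^{-1})$ error in $z$ as a quotable black box, and that assertion is exactly the nontrivial content of the lemma. In \cite{Olver74}*{Chapter 10.7} the error is not stated as ``$\mathcal{O}(\nu^{-1})$ uniformly on a sector''; it is given by explicit bounds of the form
\[
|\eta_{n,j}(\nu,z)| \le 2\exp\!\left(\frac{2\mathcal{V}_{\cdot,p}(U_1)}{\nu}\right)\frac{\mathcal{V}_{\cdot,p}(U_n)}{\nu^n},
\]
where $\mathcal{V}_{\cdot,p}$ is a variation seminorm taken along a $\xi$-progressive path in the variable $p=(1+z^2)^{-1/2}$. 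Whether these variations are \emph{finite and uniformly bounded} as $z$ runs over the entire ray $\arg z = \pi/4$ is precisely what must be checked: one has to exhibit a valid $\xi$-progressive path to each such $p$ (this requires the monotonicity of $\Re(\xi)$ along the ray) and then bound the corresponding variation integral. That is what the paper's proof does -- it verifies that $t\mapsto p(t\e^{i\pi/4})$ is $\xi$-progressive and reduces the variation to the convergent integral $\int_0^\infty t(1+t^4)^{-3/4}\,\mathrm{d}t$, which is bounded independently of $z$ on the ray. Your proposal skips this step entirely. If you mean to cite a source that already states the uniform-in-sector conclusion (e.g.\ the DLMF formulation in \S10.41), that should be said explicitly; but within the scope of the chapter you actually cite, the uniformity is a hypothesis to be verified, not a conclusion to be quoted.

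A secondary point: once the variational bound $\mathcal{V}\lesssim 1$ uniformly in $z$ is in place, the resulting error estimate $|\eta_{1,j}|\le 2 e^{2C/\nu}C/\nu$ is already finite and explicit for every $\nu\ge 1$, with no degeneracy as $\nu\downarrow 1$ or as $z\to 0,\,\infty$. Your step (iv) -- a separate compactness/endpoint argument for $1\le\nu<\nu_0$ -- is therefore redundant; the endpoint behaviour you worry about is subsumed in the uniform variation bound, and re-deriving it from the small-$z$ and large-$z$ asymptotics of $I_\nu,K_\nu$ adds no information. The fix is to replace your step (i) by the variational argument rather than to bolt on step (iv) afterwards.
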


\begin{remark}
The above result is actually valid in a much larger context, namely such a constant can be found uniformly for any sector of the form $|\arg(z)|\leq \pi/2-\delta$ for arbitrarily small $\delta>0$, see \cite{Olver74}*{Chapter 10.7}. However, since for our applications we only require $|\arg(z)|=\pi/4$, we shall only prove the result in this simplified case.
\end{remark}
We start by noting that in \cite{Olver74}*{Chapter 10.7.4} it is shown that for any $\nu >0$, $|\arg(z)|<\pi/2$, there holds for any $n\in\N$
\begin{align}
\label{eq:expansions modified bessel}
&I_\nu(\nu z)=\frac{1}{1+\eta_{n,1}(\nu,\infty)}\frac{\e^{\nu \xi}}{(2\pi \nu)^\frac{1}{2}(1+z^2)^{1/4}}\left (\sum_{s=0}^{n-1}\frac{U_s(p)}{\nu^s}+\eta_{n,1}(\nu,z)\right ),\\
\label{eq:expansions modified bessel2}
&K_{\nu}(\nu z) = \left(\frac{\pi}{2 \nu}\right)^{1/2}\frac{\e^{-\nu \xi}}{(1+z^2)^{1/4}}\left (\sum_{s=0}^{n-1}(-1)^s \frac{U_{s}(p)}{\nu^s}+\eta_{n,2}(\nu,z)\right),
\end{align}
where $p=(1+z^2)^{-1/2}$, and the $U_s(p)$ are polynomials defined recursively by 
\begin{equation*}
U_{s+1}(p)=\frac{1}{2}p^2(1-p^2)U_s'(p)+\frac{1}{8}\int_0^p(1-5q^2)U_s(q)\dd q
\end{equation*}
and $U_0(p)=1$. Finally, the error terms $\eta_{n,j}(\nu,z)$ satisfy
\begin{align*}
|\eta_{n,1}(\nu,z)|\leq 2 \exp{\left(\frac{2\mathcal{V}_{1,p}(U_1)}{\nu}\right)}\frac{\mathcal{V}_{1,p}(U_n)}{\nu^n}\\
|\eta_{n,2}(\nu,z)|\leq 2 \exp{\left(\frac{2\mathcal{V}_{0,p}(U_1)}{\nu}\right)}\frac{\mathcal{V}_{0,p}(U_n)}{\nu^n}
\end{align*}
where $\mathcal{V}_{1,p}$ (resp.\ $\mathcal{V}_{0,p}$) denotes the $1$-variation from $1$ (resp.\ from $0$) to $p$ along so called \emph{$\xi$--progressive paths}. It thus remains to derive uniform bounds on the variational terms. To do so, it is first imperative to define the notion of a $\xi$--progressive path. Indeed, we have the following definition from \cite{Olver74}*{Chapter 6.11.4}.
\begin{definition}
Let $s:[0,1] \to \mathbb{C}$ be a twice differentiable function so that 
\begin{enumerate}
    \item $s''(\tau)$ is continuous
    \item $s'(\tau) \neq 0$ for all $\tau \in [0,1]$.
\end{enumerate}
Then, $s$ is called an $R_2$--arc.
Let now $\mathcal{P}_j$, $j=1,2$ be paths in the complex plane connecting two points $a_j, b_j \in \mathbb{C}$, and let $\xi :\mathbb{C} \to \mathbb{C}$ be a conformal map. $\mathcal{P}_j$ is called a $\xi$--progressive path if 
\begin{enumerate}
    \item $\mathcal{P}_j$ consists of a finite number of $R_2$--arcs
    \item As $t$ passes along $\mathcal{P}_j$ from $b_j$ to $a_j$, $\Re(\xi(t))$ is non-decreasing for $j=1$, and non-increasing for $j=2$.
\end{enumerate}
\end{definition}

With this in mind, we now provide a proof of Lemma \ref{lemma:Bessel bounds}.

\begin{proof}[Proof of Lemma \ref{lemma:Bessel bounds}.]
It is clear that it suffices to bound $\mathcal{V}_{1,p}(U_1)$ uniformly for $|\arg(z)|=\pi/4$. For simplicities sake, we shall assume that $\arg(z)=\pi/4$, since the general case follows similarly. First, note that we may compute 
\[
U_1(p)=\frac{1}{8}\left(p-\frac{5}{3}p^3\right).
\]
Next, we consider the image of $\arg(z)=\pi/4$ under the map $p(z)=(1+z^2)^{-1/2}$. Letting $z=a\e^{i\pi/4}$, $a>0$, we have 

\[
p(a\e^{i\pi/4})=\left (\frac{1-ia^2}{1+a^4} \right )^{1/2}.
\]
Next, consider 
\begin{equation*}
\xi(a)=\left(\frac{2+a^4-ia^2}{1+a^4} \right)^{1/2}+\log\left(\frac{p(a\e^{i\pi/4})}{1+\left(\frac{2+a^4-ia^2}{1+a^4}\right)^{1/2}}\right).
\end{equation*}
A computation shows that $\Re(\xi(a))$ is in fact a non-increasing function in $a$ for positive values of $a$. Therefore, given any point $z=a\e^{i\pi/4}$, $\xi$--progressive paths from $1$ (resp.\ $0$) to $z$ can be constructed simply by taking $\gamma(t)=p(t\e^{i\pi/4})$. Given any smooth function $f:\mathbb{C} \to \mathbb{C}$, it therefore holds
\begin{equation*}
\mathcal{V}_{1,p}(f)\leq \int_0^\infty \left|\frac{it}{(1+it^2)^{3/2}}f'\left(\frac{1}{1+it^2}\right)\right| \dd t \leq \|f'\|_{L^\infty(|z|\leq 1)} \int_0^\infty \frac{t}{(1+t^4)^{3/4}}\dd t \leq C\|f'\|_{L^\infty(|z|\leq 1)}.
\end{equation*}
Similarly, it also holds $\mathcal{V}_{0,p}(f) \leq C\|f'\|_{L^\infty(|z|\leq 1)}$, uniformly for all $\arg(z)=\pi/4$. Therefore, we observe that the error bounds satisfy
\begin{align*}
|\eta_{1,1}(\nu,z)|\leq 2 \nu^{-1}C\exp(2\nu^{-1}C\|U'_1\|_{L^\infty(|z|\leq 1)})\|U'_1\|_{L^\infty(|z|\leq 1)},\\
|\eta_{1,2}(\nu,z)|\leq 2 \nu^{-1}C\exp(2\nu^{-1}C\|U'_1\|_{L^\infty(|z|\leq 1)})\|U'_1\|_{L^\infty(|z|\leq 1)},
\end{align*}
and so the result follows immediately from \eqref{eq:expansions modified bessel}--\eqref{eq:expansions modified bessel2}.
\end{proof}

\subsection{Parabolic cylinder and Hermite functions}\label{s:appendix-parabolic}

In this section, we summarise basic facts about parabolic cylinder and Hermite functions. Most of this material may be found in \cite{Lebedev72}*{Chapter 10.6}. We consider the solutions to the second order differential equation
\begin{equation*}
\frac{\dd^2 w_\nu}{\dd z^2}+\left(\nu+\frac{1}{2}-\frac{z^2}{4}\right)w_\nu=0.
\end{equation*}
The solution space of this equation is spanned by the functions $D_\nu(z)$,  $D_\nu(-z)$, where we call $D_\nu(z)$ a parabolic cylinder function. Making the change of variables 
\[
w_\nu = \e^{-\frac{1}{4}z^2}u_\nu\left(\frac{z}{\sqrt{2}}\right),
\]
yields that $u_\nu$ satisfies the ODE 
\begin{equation}
\label{eq:hermite equation}
\frac{\dd^2 u_\nu}{\dd z^2}-2z\frac{\dd u_\nu}{\dd z}+2\nu u_\nu=0,
\end{equation}
which is called the \emph{Hermite's equation}. Solutions to \eqref{eq:hermite equation} are called Hermite functions, and in particular, the solution space is spanned by $H_\nu(z)$, $H_{\nu}(-z)$ so long as $\nu \neq 0,1,2 \dots$. In the case where $\nu=n=0,1,2 \dots$, $H_n$ corresponds to the $n^{\text{th}}$ Hermite polynomial. The Hermite functions enjoy the following pointwise asymptotics, which may be found in \cite{Lebedev72}*{Chapter 10.6}.
\begin{equation}\label{eq:asymp-H1}
H_\nu(z) = (2z)^\nu (1+\BigO(z^{-2})), \quad \text{if } \arg(z)\in \left(-\frac{3\pi}{4},\frac{3\pi}{4}\right),
\end{equation}
\begin{equation}\label{eq:asymp-H2}
H_\nu(z) = ((2z)^\nu-C(\nu)\e^{z^2} )(1+\BigO(z^{-2})), \quad \text{if } \arg(z)\in \left(\frac{\pi}{4},\frac{5\pi}{4}\right).
\end{equation}
(in \eqref{eq:asymp-H2} we have made the slight abuse of notation by writing $\arg(z)\in (\frac{\pi}{4},\frac{5\pi}{4})$ to mean $\arg(z) \in (\frac{ \pi}{4},\pi] \cup(-\pi,-\frac{3 \pi}{4})$).
With these asymptotic representations under our belt, we may now prove the following pointwise bounds for the parabolic cylinder functions. 
\begin{lemma}
\label{lemma:parabolic cylinder bounds}
For any fixed $\nu \neq 0,1,2 \dots$, there exists a constant $C>0$ so that 
\begin{equation}\label{eq:D-decay}
|D_\nu(z)|  \leq C \left|\e^{-\frac{1}{4}z^2} \right|\left(1+|z|\right)^{\Re(\nu)}, \quad \text{if } |\arg(z)|<\frac{3\pi}{4},
\end{equation}
\begin{equation}\label{eq:D-growth}
|D_\nu(z)| \leq C \left|\e^{\frac{1}{4}z^2} \right|, \quad \text{if } \arg(z)\in \left(\frac{3\pi}{4},\frac{5\pi}{4}\right).
\end{equation}
\end{lemma}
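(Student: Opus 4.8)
The plan is to reduce both estimates to the pointwise asymptotics of the Hermite functions recorded in \eqref{eq:asymp-H1}--\eqref{eq:asymp-H2}, using the identity $D_\nu(z) = 2^{-\nu/2}\e^{-z^2/4}H_\nu(z/\sqrt 2)$. Since $\nu$ is fixed and $H_\nu$ is an entire function of its argument, it is bounded uniformly on $\{|z|\le R_0\}$ for any fixed $R_0$; hence the contribution of bounded $|z|$ can be absorbed into the constant $C$, and it suffices to establish \eqref{eq:D-decay}--\eqref{eq:D-growth} for $|z|\ge R_0$, with $R_0$ chosen large enough that the error factors $1+\BigO(z^{-2})$ in \eqref{eq:asymp-H1}--\eqref{eq:asymp-H2} are bounded, say, between $\tfrac12$ and $2$ in modulus. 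Note also that $\arg(z/\sqrt 2)=\arg z$, so the sector hypotheses transfer verbatim, and that $2^{-\nu/2}(\sqrt 2\,z)^\nu=z^\nu$.

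For \eqref{eq:D-decay}: when $|\arg z|<3\pi/4$ we are in the regime of \eqref{eq:asymp-H1}, so $H_\nu(z/\sqrt 2)=(\sqrt 2\,z)^\nu(1+\BigO(z^{-2}))$, whence $D_\nu(z)=z^\nu\e^{-z^2/4}(1+\BigO(z^{-2}))$. It remains to bound $|z^\nu|=|z|^{\Re\nu}\,\e^{-\arg(z)\Im\nu}$; since $\arg z$ ranges over the bounded set $(-3\pi/4,3\pi/4)$, the exponential factor is bounded by a constant depending only on $\nu$, so $|z^\nu|\le C_\nu|z|^{\Re\nu}\le C_\nu(1+|z|)^{\Re\nu}$ for $|z|\ge R_0$. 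The asymptotic \eqref{eq:asymp-H1} holds uniformly on closed subsectors, which one exhausts to cover the full open sector; in any case the implied constant is permitted to depend on $\nu$, and in every application of this lemma the argument of $D_\nu$ lies on a fixed ray bounded away from $\pm3\pi/4$. This yields \eqref{eq:D-decay}.

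For \eqref{eq:D-growth}: when $\arg z\in(3\pi/4,5\pi/4)$ (principal argument convention) we have $\arg(z/\sqrt 2)\in(\pi/4,5\pi/4)$, so \eqref{eq:asymp-H2} applies and gives $H_\nu(z/\sqrt 2)=\big((\sqrt 2\,z)^\nu-C(\nu)\e^{z^2/2}\big)(1+\BigO(z^{-2}))$; using $\e^{-z^2/4}\e^{z^2/2}=\e^{z^2/4}$ this becomes
\begin{equation*}
D_\nu(z)=\big(z^\nu\e^{-z^2/4}-2^{-\nu/2}C(\nu)\e^{z^2/4}\big)\big(1+\BigO(z^{-2})\big).
\end{equation*}
The second term is manifestly $\BigO(|\e^{z^2/4}|)$. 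The first term is subdominant: throughout this sector $\Re(z^2)=|z|^2\cos(2\arg z)\ge 0$ (with strict inequality in the open sector), so on any closed subsector $\Re(z^2)\ge c|z|^2$ for some $c>0$, giving $|z^\nu\e^{-z^2/4}|\le |z|^{\Re\nu}\e^{-c|z|^2/4}\le C\le C|\e^{z^2/4}|$, the last inequality because $|\e^{z^2/4}|\ge 1$ there. Combining the two terms gives \eqref{eq:D-growth}.

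The only step requiring genuine care is this last one: dominating the algebraically growing contribution $z^\nu\e^{-z^2/4}$ by the exponentially growing contribution $\e^{z^2/4}$ in the second sector. This is where the restriction $\arg z\in(3\pi/4,5\pi/4)$ is used essentially (a larger sector would let $\Re(z^2)$ change sign), and where, for $\Re\nu>0$, one must work inside closed subsectors — equivalently, accept a constant depending on $\nu$ and on the subsector. As indicated above, this is harmless for the uses of the lemma in the body of the paper, where $D_\nu$ is evaluated only along rays of fixed argument.
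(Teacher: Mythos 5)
Your proposal is correct and follows essentially the same route as the paper: pass to the Hermite function via $D_\nu(z)=2^{-\nu/2}\e^{-z^2/4}H_\nu(z/\sqrt 2)$, handle bounded $|z|$ by continuity, invoke the asymptotics \eqref{eq:asymp-H1}--\eqref{eq:asymp-H2} for large $|z|$, and use the sector condition $\Re(z^2)\ge 0$ (equivalently $\Re(z)^2\ge\Im(z)^2$) to dominate the algebraic contribution by $|\e^{z^2/4}|$. Your treatment is in fact a touch more careful than the paper's, since you explicitly flag the need to work on closed subsectors (or fixed rays) where $\Re(z^2)\ge c|z|^2$, a point the paper leaves implicit.
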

\begin{proof}
We begin by noting that we have the following relation
\[
D_\nu(z) = 2^{-\nu/2}\e^{-\frac{1}{4}z^2} H_\nu\left(\frac{z}{\sqrt{2}}\right).
\]
On the one hand, we know that $H_\nu(z)$ is a continuous function in $z\in\Co$, and moreover $|H_\nu(0)|\lesssim 1$ for any $\nu\in\Co$. On the other hand, we may now use \eqref{eq:asymp-H1}, to deduce that for $|\arg(z)|\leq 3\pi/4$, it holds
\[
|H_\nu(z)|\lesssim (1+|z|)^\gamma\left(1+(1+|z|)^{-2}\right) \lesssim (1+|z|)^\gamma,
\]
and hence we obtain \eqref{eq:D-decay}. Finally, in order to prove \eqref{eq:D-growth} we use \eqref{eq:asymp-H2} in a similar way. This time we write
\[
\left|\e^{-\frac{1}{4}z^2}H_\nu\left(\frac{z}{\sqrt{2}}\right)\right|\lesssim \max\left\lbrace \left|\e^{-\frac{1}{4}z^2}\right|(1+|z|)^\gamma, \left|\e^{\frac{1}{4}z^2}\right| \right\rbrace \left(1+(1+|z|)^{-2}\right) \lesssim \left|\e^{\frac{1}{4}z^2}\right|.
\]
At this point we observe that if $|z|\sim 1$ then both quantities in the maximum operator are comparable. However, for values of $z$ with $|z|\to \infty$ we have
\[
\left|\e^{-\frac{1}{4}z^2}\right|(1+|z|)^\gamma\lesssim \left|\e^{\frac{1}{4}z^2}\right|
\]
provided that $\Re(z)^2\geq \Im(z)^2$, or equivalently
\[
\arg(z)\in \left[-\frac{\pi}{4},\frac{\pi}{4}\right]\cup\left[\frac{3\pi}{4},\frac{5\pi}{4}\right].
\]
Since the first interval is not compatible with equation \eqref{eq:asymp-H2}, we discard it, and hence we find the sought estimate \eqref{eq:D-growth}.
\end{proof}

\subsection{Airy functions}\label{s:Appendix airy} 
We finally give a brief introduction to the theory of Airy functions. These are solutions to the second order differential equation 
\begin{equation}
\label{eq:airy equation}
\frac{\dd^2 v}{\dd z^2}-zv=0.
\end{equation}
The typical convention (see e.g.\ \cite{Olver74}) is to make the choice of basis vectors $\Ai(z)$, $\Bi(z)$ for the solution space of \eqref{eq:airy equation}. With this choice, for real values of $x$, $\Ai(x)$ decays exponentially for $x>0$ and oscillates for $x<0$, whereas $\Bi(x)$ grows exponentially for $x>0$ and osciallates for $x<0$. However, for the applications we have in mind, we are primarily interested in \emph{complex} values of $z$. For such values, it turns out (see \cite{Olver74}*{Chapter 11.8}) that $\Bi(z)$ is exponentially large, as long as $\arg(z) \neq \pi$. In order to derive a suitable Green's function for Section \ref{s:linear-zeroes}, we require that at all times, at least one of the solutions to \eqref{eq:airy equation} remains small. Therefore, we shall pick the function $\Ai(\e^{2\pi i/3}z)$ as a second solution of \eqref{eq:airy equation} instead of $\Bi(z)$. To verify that indeed $\Ai(z)$, $\Ai(\e^{2\pi i/3}z)$ are linearly independent, we examine the asymptotic bounds for the Airy functions $\Ai(z)$.
Indeed, from \cite{Olver74}*{Chapter 11.8} as well as \cite{DLMF} we set 
\[
\zeta = \frac{2}{3}z^{3/2},
\]
where $z^{3/2}$ denotes the \emph{principal branch} of $z^{3/2}$. Then, we have the asymptotic expansion
\begin{align}
\label{eq:Airy asymptotics}
\begin{split}
&\Ai(z) \sim \frac{e^{-\zeta}}{2\sqrt{\pi}z^\frac{1}{4}}\sum_{n \geq 0}(-1)^n \frac{u_n}{\zeta^n}, \quad |\arg(z)|\leq \pi -\delta,\\
&\Ai'(z) \sim -\frac{z^\frac{1}{4}e^{-\zeta}}{2\sqrt{\pi}}\sum_{n \geq 0}(-1)^n \frac{v_n}{\zeta^n}, \quad |\arg(z)|\leq \pi -\delta,
\end{split}
\end{align}
where $u_0=v_0=1$, and recursively 
\begin{equation*}
u_k=\frac{(6k-5)(6k-3)(6k-1)}{(2k-1)216k}u_{k-1}, \quad v_k=\frac{6k+1}{1-6k}u_k.
\end{equation*}
From this, we deduce the following lemma.

\begin{lemma}\label{lemma:Airy bounds}
Let $z=r\e^{i\frac{\pi}{6}}$. Then, there exists a constant $C>0$ uniform in $ r \in [0,\infty)$ such that the following bounds hold true,
\begin{align}\label{eq:useful airy bounds}
\begin{split}
&|\Ai(z)|, |\Ai(-\e^{2\pi i/3}z)| \leq C\e^{-\frac{\sqrt{2}}{3}r^{3/2}},\\
&|\Ai(-z)|, |\Ai(\e^{2\pi i/3}z)| \leq C\e^{\frac{\sqrt{2}}{3}r^{3/2}},\\
&|\Ai'(z)|, |\Ai'(-\e^{2\pi i/3}z)|\leq C(1+r^\frac{1}{4}) \e^{-\frac{\sqrt{2}}{3} r^{3/2}},\\
&|\Ai'(-z)|, |\Ai'(\e^{2\pi i/3}z)|\leq C(1+r^\frac{1}{4}) \e^{\frac{\sqrt{2}}{3} r^{3/2}}.
\end{split}
\end{align}
\end{lemma}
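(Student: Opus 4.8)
The plan is to read off all eight bounds directly from the asymptotic expansions \eqref{eq:Airy asymptotics}, which hold uniformly, with explicit remainder estimates, on every closed sector $|\arg(w)|\le\pi-\delta$ (see \cite{Olver74}*{Chapter 11.8}, or \cite{DLMF}). The first step is pure bookkeeping with arguments. For $z=r\e^{i\pi/6}$ one computes, using $-\e^{2\pi i/3}=\e^{-i\pi/3}$,
\[
\arg(z)=\frac{\pi}{6},\qquad \arg(-\e^{2\pi i/3}z)=-\frac{\pi}{6},\qquad \arg(-z)=-\frac{5\pi}{6},\qquad \arg(\e^{2\pi i/3}z)=\frac{5\pi}{6},
\]
so in every case the argument lies in $[-\tfrac{5\pi}{6},\tfrac{5\pi}{6}]\subset(-\pi,\pi)$, and \eqref{eq:Airy asymptotics} applies with $\delta=\pi/6$.

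The second step is to extract the exponential rates. For a point $w=r\e^{i\phi}$ with $\phi\in\{\pm\pi/6,\pm 5\pi/6\}$ the principal branch gives $w^{3/2}=r^{3/2}\e^{3i\phi/2}$, hence $\zeta=\frac{2}{3}w^{3/2}$ satisfies $\Re(\zeta)=\frac{2}{3}r^{3/2}\cos(3\phi/2)$. For $\phi=\pm\pi/6$ one has $3\phi/2=\pm\pi/4$, so $\Re(\zeta)=\frac{\sqrt{2}}{3}r^{3/2}>0$; for $\phi=\pm 5\pi/6$ one has $3\phi/2=\pm 5\pi/4$, so $\Re(\zeta)=-\frac{\sqrt{2}}{3}r^{3/2}<0$. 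Therefore $|\e^{-\zeta}|=\e^{\mp\frac{\sqrt{2}}{3}r^{3/2}}$, with the decaying exponential occurring exactly for $z$ and $-\e^{2\pi i/3}z$ and the growing one for $-z$ and $\e^{2\pi i/3}z$, matching \eqref{eq:useful airy bounds} precisely.

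The final step assembles the estimate. Since $u_0=v_0=1$, the remainder estimates accompanying \eqref{eq:Airy asymptotics} give, uniformly for $|w|\ge 1$ with $|\arg(w)|\le\frac{5\pi}{6}$, the bounds $|\Ai(w)|\le C|w|^{-1/4}|\e^{-\zeta}|$ and $|\Ai'(w)|\le C|w|^{1/4}|\e^{-\zeta}|$; using $|w|^{-1/4}\le 1$ and $|w|^{1/4}=r^{1/4}\le 1+r^{1/4}$ together with the rates from the previous paragraph yields all of \eqref{eq:useful airy bounds} on $r\ge 1$. For $r<1$ I instead invoke that $\Ai$ and $\Ai'$ are entire, hence bounded on the compact disk $\{|w|\le 1\}$, while each factor $\e^{\pm\frac{\sqrt{2}}{3}r^{3/2}}$ is bounded above and below by positive constants on $[0,1]$ and $1+r^{1/4}\ge 1$; combining the two regimes produces uniform constants over $r\in[0,\infty)$. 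I expect no genuine obstacle: the only point demanding care is tracking the principal branch of $w^{3/2}$ through the rotations by $\e^{2\pi i/3}$ and the reflection $w\mapsto -w$, since a careless choice of argument would flip the sign of $\Re(\zeta)$ and hence interchange decay and growth.
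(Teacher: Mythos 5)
Your proof is correct and follows essentially the same approach as the paper: invoke the DLMF/Olver asymptotic expansions for $\Ai$ and $\Ai'$ with remainder bounds valid on $|\arg(w)|\le\pi-\delta$, track the principal branch of $w^{3/2}$ through the four rotations to determine the sign of $\Re(\zeta)$, and handle the compact region near the origin by continuity of $\Ai, \Ai'$. Your organization (arguments first, then $\Re(\zeta)$, then assembly of the estimate) is cleaner than the paper's and makes the small-$r$ step explicit rather than leaving it implicit, but the underlying idea is identical.
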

\begin{proof}
We begin by understanding how to truncate the asymptotic series \eqref{eq:Airy asymptotics}. From \cite{DLMF}, we may truncate them at degree $N$, up to introducing an error of order $|u_N||\zeta|^{-N}$ multiplied by 
\begin{enumerate}
    \item $1$, if $|\arg(z)|<\pi/3$,
    \item $\min\{\csc(\arg(\zeta)),\chi(N+\sigma)+1\}$ if $\pi/3<|\arg(z)|<2\pi/3$,
    \item $\sqrt{2\pi (N+\sigma)}{|\sec(\arg(\zeta))|^{N+\sigma}}+\chi(N+\sigma)+1$ if $2\pi/3<|\arg(z)|<\pi$.
\end{enumerate}
Here we set, 
\[
\chi(x)=\pi^{1/2}\frac{\Gamma\left(\frac{1}{2}x+1\right)}{\Gamma\left(\frac{1}{2}x+\frac{1}{2}\right)},
\]
\begin{align*}
    N\geq 0, \quad \sigma=\frac{1}{6} & \quad \text{for the bounds on } \Ai,\\
    N\geq 1, \quad \sigma=0 & \quad \text{for the bounds on } \Ai'.\\
\end{align*}
It is easy to see that, as long as $\arg(z)$ is not near $\pm \pi$, $\sec(\arg(\zeta))$ is bounded. Hence, we see that 
\begin{align}
\label{eq:Airy bounds}
\begin{split}
&|\Ai(z)| \leq C\left|\frac{e^{-\zeta}}{2\sqrt{\pi}z^\frac{1}{4}}\right|, \quad |\arg(z)|\leq \pi -\delta,\\
&|\Ai'(z)| \leq C\left|\frac{z^\frac{1}{4}e^{-\zeta}}{2\sqrt{\pi}}\right|, \quad |\arg(z)|\leq \pi -\delta.
\end{split}
\end{align}
Furthermore, $\Ai(z), \Ai'(z)$ are always bounded near zero (see \cite{Olver74}), so to deduce \eqref{eq:useful airy bounds} from \eqref{eq:Airy bounds}, it suffices to study the behaviour at infinity. In particular, we will check if the choice of $\sqrt{2}/3$ in the exponential indeed works for all cases. For $r>0$, the principal value of $(r\e^{i\pi/6})^{3/2}$ is given by $r^{3/2} \e^{i \pi/4} $. Thus, in this case the real part of $\e^{i \pi/4}$ is indeed $\sqrt{2}/2$, so 
\[
\Re(\zeta) = \frac{2}{3}\Re(z^{2/3}) = \frac{\sqrt{2}}{3}r.
\]
Similarly, for $-\e^{2 \pi i/3}\e^{i \pi /6}r=\e^{-i \pi /6}r$, we compute $(\e^{-i \pi /6}r)^{3/2}=r^{3/2}\e^{-i \pi /4}$, so once again $\sqrt{2}/2$ is the relevant coefficient of the real part.
Next, we consider $\Ai(-r\e^{i \pi/6})$. Here, we have 
\[
(-r\e^{i \pi/6})^{3/2}=(r\e^{-5\pi i/6})^{3/2}=\e^{3\pi i/4}r^{3/2}.
\]
In particular, we deduce that the exponential has real part $-\sqrt{2}/2$. Finally, we have that 
\[
(\e^{2\pi i/3}\e^{i \pi/6}r)^{3/2}=\e^{-3\pi i/4}r^{3/2},
\]
and so again the choice of $-\sqrt{2}/2$ works. Crucially, as stated in \cite{DLMF}, we may truncate the asymptotic expansions \eqref{eq:Airy asymptotics} at degree $N$, introducing an error of order $u_N\zeta^{-N}$, as long as $|\arg(z)| \leq \pi/3$. Thus, for all $|z|$ sufficiently large, we deduce the bounds  required for our purposes in Section \ref{s:linear-zeroes}.
\end{proof}

\section{The operator around the critical radius}\label{s:around-r0-app}

In this appendix we will show that the approximated operator $\L_2$ around the critical radius $r_0$ has an isolated eigenvalue and an eigenfunction of the form stated in the proof of Theorem \ref{theorem:mainResult}. Since this is a key step in the derivation of the profiles of the growing mode for the operator $\L$, we give more details about the derivation of this result in the following proposition.

\begin{proposition}
    The operator
    \[
    \L_2 = \eps\partial_r^2 - c_2\eps^{-1/3}(r-r_0)^2 + \eps^{1/3}\left(-M^2\left[\frac{1}{r_0^2}+\frac{\Omega_0'}{U_0'}\right] +\sqrt{\frac{-2iM\Omega_0'}{r_0}}\right)
    \]
    has an eigenfunction in $L^\infty(\R,\max\{1,r^2\}w_\eps(r-r_0) \dd r)$ of the form
    \[
    f_\star(r) = \e^{-\frac{1}{2}\eps^{-2/3}c_2^{1/2}(r-r_0)^2},
    \]
    with eigenvalue
    \[
    \lambda_\star = \eps^{1/3}\left[-M^2\left(\frac{1}{r_0^2}+\frac{\Omega_0'}{U_0'}\right) +\sqrt{\frac{-2iM\Omega_0'}{r_0}} - c_2^{1/2}\right],
    \]
    where $c_2^{1/2}$ represents a complex number with positive real part. In fact, this is the spectral value of $\L_2$ with largest real part. Moreover, the eigenvalue $\lambda_\star$ is isolated.
\end{proposition}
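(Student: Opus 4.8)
The plan is to combine a direct computation with the spectral theory of the rotated (non-self-adjoint) harmonic oscillator. First I would verify by hand that $f_\star$ is an eigenfunction. Writing $a=\eps^{-2/3}c_2^{1/2}$, so that $f_\star=\e^{-\frac12 a(r-r_0)^2}$ and $f_\star''=(a^2(r-r_0)^2-a)f_\star$, and using $\eps a^2=\eps^{-1/3}c_2$, $\eps a=\eps^{1/3}c_2^{1/2}$, one gets
\[
\eps f_\star'' - c_2\eps^{-1/3}(r-r_0)^2 f_\star = \big(\eps a^2 - c_2\eps^{-1/3}\big)(r-r_0)^2 f_\star - \eps a\,f_\star = -\eps^{1/3}c_2^{1/2}\,f_\star,
\]
whence $\L_2 f_\star = \lambda_\star f_\star$ immediately. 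Membership in $L^\infty(\R,\max\{1,r^2\}w_\eps(r-r_0)\dd r)$ is clear: since $\Re(c_2^{1/2})>0$, $|f_\star(r)|=\e^{-\frac12\eps^{-2/3}\Re(c_2^{1/2})(r-r_0)^2}$ is a genuine Gaussian, which decays faster than the polynomial weight $\max\{1,r^2\}w_\eps(r-r_0)$ grows.

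The substantial part is identifying the full spectrum and showing $\lambda_\star$ is isolated and rightmost. Here I would use Gilbert's change of variables from Section \ref{s:gilbert-scaling}: with $s=\eps^{-1/3}(r-r_0)$ and $z=\sqrt2\,c_2^{1/4}s$, the equation $(\L_2-\lambda)v=0$ becomes the Weber equation $v''(z)+\big(-\tfrac12 c_2^{-1/2}q(\mu)-\tfrac14 z^2\big)v(z)=0$ with $q(\mu)=\mu+M^2\big[\tfrac1{r_0^2}+\big(\tfrac{\Omega_0'}{U_0'}\big)^2\big]-\sqrt{-2iM\Omega_0'/r_0}$, whose solutions are the parabolic cylinder functions $D_\nu(z)$, $D_\nu(-z)$ with $\nu=-\tfrac12 c_2^{-1/2}\eta$, $\eta=q(\mu)+c_2^{1/2}$. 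By Lemma \ref{lemma:parabolic cylinder bounds} (note $\arg(c_2^{1/4})\in(-\tfrac\pi4,\tfrac\pi4)$ since $\Re(c_2^{1/2})>0$), $D_\nu(z)$ is, up to scalars, the unique solution decaying as $r\to+\infty$ and $D_\nu(-z)$ the unique one decaying as $r\to-\infty$; when $\nu\notin\Z_{\geq0}$ these are linearly independent, so no nonzero solution lies in the weighted $L^\infty$ space and $\L_2-\lambda$ is injective there, whereas when $\nu=j\in\Z_{\geq0}$ the two solutions coincide up to parity (since $D_j(z)=2^{-j/2}\e^{-z^2/4}H_j(z/\sqrt2)$ is a Hermite polynomial times a Gaussian), producing a one-dimensional kernel. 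This occurs precisely when $\eta=-2jc_2^{1/2}$, i.e. at $\lambda=\lambda_j:=\lambda_\star-2j\,\eps^{1/3}c_2^{1/2}$, $j\ge0$, and $\lambda_\star$ is the case $j=0$.

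To turn "injective away from $\{\lambda_j\}$" into "$\sigma(\L_2)=\{\lambda_j\}_{j\ge0}$", I would invoke the Green's operator $\G_2^\lambda$ constructed in Section \ref{s:around-r0} from $D_\nu(\pm z)$: its kernel is a product of these functions divided by the Wronskian $W\propto\eps^{-1/3}/\Gamma(-\nu)$, hence meromorphic in $\lambda$ with simple poles exactly where $\nu\in\Z_{\geq0}$, i.e. at the $\lambda_j$; for all other $\lambda$ the estimates of Proposition \ref{proposition:r0}, specialised to $\L_2$, show $\G_2^\lambda$ is a bounded operator on $L^\infty(\R,\max\{1,r^2\}w_\eps(r-r_0)\dd r)$ mapping into $\mathcal{D}(\L_2)$ with $(\L_2-\lambda)\G_2^\lambda=\Id$; combined with the injectivity above this gives $\G_2^\lambda=(\L_2-\lambda)^{-1}$ and $\lambda\in\rho(\L_2)$. (Alternatively, after the rotation $z=\sqrt2\,c_2^{1/4}\eps^{-1/3}(r-r_0)$ the operator $\L_2$ is similar to the complex harmonic oscillator $-\partial_z^2+\tfrac14 z^2$, known to have compact resolvent and purely discrete spectrum, and one only needs the Green's-function bounds to check the weighted $L^\infty$ space does not enlarge the spectrum.) Either way $\sigma(\L_2)=\{\lambda_j\}_{j\ge0}$, every $\lambda_j$ is isolated, and $\Re(\lambda_j)=\Re(\lambda_\star)-2j\,\eps^{1/3}\Re(c_2^{1/2})\le\Re(\lambda_\star)$ with equality only for $j=0$, so $\lambda_\star$ is the spectral value of largest real part and is isolated.

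The main obstacle is establishing rigorously that this non-self-adjoint operator, posed on a weighted $L^\infty$ space, has no continuous or residual spectrum — a setting outside the classical self-adjoint or $L^2$ compact-resolvent framework. The resolution I would favour is the hands-on one: exhibit the resolvent explicitly as the parabolic-cylinder Green's operator $\G_2^\lambda$ and use the Gaussian decay of $D_\nu$ to verify its boundedness on the weighted space for $\lambda$ away from the $\lambda_j$. This stays entirely within the machinery already developed in Section \ref{s:around-r0} and introduces nothing new.
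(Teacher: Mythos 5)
Your proposal is correct and follows essentially the same route as the paper's proof in Appendix C: direct verification of the eigenvalue equation, then the Weber-equation/parabolic-cylinder picture with the explicit Green's function $\G_2^\lambda$ and the asymptotics of $D_\nu$ from Lemma \ref{lemma:parabolic cylinder bounds} to establish that $\lambda_\star$ is isolated and rightmost. The only difference is that you enumerate the full spectrum $\{\lambda_j\}_{j\ge0}$ and note the pole structure of $\G_2^\lambda$ via the Wronskian $\propto 1/\Gamma(-\nu)$, whereas the paper is content to show surjectivity of $\L_2-\lambda$ for $\lambda$ near $\lambda_\star$ and for $\Re(\lambda)>\Re(\lambda_\star)$ together with injectivity away from $\nu\in\mathbb{Z}_{\geq 0}$; the underlying ingredients are identical.
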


\begin{proof}
First of all, a direct computation shows that
\[
f_\star''(r) = \eps^{-2/3}c_2^{1/2}\left( \eps^{-2/3}c_2^{1/2}(r-r_0)^2 - 1 \right) f_\star(r),
\]
so that plugging it into the definition of the operator, we find
\[
\eps f_\star''(r) - c_2\eps^{-1/3}(r-r_0)^2f_\star(r) = - \eps^{1/3}c_2^{1/2}f_\star (r),
\]
and therefore,
\[
\L_2f_\star(r) = \eps^{1/3}\left[-M^2\left(\frac{1}{r_0^2}+\frac{\Omega_0'}{U_0'}\right) +\sqrt{\frac{-2iM\Omega_0'}{r_0}} - c_2^{1/2}\right] f_\star(r).
\]
Next, we prove that the eigenvalue is indeed isolated and the largest one of $\L_2$ in real part. Indeed, most of the work for this has already been done in Section \ref{s:around-r0}. Recall that for $\mu=\lambda_\star+\eps^{1/3}\eta$, $\eta \neq 0$ and $|\eta|$ small enough, there exist two linearly independent solutions to $(\L_2-\mu)f=0$ given by 
\begin{equation*}
v_1(r) = D_{\frac{1}{2}c_2^{-1/2}\eta}\left(\sqrt{2}c_2^{1/4}\eps^{-1/3}(r-r_0)\right),
\end{equation*}
\begin{equation*}
v_2(r) = D_{\frac{1}{2}c_2^{-1/2}\eta}\left(-\sqrt{2}c_2^{1/4}\eps^{-1/3}(r-r_0)\right),
\end{equation*}
where 
\[
D_\nu(z) = 2^{-\nu/2}\e^{-z^2/4} H_\nu\left(\frac{z}{\sqrt{2}}\right).
\]
Therefore, one can construct a Green's function for this equation given by 
\begin{equation*}
(\mathcal{G}_{r_0}f)(r) 
 = \mathfrak{w}(\eta)\eps^{-2/3}v_1(r)\int_0^rv_2(s)f(s)\dd s + \mathfrak{w}(\eta)\eps^{-2/3}v_2(r)\int_r^\infty v_1(s)f(s)\dd s.
\end{equation*}
where $\mathfrak{w}(\eta)$ is the Wronskian. Recall the bounds on the Hermite functions 
\begin{equation*}
|D_\nu(z)|  \lesssim \left|\e^{-z^2/4} \right|\left(1+|z|\right)^{\Re(\nu)}, \quad \text{if } |\arg(z)|<\frac{3\pi}{4},
\end{equation*}
\begin{equation*}
|D_\nu(z)| \lesssim \left|\e^{z^2/4} \right|, \quad \text{if } \arg(z)\in \left(\frac{3\pi}{4},\frac{5\pi}{4}\right),
\end{equation*}
In particular, as long as $\Re(\nu)<1$, we can redo the arguments in Section \ref{s:around-r0} to show that $\G_{r_0}$ is a well defined operator on $L^\infty(\mathbb{R},\max\{1,r^2\}w_\eps(r-r_0)\dd r)$, which acts as a right inverse to $\L_2$. Hence, surjectivity of $\L_2-\lambda$ both for $\lambda$ near $\lambda_\star$, and also for any $\lambda$ with $\Re(\lambda)>\Re(\lambda_\star)$ is proven. Finally, injectivity is an immediate consequence of the asymptotic properties of the functions functions $D_\nu$, which are bounded on $\mathbb{R}$ if and only if $\nu =0,1,2 \dots$. This completes the proof.
\end{proof}

\section{Properties of the weight function}

In this final section, we state a lemma that compiles a set of useful properties of the weight function $w_\eps$, which appears in the definitions of the Banach spaces $X$ and $Y$. These properties play an important role in the proofs of Sections \ref{s:approx-Green's} and \ref{s:linear-zeroes}. For the convenience of the reader---and to avoid repetitive arguments in the course of those longer proofs---we have chosen to collect all relevant facts about $w_\eps$ in one place here.

Recall that the weight function is defined for any $\eps>0$ and any fixed $N\in\N$ as the following polynomial of degree $N$,
\[
w_\eps(s) = 1 + (\eps^{-1/3}|s|)^N.
\]
We now present the following set of properties of the weight function.

\begin{lemma}\label{lemma:properties-weight}
The weight $w_\eps(\cdot)$ satisfies the following properties for any $\eps>0$ sufficiently small.
    \begin{enumerate}
        \item Let $\gamma,\chi>0$ and $\vartheta\in\R$ be fixed. For any $|r-r_0|\geq\eps^\gamma$ there holds the estimate
        \[
        \frac{w_\eps(r-r_0)}{w_\eps(\eps^\gamma)} \frac{(1 + \eps^{-1/3}|r-r_0|)^\vartheta}{(1 + \eps^{-1/3+\gamma})^\vartheta} \leq C \e^{\frac{1}{2}\chi \eps^{-2/3} ((r-r_0)^2 - (\eps^\gamma)^2)},
        \]
        where $C>0$ is a uniform constant that depends continuously on $N$, $\vartheta$, and $\chi$.
        
        \item Fix $a>b\geq 0$, let $r_0<s\leq r$ (analog.\ $r\leq s< r_0$) be such that $|r-s|\leq \eps^a$ and assume that $|s-r_0|\geq \eps^b$. Then,
        \[
        \frac{w_\eps(r-r_0)}{w_\eps(s-r_0)} \leq C
        \]
        where $C>0$ is a uniform constant, independent of all parameters.

        \item Let $a>0$, for any $r\geq \eps^a$ there holds the estimate
        \[
        \frac{w_\eps(r-r_0)}{w_\eps(\eps^a-r_0)}\leq \left (\frac{r}{\eps^a} \right )^N.
        \]
        
        \item For any $\gamma>0$, and $s,r\geq 0$ such that $|s-r_0|\geq \eps^\gamma$ there holds 
        \begin{equation*}
        \frac{w_\eps(r-r_0)}{w_\eps(s-r_0)} \leq C \e^{\eps^{-\gamma}|s-r|}.
        \end{equation*}
        where $C>0$ is a uniform constant that only depends on $N$.
    \end{enumerate}
\end{lemma}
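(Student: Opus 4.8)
The plan is to prove each of the four properties by elementary manipulations of the polynomial $w_\eps(s) = 1 + (\eps^{-1/3}|s|)^N$, working throughout with the rescaled variables $z = \eps^{-1/3}|r-r_0|$ and $x = \eps^{-1/3}|s-r_0|$ so that the weight becomes simply $1 + z^N$, and the exponential factors become $\e^{\frac12\chi(z^2 - (\eps^{-1/3+\gamma})^2)}$ etc. The overarching strategy is that a polynomial in $z$ (or a ratio of polynomials) is always dominated by an exponential $\e^{cz^2}$ for any $c>0$, uniformly once $z$ exceeds some threshold, and on the complementary bounded set one simply uses continuity/compactness. I would take the four items in the order (3), (2), (4), (1), since (3) is purely monotonicity, (2) and (4) follow from it with a little care about the regime near $r_0$, and (1) is the most delicate as it combines a polynomial ratio with an exponential gain.

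For \textbf{(3)}: since $r \geq \eps^a$ and $r_0$ is fixed positive, for $\eps$ small we have $r_0 \geq \eps^a$ so that $\eps^a - r_0 \le 0$ and in fact $|\eps^a - r_0|$ is comparable to $r_0$; meanwhile I claim $w_\eps(r-r_0)/w_\eps(\eps^a - r_0) \le (r/\eps^a)^N$. The cleanest way: note $w_\eps(s) \le (1+\eps^{-1/3}|s|)^N$ and $w_\eps(s) \ge \max\{1, (\eps^{-1/3}|s|)^N\}$, so the ratio is at most $(1+\eps^{-1/3}r + \eps^{-1/3}r_0)^N / \max\{1, (\eps^{-1/3}|\eps^a - r_0|)^N\}$; for $\eps$ small the denominator is $(\eps^{-1/3} r_0/2)^N$ up to constants, and since $\eps^a \le r_0$ one checks $1 + \eps^{-1/3}(r+r_0) \le C \eps^{-1/3} r_0 \cdot (r/\eps^a)$ by splitting on whether $r \le r_0$ or $r > r_0$. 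I would write this out carefully but it is a two-line estimate. For \textbf{(2)}: here $s$ and $r$ lie on the same side of $r_0$ with $|r-s| \le \eps^a$ and $|s-r_0| \ge \eps^b$ with $a > b$; then $|r - r_0| \le |s-r_0| + \eps^a \le |s-r_0|(1 + \eps^{a-b})$, so $z \le x(1+\eps^{a-b}) \le 2x$ for $\eps$ small, hence $(1+z^N)/(1+x^N) \le (1 + 2^N x^N)/(1+x^N) \le 2^N$.

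For \textbf{(4)}: again set $z = \eps^{-1/3}|r-r_0|$, $x = \eps^{-1/3}|s-r_0| \ge \eps^{-1/3+\gamma} = \eps^{-\gamma}\cdot\eps^{1/3}\cdot\eps^{-1/3}$... more simply $x \ge \eps^{-1/3+\gamma}$. By the triangle inequality $z \le x + \eps^{-1/3}|s-r|$. Then $1 + z^N \le 1 + (x + \eps^{-1/3}|s-r|)^N \le 2^{N}(1+x^N)(1 + (\eps^{-1/3}|s-r|)^N)$, and $(1+x^N)$ cancels against the denominator up to a factor $2^N$, leaving $1 + (\eps^{-1/3}|s-r|)^N \le C \e^{\eps^{-1/3}|s-r|} \le C\e^{\eps^{-\gamma}|s-r|}$ since $\gamma < 1/3$ makes $\eps^{-\gamma} \ge \eps^{-1/3}$... wait, that inequality goes the wrong way, so instead I use $1 + t^N \le C_N \e^{\eta t}$ for any $\eta>0$, applied with $t = \eps^{-1/3}|s-r|$ and $\eta$ chosen so that $\eta \eps^{-1/3} \le \eps^{-\gamma}$, i.e. one absorbs the power-to-exponential bound at scale $\eps^{-1/3}$ and then trivially bounds by the exponential at the larger rate $\eps^{-\gamma}$ — this works precisely because $\gamma < 1/3$; if $\gamma \ge 1/3$ one reverses and uses $w_\eps(s-r_0) \ge 1$ directly. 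The main obstacle, and the step I would spend the most care on, is \textbf{(1)}: one must bound $\frac{1+z^N}{1+(\eps^{-1/3+\gamma})^N} \cdot \frac{(1+z)^\vartheta}{(1+\eps^{-1/3+\gamma})^\vartheta} \le C\e^{\frac12\chi(z^2 - (\eps^{-1/3+\gamma})^2)}$ uniformly over $z \ge \eps^{-1/3+\gamma}$, with constant continuous in $N,\vartheta,\chi$. Writing $u = \eps^{-1/3+\gamma} \to \infty$, the claim is $\frac{(1+z^N)(1+z)^\vartheta}{(1+u^N)(1+u)^\vartheta} \le C\e^{\frac12\chi(z^2-u^2)}$ for all $z \ge u$; since the left side at $z = u$ is $1$ it suffices to show the left side, as a function of $z$ for $z \ge u$, grows slower than the right side, which for $u$ large reduces to comparing $\frac{d}{dz}\log[(1+z^N)(1+z)^\vartheta] \le \chi z$ — true for $z \ge u$ once $u$ is large since the left side is $O(1/z)$. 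I would make this rigorous by factoring out $\e^{\frac12\chi(z^2-u^2)}$, observing the resulting function is bounded for $z \in [u, Ku]$ by a crude estimate and for $z \ge Ku$ by the derivative comparison, and track that all constants are explicit polynomial/continuous in $N, \vartheta, \chi$; the uniformity in $\chi$ on compact intervals follows because the threshold $K$ depends monotonically and continuously on $\chi$.
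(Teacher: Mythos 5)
Your treatments of parts (1), (2), and (3) are sound and run in essentially the same channel as the paper's proof: rescale to $z=\eps^{-1/3}|r-r_0|$ and $x=\eps^{-1/3}|s-r_0|$, compare polynomial ratios, and invoke $1+u^N\le C_N\e^{u}$. Your (2) is in fact slightly cleaner than the paper's, since $z\le x(1+\eps^{a-b})\le 2x$ sidesteps the paper's implicit assumption that $(\eps^{-1/3}|s-r_0|)^N\ge 1$. Your (1) replaces the paper's chain of explicit ratio inequalities with a logarithmic-derivative comparison, which is a legitimate and essentially equivalent route.

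The genuine gap is in part (4). Your factoring $1+z^N\le 2^N(1+x^N)(1+t^N)$, $t=\eps^{-1/3}|s-r|$, followed by cancellation of $(1+x^N)$ against the denominator, discards precisely the decay that the denominator is there to supply. After cancellation you are left needing $1+(\eps^{-1/3}|s-r|)^N\le C\e^{\eps^{-\gamma}|s-r|}$ with a constant uniform in $\eps$, and this is false for $\gamma<1/3$: take $|s-r|=\eps^\gamma$, so the left side is $1+\eps^{N(\gamma-1/3)}\to\infty$ while the right side stays $\le C\e$. Your proposed repair, choosing $\eta=\eps^{1/3-\gamma}$ so that $\eta\eps^{-1/3}\le\eps^{-\gamma}$ and then using $1+u^N\le C_N(\eta)\e^{\eta u}$, does not help because $C_N(\eta)\to\infty$ as $\eta\to 0$, so the constant is again not uniform in $\eps$; you also have the sign of the comparison between $\eps^{-\gamma}$ and $\eps^{-1/3}$ momentarily reversed, which is a symptom of the same issue. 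The paper does not decouple $x$ and $t$: it keeps the quotient intact,
\[
\frac{(\eps^{-1/3}|s-r|)^N}{1+(\eps^{-1/3}|s-r_0|)^N}\le\frac{(\eps^{-1/3}|s-r|)^N}{(\eps^{-1/3}|s-r_0|)^N}=\frac{|s-r|^N}{|s-r_0|^N}\le\bigl(\eps^{-\gamma}|s-r|\bigr)^N,
\]
letting the $\eps^{-1/3}$ powers cancel exactly and only then using $|s-r_0|\ge\eps^\gamma$. It is this exact cancellation that converts the scale from $\eps^{-1/3}$ to $\eps^{-\gamma}$; the multiplicative factoring converts it to nothing. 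Keeping the quotient together closes the argument.
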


\begin{proof}
    \emph{(1)}
    Let us define $x = \eps^{-1/3}|r-r_0|$ and $y = \eps^{-1/3+\gamma}$, which satisfies $x\geq y>1$ since $\gamma<1/3$ by assumption. With this notation we have
    \[
    \frac{w_\eps(r-r_0)}{w_\eps(\eps^\gamma)} \frac{(1 + \eps^{-1/3}|r-r_0|)^\vartheta}{(1 + \eps^{-1/3+\gamma})^\vartheta} =  \frac{(1+x^N)(1+x)^\vartheta}{(1+y^N)(1+y)^\vartheta}.
    \]
    Now observe that if $\vartheta\geq 0$ we have that $(1+x)^\vartheta\leq (2x)^\vartheta$ and $(1+y)^\vartheta\geq (y)^\vartheta$, whereas if $\vartheta<0$ we have that $(1+x)^\vartheta\leq (x)^\vartheta$ and $(1+y)^\vartheta \geq (2y)^\vartheta$. Therefore, for any $\vartheta\in\R$ we can write
    \[
    \frac{(1+x)^\vartheta}{(1+y)^\vartheta} \leq 2^{|\vartheta|} \left(\frac{x}{y}\right)^{\vartheta},
    \]
    and moreover
    \[
    \frac{(1+x^N)(1+x)^\vartheta}{(1+y^N)(1+y)^\vartheta} \leq 2^{1 + |\vartheta|} \left(\frac{x}{y}\right)^{N+\vartheta}.
    \]
    This in turn implies that there exists a constant $C>0$ depending exclusively on $N\in \N$ and $\vartheta\in\R$ such that the following estimate
    \[
    \frac{(1+x^N)(1+x)^\vartheta}{(1+y^N)(1+y)^\vartheta} \leq C \e^{\frac{1}{2}(x^2y^{-2} - 1)} = C\e^{\frac{1}{2}y^{-2}(x^2-y^2)} \leq C\e^{\frac{1}{2}(x^2-y^2)}
    \]
    holds true uniformly in $x,y\geq 1$, and thus the claim of the lemma follows.

    \emph{(2)} Take $r_0<s\leq r$, since $|r-s|\leq \eps^a$ and $|s-r_0|\geq\eps^b$, we can write via triangle inequality
    \[
    \frac{w_\eps(r-r_0)}{w_\eps(s-r_0)}  \leq 2\frac{(\eps^{-1/3}|r-r_0|)^N}{(\eps^{-1/3}|s-r_0|)^N} \leq 2\left(1 + \frac{|r-s|^N}{|s-r_0|^N}\right) \leq 2(1 + \eps^{N(a-b)}).
    \]
    Thus, the right hand side can be uniformly bounded by $4$ provided that $a>b$.
    The case $r\leq s< r_0$ follows by symmetry.

    \emph{(3)} Take $r\geq \eps^a$ and  write the trivial bound
    \[
    \frac{w_\eps(r-r_0)}{w_\eps(\eps^a-r_0)}\leq \frac{1+(\eps^{-1/3}|r-r_0|)^N}{(\eps^{-1/3}|\eps^a-r_0|)^N}.
    \]
    On the one hand, the numerator can be estimated by
    \[
    1+(\eps^{-1/3}|r-r_0|)^N \lesssim (\eps^{-1/3}r)^N + (\eps^{-1/3}r_0)^N \lesssim \eps^{-\left(\frac{1}{3}-a\right)N} \left(\frac{r}{\eps^a}\right)^N + \eps^{-\frac{1}{3}N}.
    \]
    On the other hand, for the denominator we simply use that $|\eps^a-r_0|\gtrsim r_0$ to write
    \[
    (\eps^{-1/3}|\eps^a-r_0|)^N \gtrsim \eps^{-\frac{1}{3}N},
    \]
    so that combining both estimates we obtain
    \[
    \frac{w_\eps(r-r_0)}{w_\eps(\eps^a-r_0)}\lesssim \eps^{aN} \left(\frac{r}{\eps^a}\right)^N + 1 \lesssim \left(\frac{r}{\eps^a}\right)^N
    \]
    where the last inequality comes from the observation that $r\geq \eps^a$.

    \emph{(4)} We start by estimating
    \[
    \frac{w_\eps(r-r_0)}{w_\eps(s-r_0)} \leq \frac{1 + (\eps^{-1/3}|s-r_0|)^N + (\eps^{-1/3}|s-r|)^N}{1 + (\eps^{-1/3}|s-r_0|)^N} \leq 1 + \frac{(\eps^{-1/3}|s-r|)^N}{1 + (\eps^{-1/3}|s-r_0|)^N}.
    \]
    Now, since $|s-r_0|\geq \eps^\gamma$ we get
    \[
    1 + \frac{(\eps^{-1/3}|s-r|)^N}{1 + (\eps^{-1/3}|s-r_0|)^N} \leq 1 + \frac{|s-r|^N}{|s-r_0|^N} \leq 1 + \eps^{-\gamma N}|s-r|^N \leq C \e^{\eps^{-\gamma}|s-r|}.
    \]
    where for the last inequality we used the fact that there exists a constant $C>0$ only depending on $N$ so that $1+x^N\leq C\e^x$.
\end{proof}

\addtocontents{toc}{\protect\setcounter{tocdepth}{0}}
\section*{Acknowledgements}
The authors thank Michele Coti Zelati, Lucas Ertzbischoff, and Niklas Knobel and Massimo Sorella for insightful discussions. We also thank Michele Coti Zelati and Massimo Sorella for invaluable comments on initial drafts of the manuscript.
The research of VNF is funded by the ERC-EPSRC Horizon Europe Guarantee EP/X020886/1. The research of DV is funded by the Imperial College President's PhD Scholarships.

\addtocontents{toc}{\protect\setcounter{tocdepth}{1}}
\bibliographystyle{abbrv}
\bibliography{dynamo.bib}

\end{document}